\def\bA{{\mathbb{A}}}
\def\bC{{\mathbb{C}}}
\def\bN{{\mathbb{N}}}
\def\bP{{\mathbb{P}}}
\def\bQ{{\mathbb{Q}}}
\def\bR{{\mathbb{R}}}
\def\bZ{{\mathbb{Z}}}
\DeclareMathAlphabet{\mathmybb}{U}{bbold}{m}{n}
\def\cE{{\mathcal{E}}}
\def\cF{{\mathcal{F}}}
\def\cG{{\mathcal{G}}}
\def\cK{{\mathcal{K}}}
\def\cL{{\mathcal{L}}}
\def\cM{{\mathcal{M}}}
\def\cN{{\mathcal{N}}}
\def\cO{{\mathcal{O}}}
\def\cP{{\mathcal{P}}}
\def\cU{{\mathcal{U}}}
\def\cX{{\mathcal{X}}}
\def\cY{{\mathcal{Y}}}
\def\fo{{\mathfrak{o}}}
\def\m{\mathfrak m}
\def\p{\mathfrak p}
\def\alg{\operatorname{alg}}
\def\an{\operatorname{an}}
\def\ar{\operatorname{ar}}
\def\Aut{\operatorname{Aut}}
\def\Ban{\operatorname{Ban}}
\def\Berk{\operatorname{Berk}}
\def\diag{\operatorname{diag}}
\def\disc{\operatorname{disc}}
\def\Frac{\operatorname{Frac}}
\def\GL{\operatorname{GL}}
\def\Hom{\operatorname{Hom}}
\def\hyb{\operatorname{hyb}}
\def\ord{\operatorname{ord}}
\def\rank{\operatorname{rank}}
\def\red{\operatorname{red}}
\def\rr{\operatorname{rat.rank}}
\def\s{\operatorname{s}}
\def\sn{\operatorname{sn}}
\def\sp{\operatorname{sp}}
\def\Spec{\operatorname{Spec}}
\def\Spm{\operatorname{Spm}}
\def\trdeg{\operatorname{tr.deg}}
\def\triv{\operatorname{triv}}
\def\um{\operatorname{um}}
\def\ZR{\operatorname{ZR}}
\def\tilde{\widetilde}
\def\setminus{\smallsetminus}
\def\emptyset{\varnothing}
\declaretheorem[name=Theorem,refname={Theorem},style=plain,numberwithin=subsection]{theorem}
\declaretheorem[name=Theorem,refname={Theorem},style=plain,numbered=no]{theorem*}
\declaretheorem[name=Proposition-Definition,refname={Proposition-Definition},style=plain,sibling=theorem]{proposition-definition}
\declaretheorem[name=Proposition,refname={Proposition},style=plain,sibling=theorem]{proposition}
\declaretheorem[name=Proposition,refname={Proposition},style=plain,numbered=no]{proposition*}
\declaretheorem[name=Lemma,refname={Lemma},style=plain,sibling=theorem]{lemma}
\declaretheorem[name=Lemma,refname={Lemma},style=plain,numbered=no]{lemma*}
\declaretheorem[name=Definition,refname={Definition},style=definition,sibling=theorem]{definition}
\declaretheorem[name=Definition,refname={Definition},style=definition,numbered=no]{definition*}
\declaretheorem[name=Remark,refname={Remark},style=definition,sibling=theorem]{remark}
\declaretheorem[name=Remark,refname={Remark},style=remark,numbered=no]{remark*}
\declaretheorem[name=Corollary,refname={Corollary},style=plain,sibling=theorem]{corollary}
\declaretheorem[name=Corollary,refname={Corollary},style=plain,numbered=no]{corollary*}
\declaretheorem[name=Example,refname={Example},style=definition,sibling=theorem]{example}
\declaretheorem[name=Notation,refname={Notation},style=definition,sibling=theorem]{notation}
\declaretheorem[name=Claim,refname={Claim},style=plain,numbered=no]{claim*}
\declaretheorem[name=Théorème,refname={Théorème},style=plain,numbered=no]{theoremfr*}
\newtheorem{theo}{Theorem}
\newcommand{\va}{|\cdot|}
\newcommand{\fonction}[5]{\begin{array}{l|rcl}
#1: & #2 & \longrightarrow & #3 \\
    & #4 & \longmapsto & #5 \end{array}}
 \thanks{The author was supported by the collaborative research 
	center SFB 1085 \emph{Higher Invariants - Interactions between Arithmetic Geometry and Global Analysis} funded by the Deutsche Forschungsgemeinschaft.}
\begin{document}

\title{Pseudo-absolute values: foundations}
\date{\today}
\author{Antoine Sédillot}
\address{A. Sédillot, Mathematik, Universit{\"a}t 
		Regensburg, 93040 Regensburg, Germany}
	\email{antoine.sedillot@mathematik.uni-regensburg.de}

\begin{abstract}
In this article, we introduce pseudo-absolute values, which generalise usual absolute values. Roughly speaking, a pseudo-absolute value on a field $K$ is a map $|\cdot| : K \to [0,+\infty]$ satisfying axioms similar to those of usual absolute values. This notion allows to include "pathological" absolute values one can encounter trying to incorporate the analogy between Diophantine approximation and Nevanlinna theory in an Arakelov theoretic framework. It turns out that the space of all pseudo-absolute values can be endowed with a compact Hausdorff topology in a similar way as the Berkovich analytic spectrum of a Banach ring. Moreover, we introduce both local and global notions of analytic spaces over pseudo-valued fields and interpret them as analytic counterparts to Zariski-Riemann spaces.
\end{abstract}


\maketitle

\tableofcontents

\section*{Introduction}

\subsection*{Motivations and background}

\subsubsection*{Arakelov geometry, adelic curves} 

Let us start by stating the following guiding principle. Given a field $K$, together with a distinguished set of absolute values satisfying a product formula, one can perform Diophantine geometry. The first occurrence of this principle is the case of global fields. A turning point in this philosophy is due to Arakelov's seminal work \cite{Arakelov74}, which led to the development of many arithmetic analogues over number fields of classical tools in algebraic geometry. Moreover, several other instances of this principle have been studied in the literature (e.g. trivially valued fields, finitely generated extensions of $\bQ$, infinite algebraic extensions of $\bQ$).

In \cite{ChenMori},Chen and Moriwaki introduced an Arakelov theory over an arbitrary countable field. The central object of the theory is called an \emph{adelic curve}. Namely, an adelic curve is the data $S=(K,(\Omega,\nu),(\va_{\omega})_{\omega\in\Omega})$, where $K$ is a field, $(\Omega,\nu)$ is a measure space and $(\va_{\omega})_{\omega\in\Omega}$ is a family of absolute values on $K$. Moreover, an adelic curve $S$ is called \emph{proper} if a product formula holds (see \cite{ChenMori}, \S 3 for more details).

Adelic curves arise naturally in various number theoretic situations. In particular, any global field can be naturally equipped with an adelic structure. More generally, any countable field can be endowed with an adelic structure. Furthermore, adelic curves allow to study the examples cited above in a uniform manner.

Most tools arising in classical Arakelov geometry have a counterpart in the world of adelic curves: e.g. geometry of numbers, arithmetic intersection theory, Hilbert-Samuel formula \cite{ChenMori,ChenMori21,ChenMori24}.


\subsubsection*{Nevanlinna theory and $M$-fields} 

Another instance of the guiding principle is encountered in the analogy between Diophantine geometry and Nevanlinna theory. This analogy was spotted first by Osgood \cite{Osgood81} and further explored by Vojta in \cite{Vojta87}. Roughly speaking, Nevanlinna theory can be seen as a generalisation to the fundamental theorem of algebra to entire functions. It builds on two fundamental theorem. Through the analogy, the first one corresponds to a suitable construction of a height function. The second one is seen as an analogue of Roth's theorem \cite{Roth55}. 

In \cite{Gubler97}, Gubler introduced the notion of \emph{M-fields}, with the idea of including Nevanlinna theory in an Arakelov theoretic framework. More precisely, an $M$\emph{-field} $K$ is the data of a field $K$ together with a measure space $(M,\nu)$ and maps defined $\nu$-almost everywhere 
\begin{align*}
(v\in M) \mapsto |a|_v \in\bR_{+} 
\end{align*}
for all $a\in K$ satisfying
\begin{itemize}
	\item[(i)] $|a+b|_v\leq |a|_v+|b|_v$ $\nu$-ae,
	\item[(ii)] $|a\cdot b|_v =|a|_v\cdot|b|_v$ $\nu$-ae,
	\item[(iii)] $v\mapsto \log|c|_v \in L^1(M,\nu)$ and $|0|_v=0$ $\nu$-ae,
\end{itemize} 
for all $a,b\in K$ and $c\in K^{\ast}$. This definition is notably motivated by the following example in Nevanlinna theory. Consider the field $\cM(\bC)$ of meromorphic complex functions. Consider a real number $R>0$ and set $M_R:=\{z\in \bC : |z|_{\infty}\leq R\}$ where the boundary $\{z\in \bC: |z|_{\infty}=R\}$ is equipped with the Lebesgue measure and the open disc $\{z\in \bC: |z|_{\infty}<R\}$ is equipped with a counting measure. For any $f\in \cM(\bC)$, consider the map
\begin{align*}
(z\in M_R) \mapsto  \left\{\begin{matrix}
|f(z)|_{\infty} \quad \text{if }|z|_{\infty}=R, \\
e^{-\ord(f,z)}\quad \text{if }|z|_{\infty}=R.
\end{matrix}\right.
\end{align*}
Note that the above map is well-defined everywhere except poles of $f$ on the circle of radius $R$. Then one can check that we have a $M_R$-field $\cM(\bC)$.

Using $M$-fields, Gubler obtains the construction of a height function for fields of arithmetic nature, including the above example and thus generalising Nevanlinna's first main theorem. 

\begin{remark*}
Since the notions introduced above will not be further used in the following presentation, we exposed them very succinctly. For the interested reader, a more detailed exposition can be found in the introduction of \cite{Sedillotthese}.
\end{remark*}

\subsection*{Goal}

Broadly speaking, the goal is to obtain a framework in the spirit of adelic curves allowing to both include Nevanlinna theory and study possibly uncountable fields of arithmetic interest. 

First of all, let us mention that in the case of $M$-fields, it seems to be a complicated problem to obtain further results, e.g. geometry of numbers. This is notably due to the fact that the "absolute values" appearing are not well-defined everywhere. Second, in the theory of adelic curves, the countability condition on the base field is imposed by the fact that the parameter space is a measure space and various construction for adelic vector bundles thus force this countability assumption (cf. \cite{ChenMori}, \S 4 and \S 6). Another hope is being able to add some analytic flavoured features in a similar spirit to the theory of global Berkovich spaces initiated by  \cite{Poineau07,Poineau10,Poineau13a,LemanissierPoineau24}.

To address these issues, it is aimed to work with a topological parameter space, also equipped with some analytic data. To this purpose, the idea is to authorise "absolute values" with singularities. These are the objects we introduce in this article. 

The content of this article is to be seen as the local aspect of my PhD thesis \cite{Sedillotthese}. The global counterpart (namely Chapters II-IV in \emph{loc. cit.}) will be introduced in a subsequent work. Since the tools we introduce next have an intrinsic interest, we decided to publish them in an independent paper. 

\subsection*{Content of the article}

In this article, we introduce the notion of pseudo-absolute value. Let $K$ be a field. A \emph{pseudo-absolute value} on $K$ is a map $|\cdot| : K \to [0,+\infty]$ satisfying 
\begin{itemize}
\item[(i)] $|1| = 1$ and $|0|=0$;
	\item[(ii)] for all $a,b\in K$, $|a+b| \leq |a|+|b|$;
	\item[(iii)] for all $a,b\in K$ such that $\{|a|,|b|\}\neq \{0,+\infty\}$, $|ab| = |a||b|$. 
\end{itemize}
This notion includes "pathological" absolute values that one encounters in the context of Diophantine geometry or Nevanlinna theory, e.g. maps of the form $(f\in \bQ(T)) \mapsto |f(0)|_{\infty} \in [0,+\infty]$ or $(f\in\cM(\bC)) \mapsto |f(z)|_{\infty}$, for $z\in \bC$. Moreover, if $|\cdot|$ is a pseudo-absolute value on a field $K$, then $A_{|\cdot|}:=\{a\in K : |a|<+\infty\}$ is a valuation ring of $K$ with maximal ideal is $\m_{|\cdot|}:=\{a\in A : |a|=0\}$ and $|\cdot|$ induces a usual absolute value on the residue field $A_{|\cdot|}/\m_{|\cdot|}$. In other words, pseudo-absolute values are the composition of a general valuation with a (possibly Archimedean) absolute value. 

The first appearance of pseudo-absolute values in the literature seems to be in (\cite{Weil51}, \S 9). What we call pseudo-absolute values are called \emph{absolute values}. In this seminal work, he gives the description of pseudo-absolute values as the composition of a valuation and a residue absolute value. 

In \cite{Temkin11}, Temkin studies Zariski-Riemann spaces in a relative context. The notion of \emph{semi-valuation} or \emph{Manis valuation} is central in the valuative interpretation of these Zariski-Riemann spaces. Pseudo-absolute values can be seen as variants of semi-valuations where the residue valuation is allowed to be Archimedean. In this article, we only limit ourselves to the classical case where the algebraic morphism is just the inclusion of the generic point of an integral projective scheme. Nonetheless, more general versions should be studied using this point of view. 

More recently, this notion has appeared in the context of Ben Yaacov-Hrushovski's framework of globally valued field. They obtain several related facts to those proved in that article (cf. the independent work \cite{GVF24} by Ben Yaacov-Destic-Hrushovski-Szachniewicz). More details will be given in the following of this introduction.

Using this interpretation, we obtain results on the behaviour of pseudo-absolute values with respect to extensions of the base field as well as some "Galois theoretic" results, namely we describe the action of the Galois group on sets of extensions of pseudo-absolute values (\S \ref{sec:algebraic_extension_of_sav}-\ref{sec:transcendental_extension_sav}). It is also possible to complete a field equipped with a pseudo-absolute value (\S \ref{sec:completion_sav}). Note that in general, the completion is not canonical. 

After that, we introduce the analogue of a normed vector space in the case where the base field is equipped with a pseudo-absolute value (\S \ref{sec:local_pseudo-norm}). The generalisation of a norm on a vector space is called a \emph{pseudo-norm} and the latter can be described as the data of a free module over the underlying valuation ring of the pseudo-absolute value together with a usual norm on the extension of scalars of this free module to the completion of the residue field. In this context, it is possible to generalise the usual algebraic constructions that are performed over normed vector spaces (e.g. subspaces, quotients, duals, tensor products, exterior products, direct sums). This step is necessary in view of developing slope theory in the global context (cf. \cite{Sedillotthese}, Chapter 3).

The core of this article consists in exploring the analytic geometry of spaces of pseudo-absolute values. First , note that pseudo-absolute values are intrinsically related to multiplicative semi-norm on a ring: indeed, if $|\cdot|$ is a pseudo-absolute value on a field $K$, then $|\cdot|$ induces a multiplicative semi-norm on the valuation ring $A_{|\cdot|}=\{a\in K : |a|<+\infty\}$. It is possible to equip the space of pseudo-absolute values with a topology similar to that of the Berkovich spectrum of a Banach ring \cite{Berko90}. It turns out that the set of all pseudo-absolute values on a field $K$ behaves as the analytic spectrum of the Zariski-Riemann space of $K$, namely the set of valuation rings of $K$, as suggested by the following result.

\begin{theo}
\label{th:pav_intro}
Let $K$ be a field with prime subring $k$. Denote by $M_{K}$ the set of all pseudo-absolute values on $K$. Then the topological space $M_K$ is non-empty, compact and Hausdorff. Moreover, we have a specification map $j : M_K \to \ZR(K/k)$ which is  continuous, where $\ZR(K/k)$ denotes the Zariski-Riemann space ok $K/k$ equipped with the Zariski topology.
\end{theo}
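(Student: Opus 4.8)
The plan is to realise $M_K$ as a closed subspace of a Tychonoff cube, deduce compactness from this, and settle the remaining assertions by direct inspection. Recall that $M_K$ is endowed with the coarsest topology making every evaluation map $\operatorname{ev}_a\colon\phi\mapsto\phi(a)$ continuous, where $[0,+\infty]$ carries its usual (compact, Hausdorff) topology; equivalently $M_K$ is a subspace of $P:=\prod_{a\in K}[0,+\infty]$, which is compact and Hausdorff by Tychonoff's theorem. Hausdorffness of $M_K$ is then automatic: two distinct pseudo-absolute values differ at some $a\in K$, and one separates them by pulling back disjoint neighbourhoods of their values under $\operatorname{ev}_a$. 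Non-emptiness is clear, since the trivial absolute value lies in $M_K$.

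The substantive step is to show that $M_K$ is closed in $P$, whence it is compact. The conditions $\phi(1)=1$ and $\phi(0)=0$ plainly carve out closed subsets. For the triangle inequality I would fix $a,b\in K$ and observe that, since addition $[0,+\infty]^2\to[0,+\infty]$ is continuous, $\{\phi\in P:\phi(a+b)\le\phi(a)+\phi(b)\}$ is the preimage of the closed set $\{(s,t):s\le t\}$ under a continuous map, hence closed. Multiplicativity (iii) is the only delicate point, exactly because multiplication $[0,+\infty]^2\to[0,+\infty]$ is continuous away from the two points $(0,+\infty)$ and $(+\infty,0)$, which are precisely the ones excluded in (iii). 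I would fix $a,b\in K$, introduce the open set $W_{a,b}:=\{\phi\in P:(\phi(a),\phi(b))\notin\{(0,+\infty),(+\infty,0)\}\}$, and set
\[
Z_{a,b}:=\bigl\{\phi\in P:\{\phi(a),\phi(b)\}=\{0,+\infty\}\bigr\}\ \cup\ \bigl\{\phi\in W_{a,b}:\phi(ab)=\phi(a)\phi(b)\bigr\}.
\]
Then $Z_{a,b}$ is closed: its first piece is closed, and if $\phi\in W_{a,b}$ lies in $\overline{Z_{a,b}}$, any net $(\phi_\alpha)$ in $Z_{a,b}$ with $\phi_\alpha\to\phi$ eventually lands in the open set $W_{a,b}$, hence eventually satisfies $\phi_\alpha(ab)=\phi_\alpha(a)\phi_\alpha(b)$; letting $\alpha$ grow and using continuity of $\operatorname{ev}_{ab}$ and of multiplication at $(\phi(a),\phi(b))\in W_{a,b}$ gives $\phi(ab)=\phi(a)\phi(b)$, so $\phi\in Z_{a,b}$. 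Intersecting these closed sets over all $a,b\in K$ together with the conditions above exhibits $M_K$ as a closed subset of $P$.

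For the map $j$: for $\phi\in M_K$ the set $A_\phi:=\{a\in K:\phi(a)<+\infty\}$ is a valuation ring of $K$ (as noted in the introduction) containing the prime subring $k$, so $\phi\mapsto A_\phi$ defines $j\colon M_K\to\ZR(K/k)$. The Zariski topology on $\ZR(K/k)$ is generated by the subbasic opens $U(x)=\{A:x\in A\}$ for $x\in K$, so it suffices to note that
\[
j^{-1}\bigl(U(x)\bigr)=\{\phi\in M_K:\phi(x)<+\infty\}=M_K\cap\operatorname{ev}_x^{-1}\bigl([0,+\infty)\bigr)
\]
is open in $M_K$, since $[0,+\infty)$ is open in $[0,+\infty]$; hence $j$ is continuous. (One sees similarly that $j$ is surjective, by equipping each residue field with the trivial absolute value, though this is not part of the claim.) I expect the only real obstacle to be the closedness of the multiplicativity axiom handled above; the rest is formal.
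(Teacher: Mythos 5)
Your proof is correct and follows essentially the same route as the paper: embed $M_K$ into $\prod_{f\in K}[0,+\infty]$, invoke Tychonoff, check that the axioms (with the delicate multiplicativity case handled via continuity of multiplication away from $(0,+\infty)$ and $(+\infty,0)$) cut out a closed subset, use the trivial absolute value for non-emptiness, and verify continuity of $j$ by computing preimages of the (sub)basic Zariski opens. The only cosmetic differences are that you phrase closedness as an intersection of closed conditions rather than as a limit of a convergent net, and you get Hausdorffness for free from the product instead of the paper's explicit separating function.
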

This will be proved in Theorem \ref{prop:compactness_set_sav}. This shows that the space $M_K$ is a choice of compactification of the set of usual absolute values over $K$ and plays the role of an analytic spectrum, the corresponding algebraic space being the Zariski-Riemann space of $K$. 

Then, in \S \ref{sec:local_analytic_spaces}, we introduce \emph{local analytic spaces}. We give two approaches. The first one is called the model approach. More precisely, let $v$ be a pseudo-absolute value on a field $K$ with valuation ring $A$ and residue field $\kappa$. Denote by $\widehat{\kappa}$ the completion of $\kappa$. Let $X$ be a projective $K$-scheme and let $\cX \to \Spec(A)$ be a projective model of $X$ over $A$. Namely $\cX$ is a projective scheme over $A$ given with an isomorphism between $X$ and the generic fibre of $\cX$. Then the \emph{model local analytic space} attached to this data is the analytification in the sense of Berkovich of the extension of scalars to $\widehat{\kappa}$ of the special fibre of $\cX$, namely $(\cX \otimes_{A} \widehat{\kappa})^{\an}$. The more general approach, i.e. without fixing a projective model over the valuation ring, illustrates the birational nature of pseudo-absolute values. In fact, it turns out that the relevant space to consider is the set of all pseudo-absolute values on a finite type extension of the base field $K$. This can be described as a projective limit of the local model analytic spaces over all the possible valuation rings of the extension. 

\begin{theo}
\label{th:local_intro}
Let $v$ be a pseudo-absolute value on $K$ defining a valuation ring $A$ with residue field $\kappa$. Let $K'/K$ be a finitely generated field extension. Let $M_{K',v}$ denote the set of all pseudo-absolute values on $K'$ extending $v$.
\begin{itemize}
	\item We have a continuous map 
	\begin{align*}
	M_{K',v} \to \ZR(K'/A).
	\end{align*}
	\item We have a commutative diagram of topological spaces
		\begin{center}
		\begin{tikzcd}[ampersand replacement=\&]
M_{K',v} \arrow[d] \arrow[r, "\cong"] \& \varprojlim_{\cX} (\cX \otimes_{A} \kappa)^{\an} \arrow[d] \\
\ZR(K'/A) \arrow[r, "\cong"]            \& \varprojlim_{\cX} \cX  \\                                 
\end{tikzcd},
		\end{center}
	where $\cX$ runs over all projective models of $K'/A$.
	\end{itemize}
\end{theo}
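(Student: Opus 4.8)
The first bullet is formal: if $w \in M_{K',v}$ then $|\cdot|_w|_K = |\cdot|_v$ forces $A_w \cap K = A$, so $A \subseteq A_w$ and $A_w \in \ZR(K'/A)$; the assignment $j' \colon w \mapsto A_w$ is the restriction of the specification map of Theorem~\ref{th:pav_intro} for $K'$ to the subspace $M_{K',v} \subseteq M_{K'}$, with target cut down to the subspace $\ZR(K'/A) \subseteq \ZR(K'/k)$, hence continuous; concretely, the preimage of a basic Zariski open $\{B : x_1,\dots,x_n \in B\}$ is $\bigcap_i \{w : |x_i|_w < +\infty\}$, which is open since each $w \mapsto |x_i|_w$ is continuous into $[0,+\infty]$.

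For the diagram I would first present the two sides as cofiltered limits. On the algebraic side I invoke the classical description of Zariski--Riemann spaces (see \cite{Temkin11}): the projective models $\cX$ of $K'/A$ form a directed system under domination — any two being dominated by the scheme-theoretic closure of $\Spec K'$ inside their fibre product over $A$ — one has $\ZR(K'/A) \cong \varprojlim_\cX \cX$, and the valuation ring $B$ attached to a compatible system of centres $x_\cX$ is $B = \bigcup_\cX \cO_{\cX,x_\cX}$, so $\kappa_B := B/\m_B = \varinjlim_\cX k(x_\cX)$. On the analytic side I record the standard compatibility of Berkovich analytification with completion of the base: for a $\kappa$-scheme $Z$ of finite type, $(Z \otimes_\kappa \widehat\kappa)^{\an}$ is canonically the set of pairs $(z,|\cdot|)$ with $z \in Z$ and $|\cdot|$ an absolute value on $k(z)$ extending $\bar v$; this is exactly why the completion $\widehat\kappa$ enters the definition of the model local analytic space $(\cX \otimes_A \widehat\kappa)^{\an}$, and it matters when $\bar v$ is Archimedean or $\kappa$ is incomplete. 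Applied to $Z = \cX \otimes_A \kappa$, a point of $\varprojlim_\cX (\cX \otimes_A \widehat\kappa)^{\an}$ becomes a compatible family $(x_\cX, |\cdot|_\cX)$ of such pairs.

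Next I would construct the comparison map $\pi \colon M_{K',v} \to \varprojlim_\cX (\cX \otimes_A \widehat\kappa)^{\an}$ and prove it is a homeomorphism. Given $w$ and a model $\cX$, the valuative criterion of properness applied to $\cX \to \Spec A$ and the generic morphism $\Spec K' \to \cX$ gives a unique $\Spec A_w \to \cX$ over $A$; since $\m_w \cap A = \m_v$, the image $x_\cX$ of the closed point lies in the special fibre, the local map $\cO_{\cX,x_\cX} \to A_w$ induces $k(x_\cX) \hookrightarrow \kappa_w$, and restricting the residue absolute value $\bar w$ of $w$ gives a point $(x_\cX, \bar w|_{k(x_\cX)})$ of $(\cX \otimes_A \widehat\kappa)^{\an}$; functoriality of the valuative criterion makes the family compatible. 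That $\pi$ is bijective is seen by comparing both sides with their elementary descriptions: an element of $M_{K',v}$ is exactly a valuation ring $B \in \ZR(K'/A)$ with $\m_B \cap A = \m_v$ plus an absolute value on $\kappa_B$ extending $\bar v$ (a pseudo-absolute value being a valuation composed with a residue absolute value), and in a compatible family $(x_\cX,|\cdot|_\cX)$ the centres assemble to such a $B$ — the condition that every $x_\cX$ lie in the special fibre being equivalent to $\m_B \cap A = \m_v$ — while the $|\cdot|_\cX$, compatible under $k(x_\cX) \hookrightarrow k(x_{\cX'})$, glue over $\kappa_B = \varinjlim_\cX k(x_\cX)$ to an absolute value extending $\bar v$; one checks $\pi$ realizes this correspondence. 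Continuity of $\pi$ reduces to that of each $\pi_\cX$, checked over the open cover of $M_{K',v}$ by the loci $\{w : \cO_\cX(\cU) \subseteq A_w\}$ attached to affine charts $\cU \subseteq \cX$: there $x_\cX \in \cU$, and for $g$ a regular function on $\cU \cap (\cX \otimes_A \kappa)$ lifted to $\widetilde g \in \cO_\cX(\cU) \subseteq K'$ one has $|g(x_\cX)|_{\bar w} = |\widetilde g|_w$, continuous in $w$. Finally $M_{K',v} = \bigcap_{a \in K}\{w : |a|_w = |a|_v\}$ is closed in the compact Hausdorff space $M_{K'}$ of Theorem~\ref{th:pav_intro}, and $\varprojlim_\cX (\cX \otimes_A \widehat\kappa)^{\an}$ is compact Hausdorff (a cofiltered limit of analytifications of projective $\widehat\kappa$-schemes), so the continuous bijection $\pi$ is a homeomorphism; commutativity of the square is then immediate, both composites to $\ZR(K'/A)$ sending $w \mapsto A_w$, the right-hand vertical arrow being induced by $(\cX \otimes_A \widehat\kappa)^{\an} \to \cX \otimes_A \widehat\kappa \to \cX$.

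The main obstacle is organisational rather than conceptual: one must carefully match the valuative-criterion construction of $\pi$ with the elementary gluing bijection and check continuity and compatibility uniformly over the whole cofiltered system of projective models, while applying the classical inputs — existence and directedness of projective models, $\ZR(K'/A) = \varprojlim_\cX \cX$ with $B = \bigcup_\cX \cO_{\cX,x_\cX}$, and the identification of $(Z \otimes_\kappa \widehat\kappa)^{\an}$ with $\bar v$-compatible absolute values on residue fields — with due care in the possibly Archimedean, possibly non-Noetherian setting.
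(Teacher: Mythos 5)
Your argument is correct, and its skeleton — the comparison map built from the valuative criterion of properness, continuity checked on affine charts, compactness of both sides reducing everything to bijectivity — is the same as the paper's (Theorem \ref{prop:local_analytic_space_projective_limit}, via \S \ref{subsub:link_local_analytic_spaces} and Proposition \ref{prop:continuity_reduction_local}). Where you genuinely diverge is the bijectivity step. The paper splits it: injectivity uses Theorem \ref{th:Zariski-Riemann_projective_limite} (2) to write $A'=\bigcup_{\cX}\cO_{\cX,x_{\cX}}$ and hence $\kappa'=\bigcup_{\cX}\kappa(x_{\cX})$, while surjectivity is reduced to surjectivity of each single $\red_{\cX}$ by Lemma \ref{lemma:projective_limit_CHaus_spaces} (which needs surjectivity of the transition maps between models) and then settled by the Zorn's-lemma construction of Lemma \ref{lemma:domintation_valuation_ring_residue_algebraic}, producing a valuation ring dominating $\cO_{\cX,q}$ with algebraic residue extension so that the given absolute value extends. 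You instead identify a point of $(\cX\otimes_A\widehat{\kappa})^{\an}$ with a pair $(z,|\cdot|)$, $z$ in the special fibre and $|\cdot|$ an absolute value on $\kappa(z)$ extending the residue absolute value of $v$, and then glue a compatible family directly into a valuation ring $B=\bigcup_{\cX}\cO_{\cX,x_{\cX}}$ together with an absolute value on $\kappa_B=\varinjlim_{\cX}\kappa(x_{\cX})$; this gives injectivity and surjectivity in one stroke and avoids both the compact-Hausdorff surjectivity lemma and the Zorn argument, at the price of invoking the ``standard'' descent of analytification points along the completion $\kappa\hookrightarrow\widehat{\kappa}$, which you assert but do not prove (it is true, also in the Archimedean case, but it is exactly the point where the completed special fibre enters and deserves the short verification that restriction of seminorms from $\cO(\cU_s)\otimes_{\kappa}\widehat{\kappa}$ to $\cO(\cU_s)$ is bijective onto seminorms extending $\bar v$). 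Two further small remarks: in your continuity check you only control $|h|\circ\pi_{\cX}$ for $h$ regular over $\kappa$, whereas the Berkovich topology on the $\widehat{\kappa}$-analytification is generated by functions with coefficients in $\widehat{\kappa}$, so one should add the density/uniform-approximation argument (the paper's own Proposition \ref{prop:continuity_reduction_local} is comparably terse on this point); and your closedness claim $M_{K',v}=\bigcap_{a\in K}\{w:|a|_w=|a|_v\}$ is a clean way to get compactness of the left-hand side, equivalent to the paper's use of properness of $\pi_{K'/K}$.
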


This will be proved in Theorem \ref{prop:local_analytic_space_projective_limit}. This result can be seen as an analytic counterpart to the usual algebraic description of Zariski-Riemann spaces, namely the bottom homeomorphism in the above diagram. It turns out that in the case where $v$ in a usual absolute value, Theorem \ref{th:local_intro} can be viewed as a birational counterpart to (\cite{Goto24}, Theorem 1.2). 

Finally, we introduce global analytic spaces. Similarly to the local case, we first introduce a "model" approach. Namely, we specify an integral model of the base field, which restricts the choice of relevant pseudo-absolute values. To do so, we introduce the notion of integral structure (\S \ref{sec:integral_structure}). Namely an \emph{integral structure} for a field $K$ is a Prüfer Banach ring $(A,\|\cdot\|)$ with fraction field $K$. It turns out that the Berkovich spectrum $\cM(A,\|\cdot\|_A)$ can be interpreted as a closed subset of $M_K$. This notion allows in some cases to describe explicitely some parts of spaces of pseudo-absolute values. The appearance of Prüfer domains in the theory is not surprising since they play an important role in the theory of Zariski-Riemann spaces. Namely they characterise the so-called \emph{affine subsets} of Zariski-Riemann spaces (cf. e.g. \cite{Olberding21}).

Using these integral structures, we introduce \emph{model global analytic spaces} can be introduced using the theory of global Berkovich spaces introduced in \cite{LemanissierPoineau24} (\S \ref{sec:global_analytic_spaces}). These spaces will be used notably in the implementation of Nevanlinna theory in the global construction. We also have a general notion of global analytic space which is linked with the model approach when we choose an integral structure on the base field. 

\begin{theo}
\label{th:global_intro}
Let $K'/K$ be a finitely generated field extension. Let $(A,\|\cdot\|_A)$ be an integral structure for $K$ such that $(A,\|\cdot\|)$ is a geometric base ring. Let $V'_A$ denote the set of all pseudo-absolute values on $K'$ restricting to an element of $\cM(A,\|\cdot\|_A)$.
\begin{itemize}
	\item We have a continuous map 
	\begin{align*}
	V'_A \to \ZR(K'/A).
	\end{align*}
	\item We have a commutative diagram
		\begin{center}
		\begin{tikzcd}[ampersand replacement=\&]
V'_A \arrow[d] \arrow[r, "\cong"] \& \varprojlim_{\cX} \cX^{\an} \arrow[d] \\
\ZR(K'/A) \arrow[r, "\cong"]            \& \varprojlim_{\cX} \cX  \\                                 
\end{tikzcd},
		\end{center}
	where $\cX$ runs over all projective, coherent models of $K'/A$.
	\end{itemize}
\end{theo}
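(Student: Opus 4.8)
The plan is to run the argument of Theorem~\ref{th:local_intro} (proved as Theorem~\ref{prop:local_analytic_space_projective_limit}) in the global setting, replacing the analytification of the special fibre over the completed residue field by the global Berkovich analytification of \cite{LemanissierPoineau24} over the Banach ring $(A,\|\cdot\|_A)$; the assumption that $(A,\|\cdot\|_A)$ is a geometric base ring is exactly what guarantees that the analytification functor on finite type (coherent) $A$-schemes is well behaved, so that $\cX^{\an}$ is compact and Hausdorff for $\cX$ projective over $A$, and that $\cX\mapsto\cX^{\an}$ is functorial with a reduction map $\cX^{\an}\to\cX$. I first record two inputs. On the algebraic side, since $A$ is a coherent Prüfer ring with $\Frac(A)=K$, one has the classical homeomorphism $\ZR(K'/A)\cong\varprojlim_{\cX}\cX$ over the cofiltered system of projective coherent models $\cX$ of $K'/A$; moreover a compatible system $(x_{\cX})_{\cX}$ corresponds to the valuation ring $\bigcup_{\cX}\cO_{\cX,x_{\cX}}$ of $K'$, which dominates each $\cO_{\cX,x_{\cX}}$ and has residue field $\varinjlim_{\cX}\kappa(x_{\cX})$. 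On the pseudo-absolute value side, $\cM(A,\|\cdot\|_A)$ is closed in $M_K$ and restriction $M_{K'}\to M_K$ is continuous, so $V'_A$ is closed in $M_{K'}$, hence compact Hausdorff by Theorem~\ref{th:pav_intro}.

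For the first bullet: if $w\in V'_A$ then $w|_K\in\cM(A,\|\cdot\|_A)$ gives $|a|_w\le\|a\|_A<+\infty$ for $a\in A$, so $A\subseteq A_w$ and $A_w\in\ZR(K'/A)$; the map $j\colon V'_A\to\ZR(K'/A),\ w\mapsto A_w$, is the restriction of the specification map of Theorem~\ref{th:pav_intro}, hence continuous. For each projective coherent model $\cX$, the valuative criterion of properness applied to $\Spec A_w\to\Spec A$ and $\Spec K'\to\cX$ yields a unique lift whose closed point maps to a point $x_{\cX}\in\cX$ dominated by $A_w$; the family $(x_{\cX})_{\cX}$ is compatible and corresponds to $j(w)$ under $\ZR(K'/A)\cong\varprojlim_{\cX}\cX$.

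The map $\Phi\colon V'_A\to\varprojlim_{\cX}\cX^{\an}$ sends $w$, with residue absolute value $\bar w$ on $\kappa_w=A_w/\m_w$, to the system whose $\cX$-component is the point of $\cX^{\an}$ over $x_{\cX}$ given by the multiplicative norm $\bar w|_{\kappa(x_{\cX})}$ on $\kappa(x_{\cX})$ (using $A\to\cO_{\cX,x_{\cX}}\hookrightarrow A_w\twoheadrightarrow\kappa_w$); this norm is bounded over $(A,\|\cdot\|_A)$ precisely because $w|_K\in\cM(A,\|\cdot\|_A)$ and $A\subseteq A_w$. Compatibility along dominations $\cX'\to\cX$ is immediate and $\Phi$ commutes with the two vertical maps to $\ZR(K'/A)\cong\varprojlim_{\cX}\cX$, giving the commutative square. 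Injectivity: $\Phi(w)$ recovers $(x_{\cX})_{\cX}$, hence $A_w$ and $\kappa_w=\varinjlim_{\cX}\kappa(x_{\cX})$, hence $\bar w=\varinjlim_{\cX}\bar w|_{\kappa(x_{\cX})}$, hence $w$ (the composition of the valuation of $A_w$ with $\bar w$). Surjectivity: a compatible system $(\xi_{\cX})_{\cX}$ has underlying compatible points $(x_{\cX})_{\cX}$, defining a valuation ring $A_\infty\supseteq A$ of $K'$ with residue field $\kappa_\infty=\varinjlim_{\cX}\kappa(x_{\cX})$; the norms underlying the $\xi_{\cX}$ are compatible, so they glue to a multiplicative norm $\bar w$ on $\kappa_\infty$, and the composition $w$ of the valuation of $A_\infty$ with $\bar w$ is a pseudo-absolute value on $K'$ with $j(w)=A_\infty$ and $\Phi(w)=(\xi_{\cX})_{\cX}$; since each $\xi_{\cX}$ is bounded over $(A,\|\cdot\|_A)$ and $A\subseteq A_\infty$, one gets $w|_K\in\cM(A,\|\cdot\|_A)$, i.e.\ $w\in V'_A$. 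Finally $\Phi$ is continuous (each composite $V'_A\to\cX^{\an}$ is continuous, as preimages of the defining basic opens $\{|f|<r\}$, $\{|f|>r\}$ of $\cX^{\an}$ are, by the previous paragraph, of the form $\{w\in V'_A:|g|_w<r\}$ resp.\ $\{w\in V'_A:r<|g|_w<+\infty\}$ for suitable $g\in K'$, which are open for the topology of $M_{K'}$ using continuity of $j$); being a continuous bijection between compact Hausdorff spaces it is a homeomorphism.

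The main obstacle is concentrated in two places. First, one needs the global analytification $\cX^{\an}$ over the possibly non-Noetherian geometric base ring $(A,\|\cdot\|_A)$ to have all the expected properties — functoriality, reduction map, and above all compactness and Hausdorffness for $\cX$ projective — for which I would appeal to \cite{LemanissierPoineau24}, and one must verify that projective coherent models are numerous enough to present simultaneously $\ZR(K'/A)$ and the inverse limit of analytifications. Second, the surjectivity step is the delicate one: checking that a compatible system of bounded analytic points along the cofiltered tower of projective coherent models genuinely descends to a single multiplicative norm on the colimit residue field $\kappa_\infty$ with the right boundedness over $A$ requires some care with completions and bounds along a non-Noetherian inverse system; it is precisely here that the role of "coherent" in the class of models is used.
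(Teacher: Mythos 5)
Your proposal is correct in substance and shares the paper's overall frame (a continuous map into the inverse limit, then bijectivity plus compact--Hausdorff to conclude; injectivity via $A_w=\bigcup_{\cX}\cO_{\cX,x_{\cX}}$ and $\kappa_w=\varinjlim_{\cX}\kappa(x_{\cX})$ from Theorem \ref{th:Zariski-Riemann_projective_limite} and Bourbaki's result on filtered colimits of local rings), but your surjectivity argument is genuinely different from the paper's. The paper (Theorem \ref{th:link_global_analytic_spaces}) first invokes Lemma \ref{lemma:projective_limit_CHaus_spaces} to reduce surjectivity of $\red$ to surjectivity of each individual $\red_{\cX}$, and then, for a point of $\cX^{\an}$ lying over $p\in\cX$, produces by Zorn's lemma (Lemma \ref{lemma:domintation_valuation_ring_residue_algebraic}) a valuation ring of $K'$ dominating $\cO_{\cX,p}$ whose residue field is algebraic over $\kappa(p)$, and extends the residue absolute value along that algebraic extension; this only needs pointwise lifting and never manipulates compatible systems directly. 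You instead work at the level of the inverse limit: from a compatible system $(\xi_{\cX})_{\cX}$ you take the compatible centers, form $A_\infty=\bigcup_{\cX}\cO_{\cX,x_{\cX}}$, identify its residue field with $\varinjlim_{\cX}\kappa(x_{\cX})$, and glue the compatible residue absolute values. This is a valid and arguably more economical route (it avoids both the compactness lemma and the algebraic-extension step), at the price of using the set-theoretic description of points of the Lemanissier--Poineau analytification as pairs (scheme point, absolute value on the residue field bounded on $A$), where the paper instead passes through Proposition \ref{prop:LP_4.4.8} to identify fibres of $\cX^{\an}\to\cM(A)$ with local model analytic spaces; your "glueing and boundedness" worry in the closing paragraph is in fact harmless, since every element of $\kappa_\infty$ and every $a\in A$ already lives at a finite stage, and no completion enters.

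Two loose ends worth tightening. First, you flag but do not prove that projective coherent (flat) models are cofinal among all projective models of $K'/A$, which is what lets you quote Theorem \ref{th:Zariski-Riemann_projective_limite} for the limit over that subfamily; the paper settles this by killing torsion and invoking the flattening theorem of Raynaud--Gruson (Lemma \ref{lemma:domination_flat models_global_analytic_spaces}) — coherence is needed there so that the analytification exists, not for your gluing step. Second, your continuity argument is compressed: the preimage of a subbasic open $\{x\in U^{\an}: |f|_x<r\}$ is not just $\{w: |g|_w<r\}$ but its intersection with the locus where the center of $A_w$ lies in $U$, which is cut out by finiteness of $|b_i|_w$ for generators $b_i$ of $\cO_{\cX}(U)$; this is exactly the argument of Proposition \ref{prop:continuity_reduction_local}, so the fix is routine.
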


This will be proved in Theorem \ref{th:link_global_analytic_spaces}. Note that this result recovers (\cite{GVF24}, Proposition 1.5) in the case where $K$ is a number field equipped with the integral structure from Example \ref{example:integral_structure} (2). In \cite{GVF24}, when the field $K$ is of characteristic zero, the authors obtain a description of the space $M_K$ as a projective limit of Berkovich analytifications of \emph{submodels} of $K$, i.e. schemes $\cX$ over $\bZ$ that are models of a finitely generated field contained in $K$. This approach is related to the technics used by Gubler in \cite{Gubler97}.

\subsection*{Acknowledgements}

We would like to thank Huayi Chen for his support and discussions during the elaboration of this paper. We also thank Keita Goto, Walter Gubler, Klaus Künnemann and Jérôme Poineau for numerous remarks and suggestions. Finally, we thank Micha\l{} Szachniewicz for pointing out \cite{Weil51} and that variants of Theorems \ref{th:pav_intro} and \ref{th:global_intro} were independently obtained in \cite{GVF24}.

\section*{Conventions and notation}

\begin{itemize}
	\item All rings considered in this article are commutative with unit.
	\item Let $A$ be a ring. We denote by $\Spm(A)$ the set of maximal ideals of $A$.
	\item By a local ring $(A,\m)$, we mean that $A$ is a local ring and $\m$ is its maximal ideal. In general, if $A$ is a local ring, the maximal ideal of $A$ is denoted by $\m_A$. 
	\item By an \emph{algebraic function field} $K/k$, we mean a finitely generated field extension $K/k$.   
	\item Let $A$ be a ring and let $X \to \Spec(A)$ be a scheme over $A$. Let $A\to B$ be an $A$-algebra. Then we denote $X \otimes_{A} B := X \times_{\Spec(A)} \Spec(B)$.
	\item Let $k$ be a field. We denote by $|\cdot|_{\triv}$ the trivial absolute value on $k$. If we have an embedding $k \hookrightarrow \bC$, we denote by $|\cdot|_{\infty}$ the restriction of the usual Archimedean absolute value on $\bC$.
	\item Let $(k,|\cdot|)$ be a valued field. Unless mentioned otherwise and when no confusion may arise, we will denote by $\widehat{k}$ the completion of $k$ w.r.t. $|\cdot|$.
	\item Throughout this article, unless specified otherwise, all valuations are considered up to equivalence. 
	\item We assume the \emph{axiom of universes}, which will allow taking inverse and direct limits over collections. 
\end{itemize}

\section{Preliminaries}
\label{sec:preliminaries}

\subsection{Valuation rings and Prüfer domains}
\label{sub:valuation_rings}

\subsubsection{Rank, rational rank, composite valuations and Gauss valuations}
\label{subsub:rank_rat_rank_etc}

Let $V$ be a valuation ring. We denote its \emph{value group} by $\Gamma_V:=\Frac(V)^{\times}/V^{\times}$. Let $V$ be a valuation ring with fraction field $K$ whose underlying valuation is denoted by $v: K \to \Gamma_V\cup\{\infty\}$. 
\begin{itemize}
	\item The \emph{rank} of $V$ is defined as the ordinal type of the totally ordered set of prime ideals in $V$ and is denoted by $\rank(V)$.
	\item The \emph{rational rank} of $V$ is defined by $\rr(V):= \dim_{\bQ}(\Gamma_V \otimes_{\bZ} \bQ)\in \bN\cup \{\infty\}$. In general we have $\rank(V) \leq \rr(V)$ (\cite{BouAC}, Chap. VI, \S10.2, Proposition 3).  
\end{itemize}

Let $V$ be a valuation ring with fraction field $K$, value group $\Gamma$ maximal ideal $\m$ and associated valuation $v : K \to \Gamma \cup \{\infty\}$. Let $\overline{v}$ be a valuation on the residue field $\kappa:=V/\m$. Then (\cite{Vaquie}, Proposition 1.12) implies that 
\begin{align*}
V' := \{a\in V : \overline{v}(\overline{a}) \geq 0\}
\end{align*}
is a valuation ring of $K$. The valuation attached to $V'$ is denoted by $v':= v \circ \overline{v}$. In that case, the residue field of $V'$ equals the residue field of $\overline{v}$. Moreover, we have the equalities
\begin{align*}
\rank(v') = \rank(v) + \rank(\overline{v}),\\
\rr(v') = \rr(v) + \rr (\overline{v}),
\end{align*}
as well as a short exact sequence of Abelian groups
\begin{align*}
0 \longrightarrow \overline{\Gamma} \longrightarrow \Gamma' \longrightarrow \Gamma \longrightarrow 0,
\end{align*}
where $\overline{\Gamma},\Gamma'$ denote respectively the value groups of $\overline{v},v'$. In the particular case where $\Gamma$ is order isomorphic to $\bZ^{n}$ for some integer $n\in \bN_{>0}$ (in that case $\rank(v)=n$), the the above short exact sequence splits and $\Gamma'$ is isomorphic to $\Gamma \times \overline{\Gamma}$ equipped with the lexicographic order. 

\begin{definition}
\label{def:composite_valuation}
With the same conditions as above, the valuation $v':= v \circ \overline{v}$ called the \emph{composite valuation} with $v$ and $\overline{v}$.
\end{definition}

\begin{example}
\label{example:composite_valuation}
Let $(V,\m)$ be a valuation ring with fraction field $K$, denote by $v$ the corresponding valuation. Assume that $V\subset V'$, where $(V',\m')$ is a valuation ring of $K$, denote by $v'$ the corresponding valuation. Let $\kappa,\kappa'$ denote respectively the residue fields of $V,V'$. Then $\m'\cap V$ is a prime ideal of $V$ and the quotient ring $\overline{V} := V/(\m'\cap V)$ is a valuation ring of $\kappa'$, denote by $\overline{v}$ the corresponding valuation. Then we have the equality $v = v' \circ \overline{v}$.
\end{example}

\begin{example}[Gauss valuations]
\label{example:Gauss_valuations}
Let $K$ be a field and $x$ be transcendental over $K$. Let $v$ be a valuation of $K$ with value group $\Gamma$ and residue field $\kappa$. Fix an extension of totally ordered Abelian groups $\iota : \Gamma \hookrightarrow \Gamma'$. Let $a\in K$ and $\gamma \in \Gamma'$. Let $P\in K[x]$ of degree $n$. Write 
\begin{align*}
P = \displaystyle\sum_{i=0}^{n} c_{i}(x-a)^i,
\end{align*}
where $c_0,...,c_n \in K$, and set
\begin{align*}
v_{a,\gamma}(P) := \displaystyle\min_{0\leq i\leq n} \{v(c_{i})+ i\gamma\}.
\end{align*}
For any $P/Q\in K(X)$, where $P,Q\in K[x]$ with $Q\neq 0$, set $v_{a,\gamma}(P/Q)=v_{a,\gamma}(P)-v_{a,\gamma}(Q)$. Then $v_{a,\gamma} : K(X) \to \Gamma + \bZ\gamma$ defines a valuation of $K(X)$ extending $v$. From (\cite{Kuhlmann04}, Lemma 3.10), if $\gamma$ is non-torsion modulo $\Gamma$, then $v_{a,\Gamma}$ has value group $\Gamma + \bZ\gamma$ and residue field $\kappa$. Otherwise, $v_{a,\gamma}$ has value group $\Gamma + \bZ\gamma$ and its residue field is a purely transcendental extension of transcendence degree $1$ of $\kappa$. Note that in both cases, $v_{a,\gamma}$ is Abhyankar. 
\end{example}

\subsubsection{Connection to algebraic geometry : specialisation on a scheme}

Valuation rings are of particular importance in algebraic geometry. An illustration of this fact is the following result.

\begin{proposition}[\cite{stacks}, \href{https://stacks.math.columbia.edu/tag/00PH}{Lemma 00PH}]
\label{prop:Krull-Akizuki}
Let $R$ be a Noetherian local domain which is not a field. Let $K:=\Frac(R)$. Let $L/K$ be a finitely generated extension. Then there exists a DVR $V$ with fraction field $L$ which dominates $R$. 
\end{proposition}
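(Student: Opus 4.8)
The strategy is to reduce first to the case $L=K$, and then to produce a discrete valuation ring of $K$ dominating $R$ by blowing up the maximal ideal once — which immediately lowers us to a one-dimensional local ring — followed by an application of the Krull--Akizuki theorem to normalise.

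\emph{Reduction to $L=K$.} Suppose we have found a DVR $V_0\subseteq K$ with $\Frac(V_0)=K$ dominating $R$. Choose a transcendence basis $t_1,\dots,t_r$ of $L/K$, so that $L$ is finite over $F:=K(t_1,\dots,t_r)$. Extend $V_0$ across the $t_i$ one at a time: if $V$ is a DVR with $\Frac(V)$ containing a transcendental element $t$, the Gauss valuation $v_{t,0}$ of Example~\ref{example:Gauss_valuations} (taking $\gamma=0$ inside the value group $\bZ$ of $V$) has value group $\bZ$, hence its valuation ring is again a DVR, it has fraction field $\Frac(V)(t)$, and it dominates $V$ because $v_{t,0}$ restricts to the valuation of $V$. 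After $r$ steps we obtain a DVR $V_r$ with $\Frac(V_r)=F$ dominating $R$. For the residual finite extension, let $B$ be the integral closure of $V_r$ in $L$; since $V_r$ is a one-dimensional Noetherian domain and $L/F$ is finite, Krull--Akizuki (\cite{stacks}) shows $B$ is a one-dimensional Noetherian integrally closed domain, i.e. a (semilocal) Dedekind domain, and it is not a field as it contains $V_r$. Picking a maximal ideal $\n$ of $B$ over $\m_{V_r}$ (which exists, $B$ being integral over $V_r$), the localisation $B_{\n}$ is a DVR with fraction field $L$ dominating $V_r$, hence dominating $R$ by transitivity of the domination relation.

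\emph{The case $L=K$.} Since $R$ is not a field, $\dim R\geq 1$, so $\m_R^{n}/\m_R^{n+1}\neq 0$ for all $n$ (otherwise Nakayama would force $\m_R^n=0$, impossible in a domain with $\m_R\neq 0$); hence the exceptional divisor $\mathrm{Proj}(\gr_{\m_R}R)$ of the blow-up of $\m_R$ is non-empty, and therefore there is an element $x$ in a minimal generating system $x=x_1,x_2,\dots,x_n$ of $\m_R$ for which the affine blow-up algebra $R_1:=R[x_2/x_1,\dots,x_n/x_1]\subseteq K$ satisfies $\m_R R_1=xR_1\subsetneq R_1$. Here $R_1$ is a Noetherian domain with $\Frac(R_1)=K$, and from $\m_R\subseteq \m_R R_1=xR_1$ one gets that any prime $\q$ of $R_1$ containing $xR_1$ contracts to $\m_R$ in $R$. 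Choose $\q$ \emph{minimal} over $xR_1$: by Krull's principal ideal theorem $\mathrm{ht}(\q)\leq 1$, and $\q\neq(0)$ since $R_1$ is a domain and $x\neq 0$, so $\mathrm{ht}(\q)=1$. Thus $(R_1)_{\q}$ is a one-dimensional Noetherian local domain, not a field, with fraction field $K$, and it dominates $R$ because $\q\cap R=\m_R$. Finally let $C$ be the integral closure of $(R_1)_{\q}$ in $K$: by Krull--Akizuki it is a Dedekind domain (and not a field), so localising $C$ at a maximal ideal lying over the maximal ideal of $(R_1)_{\q}$ yields a DVR $V_0\subseteq K$ with $\Frac(V_0)=K$ which dominates $(R_1)_{\q}$, hence dominates $R$.

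\emph{Main obstacle.} Everything except one point is routine bookkeeping (transitivity of domination, lying over for integral extensions, and the fact that a one-dimensional Noetherian integrally closed domain is Dedekind). The one place with genuine content is the assertion that for a suitable generator $x$ of $\m_R$ the ideal $\m_R R_1=xR_1$ is \emph{proper} in $R_1$ — equivalently, that the exceptional fibre of $\mathrm{Bl}_{\m_R}\Spec R$ over the closed point is non-empty. This is exactly where the hypotheses that $R$ is Noetherian, local, a domain, and not a field are used (through $\m_R^n\neq\m_R^{n+1}$ for all $n$, i.e. $\mathrm{gr}_+$ not nilpotent); once this is granted the one-dimensional ring drops out for free from Krull's principal ideal theorem and the rest is Krull--Akizuki. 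I would also take care to invoke Krull--Akizuki in the precise form "the integral closure of a one-dimensional Noetherian domain in a finite — a fortiori in any — extension of its fraction field is a Noetherian domain of dimension $\leq 1$", since this is what legitimises both normalisation steps.
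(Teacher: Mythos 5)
The paper offers no proof of this proposition at all — it is quoted verbatim from the Stacks project (tag 00PH) — so there is no internal argument to compare against; what matters is whether your proof stands on its own, and it does. Your route is exactly the standard one behind that lemma: the non-vanishing of $\m_R^n/\m_R^{n+1}$ makes the exceptional divisor $\operatorname{Proj}(\gr_{\m_R}R)$ non-empty, so some affine chart $R[\m_R/x_1]$ of the blow-up has $\m_R R_1=x_1R_1$ proper, Krull's principal ideal theorem then hands you a one-dimensional Noetherian local domain dominating $R$ with fraction field $K$, and Krull--Akizuki normalises it to a DVR; the transcendental generators are absorbed by Gauss valuations (value group unchanged, so the ring stays a DVR, exactly as in Example \ref{example:Gauss_valuations} with $\gamma=0$) and the residual finite extension again by Krull--Akizuki plus lying over. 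The only blemish is the parenthetical ``in a finite --- a fortiori in any --- extension'': taken literally this is false, since the integral closure of a one-dimensional Noetherian domain in an infinite algebraic extension need not be Noetherian (e.g.\ the integral closure of $\bZ$ in $\overline{\bQ}$); but you only ever invoke the finite (and trivial $L=K$) cases, so nothing in the argument is affected.
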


\begin{proposition}
\label{prop:DVR_specialisation}
Let $X$ be a locally Noetherian scheme and let $x,x'\in X$ be such that $x$ is a specialisation of $x'$, namely $x \in \overline{\{x'\}}$. Then there exist a discrete valuation ring $V$ and a morphism $\Spec(V) \to X$ such that the generic point of $\Spec(V)$ is mapped to $x'$ and the closed point of $\Spec(V)$ is mapped to $x$. Moreover, for any finitely generated extension $K/\kappa(x')$ we may choose $V$ such that the extension $\Frac(V)/\kappa(x')$ is isomorphic to the given extension $K/\kappa(x')$.
\end{proposition}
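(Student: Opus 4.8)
The plan is to reduce everything to the Krull--Akizuki-type statement of Proposition \ref{prop:Krull-Akizuki}, applied to the Noetherian local domain obtained by localising $X$ at $x$ and then killing the prime ideal corresponding to $x'$.

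First I would localise. Pick an affine open neighbourhood $U=\Spec(A)$ of $x$; as $X$ is locally Noetherian, $A$ is Noetherian. Since $\overline{\{x'\}}$ is irreducible with generic point $x'$, and $U\cap\overline{\{x'\}}$ is a non-empty (it contains $x$) open subset of it, this intersection is dense, so $x'\in U$ as well. Let $\q\supseteq\p$ be the primes of $A$ corresponding to $x$ and $x'$, the inclusion encoding the relation $x\in\overline{\{x'\}}$. Set $R:=A_{\q}=\cO_{X,x}$, a Noetherian local ring, and let $\iota\colon\Spec(R)\to\Spec(A)\hookrightarrow X$ be the canonical morphism; it sends the closed point of $\Spec(R)$ to $x$ and the point $\p R$ to $x'$. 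Now put $\bar R:=R/\p R=(A/\p)_{\q}$: this is a Noetherian local domain with fraction field $\kappa(x')$ and residue field $\kappa(x)$, and the composite $\Spec(\bar R)\hookrightarrow\Spec(R)\xrightarrow{\,\iota\,}X$ carries the generic point of $\Spec(\bar R)$ to $x'$ and its closed point to $x$.

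Assume first $x\neq x'$, so that $\p\subsetneq\q$ and $\bar R$ is not a field. Given the prescribed finitely generated extension $K/\kappa(x')$ (or taking $K=\kappa(x')$ if none is prescribed), Proposition \ref{prop:Krull-Akizuki} applied to $\bar R$ produces a discrete valuation ring $V$ with $\Frac(V)$ isomorphic to $K$ as an extension of $\kappa(x')=\Frac(\bar R)$ and dominating $\bar R$. The inclusion $\bar R\hookrightarrow V$ induces $\Spec(V)\to\Spec(\bar R)$; composing with the morphism above gives the sought $\Spec(V)\to X$. Indeed, injectivity of $\bar R\hookrightarrow V$ forces the generic point of $\Spec(V)$ to map to the generic point of $\Spec(\bar R)$, hence to $x'$; and domination, i.e. $\m_V\cap\bar R=\m_{\bar R}$, forces the closed point of $\Spec(V)$ to map to the closed point of $\Spec(\bar R)$, hence to $x$. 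If instead $x=x'$ (so $\bar R=\kappa(x')$), one argues directly: for the bare existence of $V$ take e.g. $V=\kappa(x')[[t]]$ and $\Spec(V)\to\Spec(\kappa(x'))\to X$; and when a prescribed extension $K/\kappa(x')$ with $\trdeg(K/\kappa(x'))\ge 1$ is required, one builds $V$ with $\kappa(x')\subseteq V$ and $\Frac(V)\cong K$ over $\kappa(x')$ by applying the Gauss valuation construction of Example \ref{example:Gauss_valuations} to the trivial valuation on $\kappa(x')$ and then Proposition \ref{prop:Krull-Akizuki} along a finite subextension.

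I do not expect a genuine obstacle: the proof is essentially a formal reduction to Proposition \ref{prop:Krull-Akizuki}. The steps that need a little care are the observation that the affine neighbourhood $U$ automatically contains $x'$ (by irreducibility of $\overline{\{x'\}}$), the bookkeeping of which primes of $R$ and of $\bar R$ lie over $x$ and $x'$ under $\iota$, and the degenerate case $x=x'$, for which the statement about realising a prescribed extension $K/\kappa(x')$ is only substantive when $\trdeg(K/\kappa(x'))\ge 1$.
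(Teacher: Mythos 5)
Your proof is correct and takes essentially the same route as the paper: everything is reduced to Proposition \ref{prop:Krull-Akizuki} applied to the Noetherian local domain $\cO_{X,x}/\p$, where $\p$ is the prime of $\cO_{X,x}$ corresponding to $x'$ (the paper phrases this loosely as a DVR ``dominating $\cO_{X,x}$''), and the morphism $\Spec(V)\to X$ is obtained by composition. Your explicit handling of the degenerate case $x=x'$ --- which the paper's one-line proof silently skips, and where the ``moreover'' clause is indeed only attainable when $\trdeg(K/\kappa(x'))\geq 1$ --- is a careful and correct addition rather than a deviation.
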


\begin{proof}
Let $K/\kappa(x')$ be a finitely generated extension, this induces ring morphisms $\cO_{X,x} \to \kappa(x') \to K$. Now Proposition \ref{prop:Krull-Akizuki} yields the existence of a DVR $V$ with fraction field $K$ which dominates $\cO_{X,x}$. Therefore, the morphism $\cO_{X,x} \to V$ yields the desired scheme morphism $\Spec(V) \to X$. 
\end{proof}

Roughly speaking, Proposition \ref{prop:DVR_specialisation} implies the fact that any specialisation of a locally Noetherian scheme can be encoded through a discrete valuation ring. In general, we have no information on the extension of residue fields of the special points. The following result gives a partial answer in this direction. 

\begin{proposition}
\label{prop:valuation_rings_specialisation_smooth}
Let $X$ be a locally Noetherian integral scheme and let $x\in X$ be a regular point. Then there exists a valuation ring $V$ dominating $\cO_{X,x}$ with residue field $\kappa$ such that $\Frac(R) = K(X)$ and $\kappa = \kappa(x)$.
\end{proposition}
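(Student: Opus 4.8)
The plan is to reduce to a statement about the regular local ring $\cO_{X,x}$ and then to build the valuation by iterating the rank-one discrete valuation attached to a regular system of parameters, i.e.\ as a composite valuation in the sense of Definition~\ref{def:composite_valuation}. Note that Proposition~\ref{prop:Krull-Akizuki} would already furnish a DVR dominating $\cO_{X,x}$ with fraction field $K(X)$, but it gives no control on the residue field; regularity is precisely what lets us force the residue field to be $\kappa(x)$. So I would set $R:=\cO_{X,x}$, which since $X$ is integral and locally Noetherian and $x$ is regular is a regular Noetherian local domain with $\Frac(R)=K(X)$, residue field $R/\m_R=\kappa(x)$, and finite Krull dimension $n:=\dim R$. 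It suffices to produce a valuation ring $V$ of $K(X)$ dominating $R$ whose induced map $R/\m_R\to V/\m_V$ is an isomorphism; the composite $\Spec(V)\to\Spec(R)\to X$ then has the asserted properties. I would argue by induction on $n$, the case $n=0$ being trivial since then $R=K(X)$ is a field and $V:=R$ works.

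For the inductive step, assume $n\geq 1$ and pick a regular system of parameters $t_1,\dots,t_n$ of $R$. Then $R/(t_1)$ is again a regular local ring, of dimension $n-1$; in particular $(t_1)$ is a height-one prime, so $V_1:=R_{(t_1)}$ is a DVR with $\Frac(V_1)=K(X)$ and residue field $\kappa_1:=\Frac(R/(t_1))$, with valuation $v_1$. Applying the inductive hypothesis to the integral Noetherian affine scheme $\Spec(R/(t_1))$ at its closed point yields a valuation ring $\overline V$ of $\kappa_1$ dominating $R/(t_1)$, with $\Frac(\overline V)=\kappa_1$ and residue field $(R/(t_1))/(\m_R/(t_1))=R/\m_R=\kappa(x)$; let $\overline v$ be its valuation.

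Next I would form the composite valuation $v:=v_1\circ\overline v$ of $K(X)$, with valuation ring $V:=\{a\in V_1 : \overline v(\overline a)\geq 0\}$; by the discussion of composite valuations in \S\ref{subsub:rank_rat_rank_etc} the residue field of $V$ equals that of $\overline v$, namely $\kappa(x)$, and $\Frac(V)=K(X)$. It then remains to check that $V$ dominates $R$. If $a\in R$, its reduction modulo $\m_{V_1}$ is the image of $a$ under $R\to R/(t_1)\hookrightarrow\overline V\subseteq\kappa_1$, so $\overline v(\overline a)\geq 0$ and $a\in V$; hence $R\subseteq V$. Since $1\notin\m_V$, the ideal $\m_V\cap R$ of the local ring $R$ is proper, so $\m_V\cap R\subseteq\m_R$. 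Conversely, for $a\in\m_R$: if $a\in(t_1)$ then $v_1(a)>0$, hence $v(a)>0$; if $a\notin(t_1)$ then $v_1(a)=0$ while $\overline a\in\m_R/(t_1)=\m_{R/(t_1)}\subseteq\m_{\overline V}$, so $\overline v(\overline a)>0$ and again $v(a)>0$. In either case $a\in\m_V$, so $\m_V\cap R=\m_R$ and $V$ dominates $R$; tracking the identifications, $R/\m_R\to V/\m_V$ is the isomorphism supplied by the inductive hypothesis, closing the induction.

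The main obstacle, such as it is, is the bookkeeping in the last paragraph: verifying that $V$ genuinely \emph{dominates} $R$ rather than merely contains it, and that the residue-field identification is through the natural map $\kappa(x)\to V/\m_V$. Both fall out of the explicit description $V=\{a\in V_1:\overline v(\overline a)\geq 0\}$ of the composite valuation ring. Everything else is standard regular-local-ring theory (existence of a regular system of parameters, regularity of $R/(t_1)$, height-one primality of $(t_1)$, and $\dim R<\infty$).
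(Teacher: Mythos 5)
Your proof is correct and takes essentially the same route as the paper: both choose a regular system of parameters and build the valuation as the composite of the successive divisorial (discrete rank-one) valuations attached to the flag of regular subschemes it cuts out. The only difference is presentational — you assemble the rank-$n$ valuation inductively via the composite-valuation formalism of \S\ref{subsub:rank_rat_rank_etc}, whereas the paper writes it in one stroke as an explicit $\bZ^{r}_{\mathrm{lex}}$-valued valuation — and your explicit verification that $V$ dominates $\cO_{X,x}$ with the right residue field is, if anything, more detailed than the paper's.
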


\begin{proof}
Since $x$ is regular, there exist $a_1,...,a_r\in \cO_{X,x}$ whose images in $\m_x/\m_x^2$ are linearly independent over $\kappa(x)$ such that the maximal ideal $\m_x$ of $\cO_{X,x}$ is $(a_1,...,a_r)$. Moreover, for any $i=1,...,r$, the image of $a_i$ is an irreducible element of the regular ring $\cO_{X,x}/(a_1,...,a_{i-1})$ which is a UFD, and thus is a prime element of $\cO_{X,x}/(a_1,...,a_{i-1})$. This yields a prime divisor $D_i \subset \Spec(\cO_{X,x}/(a_1,...,a_{i-1}))$ and thus a discrete valuation $v_i$ of the field $\Frac(\cO_{X,x}/(a_1,...,a_{i-1}))$. We now define a valuation $v : \Frac(\cO_{X,x})\cong K(X) \to \bZ^{r}_{\mathrm{lex}}$ by sending any $f\in \cO_{X,x}$ to $(v_i(f \mod (a_1,...,a_{i-1})))_{1\leq i \leq r}$. Then the valuation ring $V$ of $v$ is a rank $r$ valuation ring of $K(X)$ dominating $\cO_{X,x}$ with residue field $\kappa(x)$.
\end{proof}

\begin{proposition}
\label{prop:valuation_rings_specialisation_non_smooth}
Let $X$ be a locally Noetherian integral scheme over a field $K$. Assume that there exists a proper birational morphism $\pi : X' \to X$ of $K$-schemes such that $X'$ is smooth. Then, for any $x\in X$, there exists a valuation ring $V$ dominating $\cO_{X,x}$ with residue field $\kappa$ such that $\Frac(R) = K(X)$ and $\kappa/\kappa(x)$ is finite.
\end{proposition}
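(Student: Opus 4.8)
The plan is to deduce the statement from Proposition~\ref{prop:valuation_rings_specialisation_smooth} applied to the smooth scheme $X'$, once a suitable point of $X'$ lying over $x$ has been chosen. First I would record that $\pi$ is surjective: being proper it is closed, and being birational it restricts to an isomorphism over a dense open $U\subseteq X$, so $\pi(X')$ is a closed set containing $U$, hence all of $X$; in particular $\pi^{-1}(x)\neq\emptyset$. Since $\pi$ is proper it is of finite type, so $X'$ is locally Noetherian and the scheme-theoretic fibre $X'_x$ of $\pi$ over $x$ is a non-empty scheme of finite type over the field $\kappa(x)$. I would then pick a \emph{closed} point $x'$ of $X'_x$; by the Nullstellensatz its residue field as a point of $X'_x$ is finite over $\kappa(x)$. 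Because $\pi$ is a morphism of schemes the induced homomorphism $\cO_{X,x}\to\cO_{X',x'}$ is local, so $\m_x$ maps into $\m_{x'}$; a short computation with fibre products then identifies the residue field of $x'$ as a point of $X'_x$ with its residue field $\kappa_{X'}(x')$ as a point of $X'$. Hence $x'$ is a point of $\pi^{-1}(x)$ with $\kappa_{X'}(x')/\kappa(x)$ finite.

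Next, since $X'$ is smooth over $K$ it is regular, so every point of $X'$ — in particular $x'$ — has regular local ring; moreover $X'$, being birational to the integral scheme $X$, is integral with $K(X')=K(X)$. Applying Proposition~\ref{prop:valuation_rings_specialisation_smooth} to $X'$ and the regular point $x'$ then yields a valuation ring $V$ dominating $\cO_{X',x'}$, with fraction field $K(X')=K(X)$ and residue field $\kappa:=\kappa_{X'}(x')$.

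It remains to verify the two assertions about $V$. The morphism $\pi$ gives the local homomorphism $\cO_{X,x}\to\cO_{X',x'}$, and $\cO_{X',x'}\hookrightarrow V$ is local since $V$ dominates $\cO_{X',x'}$; composing, $\cO_{X,x}\to V$ is a local homomorphism, i.e.\ $V$ dominates $\cO_{X,x}$. Its fraction field is $K(X)$ and, by the first paragraph, its residue field $\kappa=\kappa_{X'}(x')$ is finite over $\kappa(x)$, as desired. The only step that requires genuine care is the choice of $x'$: an arbitrary point of $\pi^{-1}(x)$ need not have residue field finite over $\kappa(x)$, because the fibres of a proper birational morphism can be positive-dimensional (think of a blow-up), so it is essential to take a closed point of the fibre and to identify its residue field correctly. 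Everything else is a formal consequence of Proposition~\ref{prop:valuation_rings_specialisation_smooth} together with standard properties of proper morphisms and of domination of local rings.
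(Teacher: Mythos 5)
Your proof is correct and follows essentially the same route as the paper: pass to the smooth model, choose a point of the fibre over $x$, apply Proposition~\ref{prop:valuation_rings_specialisation_smooth} at that regular point, and compose local homomorphisms to get domination of $\cO_{X,x}$. Your handling of the fibre is in fact more careful than the paper's: the paper reduces to $x$ closed and then asserts that every point of $\pi^{-1}(x)$ is a closed regular point of $X'$, which fails when the fibre is positive-dimensional (for instance the generic point of the exceptional curve of a blow-up lies in the fibre and has residue field of positive transcendence degree over $\kappa(x)$); your choice of a closed point of the scheme-theoretic fibre $X'_x$, with finiteness of the residue extension supplied by the Nullstellensatz, is exactly the needed correction, and it also removes the unjustified reduction to $x$ closed.
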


\begin{proof}
We may assume that $x\in X$ is a closed point. Then, for any $x'\in \pi^{-1}(X')$, $x'$ is a closed regular point of $X'$ and Proposition \ref{prop:valuation_rings_specialisation_smooth} yields a valuation ring $V\subset K(X')\cong K(X)$ dominating $\cO_{X',x'}$ with residue field $\kappa(x')$. Since the extension $\kappa(x')/\kappa(x)$ is finite, we obtain the desired property for $V$.
\end{proof}

\begin{remark}
\label{rem:valuation_rings_specialisation_non_smooth}
In particular, Proposition \ref{prop:valuation_rings_specialisation_non_smooth} holds when the base field is of characteristic zero (cf. \cite{Hir64}).
\end{remark}

\subsubsection{Prüfer domains}
\label{subsub:Prufer_domains}

Let $A$ be an integral domain with fraction field $K$. $A$ is said to be a \emph{Prüfer domain} if, for any prime ideal $\p \in \Spec(A)$, the localisation $A_{\p}$ is a valuation ring. There are many characterisations of Prüfer domains (see e.g. \cite{FHP97}, Theorem 1.1.1).

\begin{proposition}
\label{prop:prop_Prufer_domains}
Let $A$ be a Prüfer domain with fraction field $K$. 
\begin{itemize}
	\item[(1)] Let $V$ be a valuation ring of $K$ containing $A$, and denote by $\m$ the maximal ideal. Then $\m\cap A$ is a prime ideal of $A$ and $V = A_{\m \cap A}$.
	\item[(2)] Let $L/K$ be an algebraic extension. Then the integral closure of $A$ in $L$ is a Prüfer domain (\cite{Fuchs01}, Chap. III, Theorem 1.2).
	\item[(3)] An $A$-module is flat if and only if it is torsion-free (\cite{BouAC}, Chapitre VII, \S 2, Exercices 12 et 14).
	\item[(4)] A finitely generated $A$-module is projective if and only if it is torsion-free (\cite{Fuchs01}, Chapter V, Theorem 2.7).
	\item[(5)] Assume that $A$ is Bézout. Then any projective $A$-module is free (\cite{Fuchs01}, Chapter VI, Theorem 1.11). 
	\item[(6)] Let $(A_i)_{i\in I}$ be direct system of Prüfer domains with injective arrows. Then $\varinjlim_{i\in I} A_i$ is a  Prüfer domain (\cite{Fuchs01}, Proposition 1.8).
\end{itemize}
\end{proposition}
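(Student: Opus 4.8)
The items (2)--(6) come with precise references attached to the statement, so for those the plan is simply to cite \cite{Fuchs01} and \cite{BouAC} as indicated (and, for (5), to record that the Bézout hypothesis enters only through the quoted theorem). The only claim that needs a genuine argument is (1), so I focus on that.

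Here is how I would prove (1). Let $V$ be a valuation ring of $K$ with $A \subseteq V$ and maximal ideal $\m$, and set $\p := \m \cap A$. The first step is routine: $\p$ is a prime ideal of $A$, being the contraction of the prime $\m$ along the inclusion $A \hookrightarrow V$. Since $A$ is a Prüfer domain, the localisation $A_{\p}$ is a valuation ring, and its fraction field is $K$ because $A \subseteq A_{\p} \subseteq K$; so the assertion reduces to the equality $V = A_{\p}$.

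The second step is the inclusion $A_{\p} \subseteq V$: for $a \in A$ and $s \in A \setminus \p$ one has $s \notin \m$, hence $s$ is a unit of $V$ (the non-units of the local ring $V$ are exactly the elements of $\m$), so $a/s \in V$. Running the same computation with the extra hypothesis $a \in \p$ shows $\p A_{\p} \subseteq \m$; that is, $V$ dominates $A_{\p}$, equivalently $\m_{A_{\p}} \subseteq \m_{V}$. The third and final step is the elementary observation that a valuation ring which dominates another valuation ring of the same field coincides with it: if $W_1 \subseteq W_2$ are valuation rings of $K$ with $\m_{W_1} \subseteq \m_{W_2}$, and if some $x \in W_2 \setminus \{0\}$ satisfied $x \notin W_1$, then $x^{-1} \in \m_{W_1} \subseteq \m_{W_2}$, whence $1 = x x^{-1} \in \m_{W_2}$, which is absurd; so $W_2 \subseteq W_1$, and combined with $W_1 \subseteq W_2$ this forces $W_1 = W_2$. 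Applying this with $W_1 = A_{\p}$ and $W_2 = V$ finishes (1).

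I do not anticipate a real obstacle: the statement is essentially a dictionary entry, and even (1) is short. The one point I would be careful to spell out rather than skip is the passage from ``$A_{\p} \subseteq V$'' to ``$A_{\p} = V$'', since that is exactly where being Prüfer is used — it is what guarantees that $A_{\p}$ is \emph{already} a valuation ring, so that the domination argument of the last step applies. An even quicker alternative would be to note that (1) is one of the standard equivalent characterisations of Prüfer domains and point to \cite{FHP97}, Theorem 1.1.1; I would nonetheless prefer to include the self-contained argument above.
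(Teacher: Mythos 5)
Your proposal is correct and follows essentially the same route as the paper: the paper also only proves (1), by noting $A_{\m\cap A}\subseteq V$, that this inclusion is a local morphism of valuation rings with fraction field $K$, and concluding equality from the fact that $A_{\m\cap A}$ is a valuation ring, while (2)--(6) are handled by the cited references. Your version merely makes the final domination-implies-equality step explicit, which the paper leaves as a standard fact.
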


\begin{proof}
By definition, $A_{\m \cap A} = \{a/b : a,b\in A \text{ and } b\notin \m\cap A\}$. Thus $A_{\m \cap A} \subset V_{\m}=V$. Since $(\m\cap A) \subset \m$, the inclusion $A_{\m \cap A} \to V$ is a local morphism of local rings whose fraction fields are $K$. From the fact that $A_{\m\cap A}$ is a valuation ring, we get $A_{\m\cap A} = V$.
\end{proof}

\begin{example}
\label{example:Prufer_rings}
\begin{itemize}
	\item[(1)] Any field is a Prüfer domain.	
	\item[(2)] Any Dedekind ring is Prüfer. Indeed, these are exactly the Noetherian Prüfer domains since they are locally discrete valuation rings, i.e. Noetherian valuation rings. In particular, the ring of integers of a number field is a Prüfer domain, and so is its absolute integral closure (cf. Proposition \ref{prop:prop_Prufer_domains} (2)). 
	\item[(3)] Let $X$ be a connected non-compact Riemann surface. Then the ring $\cO(X)$ of analytic functions on $X$ is a Prüfer domain (\cite{Royden56}, Proposition 1). Moreover, it is a Bézout domain. 
	\item[(4)] Let $C \subset U$ be a connected Stein subset of a connected non-compact Riemann surface $U$, namely $C$ has a basis of Stein open subset neighbourhoods in $U$. Denote $A=\cO(C)$ the ring of germs of holomorphic functions on $C$. It is an integral ring whose fraction field $K:=\cM(C)$ is the field of germs of meromorphic functions on $C$. Then $A$ is Prüfer. Indeed, for any Stein open set $C\subset U' \subset U$, $\cO(U')$ is a Prüfer domain and $A = \varinjlim \cO(U')$, where $U'$ runs over the open sets $U'$ such that $C\subset U'\subset U$. Hence Proposition \ref{prop:prop_Prufer_domains} (6) implies that $A$ is Prüfer.
\end{itemize}
\end{example}

\subsection{Application to rings of analytic functions}
\label{subsub:rings_of_analytic_functions}

In this subsection, we recall useful algebraic properties of rings of analytic functions that are studied throughout this article. 

\begin{proposition}
\label{prop:ring_of_analytic_functions_open_Riemann_surface}
Let $X$ be a non-compact Riemann surface.
\begin{itemize}
 \item[(1)] Any finitely generated ideal of $A:=\cO(X)$ is principal. Furthermore, any such ideal $I\subset
  A$ is prime iff it is maximal iff any generator of $I$ has exactly one zero, namely $I$ is the ideal of analytic functions vanishing at a point of $X$.
 \item[(2)] If $\m$ is a maximal ideal of $A$, then $\m$ is principal iff $\m$ is the kernel of a $\bC$-algebra morphism $\pi : A \to \bC$.
\end{itemize}
\end{proposition}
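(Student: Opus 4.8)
The first assertion of (1)---that every finitely generated ideal of $A=\cO(X)$ is principal---I would simply record as being the statement that $A$ is a Bézout domain, which is Example~\ref{example:Prufer_rings}(3); its proof rests on the Stein property of the non-compact surface $X$ (Behnke--Stein), through solvability of the second Cousin problem (every effective divisor equals $\operatorname{div}(g)$ for some $g\in A$) and Cartan's Theorem~B (finitely many functions with no common zero generate the unit ideal). The plan for the rest is to work throughout with the resulting divisor dictionary: for nonzero $f,h\in A$ one has $f\mid h$ in $A$ iff $\operatorname{div}(f)\le\operatorname{div}(h)$, since $h/f$ is then holomorphic on all of $X$.

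For the characterization in (1), let $I=(g)$ be a nonzero proper finitely generated ideal (so $g$ is a non-unit; the zero ideal is prime but not maximal and is tacitly excluded), and let $p$ be a zero of $g$. I would choose $g_1\in A$ with $\operatorname{div}(g_1)=[p]$ (second Cousin again); then $g_1\mid g$, say $g=g_1g_2$. A short divisor computation gives $g_1\in(g)\iff\operatorname{div}(g)=[p]$, while $g_2\notin(g)$ always. Hence if $(g)$ is prime then $\operatorname{div}(g)=[p]$, i.e. $g$ has a single, simple zero, and $(g)=\{h\in A:\operatorname{div}(h)\ge[p]\}=\mathfrak{m}_p:=\{h\in A:h(p)=0\}$, the kernel of evaluation at $p$, which is maximal. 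Conversely $\mathfrak{m}_p=(g_1)$ is principal and maximal, hence prime, and any generator $g$ of it satisfies $g\mid g_1$ and $g_1\mid g$, forcing $\operatorname{div}(g)=[p]$ (a single simple zero). Since every maximal ideal is prime, the three equivalences follow, with the common object being $\mathfrak{m}_p$.

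For (2), the implication "principal $\Rightarrow$ kernel of a $\bC$-algebra morphism" is immediate from (1): if $\mathfrak{m}$ is principal it is finitely generated, hence equals $\mathfrak{m}_p$ for some $p$, and $\mathfrak{m}_p$ is the kernel of $f\mapsto f(p)$. For the converse, write $\mathfrak{m}=\ker\pi$ with $\pi:A\to\bC$ a $\bC$-algebra morphism; it suffices to find $p\in X$ with $f(p)=\pi(f)$ for all $f\in A$, for then $\mathfrak{m}\subseteq\mathfrak{m}_p\subsetneq A$ forces $\mathfrak{m}=\mathfrak{m}_p$, which is principal by (1). Put $Z_f:=\{x\in X:f(x)=\pi(f)\}$, a closed subset of $X$. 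Using (1), these sets have the finite intersection property: for $f_1,\dots,f_k\in A$ the ideal $(f_1-\pi(f_1),\dots,f_k-\pi(f_k))\subseteq\mathfrak{m}$ is finitely generated, hence principal $=(g)$ with $g$ a non-unit, so $g$ vanishes somewhere on $X$, and any such zero lies in every $Z_{f_i}$. Next I would fix $h_1,\dots,h_N\in A$ with $\Phi:=(h_1,\dots,h_N):X\to\bC^N$ injective---these exist since $X$, being Stein of dimension $1$, admits a proper holomorphic embedding into some $\bC^N$ (Narasimhan; $N=3$ works). Then $Z_{h_1}\cap\dots\cap Z_{h_N}=\Phi^{-1}(\pi(h_1),\dots,\pi(h_N))$ has at most one point and is non-empty by the finite intersection property; call it $\{p\}$. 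Finally, for any $f\in A$ the set $Z_{h_1}\cap\dots\cap Z_{h_N}\cap Z_f$ is non-empty and contained in $\{p\}$, so $p\in Z_f$, i.e. $f(p)=\pi(f)$ for all $f$.

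The step I expect to be the main obstacle is the converse in (2): one must exclude "exotic" $\bC$-valued characters of $\cO(X)$, and the only genuine external input needed there (beyond the Bézout property) is the existence of finitely many point-separating global holomorphic functions on $X$, which I would extract from a proper embedding $X\hookrightarrow\bC^N$. The other non-formal ingredient, underlying the Bézout property itself, is the solvability of the Cousin problems, i.e. the Stein-ness of $X$; everything else is formal divisor arithmetic together with the finite intersection property.
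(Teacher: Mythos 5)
Your proposal is correct, but it takes a different route from the paper: the paper's proof of this proposition is essentially a citation, deducing the Bézout property from the identity $d=e_1f_1+\cdots+e_nf_n$ of Royden (\cite{Royden56}, Proposition 1) and then quoting Propositions 2 and 3 of \emph{loc.\ cit.} verbatim for the two characterizations, whereas you reconstruct full proofs from Stein theory. Your divisor-dictionary argument for (1) (choosing $g_1$ with $\div(g_1)=[p]$, noting $g_2\notin(g)$ always and $g_1\in(g)$ iff $\div(g)=[p]$) is a clean substitute for Royden's Proposition 2, and your handling of the tacitly excluded zero ideal is a reasonable reading of the statement. For (2), your argument via the finite intersection property of the sets $Z_f$ plus finitely many point-separating functions correctly proves the nontrivial converse without any continuity assumption on $\pi$, which is exactly what the statement requires; the one place where you use a heavier (and, relative to Royden's 1956 proof, anachronistic) tool is Narasimhan's proper embedding $X\hookrightarrow\bC^3$. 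That input can be lightened: pick any nonconstant $h\in A$, so $f:=h-\pi(h)\in\m$ is nonzero with discrete zero set $Z(f)$, and then a single function taking pairwise distinct values on $Z(f)$ (Weierstrass-type interpolation on an open Riemann surface) already pins down the unique common zero $p$, after which the same finitely-generated-ideal argument gives $f(p)=\pi(f)$ for all $f$. In short, the paper's version buys brevity by deferring to the literature, while yours buys self-containedness and makes explicit which analytic facts (second Cousin, generation of the unit ideal by functions without common zeros, point separation by finitely many global functions) carry the proof.
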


\begin{proof}
Let $I=(f_1,...,f_n)$ be a finitely generated ideal of $A$. Denote $d := \gcd(f_1,...,f_n) \in A$. Then there exist $e_1,...,e_n \in A$ such that $d= e_1f_1+\cdots+e_nf_n$ (cf. \cite{Royden56}, Proposition 1). Therefore $I=(d)$, i.e. $I$ is principal. Then (1) is exactly Proposition 2 of \cite{Royden56}. (2) is Proposition 3 of \cite{Royden56}. 
\end{proof}

\begin{proposition}
\label{prop:absolute_values_on_fields_of_meromorphic_functions}
Let $X$ be a non-compact Riemann surface. Denote by $A$ the ring of holomorphic functions and by $K$ the fraction field of $A$. Let $\va$ be an absolute value on $K$ such that $A\subset \{f\in K : |f|\leq 1\}$. Then $\va$ is either trivial or there exists $z\in X$ such that $\va$ is equivalent to an absolute value of the form $e^{-\ord(\cdot,z)}$.  
\end{proposition}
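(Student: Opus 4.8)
\emph{Overview.} The plan is to reduce everything to the combinatorics of divisors on $X$. First I would observe that $|\cdot|$ is non-Archimedean and trivial on $\bC$: the constant functions lie in $A=\cO(X)$, so for $c\in\bC^{\times}$ both $c$ and $c^{-1}$ lie in $A$, forcing $|c|\le 1$ and $|c^{-1}|\le 1$, hence $|c|=1$. In particular $|\cdot|$ is bounded on $\bZ$, so it is non-Archimedean, and $v:=-\log|\cdot|$ is a valuation $K^{\times}\to\bR$ with $v\ge 0$ on $A\setminus\{0\}$. The units of $A$ are exactly the nowhere-vanishing holomorphic functions, i.e.\ the functions with trivial divisor, and these have $v=0$; therefore $v|_{A\setminus\{0\}}$ depends only on $\div(f)$ and induces a homomorphism of monoids $\overline v$ from the monoid of effective divisors of $X$ to $\bR_{\ge 0}$, additive and non-negative. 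Since $K=\Frac(A)$, one has: $|\cdot|$ is trivial $\iff$ $\overline v=0$. So assume $\overline v\neq 0$.

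\emph{Locating a point.} The heart of the argument is the claim that $\overline v([z])>0$ for some $z\in X$. I would prove this by contradiction: if $\overline v([z])=0$ for all $z$, then $\overline v$ kills every \emph{finite} effective divisor; since $\overline v\neq 0$ there is an effective divisor $D_0=\sum_{i\ge 1}n_i[z_i]$ with infinite (locally finite) support and $c:=\overline v(D_0)>0$. Introduce $D_1:=\sum_{i\ge 1}i\,n_i[z_i]$, pick $g\in A$ with $\div(g)=D_1$ (Weierstrass product theorem on the open Riemann surface $X$), and use, for each $n\ge 1$, the identity of effective divisors
\begin{align*}
D_1+\sum_{i\le n}n_i(n-i)[z_i]=nD_0+\sum_{i>n}n_i(i-n)[z_i],
\end{align*}
whose left-hand correction term is finite. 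Applying $\overline v$, using $\overline v(\text{finite})=0$ and $\overline v(\text{effective})\ge 0$, yields $\overline v(D_1)\ge nc$ for all $n$, contradicting $\overline v(D_1)=v(g)<+\infty$. Hence such a $z$ exists; fix it and put $\lambda:=\overline v([z])>0$.

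\emph{Conclusion.} For $w\neq z$, pick generators $t_z,t_w$ of the (principal, by Proposition~\ref{prop:ring_of_analytic_functions_open_Riemann_surface}) maximal ideals $\m_z,\m_w$; they have disjoint zero sets, so $(t_z,t_w)=A$ and $1=e t_z+h t_w$ with $e,h\in A$. Then $0=v(1)\ge\min(v(t_z),v(t_w))$, and since $v(t_z)=\lambda>0$ this forces $v(t_w)=0$, i.e.\ $\overline v([w])=0$. Now for $f\in A\setminus\{0\}$ write $\div(f)=\ord(f,z)[z]+D'$ with $z\notin\Supp(D')$, choose $f'\in A$ with $\div(f')=D'$; the same coprimality argument applied to $f'$ and $t_z$ gives $v(f')=0$, hence $v(f)=\lambda\,\ord(f,z)$. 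Extending multiplicatively to $K$, we obtain $v=\lambda\,\ord(\cdot,z)$, i.e.\ $|\cdot|=e^{-\lambda\,\ord(\cdot,z)}=\bigl(e^{-\ord(\cdot,z)}\bigr)^{\lambda}$, which is equivalent to $e^{-\ord(\cdot,z)}$.

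\emph{Main obstacle.} Everything except the claim in the second paragraph is a routine combination of the Bézout property of $\cO(X)$ and the divisor theory on open Riemann surfaces. The subtle point is precisely that $\overline v$ killing every individual point $[z]$ does \emph{not} a priori force it to kill an infinite effective divisor, since the points do not generate the monoid of effective divisors; the identity above is designed exactly so that the finiteness of $v(g)$ — equivalently, that the value group of a genuine absolute value embeds in $\bR$ and is therefore Archimedean — rules this out. I expect this to be the only place where real care is needed.
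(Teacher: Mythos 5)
Your proof is correct, but it follows a genuinely different route from the paper. The paper works with the valuation ring $V$ of $\va$: it shows $V=A_{\p}$ for $\p=\m_V\cap A$, and then rules out the bad case ($\p\neq(0)$ containing no $\m_z$) by observing that every element of such a $\p$ has infinitely many zeros, so $A_{\p}$ contains the localisation $A_N$ at the functions with finitely many zeros, whose complete integral closure is $K$ (Gilmer); hence $A_{\p}$ is not completely integrally closed and has rank $>1$ (Fuchs--Salce), contradicting that it is the valuation ring of a (real-valued) absolute value. You instead push everything onto the divisor monoid: after checking $\va$ is non-Archimedean and trivial on $\bC$, you factor $v=-\log\va$ through $\div$, and you kill the same bad case by an explicit unboundedness argument — the divisor $D_1=\sum_i i\,n_i[z_i]$, realised as $\div(g)$ by the Weierstrass/Behnke--Stein theorem, forces $v(g)\geq nc$ for all $n$, contradicting real-valuedness. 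Both arguments hinge on the same phenomenon (a center prime meeting no $\m_z$ would produce a rank-$\geq 2$ valuation, impossible for an absolute value), but yours replaces the commutative-algebra citations by a concrete construction; the price is that you invoke the full principality of divisors on a non-compact Riemann surface (a nontrivial analytic input the paper's proof does not need explicitly, though it is in the same circle of ideas as the Royden and Henriksen results it cites), and the payoff is a slightly more precise conclusion, namely $\va=e^{-\lambda\ord(\cdot,z)}$ with $\lambda=\overline v([z])$. Two small points worth making explicit in your write-up: a generator $t_z$ of $\m_z$ has divisor $k[z]$ for some $k\geq 1$ (simplicity of the zero needs a one-line argument, or you can just use $v(t_z)=k\lambda>0$, which suffices for the coprimality step), and the well-definedness and additivity of $\overline v$ on all effective divisors already uses that every effective divisor is $\div(f)$ for some $f\in A$.
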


\begin{proof}
Let $\va$ be such an absolute value, it is necessarily non-Archimedean. Denote by $V$ the valuation ring of $V$ and by $\m$ the maximal ideal. The hypothesis ensures that $A\subset V$. Then $\p := \m \cap A \in \Spec(A)$ and $A_{\p}$ is a valuation ring of $K$ and the injection $A_{\p} \to V$ is local. Therefore, we have $A_{\p} = V$. 

Now assume that $\p$ does not contain any $\m_z$, for $z\in X$, and is not $(0)$. Let us show that any element $f$ in $\p$ has an infinite number of zeros. Assume that $f$ has a finite number of zeros $z_1,...,z_n$. By hypothesis on $\p$, there exist $f_1,...,f_n \in \p$ such that, for all $i=1,...,n$, $f_i\notin \m_{z_{i}}$. Thus $\gcd(f,f_1,...,f_n)=1 \in \p$, which gives a contradiction. Let $N = \{f\in A : f \text{ has a finite number of zeros}\}$, it is a multiplicative subset of $A$. From (\cite{Gilmer72}, \S 13, Exercise 21), we get that the localisation $A_N$ is a ring whose complete integral closure is $K$. By the above remark, we have an inclusion $A_N \subset A_{\p}$. Let $S$ denote the complete integral closure of $A_{\p}$. Then $K\subset S \subset K$ and thus $A_{\p}$ is not completely integrally closed. Then (\cite{Fuchs01}, Chapter II, Exercise 1.12) implies that $A_{\p}$ is a valuation of rank greater than $1$. This contradicts the fact that $A_{\p}$ is the valuation ring of an absolute value on $K$. 

Therefore either $\p$ is $(0)$ or there exists $z\in X$ such that $\m_z\subset \p$. In the first case, we get $V=A_{\p}=K$ and $\va$ is trivial. In the second case, let $z\in X$ such that $\m_{z}\subset \p$. Then we have an inclusion $A_{\m_{z}}\subset A_{\p}$ of rank $1$ valuation rings with fraction field $K$. Therefore $A_{\p}=A_{\m_{z}}$ and $\va$ is equivalent to the absolute value $e^{-\ord(\cdot,z)}.$
\end{proof}

We now describe prime ideals of $A=\cO(\bC)$, the ring of entire functions on $\bC$. For any entire function $f\in A$, denote by $Z(f)$ the set of zeroes of $f$. For any ideal $I \subset A$, if $\bigcap_{f\in I} Z(f) \neq \emptyset$, $I$ is called \emph{fixed}. Otherwise, the ideal $I$ is called \emph{free}. 

\begin{proposition}
\label{prop:holomorphic_functions_non-compact_RS_ideals}
Let $\p$ be a prime ideal of $A$.
\begin{itemize}
	\item[(1)] If $\p$ is fixed, it is maximal and of the form $\m_z := \{f\in A : f(z)=0\}$ for some $z\in\bC$.
	\item[(2)] If $\p$ is a free maximal ideal, then $A_{\p}$ is a valuation ring of rank at least $2^{\aleph_1}$.
	\item[(3)] If $\p$ is free, it is contained in a unique (free) maximal ideal $\m$ and 
	\begin{align*}
	\p^{\ast} := \displaystyle \bigcap_{n>0} \m^k
	\end{align*}
	is the largest non-maximal prime ideal contained in $\m$. Moreover, for any $f\in \p$, $f$ has an infinite number of zeroes.
	\item[(4)] If $\p$ is free and $\m$ is the unique maximal ideal containing $\p$, then $A/\p$ is a valuation ring whose maximal ideal $\m/\p$ is principal. 
	\item[(5)] Assume that $\p$ is free. Then $A/\p$ is a complete DVR iff $\p=\p^{\ast}$.
\end{itemize}
\end{proposition}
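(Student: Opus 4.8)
The plan is to exploit that $A=\cO(\bC)$ is a Bézout domain (Proposition~\ref{prop:ring_of_analytic_functions_open_Riemann_surface}(1)), hence Prüfer (Example~\ref{example:Prufer_rings}(3)), together with Weierstrass factorisation and the interpolation theorem for entire functions. I use freely that a nonzero entire function with finitely many zeros is a unit times a polynomial, that $\m_z$ is principal, and that any discrete closed divisor (resp.\ any finite collection of Taylor jets along a discrete closed set) on $\bC$ can be realised. The key gadget: to a nonzero prime $\p$ attach the filter $\cF(\p)$ on $\bC$ generated by $\{Z(f):f\in\p\setminus\{0\}\}$. Bézoutness gives $Z(f)\cap Z(g)=Z(\gcd(f,g))$ with $\gcd(f,g)\in\p$, so this family is stable under finite intersection and (properness of $\p$) avoids $\emptyset$; and for any $S\subseteq\bC$ and $0\neq f\in\p$, splitting the divisor of $f$ along $S$ writes $f$, up to a unit, as a product of two entire functions with zero sets $Z(f)\cap S$ and $Z(f)\setminus S$, one of which lies in the prime $\p$, so $S\in\cF(\p)$ or $\bC\setminus S\in\cF(\p)$. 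Thus $\cF(\p)$ is an ultrafilter, free iff $\p$ is; conversely every free ultrafilter $\cU$ gives a free maximal ideal $\m_\cU=\{f:Z(f)\in\cU\}$ with $\cF(\m_\cU)=\cU$ (maximality by interpolating, for $f$ with $Z(f)\notin\cU$, a $g$ with $fg=1$ on a $\cU$-large discrete closed set).

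The easy parts come first. For (1), assuming $\p\neq(0)$: a common zero $z$ yields $\p\subseteq\m_z$, and since $\m_z$ is principal $A_{\m_z}$ is a DVR, whose only primes are $(0)$ and $\m_z$; so $\p=\m_z$. The last sentence of (3) is the mirror image: an $f\in\p$ with finitely many zeros is a unit times a polynomial, so some linear factor --- hence some $\m_z$ --- lies in the prime $\p$, forcing $\p=\m_z$ fixed, a contradiction. For the uniqueness in (3): any maximal $\m\supseteq\p$ has $\cF(\m)\supseteq\cF(\p)$, but $\cF(\p)$ is already maximal among proper filters, so $\cF(\m)=\cF(\p)=:\cU$ and then $\m\subseteq\m_\cU\subsetneq A$, whence $\m=\m_\cU$; so the maximal ideal over $\p$ is unique and free.

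The genuine obstacle is (2): $A_\p=A_\m$ is a valuation ring by Prüferness, but the bound "rank $\geq 2^{\aleph_1}$" is a classical and nontrivial theorem of Henriksen on the ring of entire functions (one produces a long chain of primes below $\m$, i.e.\ of convex subgroups of the value group of $A_\m$); I would cite it rather than reprove it. The remaining content of (3) I would get as follows. Fix $0\neq f_0\in\p$ and let $t\in A$ have simple zeros exactly on $S:=Z(f_0)\in\cU$. Then $\m A_\m=(t)A_\m$: for $g\in\m$ one has $Z(g)\in\cU$, so $S\setminus Z(g)\notin\cU$, hence the factor of $t$ supported on $S\setminus Z(g)$ is a unit in $A_\m$ while the complementary factor has simple zeros inside $Z(g)$ and so divides $g$ in $A$. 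Therefore the value group $\Gamma$ of $A_\m$ has least positive element $v(t)$ and a smallest nonzero convex subgroup $H_0\cong\bZ$, whose associated prime is $\bigcap_n(tA_\m)^n=\bigcap_n(\m A_\m)^n$. A divisor-splitting argument shows $\m^n=\{f\in A:v(f)\geq nv(t)\}$ (the inclusion "$\supseteq$" by writing the divisor on $S$ of such an $f$ as a sum of $n$ pieces each yielding a function in $\m$), so $\m^n$ is saturated under localisation at $\m$ and $\p^\ast=\bigcap_n\m^n$ is precisely the contraction of that prime; as $H_0$ is minimal, $\p^\ast$ is the largest non-maximal prime inside $\m$ (it is $\neq\m$ since $H_0\neq 0$, and $\neq(0)$ by (2)).

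Finally (4): $A/\p$ is Prüfer (a prime quotient of a Prüfer domain, using that a valuation ring modulo a prime is a valuation ring, cf.\ Example~\ref{example:composite_valuation}) and, by (3), local with maximal ideal $\m/\p$, hence a valuation ring; since $A/\p=A_\m/\p A_\m$ (a local ring equals its localisation at the maximal ideal) and $\m A_\m=(t)A_\m$, the ideal $\m/\p$ is generated by the image of $t$, so is principal. And (5): if $\p=\p^\ast$, then $A/\p^\ast=A_\m/\p^\ast A_\m$ has value group the convex subgroup $H_0\cong\bZ$, i.e.\ is a DVR, and it is complete because $\varprojlim_n A/\m^n$ is attained --- given a compatible system with lifts $a_n\in A$, pick a decreasing sequence $S_n\in\cU$ inside $S$ with $\bigcap_n S_n=\emptyset$ along which the congruences $a_{n+1}\equiv a_n\bmod\m^n$ are witnessed, and use interpolation to build $a\in A$ whose $t$-adic jet at each point of $S_n$ agrees to order $n$ with $a_n$, so $a\equiv a_n\bmod\m^n$ for all $n$. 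Conversely, if $A/\p$ is a DVR its value group $H_\p$ is $\cong\bZ$; but $H_0$ is the only convex subgroup of $\Gamma$ isomorphic to $\bZ$ (such a subgroup has a least positive element, which convexity forces to equal the least positive element $v(t)$ of $\Gamma$), so $\p=\p^\ast$.
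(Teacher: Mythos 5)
The paper does not actually prove this proposition: its proof is a one-line attribution, sending (1)--(3) to (\cite{Henriksen52}, \S 3, Theorems 1--5), (4) to Theorem 6 and (5) to Theorems 7--8 of that paper. Your proposal instead reconstructs Henriksen's arguments, deferring only (2) --- which is indeed the one genuinely hard cardinality statement, and is exactly what the paper also takes from Henriksen. Your route (attach to a free prime $\p$ the ultrafilter generated by the zero sets $Z(f)$, $f\in\p\setminus\{0\}$, using Bézoutness for finite intersections and Weierstrass splitting of divisors for the dichotomy; show $\m A_\m=(t)A_\m$ for $t$ with simple zeros on a fixed $Z(f_0)$; identify $\m^n=\{f: v(f)\geq n\,v(t)\}$ by divisor splitting, hence $\p^\ast$ with the prime attached to the smallest nonzero convex subgroup $\bZ v(t)$; deduce (4) from $A/\p\cong A_\m/\p A_\m$; prove completeness in (5) by prescribing jets along a decreasing sequence of $\cU$-large sets with empty intersection, and the converse from uniqueness of the convex subgroup isomorphic to $\bZ$) is the classical line of argument, and the key steps do check out: $Z(f)\cap Z(g)=Z(\gcd(f,g))$, uniqueness of the maximal ideal above a free prime via $\cF(\m)=\cF(\p)$, the identification of $\m^n$, and the convex-subgroup argument in (5) are all correct as sketched. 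What the paper's choice buys is brevity; what yours buys is a self-contained proof that makes visible the mechanism behind (3)--(5), namely that the value group of $A_\m$ has least positive element $v(t)$ and $\bZ v(t)$ as smallest convex subgroup.

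Two small repairs. In (1) you infer that $A_{\m_z}$ is a DVR ``since $\m_z$ is principal''; for a general valuation ring a principal maximal ideal does not force rank one, so either argue directly (factor $f=(w-z)^n g$ with $g(z)\neq 0$; since $g\notin\m_z\supseteq\p$ and $\p$ is prime, $(w-z)\in\p$, whence $\p=\m_z$) or observe that $A_{\m_z}$ is precisely the valuation ring of $\ord(\cdot,z)$. Second, the parenthetical claim that every free ultrafilter $\cU$ yields a free maximal ideal $\m_\cU$ is correct only for ultrafilters containing a discrete closed subset of $\bC$ (otherwise $\m_\cU=(0)$); this is harmless, as it is not used anywhere in the proof of the proposition.
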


\begin{proof}
(1), (2) and (3) follow from (\cite{Henriksen52}, \S 3, Theorems 1-5). (4) is (\emph{loc. cit.}, Theorem 6) and (5) is (\emph{loc. cit.}, Theorems 7 and 8).
\end{proof}

\begin{remark}
\label{rem:holomorphic_functions_non-compact_RS_ideals}
More generally, by looking at the proof of Proposition \ref{prop:holomorphic_functions_non-compact_RS_ideals}, one can prove that the same conclusions as the above proposition hold by replacing $A$ by the ring of global analytic functions on a non-compact Riemann surface. 
\end{remark}

Let $X$ be a complex analytic space. Denote by $\cO_{X}$ its structure sheaf. Let $A\subset X$ be any subset. Then the space of germs of analytic functions on $A$ is
\begin{align*}
\Gamma(A,\cO_{X}) := \displaystyle\varinjlim_{U\supseteq A} \Gamma(U,\cO_{X}),
\end{align*}
where $U$ runs over the open subset of $X$ containing $A$. Conditions on $A$ to study algebraic properties of $\Gamma(A,\cO_{X})$ can be found in \cite{Frisch67,Alling68,Siu69,Dales74}. 

\begin{proposition}
\label{prop:holomorphic_functions_closed_disc_PID}
Let $R>0$ and let $\overline{D(R)}$ denote the closed disc of radius $R$ in $\bC$. Then the ring $\cO(\overline{D(R)})$ of germs of holomorphic functions on $\overline{D(R)}$ is a principal ideal domain (hence a Dedekind domain). 
\end{proposition}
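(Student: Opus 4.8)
The plan is to prove directly that every ideal of $A := \cO(\overline{D(R)})$ is principal. Since $A$ is obviously an integral domain, this already makes it a PID, and every PID is a Dedekind domain, so the statement follows. (The fact itself is classical; I sketch a self-contained argument.)

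\emph{Setup.} As $\overline{D(R)}$ is compact, the open discs $D(R')$ with $R' > R$ are cofinal among its open neighbourhoods in $\bC$, whence $A = \varinjlim_{R' > R} \cO(D(R'))$, a directed colimit along the restriction maps, which are injective by the identity theorem; in particular $A$ is a domain. Each $\cO(D(R'))$ is a Bézout domain by Example \ref{example:Prufer_rings}(3) (or Proposition \ref{prop:ring_of_analytic_functions_open_Riemann_surface}(1)), $D(R')$ being a connected non-compact Riemann surface, and this passes to $A$: given $f_1,\dots,f_n \in A$, realise them all in some $\cO(D(R'))$, write $(f_1,\dots,f_n)=(d)$ there, and note that the witnessing relations $d = \sum_i e_i f_i$ and $f_i = b_i d$ persist in $A$. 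So $A$ is Bézout. For $a \in \overline{D(R)}$ the vanishing order $\ord_a$ is well defined and additive on $A \setminus \{0\}$ (and extends $\bZ$-valuedly to germs of meromorphic functions on $\overline{D(R)}$), and for a fixed nonzero $f$ one has $\ord_a(f) = 0$ for all $a$ outside the finite set $Z_f$ of zeros of $f$ in $\overline{D(R)}$. A compactness argument on $\overline{D(R)}$ yields the key fact: a germ $\phi$ of meromorphic function on $\overline{D(R)}$ lies in $A$ if and only if $\ord_a(\phi) \geq 0$ for all $a \in \overline{D(R)}$ (absence of poles on the compact set $\overline{D(R)}$ forces holomorphy on a slightly larger disc).

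\emph{Principality.} Let $I \neq (0)$ be an ideal and set $n_a := \min\{\ord_a(g) : g \in I \setminus \{0\}\}$ for $a \in \overline{D(R)}$. Fixing one $f_0 \in I \setminus \{0\}$, we get $n_a \le \ord_a(f_0)$, so $n_a = 0$ for $a \notin Z_{f_0}$, a finite set; for each $a \in Z_{f_0}$ choose $g_a \in I$ with $\ord_a(g_a) = n_a$. Let $d$ generate the principal (by the Bézout property) ideal $(f_0, (g_a)_{a \in Z_{f_0}}) \subseteq I$. Additivity of $\ord_a$ forces $\ord_a(d) = n_a$ for all $a$: off $Z_{f_0}$ because $\ord_a(f_0) = 0$, and on $Z_{f_0}$ because $d$ divides each generator (so $\ord_a(d) \ge n_a$, all generators lying in $I$) while $\ord_a(d) \le \ord_a(g_a) = n_a$. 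Finally, every $g \in I$ satisfies $\ord_a(g/d) = \ord_a(g) - n_a \ge 0$ for all $a$, so $g/d \in A$ by the key fact, i.e. $g \in (d)$; hence $I = (d)$, and $A$ is a PID.

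\emph{Main obstacle.} The whole argument turns on the construction of $d$, and the one genuinely useful observation is that a \emph{single} choice of $f_0 \in I$ already pins down $n_a = 0$ outside a finite set, so that a finite Bézout combination of $f_0$ with one order-minimising element of $I$ at each of the finitely many remaining points produces an element of $I$ whose divisor on $\overline{D(R)}$ is exactly $\sum_a n_a[a]$. The remaining ingredients — the colimit description of $A$, its Bézout property, and the ``no poles $\Rightarrow$ holomorphic on a neighbourhood'' fact — are routine consequences of compactness of $\overline{D(R)}$ and the identity theorem.
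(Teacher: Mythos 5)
Your argument is correct, but it takes a genuinely different route from the paper. The paper's proof is short but leans on nontrivial external inputs: Noetherianity of $\cO(\overline{D(R)})$ from Frisch (see also Siu), unique factorisation from Dales, the Prüfer property from Example \ref{example:Prufer_rings} (4), and then the purely algebraic facts that a Noetherian Prüfer domain is Dedekind and a Dedekind UFD is a PID. You instead prove principality of \emph{every} ideal directly: the Bézout property passes from the rings $\cO(D(R'))$ to the colimit (this is the same mechanism as Example \ref{example:Prufer_rings} (4) and Proposition \ref{prop:ring_of_analytic_functions_open_Riemann_surface} (1)), a nonzero element has only finitely many zeros on the compact disc, and the ``no poles on $\overline{D(R)}$ implies holomorphic on a slightly larger disc'' fact (poles form a discrete closed subset of $D(R')$, so they cannot accumulate on the compact $\overline{D(R)}$) lets you conclude $g/d\in A$ once $\ord_a(d)=n_a$ everywhere. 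What your approach buys is a self-contained, elementary proof that in particular re-establishes Noetherianity rather than citing it; what the paper's approach buys is brevity and a clean reduction to standard structure theory. One small wording slip: in your parenthetical on the points of $Z_{f_0}$, the inequality $\ord_a(d)\ge n_a$ comes from $d\in(f_0,(g_a)_a)\subseteq I$ (membership in $I$ and the definition of $n_a$), not from $d$ dividing the generators — divisibility gives the opposite bound $\ord_a(d)\le\ord_a(g_a)=n_a$; both bounds are indeed present in your text, so this is only a matter of attribution, not a gap.
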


\begin{proof}
The fact that $\cO(\overline{D(R)})$ is Noetherian is a consequence of (\cite{Frisch67}, Théorème I.9), see also (\cite{Siu69}, Theorem 1). $\cO(\overline{D(R)})$ is a unique factorisation domain by (\cite{Dales74}, Corollary to Theorem 1). Now Example \ref{example:Prufer_rings} (4) implies that $\cO(\overline{D(R)})$ is a Noetherian Prüfer domain, hence is Dedekind. Moreover, a Dedekind unique factorisation domain is a principal ideal domain.
\end{proof}

\begin{proposition}
\label{prop:holomorphic_functions_closed_disc_ideals}
Let $R>0$ and let $\overline{D(R)}$ denote the closed disc of radius $R$ in $\bC$. Then the maximal ideals of the ring $\cO(\overline{D(R)})$ are of the form $\m_z := \{f\in \cO(\overline{D(R)}) : f(z) = 0\}$, for some $z\in \overline{D(R)}$.
\end{proposition}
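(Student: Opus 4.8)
The plan is to leverage Proposition~\ref{prop:holomorphic_functions_closed_disc_PID}: since $A := \cO(\overline{D(R)})$ is a principal ideal domain (in particular an integral domain which is not a field, as it contains the non-invertible coordinate function), every maximal ideal has the form $(f)$ for some irreducible — equivalently prime — element $f\in A$, with $f\neq 0$ because $(f)$ is a proper ideal. So it suffices to show that any irreducible $f$ is, up to a unit, of the form $z-z_0$ with $z_0\in\overline{D(R)}$, and then to identify $(z-z_0)$ with $\m_{z_0}$. Throughout I would freely replace a germ by a holomorphic representative on a small connected open neighbourhood $U$ of $\overline{D(R)}$; this bookkeeping, together with the compactness of $\overline{D(R)}$, is the only place the argument is not purely formal, so I would keep it explicit rather than suppress it.

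First I would check that $f$ vanishes somewhere on $\overline{D(R)}$. A nonzero $f\in A$ is represented by a holomorphic function on a connected neighbourhood $U$ that is not identically zero, so its zero set $Z(f)$ is a proper closed subset of $U$. If $Z(f)$ were disjoint from $\overline{D(R)}$, then $U':=U\setminus Z(f)$ would still be an open neighbourhood of $\overline{D(R)}$ on which $1/f$ is holomorphic, giving an inverse of $f$ in $A$ and contradicting that $(f)$ is proper. Hence $f(z_0)=0$ for some $z_0\in\overline{D(R)}$. This is exactly where compactness enters, and it explains why — in contrast to the non-compact situation of Proposition~\ref{prop:holomorphic_functions_non-compact_RS_ideals} — there are no ``free'' maximal ideals here: every proper principal ideal is forced to be fixed.

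Then I would produce the divisibility $(z-z_0)\mid f$ in $A$: the quotient $g:=f/(z-z_0)$, a priori holomorphic only on $U\setminus\{z_0\}$, extends holomorphically across $z_0$ by the removable singularity theorem because $f(z_0)=0$, so $g\in A$ and $f=(z-z_0)g$. Since $z-z_0$ is not a unit in $A$ (its reciprocal would have a pole at $z_0\in\overline{D(R)}$) and $f$ is irreducible, the cofactor $g$ must be a unit, whence $(f)=(z-z_0)$. Finally, $z-z_0\in\m_{z_0}$ gives $(z-z_0)\subseteq\m_{z_0}$, and $\m_{z_0}$ is a proper ideal because it is the kernel of the surjective evaluation homomorphism $A\to\bC$, $h\mapsto h(z_0)$; maximality of $(f)=(z-z_0)$ therefore forces $(z-z_0)=\m_{z_0}$, as desired. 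The same description of $\m_{z_0}$ as an evaluation kernel shows conversely that each $\m_{z_0}$ is maximal, so these are precisely the maximal ideals of $A$. The argument is essentially routine once the PID property is available; the only genuine care needed is the passage between germs on the compact set $\overline{D(R)}$ and honest holomorphic functions on shrinking neighbourhoods.
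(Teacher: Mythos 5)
Your argument is correct, and it takes a genuinely different route from the paper. You exploit Proposition~\ref{prop:holomorphic_functions_closed_disc_PID}: in the PID $A=\cO(\overline{D(R)})$ every maximal ideal is $(f)$ with $f$ irreducible, and an elementary complex-analytic argument (a representative of a non-unit must vanish somewhere on $\overline{D(R)}$, and then the removable singularity theorem gives $f=(z-z_0)g$ with $g$ a unit by irreducibility) identifies $(f)$ with $\m_{z_0}$; the converse direction via the evaluation homomorphism is immediate. The paper instead writes $\cO(\overline{D(R)})\cong\varinjlim_{R'>R}\cO(D(R'))$, identifies $\Spec$ of the limit with $\varprojlim_{R'>R}\Spec(\cO(D(R')))$, and uses the fixed/free dichotomy for primes of $\cO(D(R'))$ (Proposition~\ref{prop:holomorphic_functions_non-compact_RS_ideals} and Remark~\ref{rem:holomorphic_functions_non-compact_RS_ideals}) together with a zero-accumulation argument to rule out free components, so that the compatible system of fixed primes pins down a single $z\in\overline{D(R)}$. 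Your proof is more self-contained and elementary once the PID property is granted (and there is no circularity, since the paper's proof of the PID statement does not use the present proposition), while the paper's limit argument classifies all nonzero primes at each finite level without invoking unique factorisation and makes the role of the Henriksen-type structure theory explicit; in a PID the two outputs coincide since nonzero primes are maximal. Two cosmetic remarks: compactness enters your argument not so much in the step you flag (the complement of the closed zero set is open regardless) as in guaranteeing that germs admit representatives on slightly larger discs, which you do handle; and your parenthetical that irreducible elements are exactly the primes is fine here precisely because $A$ is a PID.
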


\begin{proof}
Denote $\cO(\overline{D(R)})$. For any $R'>R$, let $D(R')$ denote the open disc of radius $R'$ and $A_{R'}:=\cO(D(R'))$ the ring of holomorphic functions on $D(R')$. Then we have an isomorphism
\begin{align*}
\cO(\overline{D(R)}) \cong \displaystyle\varinjlim_{R'>R} A_{R'},
\end{align*}
where, for any $R<R''<R'$, we consider the inclusion $A_{R'} \subset A_{R''}$. It follows that we have an isomorphism of schemes
\begin{align*}
\Spec(\cO(\overline{D(R)})) \cong \displaystyle\varprojlim_{R'>R} \Spec(A_{R'}).
\end{align*}
Let $\p = (\p_{R'})_{R'>R} \in \Spec(\cO(\overline{D(R)}))$ be a non-zero prime ideal, namely, for any $R'>R$, $\p_{R'} \in \Spec(A_{R'})$ and, for any $R<R''<R'$, $\p_{R''}\cap A_{R'} = \p_{R'}$. Let us prove that, for any $R<R'$, $\p_{R'}$ is fixed. Assume that $\p_{R'}$ is free and non-zero for some $R<R'$. Let $f\in \p_{R'}\setminus\{0\}$. Then Proposition \ref{prop:holomorphic_functions_non-compact_RS_ideals} (3) together with Remark \ref{rem:holomorphic_functions_non-compact_RS_ideals} imply that $f$ has an infinite number of zeroes written as a sequence $(a_n)_{n\geq 0}$. Discreteness of $(a_n)_{n\geq 0}$ yields $|a_n| \to_{n\to+\infty} = R'$. Now for any $R<R''<R'$, $\p_{R''}$ is free. Thus the restriction of $f$ to $A_{R''}$ yields a non zero element of $\p_{R''}$, which has an infinite number of zeroes in $D(R'')$ thus accumulating at the boundary of $D(R'')$ which is included in the interior of $D(R')$. Hence we get a contradiction. Therefore, for any $R<R'$, $\p_{R'}$ is fixed and corresponds to some $\m_{z_{R'}} := \{f\in A_{R'} : f(z_{R'})=0\}$ for some $z_{R'}\in D(R')$. Since $(\p_{R'})_{R'>R}$ is a projective system, we obtain that there exits $z\in \overline{D(R)}$ such that, for any $R<R'$, we have $z_{R'}=z$. Conversely, for any $z\in \overline{D(R)}$, $\m_z$ is a maximal ideal of $\cO(\overline{D(R)})$.
\end{proof}

\subsection{Models over a Prüfer domain}
\label{subsub:models_over_Prüfer_domain}

Throughout this paragraph, we fix a Prüfer domain $A$ with fraction field $K$. 

Let $X \to \Spec(K)$ be a separated $K$-scheme of finite type. By a \emph{model} of $X$ over $A$, we mean a separated $A$-scheme $\cX\to \Spec(A)$ of finite type such that the generic fibre of $\cX$ is isomorphic to $X$. A model $\cX$ of $X$ over $A$ is respectively called \emph{projective}, \emph{flat}, \emph{coherent}, if the structural morphism $\cX \to \Spec(A)$ is projective, flat, of finite presentation. Note that if $\cX$ is a projective model of $X$, then $X$ is projective. 

If $A$ is a valuation ring with residue field $\kappa$, we denote by $\cX_{s}:=\cX \otimes_{A} \kappa$ the special fibre of $\cX$. In general, for any $y\in \Spec(A)$, we denote $\cX_{y} := \cX \otimes_{A}A_{y}$ and by $\cX_{y,s}$ the special fibre of $\cX_{y}$. 

Let $L$ be an invertible $\cO_{X}$-module. By a model $(\cX,\cL)$ of $(X,L)$ over $A$ we mean the data of a model $\cX$ of $X$ over $A$ together with an invertible $\cO_{\cX}$-module $\cL$ with generic fibre isomorphic to $L$. A model $(\cX,\cL)$ of $(X,L)$ over $\cL$ is respectively called flat, coherent, if the corresponding model $\cX$ of $X$ over $A$ is so.  If $A$ is a valuation ring, we denote by $\cL_s$ the restriction of $\cL$ to the special fibre $\cX_s$. In general, for any $y\in \Spec(A)$, we denote by $\cL_{y}$ the pullback of $\cL$ to $\cX_{y}$ and by $\cL_{y,s}$ the restriction of $\cL_y$ to the special fibre of $\cX_y$. 

\begin{example}
\label{example:model_line_bundle}
Let $L$ be a very ample line bundle on a projective $K$-scheme $X$. Denote by $\iota : X \hookrightarrow \mathbb{P}^{n}_{K}$ a corresponding closed immersion. Let $\cX$ denote the schematic closure of $X$ in $\mathbb{P}_{A}^{n}$. Then $\cX \to \Spec(A)$ is a model of $X$ over $A$ and the pullback $\cL$ of $\cO_{\mathbb{P}^{n}_{A}}$ on $\cX$ yields a model of $L$ over $\cX$, namely $(\cX,\cL)$ is a model of $(X,L)$. 
\end{example}

\begin{proposition} 
We assume that $A$ is a valuation ring with residue field $\kappa$. Let $X \to \Spec(K)$ be a projective $K$-scheme and $L$ be an invertible $\cO_{X}$-module. Let $(\cX,\cL)$ be a projective model of $(X,L)$ over $A$. 
\begin{itemize}
	\item[(1)] There exists a flat projective model $(\cX',\cL')$ of $(X,L)$ such that $\cL'=\cL_{|\cX'}$ and the special fibres of $\cX'$ and $\cX$ coincide.
	\item[(2)] There exists a coherent projective model $(\cX',\cL')$ of $(X,L)$ such that
	\begin{itemize}
		\item[(i)] $\cX$ is a closed subscheme of $\cX'$;
		\item[(ii)] the special fibres of $\cX'$ and $\cX$ coincide;
		\item[(iii)] $\cL'_{\cX}=\cL$.
	\end{itemize}
	\item[(3)] Assume that the restriction of $\cL$ to every fibre of $\cX\to \Spec(A)$ is ample. Then $\cL$ is ample.   
\end{itemize}
\end{proposition}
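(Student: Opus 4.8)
I would establish the three parts in order, feeding (2) into (3).

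\emph{Part (1): flattening by schematic closure.} Fix a closed immersion $\cX\hookrightarrow\bP^n_A$ and let $\cX'\subseteq\cX$ be the schematic closure of the generic fibre $X=\cX_{\eta}$, i.e.\ the closed subscheme defined by the subsheaf $\cT\subseteq\cO_{\cX}$ of local sections killed by some nonzero element of $A$. As $A$ is a valuation ring, hence a domain, $\cO_{\cX'}=\cO_{\cX}/\cT$ has $A$-torsion-free stalks, and torsion-free modules over the Prüfer domain $A$ are flat by Proposition~\ref{prop:prop_Prufer_domains}(3); so $\cX'\to\Spec(A)$ is flat, and it is projective as a closed subscheme of $\bP^n_A$. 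Inverting the nonzero elements of $A$ kills $\cT$, hence $\cX'_{\eta}=\cX_{\eta}=X$, so $\cX'$ is again a model of $X$; set $\cL':=\cL|_{\cX'}$, which restricts to $L$ on the common generic fibre. What still has to be checked is the identification of the special fibres — the step I expect to be genuinely delicate, see the final paragraph.

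\emph{Part (2): ideal truncation.} Since $\cX/A$ is projective, choosing an ample $\cH$ on $\cX$ and $a\gg 0$ both $\cL\otimes\cH^{\otimes a}$ and $\cH^{\otimes a}$ become very ample, so the product embedding realises $\cL$ as $\cO_{\bP}(1,-1)|_{\cX}$ for a closed immersion $\cX\hookrightarrow\bP:=\bP^p_A\times_A\bP^q_A$; write $\cX$ as the closed subscheme of $\bP$ cut out by a bihomogeneous ideal $I\subseteq A[\mathbf x,\mathbf y]$, not necessarily finitely generated. As $K[\mathbf x,\mathbf y]$ and $\kappa[\mathbf x,\mathbf y]$ are Noetherian, the ideals $IK[\mathbf x,\mathbf y]$ and $\overline{I}:=(I+\m_A A[\mathbf x,\mathbf y])/\m_A A[\mathbf x,\mathbf y]$ are finitely generated; lift finite bihomogeneous generating sets to families in $I$, let $I'\subseteq I$ be the ideal they generate, and let $\cX'\subseteq\bP$ be the corresponding closed subscheme. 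Then $\cX'\to\Spec(A)$ is projective and finitely presented; $I'\subseteq I$ makes $\cX$ a closed subscheme of $\cX'$; base change to $K$ gives $I'K[\mathbf x,\mathbf y]=IK[\mathbf x,\mathbf y]$, so $\cX'_{\eta}=X$; and base change to $\kappa$ gives that the image of $I'$ in $\kappa[\mathbf x,\mathbf y]$ equals $\overline{I}$, so $\cX'_{s}=\cX_{s}$. Setting $\cL':=\cO_{\bP}(1,-1)|_{\cX'}$, which restricts to $\cL$ on $\cX$ and to $L$ on the generic fibre, gives (i), (ii), (iii) and coherence.

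\emph{Part (3): fibrewise ampleness.} By (2) I may replace $(\cX,\cL)$ by a finitely presented projective model $(\cX',\cL')$ with $\cX'_{s}=\cX_{s}$ and $\cL'|_{\cX}=\cL$, and it suffices to show $\cL'$ is ample, since the restriction of an ample line bundle to a closed subscheme is ample. Now $\cL'|_{\cX'_{s}}=\cL|_{\cX_{s}}$ is ample by hypothesis, so the closed point $s$ of $\Spec(A)$ lies in $U:=\{\,y\in\Spec(A):\cL'_{y}\text{ is ample on }\cX'_{y}\,\}$, which is open because $\cX'\to\Spec(A)$ is proper and finitely presented (openness of the ample locus in families, EGA IV, \S9.6). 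In the spectrum of a valuation ring every nonempty closed set contains the unique closed point $s$, so every open set containing $s$ is the whole space; hence $U=\Spec(A)$ and $\cL'$ is ample on all fibres. The fibrewise criterion for relative ampleness of a proper, finitely presented morphism over an affine base (EGA III, 4.7.1, together with the approximation results of EGA IV, \S8.10) then shows $\cL'$ is $A$-ample, hence ample, as $\Spec(A)$ is affine.

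\emph{Main obstacle.} The only subtle point is the comparison of special fibres in Part (1): flatness of the schematic closure is immediate from torsion-freeness over the valuation ring, but a priori $\cX'_{s}$ is merely a closed subscheme of $\cX_{s}$, and it can be strictly smaller — precisely when $\cX$ carries components supported on the special fibre. Showing that the two agree is therefore the one place where a genuine argument is needed (a local description of $\cT$, i.e.\ control of the ``vertical'' part of $\cX$) rather than formal manipulation; Parts (2) and (3) amount to bookkeeping around ideal truncation and the classical behaviour of ampleness in flat families.
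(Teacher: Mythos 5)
Your parts (2) and (3) are correct. For (3) you take exactly the paper's route: the paper also passes to the coherent model furnished by (2) and invokes (\cite{EGAIV}, Corollaire (9.6.4)) before restricting back to $\cX$; your write-up just makes explicit the openness-of-the-ample-locus step together with the remark that the only open subset of $\Spec(A)$ containing the closed point is $\Spec(A)$ itself. This explicitness is in fact needed: the paper simply asserts that $\cL'$ is ``ample along the fibres'', although a priori only the special and generic fibres of $\cX'$ are known to agree with those of $\cX$, so your argument is the careful version of the paper's. For (2) the paper gives no argument but cites (\cite{ChenMori21}, Lemma 3.2.17), observing that the proof there does not use that the valuation ring has rank at most one; your truncation of the bihomogeneous defining ideal to a finitely generated sub-ideal whose images generate the correct ideals over $K$ and over $\kappa$ is essentially that proof written out (modulo the standard fact that $\cL\otimes\cH^{\otimes a}$ and $\cH^{\otimes a}$ are very ample for $a\gg 0$ over the affine base), so it is a sound self-contained substitute.

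For (1) you use the same construction as the paper: $\cX'$ is the closed subscheme cut out by the $A$-torsion $\cO_{\cX,\mathrm{tors}}$, flat because torsion-free modules over a valuation ring are flat (Proposition \ref{prop:prop_Prufer_domains} (3)). The step you leave open --- equality of the special fibres --- is precisely the step the paper asserts without proof, and your suspicion about it is justified. The equality amounts to the inclusion $\cO_{\cX,\mathrm{tors}}\subseteq\m\,\cO_{\cX}$, which fails as soon as $\cX$ has a component supported in the special fibre: over a discrete valuation ring $A$ with uniformiser $\pi$, take $\cX=V(\pi x_0)\subseteq\bP^1_A$; its generic fibre is the $K$-point $V(x_0)$, the torsion ideal is generated by $x_0$, so $\cX'=V(x_0)\cong\Spec(A)$ has special fibre a reduced point, while $\cX_s=\bP^1_{\kappa}$. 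No other choice of $\cX'$ can help, since a flat closed subscheme of $\cX$ with generic fibre $X$ necessarily has ideal equal to $\cO_{\cX,\mathrm{tors}}$ (its ideal is torsion because it vanishes generically, and it contains the torsion because the quotient is torsion-free); over a DVR a localisation--Nakayama argument even shows the special fibres agree only when the torsion vanishes, i.e.\ when $\cX$ was already flat. So the obstacle you flagged is not something a cleverer local analysis of the ``vertical part'' could remove: as stated, the special-fibre claim in (1) needs an additional hypothesis (no associated points of $\cX$ over non-generic points of $\Spec(A)$, in which case $\cO_{\cX}$ is already torsion-free and $\cX'=\cX$), and the corresponding assertion in the paper's own proof is unjustified and, in this generality, false.
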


\begin{proof}
Let $\cO_{\cX,\mathrm{tors}}$ denote the torsion part of $\cO_{\cX}$ as an $\cO_{\Spec(A)}$-module. Then the closed subscheme $\cX'$ of $\cX$ defined by the ideal sheaf $\cO_{\cX,\mathrm{tors}}$ is a projective model of $X$ with special fibre $\cX' \times_{\Spec(A)} \Spec(\kappa) \cong \cX \times_{\Spec(A)} \Spec(\kappa)$. Moreover, the morphism $\cX' \to \Spec(A)$ is flat since $\cO_{\cX'}$ is torsion free (cf. Proposition \ref{prop:prop_Prufer_domains} (3)).  By setting $\cL'_{|\cX'}$, we conclude the proof of (1).

(2) is (\cite{ChenMori21}, Lemma 3.2.17), by noting that the proof does not use the fact that the rank of the valuation ring is less than $1$.

Let us prove (3). Let $(\cX',\cL')$ be a model such that conditions (i)-(iii) of (2) hold. Since $\cX' \to \Spec(A)$ is a proper and finitely presented and $\cL'$ is ample along the fibres, (\cite{EGAIV}, Corollaire (9.6.4)) gives the ampleness of $\cL'$. Thus $\cL = \cL'_{\cX}$ is ample.
\end{proof}

\begin{proposition}
\label{prop:models_Prüfer_domains}
Let $X \to \Spec(K)$ be a projective $K$-scheme and $L$ be an invertible $\cO_{X}$-module. Let $(\cX,\cL)$ be a projective model of $(X,L)$ over $A$. 
\begin{itemize}
	\item[(1)] We have an isomorphism $H^{0}(X,L) \cong H^{0}(\cX,\cL)\otimes_{A} K$. 
	\item[(2)] Assume that $(\cX,\cL)$ is a flat model. Then the following hold.
		\begin{itemize}
			\item[(i)] $H^{0}(\cX,\cL)$ is a flat $A$-module.
			\item[(ii)] Let $y\in \Spec(A)$, denote by $\cX_y$ the fibre of $\cX \to \Spec(A)$ over $p$ and by $\cL_y$ the restriction of $\cL_y$. Then we have an injection $H^0(\cX,\cL) \otimes_{A} \kappa \hookrightarrow H^{0}(\cX_y,\cL_y)$.
		\end{itemize}	
	\item[(3)] Assume that $(\cX,\cL)$ is a coherent model. Then $H^{0}(\cX,\cL)$ is a finitely generated $A$-module. 
	\item[(4)] Assume that $(\cX,\cL)$ is a flat and coherent model. Then $H^{0}(\cX,\cL)$ is a projective $A$-module of finite type. In particular, if $A$ is Bézout, then $H^{0}(\cX,\cL)$ is a free module of finite rank.
\end{itemize}
\end{proposition}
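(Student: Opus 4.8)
The plan is to dispatch the four items in order; (1), (2)(i) and (2)(ii) should be soft consequences of flat base change and of the structure theory of Prüfer domains, with the genuine work concentrated in (3). For (1): since $\cX$ is projective over the affine scheme $\Spec(A)$ it is quasi-compact and quasi-separated, and $A\hookrightarrow K$ is flat (localisation at the zero ideal), so flat base change for quasi-coherent cohomology gives $H^0(\cX\otimes_A K,\cL\otimes_A K)\cong H^0(\cX,\cL)\otimes_A K$, and the left-hand side is $H^0(X,L)$ because $\cX\otimes_A K\cong X$. For (2)(i): under the flatness hypothesis, multiplication by any non-zero $a\in A$ is injective on $\cO_\cX$, hence on the invertible sheaf $\cL$, hence on $H^0(\cX,\cL)$; thus $H^0(\cX,\cL)$ is torsion-free over $A$, which by Proposition~\ref{prop:prop_Prufer_domains}~(3) is the same as flat over the Prüfer domain $A$.

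For (2)(ii) I would first base change along the flat morphism $A\to A_y$ (using (1) again) to reduce to the case where $A$ is a valuation ring with maximal ideal $\m$ and residue field $\kappa$, the point being then to prove that $H^0(\cX,\cL)\otimes_A\kappa\to H^0(\cX_s,\cL_s)$ is injective. Tensoring the exact sequence $0\to\m\cO_\cX\to\cO_\cX\to\cO_{\cX_s}\to0$ with the invertible sheaf $\cL$ and taking $H^0$ identifies the kernel of $H^0(\cX,\cL)\to H^0(\cX_s,\cL_s)$ with $H^0(\cX,\m\cL)$, where $\m\cL\subseteq\cL$ is the image of $\m\otimes\cL$; flatness of $\cX$ over $A$ is used here. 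It then suffices to show $H^0(\cX,\m\cL)=\m\,H^0(\cX,\cL)$, and for this I would exploit that in a valuation ring $\m=\bigcup_{0\neq a\in\m}aA$ is a filtered (indeed totally ordered) union, so $\m\cL=\varinjlim_a a\cL$ as subsheaves of $\cL$; since $\cX$ is quasi-compact and quasi-separated, $H^0$ commutes with this filtered colimit, and $H^0(\cX,a\cL)=a\,H^0(\cX,\cL)$ because $a$ is a non-zero-divisor on $\cL$. Passing to the colimit yields the identity.

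Part (3) is where I expect the main obstacle, since $A$ need not be Noetherian and Grothendieck's coherence theorem is not directly available. My plan is to use that $\cX\to\Spec(A)$ is proper and of finite presentation and $\cL$ is finitely presented, so that the complex of derived global sections $R\Gamma(\cX,\cL)$ is a \emph{pseudo-coherent} complex of $A$-modules (Kiehl's finiteness theorem in its finitely presented form; see EGA III, or \cite{stacks}). Such a complex is represented by a bounded-above complex $P^\bullet$ of finite free $A$-modules, so $H^0(\cX,\cL)=H^0(P^\bullet)$ is a quotient of $\ker(P^0\to P^1)$. Now the Prüfer domain $A$ is a coherent ring, because a finitely generated ideal is torsion-free, hence projective, hence finitely presented (Proposition~\ref{prop:prop_Prufer_domains}~(4)); over a coherent ring the kernel of a morphism between finite free modules is finitely presented, in particular finitely generated, and therefore so is its quotient $H^0(\cX,\cL)$. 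A Noetherian approximation argument — spreading $(\cX,\cL)$ out over a finite-type $\bZ$-subalgebra of $A$ and invoking the classical theorem there — is also conceivable, but transferring finite generation back to $A$ seems delicate, so I would favour the pseudo-coherence route.

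Finally, for (4) I would combine (2)(i) with (3): $H^0(\cX,\cL)$ is then finitely generated and torsion-free over the Prüfer domain $A$, hence projective of finite type by Proposition~\ref{prop:prop_Prufer_domains}~(3)--(4); and if $A$ is moreover Bézout, every finitely generated projective $A$-module is free of finite rank by Proposition~\ref{prop:prop_Prufer_domains}~(5), which gives the last assertion.
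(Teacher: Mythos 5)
Parts (1), (2) and (4) of your proposal are correct and essentially coincide with the paper's proof: (1) is the same flat base change argument, (2)(i) the same torsion-free-implies-flat argument over the Prüfer domain $A$, and (4) the same combination of (2)(i), (3) with Proposition \ref{prop:prop_Prufer_domains} (4)--(5). In (2)(ii) you deviate mildly: where the paper identifies $H^{0}(\cX,\cL\otimes f^{\ast}\m)$ with $\m H^{0}(\cX,\cL)$ via the projection formula and flatness of $\m$, you write $\m=\bigcup_{a\in\m\setminus\{0\}}aA$ as a filtered union of principal ideals, use that cohomology on the qcqs scheme $\cX$ commutes with filtered colimits and that $a\colon\cL\to\cL$ is injective by flatness, to get $H^{0}(\cX,\m\cL)=\varinjlim_{a}aH^{0}(\cX,\cL)=\m H^{0}(\cX,\cL)$; that substitute is valid, and the reduction to the valuation ring case by localising at $y$ is the same as in the paper.

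In (3), however, there is a genuine gap: Kiehl's finiteness theorem does not take ``$f$ proper and of finite presentation and $\cL$ finitely presented'' as its hypothesis; it requires $\cL$ to be pseudo-coherent \emph{relative to} $\Spec(A)$. Over a non-Noetherian base this is strictly stronger than finite presentation, and the conclusion genuinely fails without it: already for a closed immersion $\Spec(A/aA)\hookrightarrow\Spec(A)$ with the annihilator of $a$ not finitely generated, the sheaf is finitely presented but $R\Gamma=A/aA$ is not pseudo-coherent, so it is not represented by a bounded-above complex of finite free modules. Hence the step where you replace $R\Gamma(\cX,\cL)$ by such a complex $P^{\bullet}$ is unjustified as stated. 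It is true in the situation at hand, but the verification is exactly the ingredient the paper leads with and you omit: a Prüfer domain is \emph{stably} coherent, i.e. $A[x_{1},\dots,x_{n}]$ is coherent (\cite{Glaz89}, Theorem 7.3.3 and Corollary 7.3.4). Locally $\cX$ admits a finitely presented closed immersion into some $\bA^{n}_{A}$, and $\cL$ pushes forward to a finitely presented module over the coherent ring $A[x_{1},\dots,x_{n}]$, hence a pseudo-coherent one; this is precisely relative pseudo-coherence. Coherence of $A$ itself, the only coherence you invoke (and only at the final kernel step), is strictly weaker than stable coherence and does not supply this hypothesis; nor can you fall back on flatness, which would also give relative pseudo-coherence, since (3) does not assume the model flat. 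Once this is supplied, your pseudo-coherence route is sound and is in substance parallel to the paper's appeal to stable coherence of Prüfer domains together with a non-Noetherian direct image theorem (\cite{Ullrich95}, Theorem 3.5).
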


\begin{proof} 
\textbf{(1)} Since $A\to K$ is flat and $\cX\to \Spec(A)$ is qcqs, (1) follows from the flat base change theorem (\cite{Gortz10}, Corollary 12.8). 

\textbf{(2.i)} Since $\cX \to \Spec(A)$ is flat, we obtain that $H^{0}(\cX,\cL)$ is a torsion-free $A$-module. Now Proposition \ref{prop:prop_Prufer_domains} (3) implies that $H^{0}(\cX,\cL)$ is a flat $A$-module.

\textbf{(2.ii)} We first treat the case where $A$ is a valuation ring with maximal ideal $\m$ and residue field $\kappa$. As $f:\cX \to \Spec(A)$ is a flat, we have an exact sequence of $\cO_{X}$-modules 
\begin{center}
\begin{tikzcd}
0 \arrow[r] & f^{\star}\m \otimes \cL \arrow[r] & \cL \arrow[r] & \cL_s \arrow[r] & 0
\end{tikzcd}
\end{center}
and thus an injection $H^{0}(\cX,\cL)/H^{0}(\cX,\cL \otimes f^{\ast}\m) \hookrightarrow H^{0}(\cX_s,\cL_s)$. Now $\m$ is a torsion-free $A$-module, hence is flat. By the projection formula (\cite{Gortz23}, Proposition 22.81), $H^{0}(\cX,\cL \otimes f^{\ast}\m) \cong \m H^{0}(\cX,\cL)$. As $H^{0}(\cX,\cL)$ is a flat $A$-module we have 
\begin{align*}
H^{0}(\cX,\cL) \otimes_{A} \kappa \cong H^{0}(\cX,\cL)/\m H^{0}(\cX,\cL) \hookrightarrow H^{0}(\cX_s,\cL_s).
\end{align*}
We finally treat the general case. Let $y\in \Spec(A)$ and denote by $\p_y$ the corresponding prime ideal of $A$. Then $A_{\p_y}$ is a valuation ring and $(\cX \otimes_{A} A_{\p_y},\cL \otimes_{A} A_{\p_y})$ is a flat model of $(X,L)$ over $A_{\p_y}$ whose special fibre coincides with $\cX_s$. As $A\to A_{\p_y}$ is flat, the previous case combined with the flat base change theorem yields
\begin{align*}
H^0(\cX,\cL) \otimes_{A} \kappa \cong (H^{0}(\cX,\cL) \otimes_{A} A_{\p_y}) \otimes_{A_{\p_y}} \kappa \cong H^{0}(\cX \otimes_{A} A_{\p_y},\cL \otimes_{A} A_{\p_y}) \otimes_{A_{\p_y}} \kappa \hookrightarrow H^{0}(\cX_y,\cL_y),
\end{align*}
which gives the conclusion.

\textbf{(3)} First mention that Prüfer domains and valuation domains are stably coherent, namely any polynomial algebra with finitely many indeterminates over a Prüfer domain is coherent (cf. \cite{Glaz89}, Theorem 7.3.3 and Corollary 7.3.4). Since $\cX \to \Spec(A)$ is projective and of finite presentation, (\cite{Ullrich95}, Theorem 3.5) implies that $H^{0}(\cX,\cL)$ is a finitely generated $A$-module. 

\textbf{(4)} Finally, (4) is a consequence of (2.i) and (3) together with Proposition \ref{prop:prop_Prufer_domains} (4)-(5).
\end{proof}

\subsection{Zariski-Riemann spaces}
\label{sub:Zariski-Riemann_space}

Let $K$ be a field and let $k$ be a subring of $K$ (we do not necessarily assume that $k$ is a domain with quotient field $K$). Define the \emph{Zariski-Riemann} space of $K/k$, denoted by $\ZR(K/k)$ as the set of valuation rings of $K$ containing $k$. The set $\ZR(K/k)$ is equipped with a topology. It is defined as follows. For any sub-$k$-algebra $A\subset K$, let $E(A)$ denote the set of valuation rings on $\ZR(K/k)$ containing $k$. Then the sets $E(A)$, where $A$ runs over the set of sub-$k$-algebra of finite type of $K$, form a basis for a topology on $\ZR(K/k)$, it is called the \emph{Zariski topology}. There is a \emph{center map} $\ZR(K/k) \to \Spec(k)$ sending any $V\in \ZR(K/k)$ to $\m_V \cap k$.

By a \emph{projective model} $X$ of $K/k$, we mean a projective integral $k$-scheme $X$ with fraction field $K(X)=K$. We define the \emph{domination relation} between projective models of $K/k$ as follows. Let $X,Y$ be two projective models of $K/k$. We say that $Y$ \emph{dominates} $X$ if there exists a birational $k$-morphism of schemes $Y \to X$. The collection of all projective models of $K/k$ form an inverse system with respect to the domination relation. By the valuative criterion of properness, for any $V\in \ZR(K/k)$, for any projective model $X$ of $K/k$, there exists a unique $\xi_V \in X$ such that $V$ dominates $\cO_{X,\xi_V}$. This defines a map $\ZR(K/k) \to X$ which is compatible with the domination relation. Hence we have a map $d : V\in\ZR(K/k) \to (d(V))_{X}\in\varprojlim_{X} X$, where $X$ runs over the projective models of $K/k$. Moreover, one can defined a structure sheaf on $\ZR(K/k)$. For any open subset $U \subset \ZR(K/k)$, define $\cO_{\ZR(K/k)}(U)$ to be the intersection of the valuation rings $V \in U$. This defines sheaf of rings on $\ZR(K/k)$ such that $(\ZR(K/k),\cO_{\ZR(K/k)})$ is a locally ringed space and the previously defined map $d$ is a morphism of locally ringed spaces.

\begin{theorem}[\cite{Olberding15ZR}, Proposition 3.3]
\label{th:Zariski-Riemann_projective_limite}
Assume that $k$ is a domain with fraction field $\Frac(k)$ and that the extension $K/\Frac(k)$ is finitely generated.
\begin{itemize}
\item[(1)] The domination map $d$ defined above is a homeomorphism.
\item[(2)] Let $V\in \ZR(K/k)$. Then $V$ is the union of the local rings $\cO_{X,d(V)_X}$, where $X$ runs over the collection of projective models of $K/k$. 
\item[(3)] The map $d$ defined above is an isomorphism of locally ringed spaces.
\end{itemize} 
\end{theorem}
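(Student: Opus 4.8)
The plan is to prove (2) first, deduce injectivity of $d$ from it, then obtain surjectivity together with continuity and openness to get (1), and finally upgrade the homeomorphism to an isomorphism of locally ringed spaces by a stalk computation. Everything rests on one geometric construction. Since $K/\Frac(k)$ is finitely generated, there is a projective model $X_0$ of $K/k$ (take the scheme-theoretic image of $\Spec K$ in $\bP^m_k$ under $[1:\alpha_1:\cdots:\alpha_m]$ for a finite generating family $\alpha_i$; as $\Spec K$ is integral with residue field $K$, this image is a projective integral $k$-scheme with function field $K$). Given $a_1,\dots,a_n\in K^\times$ and a projective model $Y$ of $K/k$ dominating $X_0$, let $Y\langle a_1,\dots,a_n\rangle$ be the scheme-theoretic image of the morphism $\Spec K\to Y\times_k(\bP^1_k)^n$ determined by the generic point of $Y$ and $[1:a_i]$ in the $i$-th factor: it is again a projective model of $K/k$, the projection to $Y$ is birational, and the $i$-th projection gives a $k$-morphism $g_i\colon Y\langle a_1,\dots,a_n\rangle\to\bP^1_k$ restricting to $[1:a_i]$ on the generic point. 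Two facts are used repeatedly, for $V\in\ZR(K/k)$: (i) $a\in V$ iff the center of $V$ on $\bP^1_k$ along $[1:a]$ (the image of the closed point of $\Spec V$ in the valuative-criterion extension) lies in the chart $\Spec k[a]$ — if it lies there then $a\in\cO_{\bP^1_k,\,\cdot}\subseteq V$, and if it lies in the complement $\{a^{-1}=0\}\subseteq\Spec k[a^{-1}]$ then $a^{-1}\in\m_V$, so $a\notin V$; (ii) composing the valuative extension $\Spec V\to Y\langle a\rangle$ with $g$ shows this center equals $g(d(V)_{Y\langle a\rangle})$. Finally, any two projective models of $K/k$ are dominated by a common one (scheme-theoretic image of $\Spec K$ in their $k$-fibre product), so $\varprojlim_X X$ is a well-defined cofiltered limit.

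\textbf{Part (2) and injectivity.} Fix $V\in\ZR(K/k)$ and put $W:=\bigcup_X\cO_{X,d(V)_X}$. This is a filtered union of subrings of $K$, since each domination morphism $Y\to X$ induces a local inclusion $\cO_{X,d(V)_X}\hookrightarrow\cO_{Y,d(V)_Y}$, and $W\subseteq V$ because $V$ dominates each $\cO_{X,d(V)_X}$. Conversely, for $a\in V$, facts (i)–(ii) show that $g$ sends $d(V)_{X_0\langle a\rangle}$ into $\Spec k[a]$, so $a$ is a regular function there and $a\in\cO_{X_0\langle a\rangle,\,d(V)_{X_0\langle a\rangle}}\subseteq W$. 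Hence $V=W$, which is (2). Injectivity of $d$ is now immediate: if $d(V)=d(V')$ then $V=\bigcup_X\cO_{X,d(V)_X}=\bigcup_X\cO_{X,d(V')_X}=V'$.

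\textbf{Continuity, surjectivity, openness (hence (1)).} Continuity is routine: a basic open of $\varprojlim_X X$ is $\pi_{X_1}^{-1}(U)$ for $U\subseteq X_1$ open; covering $U$ by affine opens $\Spec A_j$ with $A_j$ finitely generated over $k$, one has $d^{-1}(\pi_{X_1}^{-1}(\Spec A_j))=\{V:A_j\subseteq V\}=E(A_j)$, a basic open. For surjectivity, let $(\xi_X)_X\in\varprojlim_X X$ and set $V:=\bigcup_X\cO_{X,\xi_X}$, a filtered union of subrings of $K$ with $\Frac(V)=K$ (as $V\supseteq\cO_{X_0,\xi_{X_0}}$). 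Then $V$ dominates each $\cO_{X,\xi_X}$: if some $a\in\m_{\xi_X}$ had $a^{-1}\in V$, choosing $Z$ dominating $X$ with $a^{-1}\in\cO_{Z,\xi_Z}$ would make $a$ a non-unit and $a^{-1}$ a unit of the local ring $\cO_{Z,\xi_Z}$, absurd; thus $\m_V\cap\cO_{X,\xi_X}=\m_{\xi_X}$. Moreover $V$ is a valuation ring: for $a\in K^\times$, by (i)–(ii) the point $\xi_{X_0\langle a\rangle}$ is sent by $g$ either into $\Spec k[a]$ (so $a\in V$) or into $\{a^{-1}=0\}$ (so $a^{-1}\in V$). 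Since $V$ dominates $\cO_{X,\xi_X}$ for all $X$, uniqueness of centers gives $d(V)_X=\xi_X$, i.e. $d$ is bijective. It is also open: for $E(A)$ with $A=k[a_1,\dots,a_n]$, put $X:=X_0\langle a_1,\dots,a_n\rangle$ and let $U\subseteq X$ be the open locus where every $a_i$ is regular (i.e. $U=\bigcap_i g_i^{-1}(\Spec k[a_i])$); the argument of (2) gives $d(V)_X\in U\iff a_i\in V\ (\forall i)\iff V\in E(A)$, so $d(E(A))=\pi_X^{-1}(U)$ by surjectivity. Therefore $d$ is a continuous open bijection, i.e. a homeomorphism, which is (1).

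\textbf{Part (3) and the main obstacle.} Since $d$ is already a morphism of locally ringed spaces and, by (1), a homeomorphism, it suffices to check it is an isomorphism on structure-sheaf stalks. On the source, the stalk of $\cO_{\ZR(K/k)}$ at $V$ is $V$: every section over an open containing $V$ lies in the defining intersection, which contains $V$, while conversely each $a\in V$ is a section over the open $E(k[a])=\{W:a\in W\}$. On the target, the stalk of $\cO_{\varprojlim_X X}=\varinjlim_X\pi_X^{-1}\cO_X$ at $d(V)$ is $\varinjlim_X\cO_{X,d(V)_X}=\bigcup_X\cO_{X,d(V)_X}$, which is $V$ by (2); the canonical comparison map is the tautological identification, hence an isomorphism, and (3) follows. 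The only nonformal ingredient in the whole argument is the construction $Y\mapsto Y\langle a_1,\dots,a_n\rangle$ — resolving, via graph closures inside products with $\bP^1_k$, the rational maps attached to finitely many elements of $K$ — together with facts (i)–(ii) on the behaviour of centers under the resulting $k$-morphisms to $\bP^1_k$; parts (2), surjectivity and openness all reduce to it, whereas continuity, the stalk computation, and cofilteredness of the system of models are purely formal.
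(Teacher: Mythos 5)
Your proof is correct and follows essentially the same route as the paper, which for this statement offers no argument of its own but simply cites the classical sources — (1) and (2) to the proof of Zariski--Samuel, Chap.~VI, \S 17, Theorem 41, and (3) to Kahn — and your construction $Y\langle a_1,\dots,a_n\rangle$ with the center arguments via the valuative criterion is exactly that classical proof written out in full. The only cosmetic point is that in the surjectivity step you refer to $\m_V$ before $V=\bigcup_X\cO_{X,\xi_X}$ is known to be local; this is harmless, since a filtered union of local rings along local homomorphisms is local (the Bourbaki fact, Chap.~IX, Appendice 1, Proposition 1, that the paper invokes elsewhere), or alternatively one can first run your valuation-ring dichotomy and only then the domination argument.
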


\begin{proof}
(1) and (2) follow from the proof of (\cite{ZariskiSamuelII}, Chap. VI, \S 17, Theorem 41).

(3) is (\cite{Kahn08}, Theorem 3.2.5).
\end{proof}

\subsection{Berkovich spaces}

\subsubsection{Banach rings}

Recall that a \emph{Banach ring} is a pair $(A,\|\cdot\|_A)$, where $A$ is a ring and $\|\cdot\|_A$ is a norm on $A$ such that $
A$ is complete metrised space. 

\begin{example}
\begin{itemize}
	\item[(1)] Let $(K,\va)$ be a complete valued fields. Then $(K,\va)$ is a Banach ring.
	\item[(2)] Let $K$ be a field, let $\va$, be an absolute value on $K$. Denote by $\va_{\triv}$ the trivial absolute value on $K$. Then $\|\cdot\|_{\hyb} := \max\{\va,\va_{\triv}\}$ is a norm on $K$ such that $(K,\|\cdot\|_{\hyb})$ is a Banach ring. The norm $\|\cdot\|_{\hyb}$ is called the \emph{hybrid norm} associated to $\va$. 
	\item[(3)] The construction of (2) can be generalised to the case of a ring. Let $A$ be an arbitrary ring, let $\|\cdot\|$ be any norm on $A$. Denote by $\|\cdot\|_{\triv}$ the trivial norm on $A$, namely $\|0\|_{\triv}:=0$ and for any $a\in A \setminus\{0\}$, we have $\|a\|_{\triv}:=1$. Define $\|\cdot\|_{\hyb}:=\max\{\|\cdot\|,\|\cdot\|_{\triv}\}$. Then $\|\cdot\|_{\hyb}$ is a Banach norm on $A$. In this articles, such a norm is called a \emph{hybrid norm} and the corresponding normed ring is called a \emph{hybrid ring}.
\end{itemize}
\end{example}

\begin{definition}
\label{def:uniform_Banach_ring}
Let $(A,\|\cdot\|)$ be a Banach ring. We define the \emph{spectral semi-norm} on $A$ by
\begin{align*}
\fonction{\|\cdot\|_{\sn}}{A}{\bR_{\geq 0}}{f}{\inf_{k\in \bN^{\times}}\|f^k\|^{\frac{1}{k}}.}
\end{align*}
Theorem 1.3.1 of \cite{Berko90} yields, for all $f\in A$, the equality $\|f\|_{\sp} = \max_{x\in \cM(A)} |f(x)|$. The norm $\|\cdot\|$ is called \emph{uniform}, and that $(A,\|\cdot\|)$is a \emph{uniform} Banach ring, if $\|\cdot\|$ is equivalent to the spectral semi-norm. In that case, $\|\cdot\|_{\sp}$ is a norm and we have a homeomorphism
\begin{align*}
\cM(A,\|\cdot\|) \cong \cM(A,\|\cdot\|_{\sp})
\end{align*}
induced by the identity on $A$. In practice, unless explicitly mentioned, when a ring $A$ is equipped with a uniform norm, we always assume that the norm is the spectral norm.
\end{definition}

\subsubsection{Analytification in the sense of Berkovich: completely valued field case}

In this paragraph, we fix a completely valued field $(k,|\cdot|)$.

Let $X$ be a $k$-scheme. Its \emph{analytification in the sense of Berkovich}, denoted by $X^{\an}$, is defined in the following way: a point $x\in X^{\an}$ is the data $(p,\va_{x})$ where $p \in X$ and $\va_{x}$ is an absolute value on $\kappa(x)$ extending the absolute value on $k$. $X^{\an}$ can be endowed with the Zariski topology: namely the coarsest topology on $X^{\an}$ such that the first projection $j : X^{\an} \to X$ is continuous. There exists a finer topology on $X^{\an}$, called the Berkovich topology: it is the initial topology on $X^{\an}$ with respect to the family defined by $j : X^{\an} \to X$ and the applications
\begin{align*}
\fonction{|f|_{\cdot}}{U^{\an}}{\bR_{\geq 0},}{x}{|f|_x,}
\end{align*}
where $U^{\an}$ is of the form $U^{\an}:=j^{-1}(U)$, with $U$ a Zariski open subset of $X$, and $f\in \cO_{X}(U)$. Endowed with this topology, $X^{\an}$ is a locally compact topological space. There are GAGA type results: namely, $X$ is separated, resp. proper iff $X^{\an}$ is Hausdorff, resp. compact Hausdorff. If $X$ is a scheme of finite type, $X^{\an}$ can be endowed with a sheaf of analytic functions. 

Let $L$ be a line bundle on $X$. A \emph{metric} on $L$ is a family $\varphi :=(\va_{\varphi}(x))_{x\in X^{\an}}$, where 
\begin{align*}
\forall x\in X^{\an},\quad \va_{\varphi}(x) : L(x) := L \otimes_{\cO_X} \widehat{\kappa}(x) \to \bR_{\geq 0}
\end{align*}
is a norm on the $\widehat{\kappa}(x)$-vector space $L(x)$. The metric $\varphi$ is called \emph{continuous} if for all $U\subset X$ open and for all $s\in H^{0}(U,L)$, the map $|s|_{\varphi} : U^{\an} \to \bR_{\geq 0}$ is continuous with respect to the Berkovich topology. 

Let $\varphi$ be a continuous metric on a line bundle $L$. Let 
\begin{align*}
\forall s\in H^{0}(X,L),\quad \|s\|_{\varphi} := \displaystyle\sup_{x\in X^{\an}} |s|_{\varphi}(x) \in \bR_{\geq 0},
\end{align*}
it defines a seminorm on $H^{0}(X,L)$. Moreover, if $X$ is reduced then $\|\cdot\|_{\varphi}$ defines a norm on $H^{0}(X,L)$. In general (\cite{ChenMori}, Proposition 2.1.16) implies that if a section $s\in H^{0}(X,L)$ satisfies $\|s\|_{\varphi}=0$, then there exists an integer $n\geq 1$ such that $s^{\otimes n}=0$.

\subsubsection{Analytification in the sense of Berkovich: global case}
\label{subsub:global_analytification}

We now briefly recall the global counterpart of the last paragraph. The relevant class of base global analytic objects are the so-called \emph{geometric base rings} (cf. \cite{LemanissierPoineau24}, Définition 3.3.8 for more details). This class includes many usual examples of Banach rings studied in analytic geometry (e.g. rings of integers of number fields, hybrid fields, discretely valued Dedekind rings).

Let $(A,\|\cdot\|)$ be a Banach ring. In (\cite{LemanissierPoineau24}, Chapitre 2), the authors define the category of analytic spaces over $(A,\|\cdot\|)$, which is denoted by $(A,\|\cdot\|)$-an. 

\begin{theorem}[\cite{LemanissierPoineau24}, Corollaire 4.1.3, Lemme 6.5.1 and Proposition 6.5.3]
\label{th:global_analytification}
Let $(A,\|\cdot\|)$ be a geometric base ring. Let $\cX \to \Spec(A)$ be an $A$-scheme which is locally of finite presentation. Then the functor 
\begin{align*}
\fonction{\Phi_{\cX}}{(A,\|\cdot\|)\mathrm{-an}}{\mathrm{Sets}}{Y}{\Hom(Y,\cX)}
\end{align*}
is representable. The $(A,\|\cdot\|)$-analytic space which represents $\Phi_{\cX}$ is called the \emph{analytification} of $\cX$ and is denoted by $\cX^{\an}$. Moreover, if $\cX$ is projective, then $\cX^{\an}$ is a compact Hausdorff topological space.
\end{theorem}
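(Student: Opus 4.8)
The plan is to follow the strategy of \cite{LemanissierPoineau24}: establish representability first for affine spaces, then for closed subschemes of affine spaces, then glue to obtain the general case, and finally deduce the topological statement in the projective case from properness of $\bP^{n}$ over the compact base $\cM(A)$. The starting point is that the foundational theory of $(A,\|\cdot\|)$-an provides, for each $n\in\bN$, an analytic affine space $\bA^{n,\an}_{A}$ carrying $n$ global analytic coordinates; the hypothesis that $(A,\|\cdot\|)$ is a geometric base ring is precisely what makes these building blocks well behaved. Directly from the construction of $\bA^{n,\an}_{A}$ one checks the universal property $\Hom(Y,\bA^{n,\an}_{A}) = \cO(Y)^{n}$ for every $(A,\|\cdot\|)$-analytic space $Y$, and the right-hand side is canonically $\Hom_{A}(Y,\bA^{n}_{A})$ when $Y$ is viewed as a locally ringed space over $A$. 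Hence $\Phi_{\bA^{n}_{A}}$ is represented by $\bA^{n,\an}_{A}$.

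Next, any affine $A$-scheme of finite presentation is a closed subscheme $\cX = \Spec(A[x_{1},\dots,x_{n}]/(f_{1},\dots,f_{m}))$ of some $\bA^{n}_{A}$. The category $(A,\|\cdot\|)$-an admits closed analytic subspaces, so one defines $\cX^{\an}$ to be the closed subspace of $\bA^{n,\an}_{A}$ cut out by the images of $f_{1},\dots,f_{m}$. A morphism $Y \to \bA^{n,\an}_{A}$ corresponds to a tuple $(g_{1},\dots,g_{n})\in\cO(Y)^{n}$, and it factors through $\cX^{\an}$ exactly when $f_{j}(g_{1},\dots,g_{n}) = 0$ in $\cO(Y)$ for all $j$, which is precisely the condition that the associated morphism $Y\to\bA^{n}_{A}$ factor through $\cX$; so $\cX^{\an}$ represents $\Phi_{\cX}$. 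For a general $\cX$ locally of finite presentation over $A$, choose an open cover by affine opens of finite presentation with intersections covered by the same; since open immersions of schemes analytify to open immersions of analytic spaces and the resulting charts satisfy the cocycle condition (by uniqueness in the universal property), they glue to an analytic space $\cX^{\an}$. That $\cX^{\an}$ represents $\Phi_{\cX}$ is then local on source and target: a morphism to $\cX$ is the same as a compatible family of morphisms to the chosen affine opens, and likewise on the analytic side, since the analytified charts form an open cover of $\cX^{\an}$ and morphisms of analytic spaces glue.

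For the last assertion, suppose $\cX$ is projective over $A$ and fix a closed immersion $\cX\hookrightarrow\bP^{n}_{A}$. Applying the previous paragraph to $\bP^{n}_{A}$ (glued from its $n+1$ standard charts) realizes $\cX^{\an}$ as a closed analytic subspace of $(\bP^{n}_{A})^{\an}$, so it suffices to know that $(\bP^{n}_{A})^{\an}$ is compact Hausdorff. Hausdorffness follows because $\cX\to\Spec(A)$ is separated, which by the GAGA-type compatibilities of the theory forces $\cX^{\an}$ to be separated as an analytic space, while $\cM(A)$ is Hausdorff, being the Berkovich spectrum of a Banach ring. For compactness, the structural map $(\bP^{n}_{A})^{\an}\to\cM(A)$ is proper — projectivity is preserved by analytification, exactly as in the complete valued field case recalled above — its base $\cM(A)$ is compact since $(A,\|\cdot\|)$ is a Banach ring, and its fibre over $x\in\cM(A)$ is the classical compact Berkovich space $(\bP^{n}_{\widehat{\kappa(x)}})^{\an}$; a proper map with compact base has compact total space, and a closed subspace of a compact Hausdorff space is again compact Hausdorff, giving the claim for $\cX^{\an}$.

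The genuine obstacle lies entirely in the first two steps, namely in setting up the category $(A,\|\cdot\|)$-an well enough that analytification is functorial, sends open and closed immersions to open and closed immersions, is compatible with the Zariski topology and with fibre products, and admits gluing — this is the substance of \cite{LemanissierPoineau24}, and it is where the geometric base ring hypothesis is essential. Once that machinery is in place, the reduction to $\bP^{n}$ and the compactness statement are formal consequences of the properness of projective space and the compactness of $\cM(A)$.
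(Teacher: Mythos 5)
This statement is not proved in the paper at all: it is imported verbatim from \cite{LemanissierPoineau24} (Corollaire 4.1.3, Lemme 6.5.1, Proposition 6.5.3), so there is no internal proof to compare your argument against. What you have written is a faithful reconstruction of the strategy of the cited source: representability of $\Phi_{\bA^n_A}$ via the universal property of the analytic affine space, passage to closed subschemes of $\bA^n_A$ cut out by finitely many equations, gluing along affine charts for a general $\cX$ locally of finite presentation, and then, for projective $\cX$, reduction via a closed immersion $\cX\hookrightarrow \bP^n_A$ to compactness and Hausdorffness of $(\bP^n_A)^{\an}$, obtained from topological properness of $(\bP^n_A)^{\an}\to\cM(A)$ together with compactness of $\cM(A)$ and of the fibres $(\bP^n_{\widehat{\kappa(x)}})^{\an}$. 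At this level of granularity the outline is correct and is essentially the approach of \cite{LemanissierPoineau24}.

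Be aware, though, that as a proof it is a schema rather than an argument: every load-bearing step (existence and functoriality of the category $(A,\|\cdot\|)$-an, the fact that open and closed immersions analytify to open and closed immersions, gluing of analytic spaces and of morphisms, separatedness and topological properness of analytified projective morphisms, and the role of the geometric base ring hypothesis in all of this) is delegated to the very reference the theorem is quoted from, as you yourself acknowledge in your final paragraph. That is acceptable here precisely because the paper treats the statement as external input; but nothing in your text could substitute for the cited results, and the author's choice to quote \cite{LemanissierPoineau24} rather than reprove it is the same judgement you are implicitly making.
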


\section{Pseudo-absolute values}
\label{sec:sav}

In this section, we introduce the main object of this paper: pseudo-absolute values.

\subsection{Definitions}

\begin{definition}
\label{def:semi-absolute_value}
Let $K$ be a field. A \emph{pseudo-absolute value} on $K$ is any map $\va : K \to [0,+\infty]$ such that the following conditions hold:
\begin{itemize}
	\item[(i)] $|1| = 1$ and $|0|=0$;
	\item[(ii)] for all $a,b\in K$, $|a+b| \leq |a|+|b|$;
	\item[(iii)] for all $a,b\in K$ such that $\{|a|,|b|\}\neq \{0,+\infty\}$, $|ab| = |a||b|$. 
\end{itemize}
The set of all pseudo-absolute values on $K$ is denoted by $M_K$.
\end{definition}

\begin{proposition}
\label{prop:sav_valuation_ring}
Let $\va$ be a pseudo-absolute value on a field $K$. Then $A_{\va} := \{a\in K : |a| \neq +\infty\}$ is a valuation ring of $K$ with maximal ideal $\m_{\va} := \{a\in A : |a|=0\}$. Further, $\va$ induces a multiplicative semi-norm on $A_{\va}$ with kernel $\m_{\va}$.
\end{proposition}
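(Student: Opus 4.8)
The plan is to verify the three defining properties of a valuation ring of $K$ (closure under addition and multiplication, containing $0$ and $1$, and the key property that for every $x \in K^\times$ either $x \in A_{\va}$ or $x^{-1} \in A_{\va}$), then identify the maximal ideal and deduce the semi-norm statement. First I would observe that $0,1 \in A_{\va}$ since $|0|=0$ and $|1|=1$ are both finite. For closure under addition, if $a,b \in A_{\va}$ then $|a+b| \le |a|+|b| < +\infty$ by the triangle inequality (ii), so $a+b \in A_{\va}$. For closure under multiplication, if $a,b \in A_{\va}$ then $|a|,|b| \in [0,+\infty)$, so the pair $\{|a|,|b|\}$ is certainly not equal to $\{0,+\infty\}$, and property (iii) gives $|ab| = |a||b| < +\infty$; hence $ab \in A_{\va}$. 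So $A_{\va}$ is a subring of $K$.

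Next I would establish the valuation ring property. Take $x \in K^\times$; I claim $|x| \ne 0$ implies $|x|\cdot|x^{-1}| = 1$, and more generally that $x \notin A_{\va}$ forces $x^{-1} \in \m_{\va}$. The argument: first note $|x| = 0$ and $|x| = +\infty$ cannot both happen, and we should rule out bad interactions with $x^{-1}$. Apply (iii) to the pair $a = x$, $b = x^{-1}$: this is legitimate provided $\{|x|,|x^{-1}|\} \ne \{0,+\infty\}$, in which case $|x|\cdot|x^{-1}| = |x x^{-1}| = |1| = 1$. The only way (iii) fails to apply directly is if $\{|x|,|x^{-1}|\} = \{0,+\infty\}$; but in that case one of $x, x^{-1}$ already lies in $A_{\va}$ (the one with value $0$), and the other is its inverse, so this case is consistent with the claim as well. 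When (iii) does apply and gives $|x|\cdot|x^{-1}| = 1$ in $[0,+\infty]$, the possibilities for the pair $(|x|, |x^{-1}|)$ are exactly: both finite and nonzero (then $x \in A_{\va}$), or $|x| = +\infty, |x^{-1}| = 0$ (then $x^{-1} \in A_{\va}$), or $|x| = 0, |x^{-1}| = +\infty$ (then $x \in A_{\va}$). In every case at least one of $x, x^{-1}$ is in $A_{\va}$, so $A_{\va}$ is a valuation ring. I expect the main obstacle here to be careful bookkeeping of the exceptional case $\{|a|,|b|\} = \{0,+\infty\}$ in (iii), making sure the product $0 \cdot (+\infty)$ convention never actually needs to be invoked and that no circularity creeps in.

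Then I would identify $\m_{\va}$. The non-units of the valuation ring $A_{\va}$ are exactly the elements $a$ with $a^{-1} \notin A_{\va}$ (together with $0$); I will show these are precisely the $a$ with $|a| = 0$. If $|a| = 0$ with $a \ne 0$, then applying (iii) to $a, a^{-1}$ is not allowed only if $|a^{-1}| = +\infty$ — and indeed if $|a^{-1}|$ were finite, then (iii) would apply (the pair $\{0, |a^{-1}|\}$ with $|a^{-1}| < \infty$ is not $\{0,+\infty\}$) and give $|1| = |a|\cdot|a^{-1}| = 0$, contradicting (i); so $|a^{-1}| = +\infty$, i.e.\ $a^{-1} \notin A_{\va}$, so $a$ is a non-unit. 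Conversely, if $a \in A_{\va}$ is a non-unit and $a \ne 0$, then $a^{-1} \notin A_{\va}$, so $|a^{-1}| = +\infty$; since $a \in A_{\va}$, $|a| < \infty$, and then $\{|a|, |a^{-1}|\}$ with $|a| < \infty$ equals $\{0,+\infty\}$ only if $|a| = 0$ — but we still must rule out $|a| \in (0,\infty)$: if $|a| \in (0,\infty)$, property (iii) applies to $a, a^{-1}$ (the pair is not $\{0,+\infty\}$) giving $1 = |a| \cdot (+\infty) = +\infty$, absurd. Hence $|a| = 0$. So $\m_{\va} = \{a \in A_{\va} : |a| = 0\}$ is the unique maximal ideal.

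Finally, for the semi-norm statement: restricting $\va$ to $A_{\va}$ gives a map $A_{\va} \to [0,+\infty)$ (finite by definition of $A_{\va}$) with $|0| = 0$, $|1| = 1$, subadditive by (ii), and multiplicative because for $a,b \in A_{\va}$ the values $|a|,|b|$ are finite so (iii) unconditionally yields $|ab| = |a||b|$. Thus $\va|_{A_{\va}}$ is a multiplicative semi-norm on $A_{\va}$, and its kernel $\{a \in A_{\va} : |a| = 0\}$ is exactly $\m_{\va}$ by the previous paragraph. This completes the proof.
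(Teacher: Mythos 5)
Your proof is correct and takes essentially the same route as the paper: both verify the subring property from (i)--(iii), obtain the valuation property by applying (iii) to the pair $x, x^{-1}$ and using $|1|=1$ to rule out bad combinations, and identify $\m_{\va}$ as the non-units (the paper phrases this as every nonzero class of $A_{\va}/\m_{\va}$ having an invertible representative). The only quibble is that your opening claim ``$|x|\neq 0$ implies $|x|\cdot|x^{-1}|=1$'' is loosely stated when $|x|=+\infty$, but your subsequent case analysis handles that situation correctly, so nothing is missing.
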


\begin{proof}
From $(i),(ii)$, $A_{\va}$ and $\m_{\va}$ are Abelian subgroups of $K$. From $(i),(iii)$, $A_{\va}$ is an integral subring of $K$ and $\m_{\va}\subset A_{\va}$ is an ideal. 

We show that $A_{\va}$ is a valuation ring of $K$. It is enough to treat the $A_{\va}\neq K$ case. Let $x\in K\setminus A_{\va}$. If $|x^{-1}|=+\infty$, $(iii)$ yield $1=|1|=|x\cdot x^{-1}| = +\infty$, contradicting $(i)$. Hence $|x^{-1}| \neq +\infty$, i.e. $x^{-1}\in A_{\va}$. 

We now prove that $\m_{\va}$ is the maximal ideal of $A_{\va}$. Let $\overline{a}\in A_{\va}/\m_{\va}\setminus \{0\}$ and fix a representative $a\in A_{\va}$ of $\overline{a}$. Then $a\in A_{\va}^{\times}$ and $a^{-1}$ is a representative of $\overline{a}$. Hence $\m_{\va}$ is the maximal ideal of $A_{\va}$. The last statement is a direct consequences of $(i)-(iii)$. 
\end{proof}

\begin{notation}
\label{notation:sav}
Let $K$ be a field. 
\begin{itemize}
	\item[(1)] Let $\va$ be a pseudo-absolute value on $K$. We call
	\begin{itemize}
		\item $A_{\va}$ the \emph{finiteness ring} of $\va$;
		\item $\m_{\va}$ the \emph{kernel} of $\va$;
		\item $\kappa_v := A_{\va}/\m_{\va}$ the \emph{residue field} of $v$.
		\item $v_{\va} : K \to \Gamma_{v_{\va}} \cup\{\infty\}$ the \emph{underlying valuation} of $\va$.
	\end{itemize}
	\item[(2)] By "let $(\va,A,\m,\kappa)$ be a pseudo-absolute value", we mean that $\va$ is a pseudo-absolute value on $K$ with finiteness ring $A$, kernel $\m$, residue field $\kappa$. 
	\item[(3)] By abuse of notation, by "let $v$ be a pseudo-absolute value" on $K$, we mean the pseudo-absolute value $(\va_v,A_v,\m_v,\kappa_v)$.
	\item[(4)] By abuse of notation, if $v$ is a pseudo-absolute value on $K$, we denote by $v:K \to \Gamma_{v} \cup\{\infty\}$ the underlying valuation of $\va_v$.
	\item[(5)] Let $v$ be a pseudo-absolute value on $K$. The map $\va_v : K \to [0,+\infty]$ is uniquely determined by the \emph{residue absolute value} on the residue field $\kappa_v$, namely the absolute value defined by $\tilde{|\overline{x}|_v} := |x|$ for all $\overline{x}\in \kappa_v$, where $x_v$ denotes any representative of $\overline{x}$ in $A_v$. By abuse of notation, we shall denote by $\va_v$ the residue absolute value. We denote by $\widehat{\kappa_v}$ the \emph{completed residue field}, namely the completion of $\kappa_v$ with respect to the residue absolute value
\end{itemize}
\end{notation}

\begin{remark}
\label{rem:characterisation_sav}
The construction of Proposition \ref{prop:sav_valuation_ring} can be reversed. Let $A$ be a valuation ring of a field $K$ with maximal ideal $\m$. Let $\va$ be a multiplicative semi-norm on $A$. Then $\va$ can be extended to $K$ by setting $|x| = \infty$ if $x\in K\setminus A$. Then $\va : K \to [0,+\infty]$ defines a pseudo-absolute value on $K$ with finiteness ring $A$ and kernel $\m$. 
\end{remark}

In the following, we shall implicitly use Remark \ref{rem:characterisation_sav} and we shall often describe a pseudo-absolute value by specifying the finiteness ring and the residue absolute value.

\begin{definition}
\label{def:properties_sav}
Let $v$ be a pseudo-absolute value on a field $K$. 
\begin{itemize}
	\item[(1)] The \emph{rank} of $v$ is defined as the rank of the finiteness ring $A_v$. It is denoted by $\rank(v)$.
	\item[(2)] Likewise, the \emph{rational rank} of $v$ is defined as the rational rank of the the finiteness ring $A_v$. It is denoted by $\rr(v)$.
	\item[(3)] $v$ is respectively called \emph{Archimedean}, \emph{non-Archimedean}, \emph{residually trivial} if the residue absolute value is Archimedean, non-Archimedean, trivial. 
\end{itemize}
\end{definition}

\begin{remark}
\label{rem:relation_sav_composite_valuation}
The notion of pseudo-absolute value is related the notion of composite valuations (cf. Definition \ref{def:composite_valuation}). Indeed, a non-Archimedean pseudo-absolute value on a field $K$ is nothing else that the data of a valuation $v$ of $K$ with a specified decomposition $v = v' \circ \overline{v}$, where $v'$ is a (general) valuation of $K$ and $\overline{v}$ is a rank one valuation of the residue field of $v'$. Likewise, an Archimedean pseudo-absolute value on $K$ is the "composition" of a valuation of $K$ with an Archimedean absolute value. Roughly speaking, a pseudo-absolute value can be seen as the composition of a valuation with a "real valuation".
\end{remark}

\subsection{Example of pseudo-absolute values}

\begin{example} 
\label{ex:sav}
\begin{itemize}
	\item[(1)] Any usual absolute value $\va$ on a field $K$ defines a pseudo-absolute value with finiteness ring $K$ and trivial underlying valuation.
	\item[(2)] Let $z\in \bP^{1}_{\bC}$ and $\epsilon \in ]0,1]$. Denote by $\va_{\infty}$ the usual absolute value on $\bC$. Then the map 
	\begin{align*}
	\fonction{\va_{z,\epsilon}}{\bC(T)}{[0,+\infty]}{P}{|P(z)|_{\infty}^{\epsilon}}
	\end{align*}
defines a pseudo-absolute value on $\bC(T)$ denoted by $v_{z,\epsilon,\infty}$, where $P(\infty) \in \bP^{1}_{\bC}$ denotes the evaluation in $0$ of $Q(T):=P(1/T)$. Its finiteness ring is $A_{z} := \{f\in \bC(T) : \ord(f,z) \geq 0\}$, its kernel is $\m_{z} := \{f\in \bC(T) : \ord(f,z) > 0\}$, and its residue field is $\bC$ endowed with the absolute value $\va_{\infty}^{\epsilon}$.
	\item[(3)] Let $K=\cM(U)$ be the field of meromorphic functions on a non-compact Riemann surface $U$ and denote by $\cO(U)$ the ring of analytic functions on $U$. Then $\cO(U)$ is a Prüfer domain (cf. \S \ref{subsub:Prufer_domains}). Let $z\in U$, then the localisation $A_z := \cO(U)_{\m_z}$ of $\cO(U)$ at the maximal ideal of functions on $U$ vanishing in  $z$ is a valuation ring of $K$ with residue field $\bC$. For all $\epsilon\in]0,1]$, let $v_{z,\epsilon,\ar} \in M_{K}$ be the pseudo-absolute value on $K$ with valuation ring $A_z$ and residue absolute value $\va_{\ar}^{\epsilon}$. 
	\item[(4)] Let $K=\bQ(T)$ and let $t\in [0,1]$ such that $e^{2i\pi t}\in \overline{\bQ}$. Denote by $\m_t$ the ideal of $\bQ[T]$ generated by the minimal polynomial of $e^{2i\pi t}$. For all $\epsilon\in]0,1]$, let $v_{t,\epsilon,\infty}$ be the pseudo-absolute value on $K$ defined by
\begin{align*}
\forall P\in\bQ[T]_{\m_t},\quad |P|_{t,\epsilon,\ar} := |P(e^{2\pi i t})|_{\infty},
\end{align*}
where $|\cdot|_{\infty}$ denotes the usual absolute value on $\bC$.
\end{itemize}
	\item[(5)] Let $K$ be a field and $(A,\m)$ be a valuation ring of $K$. Then there is a residually trivial pseudo-absolute value $v_{A,\triv}$ whose finiteness ring is $(A,\m)$ and the residue absolute value is the trivial absolute value on $A/\m$. Conversely, all residually trivial pseudo-absolute value arise this way. 
\end{example}

\subsection{Extension of pseudo-absolute values}

\begin{definition}
\label{def:extension_of_sav}
Let $K'/K$ be a field extension. Let $v=(\va,A,\m,\kappa)$, resp. $v'=(\va',A',\m',\kappa')$, be a pseudo-absolute value on $K$, resp. on $K'$. If $|x|' = |x|$ for all $x\in K$, we say that $\va'$ \emph{extends} (alternatively is \emph{above}) $\va$. In that case, we use the notation $v'|v$.
\end{definition} 

\begin{proposition}
\label{prop:extension_sav}
Let $K'/K$ be a field extension. Let $v=(\va,A,\m,\kappa)$, resp. $v'=(\va',A',\m',\kappa')$, be a pseudo-absolute value on $K$, resp. on $K'$. Assume that $v'|v$. Then
\begin{itemize}
	\item[(i)] $A'$ is an extension of $A$, namely there is an injective local morphism $A \to A'$; 
	\item[(ii)] the residue absolute value of $\va'$ extends the one of $\va$.
\end{itemize}
Conversely, given any extension $A\to A'$ of valuation rings (of $K$ and $K'$ respectively) endowed with multiplicative semi-norms with kernel the maximal ideal satisfying $(ii)$, the induced pseudo-absolute value on $K'$ extends the induced pseudo-absolute value on $K$. 
\end{proposition}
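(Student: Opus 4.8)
The plan is to unwind the definitions on both sides, relying on Proposition~\ref{prop:sav_valuation_ring} and the reverse construction of Remark~\ref{rem:characterisation_sav}: together these say that a pseudo-absolute value is the same datum as a valuation ring equipped with a multiplicative semi-norm whose kernel is the maximal ideal, equivalently a valuation ring together with an absolute value on its residue field.

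For the forward implication, I would start from the hypothesis $|x|'=|x|$ for all $x\in K$. Assertion (i) is then immediate: $A=\{x\in K:|x|<+\infty\}=\{x\in K:|x|'<+\infty\}=A'\cap K$, so the field embedding $K\hookrightarrow K'$ restricts to an injective ring morphism $A\hookrightarrow A'$; and $\m=\{x\in K:|x|=0\}=\{x\in K:|x|'=0\}=\m'\cap A$, so this morphism is local. For (ii), the local morphism $A\hookrightarrow A'$ induces an embedding of residue fields $\kappa=A/\m\hookrightarrow A'/\m'=\kappa'$, and for $\overline x\in\kappa$ with representative $x\in A$ the residue absolute value of $v'$ at the image of $\overline x$ equals $|x|'=|x|$, i.e. the residue absolute value of $v$ at $\overline x$. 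Hence the residue absolute value of $v'$ restricts to that of $v$ along $\kappa\hookrightarrow\kappa'$.

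For the converse, I would take as input an injective local morphism $A\hookrightarrow A'$ of valuation rings of $K$ and $K'$ (compatible with $K\hookrightarrow K'$), each carrying a multiplicative semi-norm with kernel the maximal ideal, such that the residue absolute value on $\kappa'$ restricts to the one on $\kappa$; let $v$ and $v'$ be the associated pseudo-absolute values furnished by Remark~\ref{rem:characterisation_sav}. It then remains to check $|x|'=|x|$ for all $x\in K$, which I would split into two cases. If $x\in A$, then $|x|$ and $|x|'$ are, by construction, the residue absolute values of the class of $x$ in $\kappa$ and in $\kappa'$ respectively, and these agree by hypothesis. If $x\in K\setminus A$, then $x^{-1}$ lies in the maximal ideal $\m$ of the valuation ring $A$, so by locality $x^{-1}\in\m\subseteq\m'$; thus $x^{-1}$ is a non-unit of the valuation ring $A'$, which forces $x\notin A'$ and hence $|x|'=+\infty=|x|$.

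There is no real obstacle here: the statement is a translation exercise between the three equivalent descriptions of pseudo-absolute values. The only non-formal point is the case $x\in K\setminus A$ in the converse, where one genuinely uses that $A\hookrightarrow A'$ is a \emph{local} morphism (through the inclusion $\m\subseteq\m'$) to ensure that elements infinite for $v$ remain infinite for $v'$; and one should be careful to record that the embedding $\kappa\hookrightarrow\kappa'$ appearing in (ii) is exactly the one induced by the local morphism of finiteness rings, so that the phrase ``the residue absolute value of $v'$ extends that of $v$'' is unambiguous.
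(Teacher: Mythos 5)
Your proposal is correct and follows essentially the same route as the paper: the forward direction is a direct unwinding of Definition \ref{def:extension_of_sav}, and the converse hinges on the same two observations, namely that (ii) gives $|x|'=|x|$ on $A$ and that locality of $A\to A'$ forces $A'\cap K=A$, so elements of $K\setminus A$ have $|x|'=+\infty$. You merely spell out the locality argument that the paper leaves implicit in its parenthetical remark.
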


\begin{proof}
The first statement is a direct consequence of Definition \ref{def:extension_of_sav}. Let $v'=(\va',A',\m',\kappa')$, resp. $v=(\va,A,\m,\kappa)$, denote  the induced pseudo-absolute value on $K'$, resp. on $K$. Then $(ii)$ yields $|x|'=|x|$ for all $x\in A$. If $x\in K\setminus A$, then $|x|'= +\infty$ (otherwise $x \in A'\cap K = A$). Hence the conclusion.
\end{proof}

Let $\mathbf{SVF}$ be the category defined as follows. An object of $\mathbf{SVF}$ is a field endowed with a pseudo-absolute values and morphisms in $\mathbf{SVF}$ are given by the extensions of pseudo-absolute values. Let $\mathbf{VR}$ be the category with objects valuation rings endowed with multiplicative semi-norms with kernel the maximal ideal and with morphisms the extensions satisfying conditions $(i),(ii)$ of Proposition \ref{prop:extension_sav}.

\begin{proposition}
\label{prop:equivalence_extension_sav}
The categories $\mathbf{SVF}$ and $\mathbf{VR}$ are equivalent. 
\end{proposition}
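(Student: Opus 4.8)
The plan is to exhibit a pair of mutually quasi-inverse functors between $\mathbf{SVF}$ and $\mathbf{VR}$, both of which are essentially already built from the preceding results. First I would define $F \colon \mathbf{SVF} \to \mathbf{VR}$ by sending an object $(K,v)$ to its finiteness ring $A_v$ equipped with the multiplicative semi-norm $\va_v|_{A_v}$ (which has kernel $\m_v$); this is a legitimate object of $\mathbf{VR}$ by Proposition \ref{prop:sav_valuation_ring}. On a morphism $K \hookrightarrow K'$ realising $v'|v$, the functor restricts the inclusion to a map $A_v \to A_{v'}$ (this lands in $A_{v'}$ precisely because $v'$ extends $v$), and Proposition \ref{prop:extension_sav}(i)--(ii) guarantees this restriction is an injective local morphism compatible with the residue absolute values, hence a morphism in $\mathbf{VR}$. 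Functoriality is immediate since restriction respects identities and composition.

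Next I would define $G \colon \mathbf{VR} \to \mathbf{SVF}$ using Remark \ref{rem:characterisation_sav}: given $(A,\|\cdot\|)$, put $K := \Frac(A)$ and extend $\|\cdot\|$ to $K$ by declaring $|x| = +\infty$ for $x \in K \setminus A$, obtaining a pseudo-absolute value on $K$ with finiteness ring $A$ and kernel $\m_A$. On morphisms, an arrow $\phi \colon A \to A'$ of $\mathbf{VR}$ --- an injective local morphism satisfying (i)--(ii) --- extends uniquely to a field extension $\Frac(A) \hookrightarrow \Frac(A')$ since $\phi$ is an injective map of integral domains, and the converse assertion in Proposition \ref{prop:extension_sav} shows the associated pseudo-absolute value on $\Frac(A')$ extends that on $\Frac(A)$; so this field extension is a morphism in $\mathbf{SVF}$, and uniqueness of the extension to fraction fields makes $G$ a functor.

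Finally I would check that $G \circ F$ and $F \circ G$ are (naturally isomorphic to) the identity functors. For $G \circ F$, one uses that $A_v$ is a valuation ring of $K$, so $\Frac(A_v)$ is canonically identified with $K$, and the pseudo-absolute value reconstructed from $(A_v,\va_v|_{A_v})$ agrees with $\va_v$ on $A_v$ and equals $+\infty$ outside, hence coincides with $v$; on morphisms both composites act as the obvious restriction/extension of the same underlying field map. For $F \circ G$, the finiteness ring of the pseudo-absolute value built from $(A,\|\cdot\|)$ is exactly $A$ and its restriction to $A$ is exactly $\|\cdot\|$, so the original object is returned.

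I expect these verifications to be essentially bookkeeping; the genuinely substantive inputs are Propositions \ref{prop:sav_valuation_ring} and \ref{prop:extension_sav} together with Remark \ref{rem:characterisation_sav}, all of which are already in place. The mildest point needing care --- the closest thing to an obstacle --- is confirming that the two notions of morphism match up exactly, i.e.\ that conditions (i)--(ii) of Proposition \ref{prop:extension_sav} on the valuation-ring side are precisely equivalent to the extension relation $v'|v$ on the field side (which is exactly the content of that proposition and its converse), together with the harmless set-theoretic subtlety that $\Frac(A_v)$ is only canonically, rather than literally, equal to $K$, so that the composites are identities only up to natural isomorphism --- still enough for the claimed equivalence (in fact one obtains an isomorphism of categories once these identifications are fixed).
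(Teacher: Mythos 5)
Your proof is correct and follows the same route as the paper, which simply cites Remark \ref{rem:characterisation_sav} and Proposition \ref{prop:extension_sav}; you have merely written out in full the functors and the (routine) verification that they are quasi-inverse. The point you flag about $\Frac(A_v)$ being only canonically identified with $K$ is handled correctly and does not affect the conclusion.
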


\begin{proof}
It is a consequence of Remark \ref{rem:characterisation_sav} (2) combined with Proposition \ref{prop:extension_sav}.
\end{proof}

\begin{remark}
Proposition \ref{prop:equivalence_extension_sav} allows to safely treat extensions of pseudo-absolute values in the context of the category $\mathbf{VR}$\end{remark}

\section{Algebraic extensions of pseudo-absolute values}
\label{sec:algebraic_extension_of_sav}

In this section, we study extensions of pseudo-absolute values with respect to algebraic extensions of the base field. We first treat the separable case (\S \ref{sub:finite_separable_extension_sav}). Then we extend the results to arbitrary finite extensions (\S \ref{sub:finite_extension_sav}). Finally we introduce elementary Galois theory of pseudo-absolute values (\S \ref{sub:Galois_theory_sav}).

\subsection{Finite separable extension}
\label{sub:finite_separable_extension_sav}

In this subsection, we fix a finite separable extension $K'/K$ and a pseudo-absolute value $v$ on $K$. We study extensions of  $v$ to $L$. Let $A'$ be the integral closure of $A_v$ in $K$. (\cite{BouAC}, Chap. VI, \S 1.3, Théorème 3) implies that $A'$ is the intersection of all the extensions of $A_v$ to $L$. 

\begin{lemma}
$A'$ is a semi-local ring. 
\end{lemma}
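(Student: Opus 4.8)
The plan is to identify $\Spm(A')$ with the set of valuation rings of $K'$ extending $v$ (equivalently, those restricting to $A$ on $K$), and then to invoke the classical fact that, over a finite extension, a valuation has only finitely many extensions. Note that separability will play no role here: the argument works for any finite extension $K'/K$, so the lemma is really a statement about the valuative geometry of $A'$.

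First I would record two elementary facts about the integral extension $A \hookrightarrow A'$. Since $A$ is local with maximal ideal $\m$, every maximal ideal $\m'$ of $A'$ contracts to $\m$: indeed $A/(\m'\cap A) \hookrightarrow A'/\m'$ exhibits a domain as a subring of a field integral over it, so $A/(\m'\cap A)$ is itself a field, whence $\m'\cap A = \m$; conversely any prime $\q$ of $A'$ with $\q\cap A = \m$ is maximal, as $A'/\q$ is a domain integral over the field $\kappa := A/\m$. Second, by Proposition \ref{prop:prop_Prufer_domains}~(2) the ring $A'$ is a Prüfer domain, so for each $\m'\in\Spm(A')$ the localisation $A'_{\m'}$ is a valuation ring of $K'$, and by Proposition \ref{prop:prop_Prufer_domains}~(1) the assignment $\m'\mapsto A'_{\m'}$ is injective, with $\m' = \m_{A'_{\m'}}\cap A'$.

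Next I would check that $\m'\mapsto A'_{\m'}$ lands in the set of extensions of $v$ and is a bijection onto it. The intersection $A'_{\m'}\cap K$ is a valuation ring of $K$ containing $A$, hence (again by Proposition \ref{prop:prop_Prufer_domains}~(1), applied to the valuation ring $A$, which is trivially Prüfer) equal to $A_\p$ for some $\p\in\Spec(A)$ with $\p\subseteq\m$; comparing maximal ideals, $\m_{A'_{\m'}}\cap A = \m'\cap A = \m$ while $\m_{A'_{\m'}}\cap A = (\p A_\p)\cap A = \p$, so $\p = \m$ and $A'_{\m'}\cap K = A$, i.e. $A'_{\m'}$ extends $v$. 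Conversely, if $B$ is a valuation ring of $K'$ with $B\cap K = A$, then $B\supseteq A$ and $B$, being integrally closed in $K'$, contains $A'$; moreover $\m_B\cap K = \m$, so $\m_B\cap A'$ is a prime of $A'$ over $\m$, hence maximal, and Proposition \ref{prop:prop_Prufer_domains}~(1) gives $A'_{\m_B\cap A'} = B$. These two assignments are visibly mutually inverse, so $\Spm(A')$ is in bijection with the set of valuation rings of $K'$ extending $v$.

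Finally, since $K'/K$ is finite, the classical theory of extensions of a valuation to a finite extension (\cite{BouAC}, Chap.~VI, \S8; in particular the fundamental inequality bounding the number of extensions by $[K':K]$) shows that $v$ has at most $[K':K]$ extensions to $K'$. Therefore $\Spm(A')$ is finite and $A'$ is semi-local. The only genuinely non-formal ingredient is this finiteness of valuation extensions over a finite extension; everything else is bookkeeping with the Prüfer property of $A'$, converting between maximal ideals of $A'$ and valuation overrings that restrict to $A$.
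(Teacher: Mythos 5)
Your proof is correct and follows essentially the same route as the paper: contract maximal ideals of $A'$ to $\m$, use the Prüfer property to identify the localisations $A'_{\m'}$ with valuation rings of $K'$ extending $v$, and conclude by the classical finiteness of the set of extensions of a valuation to a finite extension. The extra work you do (the full bijection between $\Spm(A')$ and the set of extensions, and the remark that separability is irrelevant) is what the paper defers to (\cite{BouAC}, Chap.~VI, \S 8.6, Proposition 6) immediately after the lemma, so there is no substantive difference.
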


\begin{proof}
Let $\m'$ be a maximal ideal $A'$. One the one hand, we have $\m' \cap A_v = \m$ and $A'$ is a  Prüfer domain (cf. Proposition \ref{prop:prop_Prufer_domains} (2)). Hence $A'_{\m}$ is an extension of $A_v$ to $L$. On the other hand, the set of extensions of $A_v$ to $L$ is finite. Whence $A'$ is semi-local.
\end{proof}

For any $\m_w$ in the fibre of $\m'_v$ of the morphism $\Spec(A') \to \Spec(A_v)$, let $A'_w := (A')_{\m_w}$ the localisation in $\m'_w$. From (\cite{BouAC}, Chap. VI, \S 8.6, Proposition 6), extensions of $A_v$ to $L$ are of the form  $A'_w$, for $w$ as above. For any such extension $A_v\to A'_w$, we have a finite extension $\kappa_v \rightarrow \kappa_w$ of residue fields.

\begin{proposition}
\label{prop:formula_extension_sav}
There is a bijective correspondence between the set of pseudo-absolute values on $L$ above $v$ and the set of extensions of the residue absolute value of $v$ with respect to extensions of the form $\kappa_v\to\kappa_w$, where $w$ runs over the set of maximal ideals of $A'$. Furthermore, we have the equality
\begin{align}
\label{eq:formula_extension_sav}
\displaystyle\sum_{\m_w\in \Spm(A')} \frac{1}{|\Spm(A')|} \sum_{i | v} \frac{[\widehat{\kappa_{w,i}}:\widehat{\kappa_v}]_s}{[\kappa_w:\kappa_v]_s} = 1,
\end{align}
where, for all $\m_w\in \Spm(A')$, $i$ runs over the set of extensions of the residue absolute value of $\va_v$ to $\kappa_w$ and $\widehat{\kappa_{w,i}}$ denotes the completion of $\kappa_w$ for any such absolute value. 
\end{proposition}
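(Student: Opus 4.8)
The plan is to split the statement into the bijective correspondence and the numerical identity \eqref{eq:formula_extension_sav}, and to establish the latter term by term over the maximal ideals of $A'$.

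\emph{The correspondence.} I would invoke Proposition~\ref{prop:equivalence_extension_sav} (equivalently, Proposition~\ref{prop:extension_sav}): giving a pseudo-absolute value on $L$ above $v$ is the same as giving a valuation ring $B$ of $L$ lying above $A_v$ — that is, an extension of $A_v$ to $L$ — together with an absolute value on $B/\m_B$ restricting to the residue absolute value $\va_v$ of $v$ on $\kappa_v$. By (\cite{BouAC}, Chap.~VI, \S 8.6, Proposition~6), recalled above, the extensions of $A_v$ to $L$ are precisely the localisations $A'_w=(A')_{\m_w}$ with $\m_w\in\Spm(A')$, each with residue field $\kappa_w$ and induced finite embedding $\kappa_v\hookrightarrow\kappa_w$. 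This yields the asserted bijection between the pseudo-absolute values on $L$ above $v$ and $\coprod_{\m_w\in\Spm(A')}\{\text{extensions of }\va_v\text{ to }\kappa_w\}$; all the index sets here are finite, since $A'$ is semi-local by the preceding lemma and each $\kappa_w/\kappa_v$ is a finite extension.

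\emph{The identity.} As the outer sum in \eqref{eq:formula_extension_sav} has $|\Spm(A')|$ terms, each weighted by $1/|\Spm(A')|$, it suffices to show that for every fixed $\m_w\in\Spm(A')$,
\begin{align*}
\sum_{i|v}[\widehat{\kappa_{w,i}}:\widehat{\kappa_v}]_s=[\kappa_w:\kappa_v]_s.
\end{align*}
To prove this I would introduce the separable closure $\mu$ of $\kappa_v$ in $\kappa_w$, so that $[\mu:\kappa_v]=[\kappa_w:\kappa_v]_s$ and $\kappa_w/\mu$ is finite and purely inseparable, and then show that the layer $\kappa_w/\mu$ contributes trivially. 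A finite purely inseparable extension admits a unique extension of any absolute value, so restriction to $\mu$ identifies the extensions of $\va_v$ to $\kappa_w$ with those to $\mu$. Moreover, if $u$ denotes the restriction to $\mu$ of the extension $i$ on $\kappa_w$, then $\widehat{\kappa_{w,i}}$ is purely inseparable over the completion $\widehat{\mu}_u$: writing $\kappa_w=\mu(\beta_1,\dots,\beta_k)$ with each $\beta_j$ purely inseparable over $\mu$, the field $\widehat{\mu}_u(\beta_1,\dots,\beta_k)$ is finite over $\widehat{\mu}_u$, hence complete and closed in $\widehat{\kappa_{w,i}}$, and it contains the dense subfield $\kappa_w$, so it coincides with $\widehat{\kappa_{w,i}}$. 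Consequently $[\widehat{\kappa_{w,i}}:\widehat{\kappa_v}]_s=[\widehat{\mu}_u:\widehat{\kappa_v}]_s$, and the displayed identity reduces to $\sum_{u|v}[\widehat{\mu}_u:\widehat{\kappa_v}]_s=[\mu:\kappa_v]$.

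For this last equality I would use the classical behaviour of finite separable extensions under completion: $\mu\otimes_{\kappa_v}\widehat{\kappa_v}\cong\prod_{u|v}\widehat{\mu}_u$ (obtained by factoring the separable minimal polynomial of a primitive element of $\mu/\kappa_v$ over $\widehat{\kappa_v}$), whence $\sum_{u|v}[\widehat{\mu}_u:\widehat{\kappa_v}]=[\mu:\kappa_v]$; and since the irreducible factors of a separable polynomial remain separable, each $\widehat{\mu}_u/\widehat{\kappa_v}$ is separable, so its degree equals its separable degree. I expect the one genuinely delicate point to be this inseparability bookkeeping — the extension $L/K$ being separable does not force $\kappa_w/\kappa_v$ to be separable, so one really must pass through $\mu$ and check that the purely inseparable layer $\kappa_w/\mu$ affects neither the set of extended residue absolute values nor the separable degrees of the completions; the separable case itself, and the reduction to the rings $A'_w$, are routine.
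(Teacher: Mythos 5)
Your proof is correct and follows essentially the same route as the paper's: the correspondence comes from the fact that a pseudo-absolute value is determined by its finiteness ring together with a residue absolute value, and the identity \eqref{eq:formula_extension_sav} is reduced, for each fixed $\m_w$, to the separable case by passing to the separable closure of $\kappa_v$ inside $\kappa_w$. The only difference is that where the paper cites references — (\cite{BouAC}) for the separable-case degree formula and (\cite{ChenMori}, Lemma 3.4.2) for the compatibility of completion with the separable closure — you supply the standard direct arguments (the tensor-product decomposition $\mu\otimes_{\kappa_v}\widehat{\kappa_v}\cong\prod_{u|v}\widehat{\mu}_u$ and the pure inseparability of $\widehat{\kappa_{w,i}}/\widehat{\mu}_u$), which is perfectly fine.
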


\begin{proof}
Remark \ref{rem:characterisation_sav} gives the first assertion. We first assume that, for all $\m_w\in\Spm(A')$, the extension $\kappa_v\to\kappa_w$ is separable. Then (\cite{BouAC}, Ch.V \S 8.5 Proposition 5) yields (\ref{eq:formula_extension_sav}). In the case where $\kappa_v\to \kappa_w$ is not separable, denote by $\kappa_v^{\s}$ the separable closure of $\kappa_v$ inside $\kappa_w$. For any $i | v$, (\cite{ChenMori}, Lemma 3.4.2) allows to identify the completion $\kappa_v^{\s}$ with the separable closure of $\widehat{\kappa_v}$ inside $\widehat{\kappa_w}_i$. Since we have an identification of the set of extensions of $v$ to $\kappa_w$ with the set of extensions of $\kappa_v$ to $\kappa_v^{\s}$, (\ref{eq:formula_extension_sav}) is obtained from the previous case.
\end{proof}

\subsection{Arbitrary finite extension}
\label{sub:finite_extension_sav}

In this subsection, we fix a finite extension $L/K$. Let $K'$ denote the separable closure of $K$ inside $L$. Then $L/K'$ is a purely inseparable finite extension and we denote by $q$ its degree. Therefore, for all $x\in L$, we have $x^q \in K'$. 

\begin{proposition}
\label{prop:extension_valuation_purely_inseparable}
Let $v'=(\va',A',\m',\kappa')$ be a pseudo-absolute value on $K'$. Then $A_L := \{x\in L : x^q\in A'\}$ is the unique valuation ring of $L$ extending $A'$. Its maximal ideal is $\m_L := \{x\in A_L : x^q\in \m'\}$. Moreover, the residue field extension $\kappa' \to A_L/\m_L$ is purely inseparable and finite.
\end{proposition}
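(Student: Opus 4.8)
I would build everything out of the $q$-th power map. Since $L/K'$ is purely inseparable of degree $q$ (so $q$ is a power of $\operatorname{char}(K)$ when the latter is positive, and $q=1$ otherwise), the map $\phi\colon L\to K'$, $x\mapsto x^q$, is a ring homomorphism, and by definition $A_L=\phi^{-1}(A')$. This gives for free that $A_L$ is a subring of $L$ containing $A'$, and that it is a valuation ring of $L$: for $x\in L^\times$ one has $x^q\in (K')^\times$, so either $x^q\in A'$ or $(x^q)^{-1}\in A'$ because $A'$ is a valuation ring, i.e. $x\in A_L$ or $x^{-1}\in A_L$. The equality $\m_L=\m_{A_L}$ is then a one-line check: if $x\in A_L$ with $x^q\in\m'$ then $x^{-1}\notin A_L$ (otherwise $1=x^q\cdot x^{-q}\in\m'$), so $x\in\m_{A_L}$; conversely if $x\in\m_{A_L}$ then $x^{-q}=(x^{-1})^q\notin A'$, so $x^q$ is a non-unit of $A'$, i.e. $x^q\in\m'$. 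In particular $\m_L\cap A'=\{a\in A':a^q\in\m'\}=\m'$, so $A'\hookrightarrow A_L$ is a local inclusion and $A_L$ genuinely extends $A'$ in the sense of Proposition~\ref{prop:extension_sav}.

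Next I would identify $A_L$ with the integral closure $\widetilde{A'}$ of $A'$ in $L$: each $x\in A_L$ is a root of the monic $T^q-x^q\in A'[T]$, so $A_L\subseteq\widetilde{A'}$; conversely if $x\in L$ is integral over $A'$ then $x^q\in K'$ is integral over $A'$, hence $x^q\in A'$ since $A'$, being a valuation ring, is integrally closed, so $\widetilde{A'}\subseteq A_L$. Uniqueness follows from this. Let $B$ be any valuation ring of $L$ dominating $A'$. Then $B$ is integrally closed in $L$ and contains $A'$, hence contains $\widetilde{A'}=A_L$; as an overring of the valuation ring $A_L$, we get $B=(A_L)_{\p}$ with $\p=\m_B\cap A_L$ a prime contained in $\m_L$. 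Domination gives $\m'=\m_B\cap A'=\p\cap A'$, so $\m'\subseteq\p$; consequently for every $x\in\m_L$ we have $x^q\in\m'\subseteq\p$, hence $x\in\p$ by primality. Thus $\m_L\subseteq\p\subseteq\m_L$, so $\p=\m_L$ and $B=(A_L)_{\m_L}=A_L$.

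For the residue fields, write $\kappa_L:=A_L/\m_L$. The local inclusion $A'\hookrightarrow A_L$ induces an embedding $\kappa'\hookrightarrow\kappa_L$, and for $x\in A_L$ the element $x^q\in A'$ reduces to the $q$-th power of the class of $x$; hence every element of $\kappa_L$ has its $q$-th power in $\kappa'$, so $\kappa_L/\kappa'$ is purely inseparable of exponent at most $q$. Finiteness is the one point that needs a real argument — a purely inseparable extension of bounded exponent of an imperfect field need not be finite — and here I would invoke the classical valuation-theoretic independence lemma: if $\bar x_1,\dots,\bar x_r\in\kappa_L$ are $\kappa'$-linearly independent and $x_i\in A_L$ are lifts, any nontrivial relation $\sum a_i x_i=0$ with $a_i\in K'$ can, after multiplying by $a_j^{-1}$ for an $a_j$ of least valuation among the nonzero coefficients, be rewritten with all coefficients in $A'$ and one of them a unit; reducing modulo $\m_L$ contradicts independence. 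Hence $r\le [L:K']=q$ and $[\kappa_L:\kappa']\le q$. The only place demanding care is the uniqueness step, where one must use that "extends" means "is dominated by" (so that $\m_B\cap A'=\m'$) together with the primality/incomparability argument above; the rest is a mechanical unwinding of $x\mapsto x^q$.
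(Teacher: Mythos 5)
Your proof is correct, and its core is the same as the paper's: everything is read off from the $q$-th power map $x\mapsto x^q$, with $A_L=\{x\in L: x^q\in A'\}$ and $\m_L=\{x: x^q\in\m'\}$. Where you differ is in how much you take as known. The paper's proof is a few lines: it asserts that $x\mapsto \tfrac1q v'(x^q)$ is \emph{the unique} extension of the valuation to $L$ (the standard fact that valuations extend uniquely along purely inseparable extensions), reads off the ring and its maximal ideal, and disposes of the residue extension by noting $\overline{a}^q\in\kappa'$, leaving finiteness implicit. You instead prove uniqueness from scratch, by identifying $A_L$ with the integral closure of $A'$ in $L$ (via $T^q-x^q$ and integral closedness of $A'$) and then running the overring-localisation argument $B=(A_L)_{\p}$ together with primality of $\p$ to force $\p=\m_L$; and you correctly flag that finiteness of $\kappa_L/\kappa'$ is not automatic from bounded exponent and supply the standard fundamental-inequality argument (lifts of residually independent elements are $K'$-independent, so $[\kappa_L:\kappa']\le[L:K']=q$). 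So your route is more elementary and self-contained, at the cost of length; the paper's is shorter because it quotes classical valuation theory and, for finiteness, is in fact terser than is strictly justified -- your explicit lemma fills exactly that gap.
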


\begin{proof}
Let $v' : K' \to \Gamma$ be the underlying valuation of $A'$. Then $(x\in L) \mapsto (1/q)v'(x^q)$ is the unique extension of $\nu$ to $L$. The corresponding valuation ring is $A_L = \{x\in L : x^q \in A'\}$ and its maximal ideal is $\m_L = \{x\in L : x^q\in \m'\}$. Let $\overline{a}\in A_L/\m_L$ and $a\in A_L$ be a representative. Then $a^q\in A'$ and represents $\overline{a}^q \in \kappa'$. Hence the conclusion.
\end{proof}

\begin{corollary}
\label{cor:purely_inseparable_extension_sav}
Let $v$ ba a pseudo-absolute value $K$. Then the set of extensions of $v$ to $L$ is in bijection with the set of extensions of $v$ on $K'$ described in Proposition \ref{prop:formula_extension_sav}.  
\end{corollary}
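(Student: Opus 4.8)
The plan is to combine the separable and purely inseparable steps, using that every finite extension $L/K$ factors as $K \subseteq K' \subseteq L$ with $K'/K$ separable and $L/K'$ purely inseparable of degree $q$. First I would observe that, by Proposition \ref{prop:equivalence_extension_sav}, it suffices to produce a bijection between the set of objects of $\mathbf{VR}$ over $(A_v,\va_v)$ lying over $K'$-valuation rings and those lying over $L$-valuation rings; equivalently, a pseudo-absolute value on $L$ is the same datum as a valuation ring $A_L$ of $L$ dominating $A_v$ together with a multiplicative semi-norm on it whose kernel is the maximal ideal, and similarly over $K'$. Thus the corollary reduces to two claims: (a) restriction along $K' \hookrightarrow L$ gives a bijection between valuation rings of $L$ extending a given valuation ring $A'$ of $K'$ and the single ring $A'$ itself (i.e. uniqueness of the extension), and (b) this bijection is compatible with the residue absolute value data, so that a pseudo-absolute value on $L$ above $v$ corresponds to a pseudo-absolute value on $K'$ above $v$.

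For (a), Proposition \ref{prop:extension_valuation_purely_inseparable} already gives that $A_L = \{x \in L : x^q \in A'\}$ is the \emph{unique} valuation ring of $L$ extending $A'$, with maximal ideal $\m_L$, and that $\kappa' \to \kappa_L := A_L/\m_L$ is finite purely inseparable. So restriction $v_L \mapsto v_L|_{K'}$ induces a well-defined map from pseudo-absolute values on $L$ above $v$ to pseudo-absolute values on $K'$ above $v$: indeed $A_{v_L} \cap K'$ is a valuation ring of $K'$ extending $A_v$, and since $A_{v_L}$ is the unique $L$-extension of that ring, restriction is injective on the level of finiteness rings. For (b), the key point is that a residue absolute value on $\kappa_L$ restricting to the residue absolute value of $v'$ on $\kappa'$ is \emph{uniquely determined} by the latter: since $\kappa_L/\kappa'$ is purely inseparable, any absolute value on $\kappa'$ has at most one extension to $\kappa_L$, given explicitly by $|\overline{x}|_{\kappa_L} = |\overline{x}^{q}|_{\kappa'}^{1/q}$ (and this formula does define an absolute value extending the given one on the completions, using that completions of purely inseparable extensions behave as in \cite{ChenMori}, Lemma 3.4.2, or directly since the residue field extension is finite). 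Conversely, given a pseudo-absolute value $v'$ on $K'$ above $v$, one builds $v_L$ by equipping $A_L$ with the semi-norm $|x|_L := |x^q|_{v'}^{1/q}$, and checks it restricts to $v'$ on $K'$ and to $v$ on $K$; by Proposition \ref{prop:extension_sav} (the converse direction) this is a pseudo-absolute value extending $v$, and by construction its restriction to $K'$ is $v'$. These two constructions are mutually inverse, giving the bijection.

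Finally I would assemble the statement: pseudo-absolute values on $L$ above $v$ are in bijection (via restriction) with pseudo-absolute values on $K'$ above $v$, and the latter are described by Proposition \ref{prop:formula_extension_sav} in terms of extensions of the residue absolute value along the residue field extensions $\kappa_v \to \kappa_w$, $w \in \Spm(A')$ where $A'$ is the integral closure of $A_v$ in $K'$. The main obstacle, such as it is, is the verification in (b) that the formula $|x|_L = |x^q|_{v'}^{1/q}$ genuinely defines a multiplicative semi-norm satisfying the triangle inequality on $A_L$ — multiplicativity and the kernel computation are immediate, but the ultrametric/triangle inequality for the Archimedean residue case needs the remark that a purely inseparable extension of a complete Archimedean field is trivial (so $\kappa_L \hookrightarrow \widehat{\kappa'}$ after completion and no genuine extension occurs), which is exactly what lets one transport the absolute value; this is routine given \cite{ChenMori}, Lemma 3.4.2, already invoked in the proof of Proposition \ref{prop:formula_extension_sav}.
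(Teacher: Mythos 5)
Your proposal is correct and follows exactly the route the paper intends: the corollary is an immediate consequence of Proposition \ref{prop:extension_valuation_purely_inseparable} (unique extension of the finiteness ring, with finite purely inseparable residue field extension) combined with the uniqueness of the extension of the residue absolute value along that purely inseparable extension, via $|\overline{x}|=|\overline{x}^{q}|^{1/q}$, and your observation that the Archimedean case is vacuous (a nontrivial purely inseparable residue extension forces positive characteristic) disposes of the only point needing care.
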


\subsection{Galois theory of pseudo-absolute values}
\label{sub:Galois_theory_sav}

Throughout this subsection, we fix a field $K$.

\begin{proposition}
\label{prop:def_galois_action_sav}
Let $L/K$ be an algebraic extension. For all $x \in M_L$, for all $\tau \in \Aut(L/K)$, the map
\begin{align*}
\fonction{\va_{\tau(x)}}{L}{[0,+\infty]}{a}{|\tau(a)|_{x}}
\end{align*}
defines a pseudo-absolute value on $L$ denoted by $x\circ \tau$. This construction defines a right action of $\Aut(L/K)$ on $M_L$. 
\end{proposition}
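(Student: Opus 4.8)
\emph{Plan.} The assertion has two halves: (a) that $\va_{\tau(x)}$ really is a pseudo-absolute value, i.e. lies in $M_L$, and (b) that $(x,\tau)\mapsto x\circ\tau$ is a right action of $\Aut(L/K)$ on $M_L$. For (a) I would simply verify the three axioms of Definition \ref{def:semi-absolute_value} directly, using only that $\tau$ is a ring automorphism of $L$ (so $\tau(0)=0$, $\tau(1)=1$, and $\tau$ is additive and multiplicative); for (b) I would unwind the definitions. I expect no genuine difficulty: the whole statement is a bookkeeping verification, and the only point that warrants a moment's care is that axiom (iii) involves the ``exceptional locus'' $\{0,+\infty\}$.

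For axiom (i): since $\tau$ fixes $0$ and $1$, one gets $|0|_{\tau(x)}=|\tau(0)|_x=|0|_x=0$ and $|1|_{\tau(x)}=|\tau(1)|_x=|1|_x=1$. For axiom (ii): additivity of $\tau$ and the triangle inequality for $x$ give, for all $a,b\in L$,
\[
|a+b|_{\tau(x)} = |\tau(a)+\tau(b)|_x \leq |\tau(a)|_x + |\tau(b)|_x = |a|_{\tau(x)} + |b|_{\tau(x)}.
\]
For axiom (iii), the observation is that by definition $\{|a|_{\tau(x)},|b|_{\tau(x)}\}$ and $\{|\tau(a)|_x,|\tau(b)|_x\}$ are literally the same subset of $[0,+\infty]$; hence the hypothesis $\{|a|_{\tau(x)},|b|_{\tau(x)}\}\neq\{0,+\infty\}$ is exactly the hypothesis of axiom (iii) for $x$ applied to the pair $\tau(a),\tau(b)\in L$, and multiplicativity of $\tau$ then yields $|ab|_{\tau(x)}=|\tau(a)\tau(b)|_x=|\tau(a)|_x|\tau(b)|_x=|a|_{\tau(x)}|b|_{\tau(x)}$. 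Thus $\va_{\tau(x)}\in M_L$. (Alternatively, one could note that its finiteness ring is $\tau^{-1}(A_x)$, a valuation ring of $L$ since $\tau$ is an automorphism, and invoke Remark \ref{rem:characterisation_sav}; but the direct check is quicker.)

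It then remains to check the two axioms of a right action. For the identity: $|a|_{x\circ\mathrm{id}_L}=|\mathrm{id}_L(a)|_x=|a|_x$ for every $a\in L$, so $x\circ\mathrm{id}_L=x$. For compatibility with the group law, let $\tau,\sigma\in\Aut(L/K)$; then for every $a\in L$,
\[
|a|_{(x\circ\tau)\circ\sigma} = |\sigma(a)|_{x\circ\tau} = |\tau(\sigma(a))|_x = |(\tau\sigma)(a)|_x = |a|_{x\circ(\tau\sigma)},
\]
so $(x\circ\tau)\circ\sigma = x\circ(\tau\sigma)$. Since the order of composition is reversed in the expected way, $(x,\tau)\mapsto x\circ\tau$ is a right action of $\Aut(L/K)$ on $M_L$, which completes the argument.
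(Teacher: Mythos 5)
Your proposal is correct and follows essentially the same route as the paper: a direct verification of axioms (i)--(iii) of Definition \ref{def:semi-absolute_value}, using that $\tau$ is a ring automorphism, with the same observation that the exceptional-locus hypothesis in (iii) transfers verbatim. You additionally spell out the right-action axioms (identity and compatibility with composition), which the paper leaves implicit, so nothing is missing.
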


\begin{proof}
Let $x \in M_L$ and $\tau\in \Aut(L/K)$. Let us show that $\va_{\tau(x)}$ defines a pseudo-absolute value on $L$. Let $a,b\in L$, by linearity of $\tau$, we have
\begin{align*}
|a+b|_{\tau(x)}=|\tau(a)+\tau(b)|_x \leq |a|_{\tau(x)} + |b|_{\tau(x)}
\end{align*}
as well as $|0|_{\tau(x)}=0$ and $|1|_{\tau(x)}=1$. Assume that $\{|a|_{\tau(x)},|b|_{\tau(x)}\}\neq\{0,+\infty\}$. Then
\begin{align*}
|ab|_{\tau(x)}=|\tau(a)\tau(b)|_x = |\tau(a)|_{x}|\tau(b)|_{x}= |a|_{\tau(x)}|b|_{\tau(x)}.
\end{align*}
Hence $\va_{\tau(x)}\in M_L$.
\end{proof}

\begin{lemma}
\label{lemma:action_galois_extension_integral_closure}
Let $L/K$ be a (possibly infinite) Galois extension with Galois group $G$. Let $B$ denote the integral closure of $A$ in $L$. Then $G$ acts on $B$ and acts transitively on the set of maximal ideals of $B$.
\end{lemma}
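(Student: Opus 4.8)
Here is the plan. The argument has three parts: that $G$ acts on $B$, that every maximal ideal of $B$ lies over $\m_A$, and transitivity, with the infinite case deduced from the finite one by compactness of the Krull topology.

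First I would record that $G$ acts on $B$. Any $b\in B$ satisfies a monic polynomial with coefficients in $A\subseteq K$; applying $\sigma\in G$, which fixes $K$ pointwise, shows $\sigma(b)$ satisfies the same relation, so $\sigma(B)\subseteq B$, and the same for $\sigma^{-1}$ gives $\sigma(B)=B$. Thus $G$ acts on $B$ by ring automorphisms and hence permutes $\Spm(B)$. Next I would note that, $A$ being a valuation ring, it is local with maximal ideal $\m_A$ and integrally closed in $K$, so $B\cap K=A$; and for any maximal ideal $\m$ of $B$ the domain $A/(\m\cap A)$ embeds integrally in the field $B/\m$, hence is a field, so $\m\cap A=\m_A$. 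In particular all maximal ideals of $B$ lie over $\m_A$, which is what makes the transitivity statement meaningful.

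For transitivity in the \emph{finite} Galois case, given maximal ideals $\m,\m'$ of $B$, I would argue by contradiction: if no $\sigma\in G$ had $\sigma(\m)=\m'$, then (both being maximal) $\m'\not\subseteq\sigma(\m)$ for every $\sigma$, so by prime avoidance there is $x\in\m'\setminus\bigcup_{\sigma\in G}\sigma(\m)$. The element $y:=\prod_{\sigma\in G}\sigma(x)$ is $G$-invariant, hence lies in $L^{G}=K$, and also in $B$, so $y\in B\cap K=A$; since $x\in\m'$ we get $y\in\m'\cap A=\m_A\subseteq\m$, and primality of $\m$ forces some $\sigma(x)\in\m$, contradicting the choice of $x$. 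For the \emph{infinite} case I would write $L=\varinjlim L_i$ over the finite Galois subextensions $L_i/K$, set $B_i:=B\cap L_i$ (the integral closure of $A$ in $L_i$) and $\m_i:=\m\cap B_i$, $\m'_i:=\m'\cap B_i$, which are maximal ideals of $B_i$ by the second step. Each $\sigma\in G$ restricts to an automorphism of $L_i$ by normality and preserves $B_i$, so $T_i:=\{\sigma\in G:\sigma(\m_i)=\m'_i\}$ is a finite union of cosets of the open subgroup $\Gal(L/L_i)$, hence clopen, and nonempty by the finite case; moreover $L_i\subseteq L_j$ implies $T_j\subseteq T_i$, so $(T_i)$ is a filtered family of nonempty closed subsets of the compact group $G$, whence $\bigcap_i T_i\neq\varnothing$. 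Any $\sigma$ in this intersection satisfies $\sigma(\m\cap B_i)=\m'\cap B_i$ for all $i$, and since $B=\bigcup_i B_i$ this gives $\sigma(\m)=\m'$.

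The genuinely delicate point is the reduction from the infinite extension to the finite ones: one must check that the sets $T_i$ are closed in the Krull topology and form a filtered system, and then invoke compactness of $G$; the finite case itself is the classical prime-avoidance/norm argument, and the action statement and the ``lying over $\m_A$'' statement are routine once one uses that $A$ is a valuation ring (in particular integrally closed and local).
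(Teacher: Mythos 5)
Your proof is correct and follows essentially the same route as the paper: reduce to the finite Galois case and pass to the limit over finite Galois subextensions. The only difference is that the paper outsources both steps to the Stacks project (Tags 0BRI and 0BRK, after observing $B^G=A$ and that every maximal ideal of $B$ lies over $\m$), whereas you prove them directly — the classical prime-avoidance/norm argument for the finite case and compactness of the Krull topology for the limit — which is precisely the argument behind those citations.
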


\begin{proof}
First, $G$ acts on $B$ since $G$ stabilises $A$. Furthermore, we have inclusions $A \subset B^G \subset K = L^{G}$ and elements of $B^G$ are integral over $A$. Hence $A= B^G$ as $A$ is integrally closed. When $L/K$ is finite, (\cite{stacks}, \href{https://stacks.math.columbia.edu/tag/0BRI}{Lemma 0BRI}) gives the conclusion. In the general case, note that the set of maximal ideals of $B$ is in bijection with the set of extensions of $A$ to $L$. Hence any maximal ideal of $B$ is mapped to $\m$ via the morphism $\Spec(B)\to\Spec(A)$. We conclude by using (\cite{stacks}, \href{https://stacks.math.columbia.edu/tag/0BRK}{Lemma 0BRK}).
\end{proof}

We shall use the following crucial result.

\begin{proposition}[(\cite{Efrat06}, Corollary 15.2.5), (\cite{stacks}, \href{https://stacks.math.columbia.edu/tag/0BRK}{Lemma 0BRK}]
\label{prop:normal_residue_extension}
Let $L/K$ be a normal extension. Let $A'$ be an extension of $A$ on $L$ and denote by $\m'$ its maximal ideal and $\kappa'$ its residue field. Then the extension $\kappa'/\kappa$ is normal and the canonical homomorphism $\{\sigma \in \Aut(L/K) : \sigma(\m')=\m'\} \to \Aut(\kappa'/\kappa)$ is surjective.
\end{proposition}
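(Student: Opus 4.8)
The plan is the following; the statement is classical in valuation theory (it is \cite{Efrat06}, Corollary 15.2.5, or \cite{stacks}, Lemma 0BRK), so I only sketch the argument. Write $\pi\colon A'\to\kappa'$ for the residue map. There are two assertions: that $\kappa'/\kappa$ is normal, and that the reduction homomorphism $D\to\Aut(\kappa'/\kappa)$, $\sigma\mapsto\overline{\sigma}$, is surjective, where $D:=\{\sigma\in\Aut(L/K):\sigma(\m')=\m'\}$; this map is well defined because $\sigma(\m')=\m'$ forces $\sigma(A')=A'$ (a valuation ring of $L$ is determined by its maximal ideal). I would first reduce the surjectivity to the case where $L/K$ is Galois: let $K_0:=L^{\Aut(L/K)}$, so that $K_0/K$ is purely inseparable, $L/K_0$ is Galois and $\Aut(L/K_0)=\Aut(L/K)$. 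Put $A_0:=A'\cap K_0$ and $\kappa_0:=A_0/\m_{A_0}$; then $\kappa_0/\kappa$ is purely inseparable, since $K_0/K$ is (cf. the proof of Proposition \ref{prop:extension_valuation_purely_inseparable}), whence every $\kappa$-automorphism of $\kappa'$ automatically fixes $\kappa_0$, i.e. $\Aut(\kappa'/\kappa)=\Aut(\kappa'/\kappa_0)$, while $D$ is unchanged. So we may assume $L/K$ Galois, $G:=\Gal(L/K)$; and it is enough to treat finite $L/K$ and then pass to a limit.

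For normality of $\kappa'/\kappa$ (this part only uses that $L/K$ is normal and algebraic), let $B$ be the integral closure of $A$ in $L$; every valuation ring of $L$ above $A$ is a localisation of $B$ at a maximal ideal, so $A'=B_{\m}$ with $\m:=\m'\cap B$ and $\kappa'=B/\m$. Given $b\in B$, its minimal polynomial $f_b$ over $K$ has coefficients in $A$ (as $A$ is integrally closed), splits completely in $L$ (as $L/K$ is normal), and its roots lie in $B$ (they are integral over $A$). Reducing modulo $\m$, the polynomial $\overline{f_b}$ splits in $\kappa'[X]$; since the minimal polynomial of $\overline{b}$ over $\kappa$ divides $\overline{f_b}$, it too splits in $\kappa'$. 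As $\kappa'=B/\m$ is generated over $\kappa$ by such $\overline{b}$, the extension $\kappa'/\kappa$ is normal.

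Now the surjectivity for $L/K$ finite Galois. Let $\kappa'_s$ be the maximal separable subextension of $\kappa'/\kappa$; since $\kappa'/\kappa'_s$ is purely inseparable, restriction gives an isomorphism $\Aut(\kappa'/\kappa)\iso\Gal(\kappa'_s/\kappa)$, so it suffices to hit each $\overline{\tau}\in\Gal(\kappa'_s/\kappa)$ (if $\kappa'_s=\kappa$ there is nothing to prove). Choose a primitive element $\overline{\alpha}$ of $\kappa'_s/\kappa$, with $\overline{\alpha}\notin\kappa$. The maximal ideals $\m=\m_1,\dots,\m_g$ of $B$ form a single $G$-orbit by Lemma \ref{lemma:action_galois_extension_integral_closure}, so by the Chinese Remainder Theorem we may lift $\overline{\alpha}$ to $\alpha\in B$ with $\alpha\equiv\overline{\alpha}\pmod{\m_1}$ and $\alpha\in\m_j$ for $j\geq 2$. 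Set
\begin{align*}
f(X):=\prod_{\sigma\in G}(X-\sigma\alpha)\in A[X].
\end{align*}
For $\sigma\in G$ one has $\sigma\alpha\in\m_1$ iff $\alpha\in\sigma^{-1}\m_1$, which by the choice of $\alpha$ happens exactly when $\sigma\notin D$; and for $\sigma\in D$ the class of $\sigma\alpha$ modulo $\m_1$ equals $\overline{\sigma}(\overline{\alpha})$. Hence in $\kappa'[X]$,
\begin{align*}
\overline{f}(X)=X^{\,|G|-|D|}\prod_{\sigma\in D}\bigl(X-\overline{\sigma}(\overline{\alpha})\bigr).
\end{align*}
Since $\overline{f}(\overline{\alpha})=0$, the minimal polynomial of $\overline{\alpha}$ over $\kappa$ divides $\overline{f}$, so $\overline{\tau}(\overline{\alpha})$—a conjugate of $\overline{\alpha}$, hence nonzero—is a nonzero root of $\overline{f}$, so $\overline{\tau}(\overline{\alpha})=\overline{\sigma}(\overline{\alpha})$ for some $\sigma\in D$. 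As $\overline{\alpha}$ generates $\kappa'_s$ over $\kappa$, this forces $\overline{\sigma}|_{\kappa'_s}=\overline{\tau}$, so $\sigma\in D$ maps to $\overline{\tau}$.

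Finally, for $L/K$ infinite Galois, write $L=\bigcup_i L_i$ with $L_i/K$ finite Galois and set $A'_i:=A'\cap L_i$, $\kappa'_i:=A'_i/\m_{A'_i}$, $D_i:=\{\sigma\in\Gal(L_i/K):\sigma(\m_{A'_i})=\m_{A'_i}\}$. Then $\kappa'=\varinjlim_i\kappa'_i$, $D=\varprojlim_i D_i$ inside $G=\varprojlim_i\Gal(L_i/K)$, and—using that each $\kappa'_i/\kappa$ is normal by the second paragraph, hence stable under $\Aut(\kappa'/\kappa)$—one has $\Aut(\kappa'/\kappa)=\varprojlim_i\Aut(\kappa'_i/\kappa)$. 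Each $D_i\to\Aut(\kappa'_i/\kappa)$ is surjective with finite source and target, so given a compatible family $(\overline{\tau}_i)$ the fibres above it form an inverse system of nonempty finite sets, whose limit is nonempty; an element of it is the desired $\sigma\in D$. The main technical point is the finite Galois case, and specifically the Chinese Remainder choice of $\alpha$ that kills every residue field except $\kappa'$: it is exactly what constrains the conjugate $\overline{\tau}(\overline{\alpha})$ to be realised by an element of the decomposition group.
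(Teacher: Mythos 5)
Your proof is correct. The paper gives no argument of its own for this proposition — it simply cites (\cite{Efrat06}, Corollary 15.2.5) and (\cite{stacks}, Lemma 0BRK) — and what you write is exactly the classical decomposition-group proof found in those sources: normality of $\kappa'/\kappa$ by reducing the (split, $A$-integral) minimal polynomials of elements of the integral closure $B$, surjectivity in the finite Galois case via a Chinese-Remainder lift $\alpha$ of a primitive element of the maximal separable subextension $\kappa'_s/\kappa$ killed at the other maximal ideals of $B$, and then the purely inseparable reduction and the limit over finite Galois subextensions; the only steps left implicit (e.g. $[\kappa':\kappa]\leq[L:K]$ so that a primitive element exists, and $\sigma(\m')=\m'\Rightarrow\sigma(A')=A'$) are standard and correctly used.
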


\begin{proposition}
\label{prop:galois_action_transitive_sav}
Let $v=(|\cdot|,A,\m,\kappa)$ be a pseudo-absolute value on $K$. Assume that the residue field $\kappa$ is perfect. Let $L/K$ be a Galois extension with Galois group $G$. Denote by $M_{L,v}$ the set of extensions of $v$ to $L$. Then $G$ acts transitively on $M_{L,v}$. 
\end{proposition}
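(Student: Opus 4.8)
The plan is to transport the problem, via the dictionary between pseudo-absolute values and pairs (valuation ring of $L$ over $A$, absolute value on the residue field extending the residue absolute value of $v$), onto two facts that are already at hand: transitivity of the $G$-action on the valuation rings above $A$ (Lemma \ref{lemma:action_galois_extension_integral_closure}) and transitivity of a Galois group on the extensions of a fixed absolute value to a Galois extension, applied to the residue fields. Throughout, note that the right action of $G=\Aut(L/K)$ on $M_L$ preserves $M_{L,v}$, since every $\tau\in G$ fixes $K$ pointwise.

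First I would fix $w,w'\in M_{L,v}$ and look for $\sigma\in G$ with $w\circ\sigma=w'$. Let $B$ be the integral closure of $A$ in $L$; by (\cite{BouAC}, Chap. VI, \S 8.6, Proposition 6) the finiteness rings $A_w,A_{w'}$ are the localisations $B_{\m_w},B_{\m_{w'}}$ at maximal ideals of $B$, and a direct check gives $A_{w\circ\tau}=\tau^{-1}(A_w)$ for every $\tau\in G$. Lemma \ref{lemma:action_galois_extension_integral_closure} yields $\sigma_0\in G$ with $\sigma_0(\m_w)=\m_{w'}$, hence $\sigma_0(A_w)=A_{w'}$; replacing $w$ by $w\circ\sigma_0^{-1}$ (and at the end composing the $\sigma$ found below with $\sigma_0^{-1}$) we may assume $A_w=A_{w'}=:A'$, with maximal ideal $\m'$ and residue field $\kappa'$. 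It then suffices to find $\sigma$ in the decomposition group $D:=\{\tau\in G:\tau(\m')=\m'\}$ with $w\circ\sigma=w'$.

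Next, $w$ and $w'$ induce absolute values $\overline{w},\overline{w'}$ on $\kappa'$, both extending the residue absolute value $\overline{v}$ of $v$ on $\kappa$. Since a pseudo-absolute value is determined by its finiteness ring together with its residue absolute value (Remark \ref{rem:characterisation_sav}, Proposition \ref{prop:extension_sav}), and since every $\tau\in D$ fixes $A'$ and induces $\overline{\tau}\in\Aut(\kappa'/\kappa)$ with $\overline{w\circ\tau}=\overline{w}\circ\overline{\tau}$, it is enough to produce $\overline{\tau}\in\Aut(\kappa'/\kappa)$ with $\overline{w}\circ\overline{\tau}=\overline{w'}$ and lift it through the surjection $D\twoheadrightarrow\Aut(\kappa'/\kappa)$ of Proposition \ref{prop:normal_residue_extension}. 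By that proposition $\kappa'/\kappa$ is normal; as $L/K$ is algebraic the extension $\kappa'/\kappa$ is algebraic, and $\kappa$ being perfect it is separable, hence Galois. So the remaining input is the classical fact that the Galois group of a (possibly infinite) Galois extension of fields acts transitively on the extensions of a fixed absolute value of the base field, applied to $\kappa'/\kappa$ and $\overline{v}$; combining this with the previous paragraph finishes the proof.

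The step I expect to be the main obstacle is precisely this last classical input, together with making the reduction to it airtight: one must check that $\kappa'/\kappa$ is genuinely algebraic (so that perfectness forces separability, hence the Galois property), and one needs a clean argument for transitivity of the Galois action on extensions of an absolute value that covers both the Archimedean case (where it is transitivity on embeddings into $\bC$) and the infinite-degree case — the latter obtained from the finite-degree non-Archimedean/Archimedean theorem by a compactness argument in the Krull topology on $\Aut(\kappa'/\kappa)$, exhibiting the relevant subsets cut out by finite subextensions as a collection of closed sets with the finite intersection property. Everything else is routine manipulation of the valuation-ring/residue-absolute-value description of pseudo-absolute values.
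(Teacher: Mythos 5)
Your proposal is correct and follows essentially the same route as the paper: reduce to a common finiteness ring via the transitivity of $G$ on the maximal ideals of the integral closure (Lemma \ref{lemma:action_galois_extension_integral_closure}), then use normality of $\kappa'/\kappa$ and the surjection from the decomposition group (Proposition \ref{prop:normal_residue_extension}), perfectness giving the Galois property, and transitivity of $\Aut(\kappa'/\kappa)$ on extensions of the residue absolute value. The "classical input" you flag as the main obstacle is exactly what the paper cites, namely (\cite{Neukirch99}, Chapter II, Proposition 9.1), whose proof is the compactness/finite-intersection argument you sketch, so no genuine gap remains.
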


\begin{proof}
First, remark that $G$ acts on $M_{L,v}$ as $G$ stabilises $K$. Let $w_1 = (\va_1,A_1,\m_1),w_2=(\va_2,A_2,\m_2)\in M_{L,v}$ and denote by $B$  the integral closure of $A$ in $L$. Lemma \ref{lemma:action_galois_extension_integral_closure} gives the existence of $\sigma \in G$ such that $\sigma(\m_1\cap B) = \m_2\cap B$. Hence the map 
\begin{align*}
\fonction{\va_{\sigma}}{L}{[0,+\infty]}{a}{|\sigma(a)|_2}
\end{align*}
defines an element of $M_{L,v}$ with finiteness ring $A_1$ and residue absolute $\va_{\sigma}$. Proposition \ref{prop:normal_residue_extension} implies that the extension $\kappa_1:=(A_1/\m_1)/\kappa$ is Galois. Hence there exists $\overline{\tau} \in \Aut(\kappa_1/\kappa)$ such that, for all $\overline{a}\in \kappa_1$, $|\overline{a}|_1 = |\overline{a}|_{\sigma}$ (\cite{Neukirch99}, Chapter II, Proposition 9.1). Furthermore, there exists $\tau \in G$ lifting $\overline{\tau}$ and such that $\tau(\m_1) = \m_1$. We then have 
\begin{align*}
|a|_1 = |\sigma(\tau(a))|_2
\end{align*}
for all $a\in K$, i.e. $v_1 = \sigma\tau(v_2)$.
\end{proof}

The following results are an adaptation of (\cite{ChenMori}, Lemma 3.3.5) in the context of pseudo-absolute values. 

\begin{lemma}
\label{lemma:max_extension_absolute_values}
Let $F$ be a field endowed with an (usual) absolute value $\va$. We fix an extension of $\va$ to $\overline{F}$ again denoted by $\va$. Let $\{\alpha_1,...,\alpha_d\}$ be a family of pairwise distinct separable elements of $F$ and let $\{n_1,...,n_d\}$ be a family of non-zero integers. Then

\begin{align*}
\displaystyle\max_{j\in\{1,...,d\}} |\alpha_j| = \limsup_{N\to +\infty} \left|\sum_{j=1}^{d} n_j\alpha_j^N \right|^{\frac{1}{N}}.
\end{align*}
\end{lemma}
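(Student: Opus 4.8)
The plan is to prove the two inequalities separately. The inequality $\limsup_N |\sum_j n_j \alpha_j^N|^{1/N} \le \max_j |\alpha_j|$ is the easy half: setting $C := \max_j |\alpha_j|$ and $S := \sum_{j=1}^d |n_j|$, the triangle inequality (valid whether $\va$ is Archimedean or not) gives $|\sum_j n_j \alpha_j^N| \le S\, C^N$ for every $N$, hence $|\sum_j n_j \alpha_j^N|^{1/N} \le S^{1/N} C \to C$. So the content is in the reverse inequality $\limsup_N |\sum_j n_j \alpha_j^N|^{1/N} \ge \max_j |\alpha_j|$.

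For the reverse inequality, I would relabel so that $|\alpha_1| = \cdots = |\alpha_r| = C = \max_j|\alpha_j|$ and $|\alpha_{r+1}|, \ldots, |\alpha_d| < C$, and it suffices to bound $|\sum_{j=1}^r n_j \alpha_j^N|$ from below along a suitable subsequence of $N$, since the tail $\sum_{j>r} n_j\alpha_j^N$ contributes at most $S' \rho^N$ with $\rho < C$, which is negligible after taking $N$-th roots. Dividing through by $\alpha_1^N$, I am reduced to showing that $\limsup_N |n_1 + \sum_{j=2}^r n_j \beta_j^N| > 0$ (in fact bounded below by a positive constant along a subsequence), where $\beta_j := \alpha_j/\alpha_1$ satisfies $|\beta_j| = 1$ for $2 \le j \le r$ and $\beta_j \ne 1$ (using that the $\alpha_j$ are pairwise distinct). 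The key structural input is that the $\beta_j$ are separable algebraic over $F$ (hence over the prime field, or at least they generate a separable extension), so one can pass to a finite Galois situation and use either an equidistribution/pigeonhole argument on the compact group generated by the $\beta_j$ in the unit circle of $\widehat{\overline{F}}$, or a Vandermonde-type nonvanishing argument: the vectors $(\beta_2^N, \ldots, \beta_r^N)$ cannot all conspire, for every large $N$, to make $n_1 + \sum_{j \ge 2} n_j \beta_j^N$ small, because that would force a nontrivial linear dependence contradicting that the $1, \beta_j$ are distinct.

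Concretely, the cleanest route is the pigeonhole one: the closure $H$ of the cyclic subgroup generated by $(\beta_2, \ldots, \beta_r)$ inside the product of unit circles of $\widehat{\overline{F}}$ is a compact group, and by Dirichlet/pigeonhole there are infinitely many $N$ with $(\beta_2^N, \ldots, \beta_r^N)$ within any prescribed $\varepsilon$ of $(1, \ldots, 1)$; for those $N$, $|n_1 + \sum_{j=2}^r n_j \beta_j^N| \ge |n_1 + n_2 + \cdots + n_r| - (\text{small})$ if that sum is nonzero — but it need not be. To handle the case $\sum_{j=1}^r n_j = 0$ one instead chooses a residue class $N \equiv N_0$ making the $\beta_j^{N}$ cluster near a point $(\zeta_2, \ldots, \zeta_r)$ of $H$ at which $n_1 + \sum_{j\ge 2} n_j \zeta_j \ne 0$; such a point exists because the function $(\zeta_j) \mapsto n_1 + \sum n_j \zeta_j$ is a nonzero polynomial on the group $H$ (it is nonconstant since not all $\beta_j$ equal $1$ and the $n_j \ne 0$), so it cannot vanish identically on the infinite compact group $H$. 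This is where separability is used: it guarantees the $\beta_j$ are distinct in all embeddings, so $H$ is genuinely a nontrivial group on which the linear form is nonvanishing somewhere.

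The main obstacle, then, is precisely this last point — showing the linear combination does not vanish along \emph{every} large $N$ when the naive sum $\sum n_j$ is zero — and I would resolve it by the group-theoretic nonvanishing argument just sketched (a nonzero regular function on an infinite compact abelian group has a nonvanishing point, and the closure of $\{(\beta_2^N,\ldots,\beta_r^N)\}_N$ visits every neighbourhood of every point of $H$ along an arithmetic progression of $N$'s). Once a lower bound $|\sum_{j=1}^r n_j \alpha_j^N| \ge c\, C^N$ holds along an infinite set of $N$ with $c > 0$, combining with the negligible tail gives $\limsup_N |\sum_{j=1}^d n_j \alpha_j^N|^{1/N} \ge \lim_N (c/2)^{1/N} C = C$, completing the proof.
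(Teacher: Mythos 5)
Your easy direction and the reduction to the terms of maximal absolute value are correct and agree with the paper, but the key lower bound has a genuine gap. You assert that the closure $H$ of $\{(\beta_2^N,\dots,\beta_r^N)\}_{N}$ inside the unit sphere of $\widehat{\overline{F}}^{\,r-1}$ is a compact group, and that by Dirichlet/pigeonhole the powers return infinitely often to any prescribed neighbourhood of a point of $H$. This is only true when the relevant completion is locally compact (the Archimedean case, or a non-Archimedean field with finite residue field and discrete value group). For a general valued field it fails: take $F=\bC((t))$ with the $t$-adic absolute value and $\beta=2\in\bC^{\times}$. Then $|\beta|=1$, but $|\beta^{N}-\beta^{M}|=1$ for all $N\neq M$ (any nonzero complex constant has absolute value $1$), so the orbit $\{\beta^{N}\}$ is uniformly discrete, its closure is not compact, and $\beta^{N}$ never approaches $1$; the recurrence you invoke simply does not exist, and the same objection applies to the refined step where you cluster near a nonvanishing point of the linear form along an arithmetic progression. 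Since the lemma must cover arbitrary absolute values (it is applied to residue fields whose completions are typically not locally compact), this is a real gap and not an edge case.

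The paper's proof avoids compactness altogether by a linear-algebra device: with $|\beta_1|=\dots=|\beta_l|=1$ and the $\beta_j$ pairwise distinct, the vector $\bigl(\sum_j n_j\beta_j^{N},\dots,\sum_j n_j\beta_j^{N+l-1}\bigr)$ is the image of $(n_1\beta_1^{N},\dots,n_l\beta_l^{N})$ under the invertible Vandermonde matrix of the $\beta_j$; the sup-norm of the latter vector equals $\max_j|n_j|>0$ independently of $N$, so at least one of any $l$ consecutive sums is bounded below by a fixed positive constant. Hence the sequence $\bigl(|\sum_{j=1}^{l}n_j\beta_j^{N}|\bigr)_N$ does not tend to $0$, the $\limsup$ of its $N$-th roots is at least $1$, and the conclusion follows after adding back the negligible tail, exactly as in your reduction. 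To repair your proposal you would either have to restrict to locally compact completions (which the statement cannot afford) or replace the pigeonhole step by this Vandermonde argument.
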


\begin{proof}
The proof essentially reproduces the arguments of the proof of (\cite{ChenMori}, Lemma 3.3.5). Let $N>0$ be an integer. The triangle inequality yields
\begin{align*}
    \displaystyle\left|\sum_{j=1}^{d} n_j\alpha_j^N \right| \leq d\max_{1\leq j\leq d} n_j \max_{j\in\{1,...,d\}} |\alpha_j|^N.
\end{align*}
Hence
\begin{align*}
    \limsup_{N\to +\infty} \left|\sum_{j=1}^{d} n_j\alpha_j^N \right|^{\frac{1}{N}} \leq \max_{j\in\{1,...,d\}} |\alpha_j|.
\end{align*}

Up to reordering, we may assume that
\begin{align*}
    |\alpha_1| = \cdots |\alpha_l| > |\alpha_{l+1}| \geq \cdots \geq |\alpha_d|,
\end{align*}
where $l\in\{1,...,d\}$. For all $j=1,...,l$, let $\beta_j := \alpha_j/\alpha_1$. Endow $(\widehat{\overline{F}})^l$, where $\widehat{\overline{F}}$ denotes the completion of the algebraic closure of $F$ with respect to $\va$, with the norm
\begin{align*}
    \|(a_1,...,a_l)\| := \displaystyle\max_{j=1,...,l} |a_j|.
\end{align*}
Then $\|(n_1\beta_1^N,...,n_l\beta_l^N)\|=\max_{j=1,...,l}|n_j|>0$ and
\begin{align*}
  \begin{pmatrix}
\sum_{j=1}^l n_j\beta_j^N \\
\vdots \\
\sum_{j=1}^l n_j\beta_j^{N+l-1} 
\end{pmatrix} = \begin{pmatrix}
1 & \dots & 1 \\
\beta_1 & \dots & \beta_l \\
\vdots & \ddots & \vdots \\
\beta_1^{l-1} & \dots & \beta_l^{l-1} 
\end{pmatrix} \begin{pmatrix}
n_1\beta_1^N \\
\vdots \\
n_l\beta_l^N 
\end{pmatrix}.
\end{align*}
As the $\beta_j$ are pairwise distinct, the above Vandermonde matrix is invertible and denote by $A$ its inverse. Then we have the inequality
\begin{align*}
    0<\frac{\max_{j=1,...,l}|n_j|}{\|A\|}\leq \|(\sum_{j=1}^l n_j\beta_j^N,\cdots,\sum_{j=1}^l n_j\beta_j^{N+l-1} )\|,
\end{align*}
where $\|A\|$ denotes the operator norm of $A$ with respect to $\|\cdot\|$. Hence the sequence $(|\sum_{j=1}^l n_j\beta_j^N|)_{N>0}$is bounded from below by a positive real number. Furthermore, we have 
\begin{align*}
    \displaystyle\lim_{N\to\infty} \left|\sum_{j=l+1}^{d} n_j(\frac{\alpha_j}{\alpha_1})^N \right| = 0. 
\end{align*}
Therefore, the sequence $(\left|\sum_{j=1}^{d} n_j(\frac{\alpha_j}{\alpha_1})^N \right|)_{N>0}$ does not converge to $0$. Hence
\begin{align*}
    \displaystyle\limsup_{N\to\infty} \left|\sum_{j=1}^{d} n_j(\frac{\alpha_j}{\alpha_1})^N \right|^{\frac{1}{N}} \geq 1.
\end{align*}
Finally we obtain
\begin{align*}
    \displaystyle\limsup_{N\to\infty} \left|\sum_{j=1}^{d} n_j\alpha_j^N \right|^{\frac{1}{N}} \geq |\alpha_1| = \max_{1\leq j\leq d} |\alpha_j|.
\end{align*}
\end{proof}

\begin{proposition}
\label{prop:max_sav_finite_extension}
Let $v$ be a pseudo-absolute value on $K$. Assume that $\kappa_v$ is perfect and trivially valued if $\mathrm{char}(\kappa_v)>0$. Let $L/K$ be a Galois extension. Let $c\in A_v^s$ (the integral closure of $A_v$ inside $K^s$). Let $P = T^{d}+a_1T^{d-1}+\cdots+a_d \in A_v[T]$ be the minimal polynomial of $c$ over $K$ and let $L$ be the field of decomposition of $P$. Let $O_c := \{\alpha_1,...,\alpha_d\}$ denote the orbit of $c$ under the action of the Galois group of $L/K$. We fix a choice $w_0:=(\va_0,B_0,\m_0,\kappa_0)$ of a pseudo-absolute value on $L$ above $v$. Then
\begin{align*}
\displaystyle\max_{1\leq j \leq d} |\alpha_j|_0 = \max_{w|v} |c|_{w} = \limsup_{N\to \infty}  \left|\sum_{j=1}^{d} \alpha_{j}^N \right|^{\frac{1}{N}}_v.
\end{align*}
\end{proposition}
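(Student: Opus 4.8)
The plan is to prove the two equalities separately: $\max_{1\le j\le d}|\alpha_j|_0=\max_{w|v}|c|_w$ is a formal consequence of the transitivity of the Galois action on extensions of $v$, while $\max_{1\le j\le d}|\alpha_j|_0=\limsup_N|\sum_j\alpha_j^N|_v^{1/N}$ is obtained by reducing modulo the maximal ideal and applying Lemma \ref{lemma:max_extension_absolute_values}. For the first one, note that $c$ is separable over $K$, so $P$ has $d$ distinct roots and $O_c$ is exactly the set of $\Gal(L/K)$-conjugates of $c$; since $\kappa_v$ is perfect, Proposition \ref{prop:galois_action_transitive_sav} gives that $G:=\Gal(L/K)$ acts transitively on the set $M_{L,v}$ of extensions of $v$ to $L$, so $M_{L,v}=\{w_0\circ\tau:\tau\in G\}$. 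As $|c|_{w_0\circ\tau}=|\tau(c)|_{w_0}$ and $\{\tau(c):\tau\in G\}=O_c=\{\alpha_1,\dots,\alpha_d\}$, we obtain $\max_{w|v}|c|_w=\max_{\tau\in G}|\tau(c)|_{w_0}=\max_{1\le j\le d}|\alpha_j|_{w_0}=\max_{1\le j\le d}|\alpha_j|_0$.

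For the second equality, set $p_N:=\sum_{j=1}^d\alpha_j^N$. By Newton's identities $p_N$ is a polynomial with integer coefficients in the elementary symmetric functions of the $\alpha_j$, hence in $\pm a_1,\dots,\pm a_d\in A_v$, so $p_N\in A_v$ and $|p_N|_v=|p_N|_{w_0}$ because $w_0|v$. Each $\alpha_j$ is integral over $A_v$, hence lies in the valuation ring $B_0$ of $w_0$, so its residue $\overline{\alpha_j}\in\kappa_0$ is defined; writing $\va_0$ for the residue absolute value of $w_0$, one has $|\alpha_j|_{w_0}=|\overline{\alpha_j}|_{\va_0}$ and $|p_N|_v=|\overline{p_N}|_{\va_0}=|\sum_j\overline{\alpha_j}^{\,N}|_{\va_0}$. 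The triangle inequality then gives $|p_N|_v\le d\,(\max_j|\alpha_j|_0)^N$, hence $\limsup_N|p_N|_v^{1/N}\le\max_j|\alpha_j|_0$.

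For the reverse inequality, group the residues: let $\beta_1,\dots,\beta_m$ be the distinct nonzero elements among $\overline{\alpha_1},\dots,\overline{\alpha_d}$, occurring with multiplicities $n_1,\dots,n_m$, so that $\overline{p_N}=\sum_{k=1}^m n_k\beta_k^{\,N}$ and $\max_j|\alpha_j|_0=\max_k|\beta_k|_{\va_0}$ (the case $m=0$ being trivial, all three quantities in the statement then vanishing). If $\operatorname{char}\kappa_v=0$, each $n_k$ is nonzero in $\kappa_0$, and Lemma \ref{lemma:max_extension_absolute_values} applied over $\kappa_0$ to the pairwise distinct elements $\beta_1,\dots,\beta_m$ with coefficients $n_1,\dots,n_m$ yields $\limsup_N|\overline{p_N}|_{\va_0}^{1/N}=\max_k|\beta_k|_{\va_0}=\max_j|\alpha_j|_0$, which finishes the proof in this case.

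It remains to treat $\operatorname{char}\kappa_v=p>0$. Here $\va_v$ is trivial by hypothesis, and since $\kappa_0/\kappa_v$ is algebraic with $\kappa_v$ perfect, $\va_0$ is trivial on $\kappa_0$ as well; thus $\max_j|\alpha_j|_0\in\{0,1\}$, and it suffices to show that if some $\overline{\alpha_j}\neq 0$ then $\overline{p_N}\neq 0$ for infinitely many $N$. The idea is to keep only the indices $k$ with $p\nmid n_k$ and to run a Vandermonde argument identical to the one in the proof of Lemma \ref{lemma:max_extension_absolute_values}, showing that among any block of consecutive integers $N$ at least one satisfies $\overline{p_N}\neq 0$, whence $\limsup_N|p_N|_v^{1/N}=1=\max_j|\alpha_j|_0$. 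This last step is the delicate point of the whole argument: in positive characteristic distinct conjugates of $c$ can reduce to the same element of $\kappa_0$, so $\overline{p_N}$ only remembers multiplicities modulo $p$; this is the only place where passing to Lemma \ref{lemma:max_extension_absolute_values} is not purely formal, and where the triviality and perfectness hypotheses on $\kappa_v$ are used.
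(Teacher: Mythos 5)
Your first equality is proved exactly as in the paper (transitivity of the Galois action, Proposition \ref{prop:galois_action_transitive_sav}), and your characteristic-zero argument for the second equality is correct and in fact slightly more economical than the paper's: since $p_N=\sum_j\alpha_j^N\in A_v$, you compute $|p_N|_v$ through the single chosen extension $w_0$, group equal residues in $\kappa_0$ with their integer multiplicities (nonzero in $\kappa_0$ because the characteristic is zero), and apply Lemma \ref{lemma:max_extension_absolute_values} once; the paper instead runs over all maximal ideals of the integral closure $B$ and all extensions of the residue absolute value and then invokes Proposition \ref{prop:formula_extension_sav}. Both arguments rest on the same lemma applied to residues counted with multiplicity, so up to this point your route is a legitimate streamlining.

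The residue characteristic $p>0$ case, however, is a genuine gap: what you write is a plan, not a proof, and the plan cannot be completed. "Keeping only the indices $k$ with $p\nmid n_k$" is vacuous when every multiplicity is divisible by $p$, and nothing in the hypotheses excludes this. Concretely, take $K=\bQ_p$, let $v$ have finiteness ring $A_v=\bZ_p$ with (necessarily) trivial residue absolute value on $\kappa_v=\bF_p$, which is perfect, and let $c$ be a root of $P(T)=(T-1)^p+p$, irreducible (Eisenstein after $T\mapsto T+1$) and separable. Its $p$ conjugates are $\alpha_j=1+\zeta^j\pi$ with $\pi^p=-p$ and $\zeta$ a primitive $p$-th root of unity; they all reduce to $1$, so $\max_j|\alpha_j|_0=\max_{w|v}|c|_w=1$, while $\sum_{j=1}^p\alpha_j^N=p\sum_{m\geq 0}\binom{N}{pm}(-p)^m$ lies in $\m_v$ for every $N\geq 1$, so that $\limsup_N\bigl|\sum_j\alpha_j^N\bigr|_v^{1/N}=0$. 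Thus the identity you are trying to establish fails in this situation: the obstruction you correctly identified (the residues only remember multiplicities modulo $p$) is fatal, not merely delicate, and no Vandermonde argument will repair it. Be aware that the paper gives no argument here either — it dismisses the positive residue characteristic case as "trivial" — so the honest conclusion is that this case requires either an additional hypothesis (for instance residue characteristic zero, or a condition preventing all multiplicities of nonzero residues from being divisible by $p$) or a weakened statement, rather than a cleverer execution of your sketch.
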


\begin{proof}
As $L/K$ is Galois, Proposition \ref{prop:galois_action_transitive_sav} yields the first equality. We now prove the second equality. The $\mathrm{char}(k)>0$ case being trivial, we assume that $\mathrm{char}(k)=0$. Let $B$ denote the integral closure of $A_v$ in $L$. Then Lemma \ref{lemma:action_galois_extension_integral_closure} combined with the fundamental theorem of symmetric polynomials yield, for all $j\in\{1,...,d\}$, $\alpha_j \in B$ and $\sum_{j=1}^{d} \alpha_{j}^N\in A_v$. From \S \ref{sub:finite_separable_extension_sav}, $B$ is a Prüfer semi-local ring and denote by $\m_1,...,\m_r$ its maximal ideals and respectively $(B_1,\m_1,\kappa_1),...,(B_r,\m_r,\kappa_r)$ the corresponding localisations. For all $i\in\{1,...,r\}$, for all $j\in\{1,...,d\}$, denote by $\overline{\alpha_j}^{(i)}\in \kappa_i$ the image of $\alpha_j$ in $\kappa_i$. Then $\overline{\alpha_j}^{(i)}$ is a root of the minimal polynomial of $\overline{c}^{(i)}$ over $\kappa_v$, hence it is separable over $\kappa_v$. Hence, for all $i\in\{1,...,r\}$, for all integer $N>0$, the image of $\sum_{j=1}^{d} \alpha_{j}^N\in A_v$ in $\kappa_i$ is of the form 
\begin{align*}
    \sum_{j\in J_i} n_j(\overline{\alpha_{j}}^{(i)})^N,
\end{align*}
where $J_i\subset \{1,...,d\}$ is of cardinality $d_i:= [\kappa_i:\kappa_v]$ such that for all $j\neq j'\in J_i$, $\overline{\alpha_{j}}^{(i)} \neq \overline{\alpha_{j'}}^{(i)}$ and for all $j\in J$, $n_j$ is a non-zero integer in $\kappa_v$. Fixing an extension $\va_i$ of $\va_v$ to $\kappa_i$, we have
\begin{align*}
\displaystyle \max_{w\in E_i} |c|_w = \max_{j\in J_i} |\overline{\alpha_{j}}^{(i)}|_i = \limsup_{N\to \infty}  \left|\sum_{j\in J_i} n_j(\overline{\alpha_{j}}^{(i)})^N \right|^{\frac{1}{N}}_i = \left|\sum_{j\in J_i} n_j(\overline{\alpha_{j}}^{(i)})^N \right|^{\frac{1}{N}}_v = \limsup_{N\to \infty}  \left|\sum_{j=1}^{d} \alpha_{j}^N \right|^{\frac{1}{N}}_v,
\end{align*}
where $E_i$ denotes the set of extensions (in the sense of usual absolute values) on $\va_v$ to $\kappa_i$. The first equality comes from Proposition \ref{prop:galois_action_transitive_sav} (cf. $\kappa_i/\kappa_v$ is Galois). Lemma \ref{lemma:max_extension_absolute_values} provides the second equality. The fact that, for all integer $N>0$, $\sum_{j\in J_i} n_j(\overline{\alpha_{j}}^{(i)})^N\in\kappa_v$ (as a symmetric functions of roots of $\overline{P}^{(i)}$, the image of $P$ in $\kappa_v$) provides the third and last equalities. We can then conclude the proof using the description given in Proposition \ref{prop:formula_extension_sav}.
\end{proof}

\section{Transcendental extensions of pseudo-absolute values}
\label{sec:transcendental_extension_sav}

Throughout this section, we fix a field $K$. We will address the problem of extending pseudo-absolute values on $K$ to transcendental extensions. In view of the study of transcendental extensions of valuations, this is a quite complicated problem (cf. e.g. \cite{MacLane36,Vaquie}). Therefore, we will mostly give specific, yet important, examples of such extensions. 

\subsection{Purely transcendental extension of degree 1 and usual absolute value case}

Throughout this subsection, let $K'=K(X)$, where $X$ is transcendental over $K$ and let $v$ be a usual absolute value on $K$. 

Let $v'=(\va',A',\m',\kappa')$ be an extension of $v$ to $K'$. Then by (\cite{BouAC}, Chap VI, \S 10.3, Corollaires 1-3), we have $\rank(v') \in \{0,1\}$. 

For the $\rank(v')=0$ case, $v'$ is an absolute value on $K'$ extending $v$. 

If $\rank(v)=1$, then (\cite{BouAC}, Chap VI, \S 10.3, Corollaires 1-3) yields $\trdeg(\kappa'/K)=0$ and $v'$ is Abhyankar, thus $A'$ is a discrete valuation ring and there exists a closed point $x\in \mathbb{P}^1_{K}$ such that $A'=\{f\in K': \ord(f,z)\geq 0\}$. The residue absolute value of $v'$ is an extension on $v$ to the algebraic extension $\kappa'/K$. 

From the above description, we get the following result.

\begin{proposition}
\label{prop:description_transcendental_extension_degree_1}
Assume that $K$ is complete with respect to the absolute value $v$ and denote by $M_{K',v}$ the set of extensions of $v$ to $K'$. Then we have a homeomorphism.
\begin{align*}
M_{K',v} \cong \mathbb{P}^{1,\Berk}_{K},
\end{align*}
where $\mathbb{P}^{1,\Berk}_{K}$ denotes the Berkovich projective line over $K$.
\end{proposition}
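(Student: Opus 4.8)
The plan is to exploit the explicit description of $M_{K',v}$ obtained above, set up a natural bijection onto the underlying set of $\mathbb{P}^{1,\Berk}_K$, and then upgrade it to a homeomorphism by a soft compactness argument.

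\emph{The bijection.} Recall that $\mathbb{P}^{1,\Berk}_K=(\mathbb{P}^1_K)^{\an}$ has for points the pairs $(p,\va_p)$ with $p\in\mathbb{P}^1_K$ and $\va_p$ an absolute value on $\kappa(p)$ extending $v$. By the discussion preceding the statement, an extension $v'$ of $v$ to $K'$ has rank $0$ or $1$: if $\rank(v')=0$ then $A_{v'}=K'$ and $v'$ is a usual absolute value on $K'=\kappa(\eta)$ over $v$, $\eta$ being the generic point of $\mathbb{P}^1_K$; if $\rank(v')=1$ then $A_{v'}=\cO_{\mathbb{P}^1_K,x}$ for a unique closed point $x$, and the residue absolute value of $v'$ is an absolute value on the finite extension $\kappa(x)/K$ extending $v$, which is unique since $K$ is complete. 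Conversely, via Remark \ref{rem:characterisation_sav}, a pair $(\eta,\va)$ gives back the rank-$0$ pseudo-absolute value $\va$ on $K'$, and a pair $(x,\va_x)$ with $x$ closed gives the rank-$1$ pseudo-absolute value with finiteness ring $\cO_{\mathbb{P}^1_K,x}$ and residue absolute value $\va_x$. This yields a bijection $\Phi\colon M_{K',v}\to\mathbb{P}^{1,\Berk}_K$, $v'\mapsto(p_{v'},\va_{v'})$, where $p_{v'}$ is the centre of $A_{v'}$ on $\mathbb{P}^1_K$ and $\va_{v'}$ is the residue absolute value of $v'$ (which equals $v'$ itself when $\rank(v')=0$). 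Completeness of $K$ enters here in an essential way: it is exactly what makes $\Phi$ injective at the rank-$1$ points.

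\emph{Continuity of $\Phi$.} I would cover both spaces by the two standard affine charts $U_0$ and $U_\infty$, cut out respectively by $|X|<+\infty$ and $|X^{-1}|<+\infty$. On the source these are open, being the preimages of $[0,+\infty)$ under the continuous evaluation maps $v'\mapsto|X|_{v'}$ and $v'\mapsto|X^{-1}|_{v'}$; on the target they are $(\Spec K[X])^{\an}$ and $(\Spec K[X^{-1}])^{\an}$; and $\Phi$ carries $U_0$ into $U_0$ and $U_\infty$ into $U_\infty$. It then suffices to prove $\Phi$ continuous on each chart. By definition of the Berkovich topology on $(\Spec K[X])^{\an}$ this amounts to continuity of $v'\mapsto|f|_{\Phi(v')}$ for every $f\in K[X]$ together with continuity of the projection $j\circ\Phi$ to $\Spec K[X]$. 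For the first point, if $v'\in U_0\cap M_{K',v}$ and $f\in K[X]$ then $f\in A_{v'}$, so by the definition of the residue absolute value (Notation \ref{notation:sav}(5)) one has $|f|_{\Phi(v')}=|f|_{v'}$ whatever the rank of $v'$; continuity is then that of $v'\mapsto|f|_{v'}$, which is part of the topology on $M_{K'}$. For the second point, $j\circ\Phi$ is, by construction of $\Phi$, the composite of the continuous specification map $M_{K'}\to\ZR(K')$, which restricts to $M_{K',v}\to\ZR(K'/K)$, with the centre map $\ZR(K'/K)\to\mathbb{P}^1_K$. The chart $U_\infty$ is handled verbatim with $X$ replaced by $X^{-1}$, so $\Phi$ is continuous.

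\emph{Conclusion and the main difficulty.} Finally, $M_{K',v}=\bigcap_{f\in K}\{v'\in M_{K'}:|f|_{v'}=|f|_v\}$ is an intersection of closed sets — each $|f|_v$ is a closed point of $[0,+\infty]$, pulled back along a continuous map — hence closed in $M_{K'}$, hence compact since $M_{K'}$ is compact Hausdorff (Theorem \ref{prop:compactness_set_sav}); on the other hand $\mathbb{P}^{1,\Berk}_K$ is Hausdorff, being the analytification of the proper $K$-scheme $\mathbb{P}^1_K$. A continuous bijection from a compact space to a Hausdorff space is a homeomorphism, which finishes the proof. The only delicate point is the matching of the two topologies at the rank-$1$ points of $M_{K',v}$; the affine-chart reduction takes care of it, because on each chart both topologies are generated by the same evaluation functionals $|f|_{\cdot}$ and these are intertwined by $\Phi$. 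Everything else is formal — the genuine content lies entirely in the description of extensions recalled at the start.
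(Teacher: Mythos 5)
Your argument is correct and is essentially the paper's own proof: the bijection comes from the rank-$0$/rank-$1$ classification recalled just before the statement, continuity holds by definition of the topologies (your chart-by-chart check merely makes this explicit), and one concludes via compactness of $M_{K',v}$ together with the compact Hausdorffness of $\mathbb{P}^{1,\Berk}_{K}$. One marginal correction: completeness of $K$ is not what makes $\Phi$ injective at the rank-one points --- injectivity holds regardless, since $\Phi$ records both the centre and the residue absolute value --- rather it guarantees uniqueness of the extension of $v$ to each $\kappa(x)$ and, more importantly, that the Berkovich analytification over $K$ itself is defined and compact Hausdorff, which is what your final step uses.
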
  

\begin{proof}
From what is written above, we have a bijection $M_{K',v}\to \mathbb{P}^{1,\Berk}_K$. It is continuous by definition of the topologies and thus we can conclude by compactness of $M_{K',v}$ and $\mathbb{P}^{1,\Berk}_K$.
\end{proof}

\subsection{Composition with a valuation}

Let $K'/K$ be a field extension. Let $\overline{v}=(\overline{\va},\overline{V},\overline{m},\kappa)$ be a pseudo-absolute value on $K$. Let $V'$ be a valuation ring with fraction field $K'$ with residue field $K$, denote by $v'$ the associated valuation of $K'$. For any $a\in V'$, denote by $\overline{a}$ the image of $a$ in $K$. Then $V:=\{a\in V': \overline{a}\in \overline{V}\}$. The results of \S \ref{subsub:rank_rat_rank_etc} imply that $V$ is a valuation ring of $K'$ with residue field $\kappa$. 

\begin{definition}
\label{def:composition_with_a_valuation}
Using the above construction together with Remark \ref{rem:characterisation_sav}, we obtain a pseudo-absolute value $v$ on $K'$ which is denoted by $v=v'\circ\overline{v}$. $v$ is called the \emph{composite pseudo-absolute value} with $v'$ and $\overline{v}$. 
\end{definition}

\begin{lemma}
\label{lemma:composition_with_a_valuation}
We use the above notation. Assume that $K$ is a subfield of $K'$. Then the pseudo-absolute value $v$ is an extension of $\overline{v}$ to $K'$.
\end{lemma}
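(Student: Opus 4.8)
The plan is to verify directly that $|x|_v=|x|_{\overline v}$ for every $x\in K$, by unwinding the definition of the composite pseudo-absolute value $v=v'\circ\overline v$ and tracing an element of $K$ through the two residue maps involved.

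First I would clarify the hypotheses: since $K\subset K'$ and $V'$ has residue field $K$, I read this as $K\subset V'$ with the residue map $V'\to V'/\m_{V'}=K$ restricting to the identity on $K$ (equivalently, $v'$ is trivial on $K$), so that $\overline a=a$ for all $a\in K$. Next I would recall, from the construction preceding Definition \ref{def:composition_with_a_valuation}, from \S\ref{subsub:rank_rat_rank_etc} and from Remark \ref{rem:characterisation_sav}, the explicit description of $v$: its finiteness ring is $V=\{a\in V':\overline a\in\overline V\}$, its maximal ideal is $\m_V=\{a\in V':\overline a\in\overline{\m}\}$ (writing $\overline{\m}$ for the maximal ideal of $\overline V$), its residue field is canonically identified with the residue field $\kappa$ of $\overline v$, and — crucially — under this identification its residue absolute value is the residue absolute value of $\overline v$. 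The key structural fact I would isolate is that the residue map $V'\to K$ carries $V$ onto $\overline V$ and $\m_V$ onto $\overline{\m}$, so that the induced isomorphism $V/\m_V\iso\overline V/\overline{\m}=\kappa$ is exactly the identification used to define $v$.

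I would then split the verification into two cases. If $x\in K\setminus\overline V$, i.e. $|x|_{\overline v}=+\infty$, then $\overline x=x\notin\overline V$ forces $x\notin V$, hence $|x|_v=+\infty=|x|_{\overline v}$. If $x\in\overline V$, then $\overline x=x\in\overline V$ gives $x\in V$; since $x\in K\subset V'$, its image under $V'\to K$ is $x$ itself, so chasing through $V/\m_V\iso\overline V/\overline{\m}=\kappa$ shows that the image of $x$ in $\kappa$ is just the residue class of $x$, and evaluating the residue absolute value of $v$ (which equals that of $\overline v$) yields $|x|_v=|x|_{\overline v}$. Combining the two cases gives the claim.

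The main obstacle is not any computation but the bookkeeping of the two residue maps: one must make sure the identification ``residue field of $V$ $=$ residue field of $\overline v$'' is the one genuinely coming from the composite-valuation construction, and that the image of $x\in\overline V$ in $V/\m_V$ matches its class in $\overline V/\overline{\m}$ under this identification — which is precisely where the hypothesis that the residue map $V'\to K$ restricts to the identity on $K$ (i.e. $\overline a = a$ for $a\in K$) is used. Once this compatibility is pinned down, the rest is immediate.
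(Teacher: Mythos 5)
Your proof is correct and follows essentially the same route as the paper: the paper's entire argument is the observation that the inclusion $K\subset K'$ gives a section $K\to V'\to K$ of the residue map, whence $|a|_v=|a|_{\overline v}$ for all $a\in K$. Your write-up simply makes explicit the case split ($x\in\overline V$ versus $x\notin\overline V$) and the residue-field identification that the paper leaves implicit.
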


\begin{proof}
Since $K'/K$ is an extension, we have a section $K \to V' \to K$. Thus, for any $a\in K$, we have $|a|_v = \overline{|a|}$.
\end{proof}

\subsection{Extension by generalisation}

Assume that $K'/K$ is an extension of algebraic functions fields, namely $K'/K$ is finitely generated. Let $X \to \Spec(K)$ be a $K$-variety with function field $K(X)\cong K'$, namely a model of $K'/K$. Let $x\in X$ be a non-singular point. Then Proposition \ref{prop:valuation_rings_specialisation_smooth} implies that there exists a valuation $v$ of $K'/K$ with valuation ring $V$ dominating $\cO_{X,x}$ and residue field isomorphic to $\kappa(x)$. Let $\overline{v_x}$ be a pseudo-absolute value on $\kappa(x)$. Using Definition \ref{def:composition_with_a_valuation}, we obtain a pseudo-absolute value $v_x = v \circ \overline{v_x}$ on $K'$. Moreover, if $\kappa(x)$ is a subfield of $K'$, Lemma \ref{lemma:composition_with_a_valuation} implies that $v_x$ is an extension of $\overline{v_x}$ to $K'$. Note that this is the case when $x$ is a regular rational point.

\begin{definition}
\label{def:extension_by_generalisation}
We use the above notation. The pseudo-absolute value $v_x$ is called the \emph{extension by generalisation} (alternatively the \emph{extension through specialisation}) of $\overline{v}_x$ to $K$ w.r.t. the valuation ring $V$. 
\end{definition} 

\section{Completion of pseudo-valued fields} 
\label{sec:completion_sav}

Let $K$ be a field and $v$ be a pseudo-absolute value on $K$ with residue field $\kappa$. The goal of this section is to construct a pseudo-absolute value (possibly on an extension of $K$) which extends $v$ and whose residue field is $\widehat{\kappa}$, the completion of $\kappa$ w.r.t. the residue absolute value. We make use of the notion of "gonflement" introduced by Bourbaki and which we recall (\S \ref{sub:gonflement}). Then we introduce (non-canonical) completion of a field with respect to a pseudo-absolute value (\S \ref{sub:completion_sav}).

\subsection{Gonflement of a local ring}
\label{sub:gonflement}

We will use results from (\cite{BouAC}, Chap. IX, Appendice 2) we now recall.

\begin{definition}
\label{def:gonflement_elementaire}
Let $(A,\m)$ be a local ring with residue field $\kappa$. An $A$-algebra $A'$ is called an \emph{elementary gonflement} (\emph{gonflement élémentaire} in French) of $A$ if either $A'$ is isomorphic to the $A$-algebra $A]X[:=A[X]_{\m A[X]}$ or there exists a monic polynomial $P\in A[X]$, whose reduction in $\kappa[X]$ is irreducible, such that $A'$ is isomorphic to the $A$-algebra $A[X]/(P)$.
\end{definition}

\begin{lemma}
\label{lemma:elementary_gonflement}
Let $A$ be a local ring and $\iota : A \to A'$ be an elementary gonflement of $A$. Then $A'$ is local. We denote respectively by $\kappa,\kappa'$ the residue fields of $A,A'$. Moreover the following assertion hold.
\begin{itemize}
	\item[(1)] The morphism $\iota$ is flat, injective and local.
	\item[(2)] The residue field extension $\kappa'/\kappa$ is generated by a single element.
	\item[(3)] The maximal ideal of $A$ generates the maximal ideal of $A'$.
	\item[(4)] If $A$ is Noetherian, then so is $A'$.
	\item[(5)] If $A$ is a valuation ring, then so is $A'$ and the value groups of $A$ and $A'$ are equal. In particular, if $A$ is a DVR then so is $A'$.
\end{itemize}
\end{lemma}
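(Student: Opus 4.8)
The plan is to treat the two cases of Definition \ref{def:gonflement_elementaire} separately, since each is an elementary classical computation; these are the two "building blocks" and all five assertions reduce to them. \emph{Case 1:} $A' = A]X[ = A[X]_{\mathfrak{m}A[X]}$. First I would observe that $\mathfrak{m}A[X]$ is a prime ideal of $A[X]$: indeed $A[X]/\mathfrak{m}A[X] \cong \kappa[X]$ is a domain, so the localisation $A'$ is a local ring whose maximal ideal is $\mathfrak{m}A' = \mathfrak{m}A[X] \cdot A'$, giving (3) immediately, and whose residue field is $\Frac(\kappa[X]) = \kappa(X)$, which is generated over $\kappa$ by the single element $X$, giving (2). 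Flatness in (1) is clear because $A[X]$ is free over $A$ and localisation is flat; injectivity is clear since $A \hookrightarrow A[X] \hookrightarrow A'$ (the second map is injective because $\mathfrak{m}A[X]$ contains no nonzero element of $A$, as $A$ is a domain when... more carefully: $A \to A'$ is injective because an element $a \in A$ becoming $0$ in $A'$ would be killed by some $s \notin \mathfrak{m}A[X]$, but taking a coefficient argument/reduction mod $\mathfrak{m}$ forces $a \in \mathfrak{m}$, and iterating with Krull intersection or directly: $s(X) a = 0$ in $A[X]$ with $A$ a domain forces $a = 0$); locality of the morphism follows from (3). For (4), $A[X]$ is Noetherian by Hilbert's basis theorem when $A$ is, hence so is its localisation $A'$. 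For (5), if $A$ is a valuation ring with valuation $v \colon \Frac(A) \to \Gamma \cup \{\infty\}$, I would extend $v$ to $\Frac(A)(X)$ by the Gauss valuation $v_{0,0}$ of Example \ref{example:Gauss_valuations} (with $\gamma = 0$, trivial on $X$); its valuation ring is exactly $A'$ and its value group is $\Gamma$, so $A'$ is a valuation ring with the same value group, and if $\rank$ is $1$ and $\Gamma \cong \bZ$ this is a DVR.

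\emph{Case 2:} $A' = A[X]/(P)$ with $P \in A[X]$ monic and $\overline{P} \in \kappa[X]$ irreducible. Here I would argue that $A'$ is a free $A$-module of rank $\deg P$ (since $P$ is monic), giving flatness and injectivity in (1) at once. Then $A'/\mathfrak{m}A' \cong \kappa[X]/(\overline P)$, which is a field because $\overline P$ is irreducible; a standard argument (a finite ring extension of a local ring, or: $\mathfrak{m}A'$ is contained in every maximal ideal and the quotient is already a field) shows $A'$ is local with maximal ideal $\mathfrak{m}A'$, giving (3), and residue field $\kappa[X]/(\overline P) = \kappa(\overline X)$ generated by one element, giving (2). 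Locality of $\iota$ is then immediate, finishing (1). For (4), $A[X]/(P)$ is a quotient of a Noetherian ring when $A$ is Noetherian. For (5), suppose $A$ is a valuation ring; then $\overline P$ irreducible means in particular $A' = A[X]/(P)$ is a domain (one checks $(P)$ is prime: if $fg \in (P)$ then reducing mod $\mathfrak m$ and using that $\kappa[X]/(\overline P)$ is a field... actually the cleanest route is that $A'$ is a finite extension of the valuation ring $A$ contained in the field $L := \Frac(A')$, hence by (\cite{BouAC}, Chap.~VI) $A'$ is contained in some valuation ring $W$ of $L$ extending $A$; since $\overline P$ irreducible forces the residue extension to be of degree exactly $\deg P = [L:\Frac(A)]$, the fundamental inequality $\sum e_i f_i \le n$ forces $W$ to be unique with $e = 1$, and $A' = W$ because $A'$ is integrally closed here — or one invokes that an unramified... ). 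So $A'$ is a valuation ring with value group $\Gamma_{A'} \supseteq \Gamma$ of finite index, but $e = 1$ forces $\Gamma_{A'} = \Gamma$; in particular a DVR stays a DVR.

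The main obstacle I anticipate is assertion (5) in Case 2, namely showing that $A[X]/(P)$ is \emph{exactly} a valuation ring (not merely contained in one or dominated by one) with \emph{unchanged} value group. The key point is that the irreducibility of $\overline P$ in $\kappa[X]$ forces the residue field extension to have degree $[\kappa':\kappa] = \deg P$, which already equals $[L:\Frac(A)]$; combined with the fundamental inequality for extensions of valuations this pins down the unique extension of $v$ to $L$, shows it is unramified (so the value group does not grow), and identifies $A'$ with its valuation ring. For (1), the only subtlety is the injectivity of $\iota$ in Case 1, which I would handle by the reduction-mod-$\mathfrak m$ / Gauss valuation argument above rather than a bare localisation computation. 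All the Noetherian statements in (4) are formal. I would write Case 2's assertion (5) invoking (\cite{BouAC}, Chap.~VI, \S 8) on extensions of valuations to finite extensions to keep the argument short.
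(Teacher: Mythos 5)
Your two-case decomposition is exactly the paper's, and for the case $A'=A]X[$ as well as for assertions (1)--(4) in both cases your arguments are fine (the paper likewise identifies $A]X[$ with the Gauss valuation ring and gets the value group from Example \ref{example:Gauss_valuations}; one small slip: your injectivity argument in Case 1 appeals to ``$A$ a domain'', which is not a hypothesis — but this is harmless, since $s\notin \m A[X]$ has a unit coefficient, or simply because a flat local morphism is faithfully flat).

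The genuine gap is the endgame of assertion (5) in Case 2, precisely the point you flag as the main obstacle. Your route via Chevalley and the fundamental inequality correctly produces a valuation ring $W$ of $L=\Frac(A')$ containing $A'$, extending $A$, with $e=1$ and $f=\deg P$, hence with unchanged value group; but this does \emph{not} by itself give $A'=W$. A priori $A'$ could be a proper (non-integrally-closed) local subring of $W$ with the same residue field data, and the two justifications you offer do not close this: ``$A'$ is integrally closed'' is unproven and is essentially equivalent to the statement you want (indeed, since $W$ is the unique extension, the integral closure of $A$ in $L$ \emph{is} $W$, so claiming $A'$ integrally closed is claiming $A'=W$), and the ``unramified\dots'' invocation is left unfinished. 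The missing step can be supplied by a short coefficient argument, which is in substance what the paper does with its explicit valuation $v'(a_0+\cdots+a_{d-1}x^{d-1})=\min_i v(a_i)$: given $\xi\in W$, write $\xi=a_0+\cdots+a_{d-1}x^{d-1}$ with $a_i\in K$; if some $a_i\notin A$, pick $c=a_{i_0}$ of minimal value, so $c^{-1}\xi\in A'$ and its image in $\kappa'=\kappa[X]/(\overline P)$ is nonzero because $1,\bar x,\dots,\bar x^{d-1}$ are $\kappa$-linearly independent ($\overline P$ irreducible of degree $d$); since $\m_W\cap A'=\m A'$ (automatic, as $\m A'$ is the unique prime of $A'$ over $\m$), $c^{-1}\xi$ is a unit of $W$, whence $w(\xi)=v(c)<0$, contradicting $\xi\in W$. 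Thus $W\subseteq A'$ and $A'=W$. With this inserted your proof is complete; it is then a genuinely different (and slightly less self-contained) route than the paper's, which avoids the fundamental inequality altogether by checking directly that the min-of-coefficients function is a valuation whose ring is $A'$.
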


\begin{proof}
We first consider the case where $\iota$ is finite, i.e. $A' \cong A[X]/(P)$, where $P\in A[X]$ has irreducible reduction $\overline{P}$ in $\kappa[X]$. Let $\m$ denote the maximal ideal of $A$. Since $A'/\m A' \cong \kappa[X]/(\overline{P})$, $mA'$ is a maximal ideal of $A'$. Let $\p$ be a maximal ideal of $A'$. Since $\iota : A\to A'$ is finite, $\p\cap A = \m$ and thus $\m A' \subset \p$ hence $\p = \m A'$, i.e. $A'$ is local. The residue field extension is generated by the class of $X$ in $\kappa'$. We now prove (5). Assume that $A$ is a valuation ring and denote by $v : A \to \Gamma$ the corresponding valuation. We assume that the valuation is trivial (i.e. $A$ is not a field), otherwise the assertion is trivial itself.  Since $P\in A[X]$ has irreducible image in $\kappa[X]$, it is irreducible in $A[X]$. Indeed, if we could write $P=f\cdot g$,  where $f,g\in A[X]$ are non constant polynomials, then either $f$ or $g$ has coefficients in $\m$ and thus $P\in \m[X]$, which contradicts $P$ is monic since $A$ is not a field. Now $P$ is monic and irreducible in $A[X]$ and $A$ is integrally closed. Therefore $P$ is irreducible in $K[X]$, where $K$ denotes the fraction field of $A$. Thus $A'$ is an integral domain with quotient field $K':= K[X]/(P)$ which is a finite extension of $K$. Now define the map $v' : A[X] \to \Gamma$ sending $a_dX^d+\cdots a_0\in A[X]$ to $\min_{0\leq i\leq d}v(a_i)$. This defines a valuation on $K(X)$ and $A[X]$ is contained in the valuation ring $A]X[$ (cf. Example \ref{example:Gauss_valuations}). Since $P$ is monic, $v(P)=0$ and $v'$ induces a map $v' : A' \to \Gamma$. It is straightforward to check that $v'$ defines a valuation on $A'$, which is non-negative on $A'$. Therefore, if $V'$ denotes the valuation ring of $K'$ of the valuation $v'$, we have $A'\subset V'$. Assume that there exists $a'\in V'$ which does not belong to $A'$. Choose a representative $a_dX^d+\cdots a_0 \in K[X]$ of $a'$. By hypothesis, $v'(a')=\min_{0\leq i\leq d}v(a_i)\geq 0$. Since $a'\notin A'$, there exists an index $j\in \{0,...,d\}$ such that $v(a_j)=\min_{0\leq i\leq d}v(a_i)$ and $a_j\notin A$. Thus $0\leq v(a')=v(a_j) <0$, yielding a contradiction. Finally, $A'$ is a valuation ring. Using (4), we obtain the final part of (5). 

If $\iota$ is not finite, namely $A'\cong A]X[$, since $\m A[X]$ is a prime ideal in $A[X]$, $A'$ is local. Then (1)-(4) follow directly from the definition. If $A$ is a valuation ring, then $A]X[$ is the valuation ring of the Gauss valuation (cf. Example \ref{example:Gauss_valuations}). This concludes to proof.
\end{proof}

\begin{definition}
\label{def:gonflement}
Let $A$ be a local ring. An $A$-algebra $A'$ is called a \emph{gonflement} of $A$ if there exist a well-ordered set $\Lambda$ with a greatest element $\omega$ together with an increasing family $(A'_{\lambda})_{\lambda\in\Lambda}$ (w.r.t. inclusion) such that the following conditions hold:
\begin{itemize}
	\item[(i)] for any $\lambda\in\Lambda$, $A_{\lambda}$ is a local ring and $A'=A'_{\omega}$;
	\item[(ii)] let $\alpha$ be the least element of $\Lambda$, then $A'_{\alpha}$ is isomorphic to $A$;
	\item[(iii)] let $\nu\in \Lambda \setminus \{\alpha\}$, let $S_{\nu}:=\{\lambda \in \Lambda : \lambda < v\}$. If $S_{\nu}$ has a greatest element $\mu$, then $A'_v$ is an elementary gonflement of $A'_{\mu}$. Otherwise, then $A'_{\nu}= \displaystyle\bigcup_{\lambda\in S_{\nu}} A'_{\lambda}$. 
\end{itemize}
\end{definition}

\begin{proposition}
\label{prop:gonflement}
Let $A$ be a local ring and let $A \to A'$ be a gonflement.
\begin{itemize}
	\item[(1)] $A'$ is a local ring and $\m_{A'}=\m_{A}A'$.
	\item[(2)] The ring extension $A \to A'$ is faithfully flat.
	\item[(3)] Assume that $A$ is Noetherian. Then $A'$ is Noetherian and $\dim(A')=\dim(A)$. Moreover, if $A$ is regular, then $A'$ is regular.
	\item[(4)] If $A$ is a DVR, then $A'$ is a DVR. Moreover, the maximal ideal of $A'$ is generated by any generator of the maximal ideal of $A$.
	\item[(5)] If $A$ is a valuation ring, then $A'$ is a valuation ring. Moreover, the value groups of $A$ and $A'$ are equal. 
\end{itemize}
\end{proposition}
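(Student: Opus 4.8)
The plan is to prove the five statements simultaneously by transfinite induction along the well-ordered set $\Lambda$, the induction hypothesis being that $A'_\lambda$ is a local ring with $\m_{A'_\lambda}=\m_A A'_\lambda$, that $A\to A'_\lambda$ is faithfully flat, and that $A'_\lambda$ inherits from $A$ the properties of being Noetherian, regular, a DVR, or a valuation ring, with the value group unchanged in the valuation-ring case. Evaluating at the greatest element $\omega$ then yields the Proposition. The base case $\lambda=\alpha$ is trivial since $A'_\alpha\cong A$.

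For a successor $\nu$ with immediate predecessor $\mu$, the ring $A'_\nu$ is an elementary gonflement of $A'_\mu$, so Lemma \ref{lemma:elementary_gonflement} applies to $A'_\mu\to A'_\nu$; I would then compose with the induction hypothesis for $A\to A'_\mu$. Locality, flatness and faithful flatness are stable under composition, and $\m_A A'_\nu=(\m_A A'_\mu)A'_\nu=\m_{A'_\mu}A'_\nu=\m_{A'_\nu}$ by Lemma \ref{lemma:elementary_gonflement}(3); the Noetherian, DVR and valuation-ring assertions together with the equality of value groups come from Lemma \ref{lemma:elementary_gonflement}(4)--(5). Since $A\to A'_\nu$ is flat and local with closed fibre $A'_\nu/\m_A A'_\nu=\kappa_{A'_\nu}$ a field, the dimension formula for flat local homomorphisms gives $\dim A'_\nu=\dim A$; if moreover $A$ is regular then $\m_A$ is generated by $\dim A$ elements, hence so is $\m_{A'_\nu}=\m_A A'_\nu$, and $A'_\nu$ (being Noetherian local) is regular.

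At a limit ordinal $\nu$ one has $A'_\nu=\varinjlim_{\lambda<\nu}A'_\lambda$, a filtered colimit of faithfully flat local $A$-algebras with compatible maximal ideals. Hence $A'_\nu$ is local with maximal ideal $\m_A A'_\nu=\bigcup_{\lambda<\nu}\m_{A'_\lambda}$ (an element outside it lies in some $A'_\lambda$ off $\m_{A'_\lambda}$, hence is a unit), and it is faithfully flat over $A$ because flatness passes to filtered colimits and $\m_A A'_\nu\neq A'_\nu$. If $A$ is a valuation ring then so is $A'_\nu$: for $x\in\Frac(A'_\nu)=\varinjlim\Frac(A'_\lambda)$ one has $x$ or $x^{-1}$ in some $A'_\lambda\subseteq A'_\nu$, and the value group is $\varinjlim\Gamma_{A'_\lambda}=\Gamma_A$. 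Granting the Noetherian case treated below, statement (4) then follows: when $A$ is a DVR, $A'_\nu$ is a Noetherian valuation ring which is not a field (as $0\neq\m_A A'_\nu\neq A'_\nu$), hence a DVR, and $\m_{A'_\nu}=\m_A A'_\nu=\pi A'_\nu$ for any uniformiser $\pi$ of $A$.

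The main obstacle is Noetherianity at a limit ordinal, since a filtered union of Noetherian rings need not be Noetherian; one must exploit the rigidity coming from flatness. Flatness of $A'_\lambda$ over $A$ together with $\m_A A'_\lambda=\m_{A'_\lambda}$ give $\m_A^nA'_\lambda/\m_A^{n+1}A'_\lambda\cong(\m_A^n/\m_A^{n+1})\otimes_{\kappa_A}\kappa_{A'_\lambda}$, and passing to the colimit
\begin{align*}
\gr_{\m_A}(A'_\nu)\cong\gr_{\m_A}(A)\otimes_{\kappa_A}\kappa_{A'_\nu},
\end{align*}
which is a finitely generated algebra over the field $\kappa_{A'_\nu}$, hence Noetherian. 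Next, for any ideal $I$ of $A$ the faithful flatness of the transition maps gives $\m_A^n(IA'_\mu)=\m_A^n(IA'_\nu)\cap A'_\mu$, so $\bigcap_n\m_A^n(IA'_\nu)$ meets each $A'_\mu$ in $\bigcap_n\m_A^n(IA'_\mu)=0$ by the Krull intersection theorem; thus $A'_\nu$ is $\m_A$-adically ideally separated over $A$, whence its $\m_A$-adic completion $\widehat{A'_\nu}$ is faithfully flat over $A'_\nu$. Finally $\widehat{A'_\nu}$ is complete with Noetherian associated graded ring, hence Noetherian, and faithfully flat descent of the ascending chain condition along $A'_\nu\hookrightarrow\widehat{A'_\nu}$ forces $A'_\nu$ to be Noetherian. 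Feeding this back closes the induction; in the generality of an arbitrary Noetherian $A$ this limit step is precisely (\cite{BouAC}, Chap. IX, Appendice 2).
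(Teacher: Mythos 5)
Your treatment of (2), (4), (5) and of the successor step is sound and close in spirit to the paper: the paper also derives (4) from (1)+(3) via ``regular of dimension one'', and proves (5) by exactly your transfinite scheme, using Lemma \ref{lemma:elementary_gonflement}(5) at successors and, at limit stages, the fact that a directed union of valuation (Prüfer) rings with injective local transition maps is again a valuation ring (Proposition \ref{prop:prop_Prufer_domains}(6)); your ``$x$ or $x^{-1}$ lies in some $A'_\lambda$'' argument is an equivalent elementary substitute. For (1)--(3) the paper simply cites Bourbaki, whereas you attempt a proof, and this is where there is a genuine gap. The crucial claim in your limit step is that the $\m_A$-adic completion $\widehat{A'_\nu}$ is faithfully flat over $A'_\nu$ ``because $A'_\nu$ is $\m_A$-adically ideally separated over $A$''. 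Ideal separatedness of $A'_\nu$ as an $A$-module is the hypothesis entering the local flatness criterion for flatness \emph{over $A$} (which you already have, $A'_\nu$ being a filtered colimit of flat $A$-algebras); it says nothing about flatness of the adic completion over the ring $A'_\nu$ itself. Since $A'_\nu$ is not yet known to be Noetherian, flatness of its completion is precisely the kind of statement that fails in general and cannot be obtained this way; as written, the descent of the ascending chain condition from $\widehat{A'_\nu}$ to $A'_\nu$ is therefore unjustified, and this is the heart of the Noetherianity assertion in (3). A secondary, fixable point: the equality $\m_A^n(IA'_\mu)=\m_A^n(IA'_\nu)\cap A'_\mu$ uses faithful flatness of the transition maps $A'_\mu\to A'_\nu$, which is not part of your stated induction hypothesis (only $A\to A'_\lambda$ is); you need to strengthen the hypothesis to include flatness of all transition maps.

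The architecture (Noetherian associated graded, pass to the completion, descend) can be repaired, but along different lines: show that $\widehat{A'_\nu}$ is flat over each Noetherian stage $A'_\lambda$ (the $\m_A$-adic and $\m_{A'_\lambda}$-adic topologies agree, and the adic completion of a flat module over a Noetherian ring is flat), deduce that $J\widehat{A'_\nu}\cap A'_\nu=J$ first for ideals $J$ generated by finitely many elements of some $A'_\lambda$ and then for arbitrary ideals, and conclude that the ascending chain condition descends from the Noetherian ring $\widehat{A'_\nu}$. Alternatively, do what the paper does and invoke (\cite{BouAC}, Chap.~IX, Appendice~2, Proposition~2 et Corollaire) for (1)--(3) outright; your closing sentence gestures at this citation, but as a proof the intermediate flatness step you propose does not go through.
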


\begin{proof}
(1)-(3) are (\cite{BouAC}, Chap. IX, Appendice 2, Proposition 2 et Corollaire). (4) follows from (1) and (3) together with the fact that discrete valuation rings are exactly regular rings of dimension $1$.

Let us prove (5). Let $A$ be a valuation ring with value group $\Gamma$. Let $A'=(A'_{\lambda})_{\lambda\in\Lambda}$ be a gonflement of $A$, where $\Lambda$ is a well-ordered set. Denote respectively by $\alpha,\omega$ the least and greatest elements of $\Lambda$, namely $A=A_{\alpha}$ and $A'=A_{\omega}$. Let 
\begin{align*}
\Lambda' := \{\lambda\in \Lambda : A_{\lambda} \text{ is a valuation ring with value group }\Gamma\}.
\end{align*}
Assume that $\Lambda\setminus\Lambda'\neq\emptyset$. Since $\Lambda$ is well-ordered, there exists a least element $\nu \in \Lambda'$. Let $S_{\nu}:=\{\lambda\in\Lambda: \lambda<\nu\}$. Note that $\alpha<\nu$ and $S_{\nu} \subset\Lambda'$. 

If $S_{\nu}$ has a greatest element $\mu$, then $A_{\nu}$ is an elementary gonflement of $A_{\mu}$ and we obtain a contradiction using Lemma \ref{lemma:elementary_gonflement} (5). 

Now assume that $S_{\nu}$ does not have a greatest element. Then $A_{\nu}=\displaystyle\bigcup_{\lambda\in S_{\nu}} A'_{\lambda}$. Note that, for any $\lambda\leq \lambda'$, the morphism $A_{\lambda}\to A_{\lambda'}$ is injective and local, thus $A_{\nu}$ is a direct limit of Prüfer rings with injective arrows and is thus Prüfer (Prop \ref{prop:prop_Prufer_domains} (6)). Hence $A_{\nu}$ is a local Prüfer domain, i.e. a valuation ring. Since $S_{\nu}\subset \Lambda'$, we deduce that the value group of $A_{\nu}$ is $\Gamma$, providing a contradiction. 
\end{proof}

\begin{theorem}[\cite{BouAC}, Chap. IX, Appendice 2, Corollaire du Théorème 1]
\label{th:gonflement}
Let $A$ be a local ring with residue field $\kappa$. Let $\kappa'/\kappa$ be a field extension. Then there exists a gonflement $A \to B$ such that the field extension $\kappa'/\kappa$ is isomorphic to the field extension $\kappa_B/\kappa$, where $\kappa_B$ denotes the residue field of $B$. 
\end{theorem}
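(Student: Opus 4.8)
The plan is to build $B$ by transfinite recursion, assembling $\kappa'$ as an increasing union of residue fields in which each step adjoins a single generator by means of an elementary gonflement (Definition \ref{def:gonflement_elementaire}), taking unions at limit stages. This is essentially Bourbaki's construction, and the nontrivial input is entirely contained in Lemma \ref{lemma:elementary_gonflement} and Proposition \ref{prop:gonflement}: elementary gonflements are local, injective, flat, their residue extension is generated by one element, and arbitrary gonflements are local with $\m_B=\m_A B$.

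Concretely, fix a well-ordering $(x_\lambda)_{\lambda<\gamma}$ of a generating family of $\kappa'/\kappa$, discarding along the way any $x_\lambda$ already produced, so that at each used index something genuinely new is adjoined. I would define by recursion on $\lambda$ a local ring $B_\lambda$ together with a $\kappa$-embedding $\iota_\lambda\colon \kappa_{B_\lambda}\hookrightarrow\kappa'$, writing $\kappa_\lambda$ for the image of $\iota_\lambda$, so that the family $(B_\lambda)$ is increasing for inclusion, each transition map is an elementary gonflement at successor stages, and $x_\mu\in\kappa_\lambda$ for all $\mu<\lambda$. Start with $B_0:=A$. At a successor stage, given $B_\lambda$ and $\iota_\lambda$: if $x_\lambda$ is transcendental over $\kappa_\lambda$, set $B_{\lambda+1}:=B_\lambda]X[$, which by Lemma \ref{lemma:elementary_gonflement} is a local elementary gonflement of $B_\lambda$ with residue field $\kappa_\lambda(X)$, and let $\iota_{\lambda+1}$ extend $\iota_\lambda$ via $X\mapsto x_\lambda$ (well-defined by transcendence); if $x_\lambda$ is algebraic over $\kappa_\lambda$ with minimal polynomial $\overline P\in\kappa_\lambda[X]$, pick a monic lift $P\in B_\lambda[X]$ of $\overline P$ and set $B_{\lambda+1}:=B_\lambda[X]/(P)$, which is a local elementary gonflement with residue field $\kappa_\lambda[X]/(\overline P)$ (as $\overline P$ is irreducible), and let $\iota_{\lambda+1}$ extend $\iota_\lambda$ via $X\mapsto x_\lambda$. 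In both cases $B_\lambda\to B_{\lambda+1}$ is injective and local by Lemma \ref{lemma:elementary_gonflement} (1), and $x_\lambda\in\kappa_{\lambda+1}$. At a limit stage $\lambda$, set $B_\lambda:=\bigcup_{\mu<\lambda}B_\mu$ and $\iota_\lambda:=\bigcup_{\mu<\lambda}\iota_\mu$; a filtered union of local rings along injective local maps is again local, with residue field $\bigcup_{\mu<\lambda}\kappa_\mu$.

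Letting $\Lambda$ be the set of stages so used, augmented if necessary by a final limit stage so that $\Lambda$ has a greatest element $\omega$, the family $(B_\lambda)_{\lambda\in\Lambda}$ satisfies all conditions of Definition \ref{def:gonflement}, hence $B:=B_\omega$ is a gonflement of $A$. By Proposition \ref{prop:gonflement} (1) we have $\m_B=\m_A B$, so $\kappa_B=B/\m_A B=\bigcup_{\lambda}\kappa_\lambda$; since every element of the chosen generating family lies in some $\kappa_\mu$ and every $\kappa_\mu$ embeds into $\kappa'$ over $\kappa$ compatibly, the induced $\kappa$-embedding $\kappa_B\hookrightarrow\kappa'$ is onto, i.e. an isomorphism. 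This produces the desired gonflement.

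The main obstacle is purely organisational rather than conceptual: one must arrange the recursion to match Definition \ref{def:gonflement} verbatim — a well-ordered index set with a greatest element, and genuine elementary gonflements at successor stages (so redundant generators have to be skipped, not processed) — and one must verify that passing to residue fields commutes with the filtered unions occurring at limit stages, which is exactly the identity $\m_{B_\lambda}=\m_A B_\lambda$ propagated through the construction (Lemma \ref{lemma:elementary_gonflement} (3) and Proposition \ref{prop:gonflement} (1)). Once this bookkeeping is in place, no individual step requires more than the cited properties of elementary gonflements.
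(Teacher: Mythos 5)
Your proof is correct and follows essentially the same transfinite-recursion argument as the source cited for Theorem \ref{th:gonflement} (Bourbaki, Chap.\ IX, Appendice 2), which the paper itself invokes without reproducing a proof. The bookkeeping points you identify (skipping or trivially handling redundant generators, and the compatibility of residue fields with unions at limit stages via $\m_{B_\lambda}=\m_A B_\lambda$) are exactly the ones needed and are covered by Lemma \ref{lemma:elementary_gonflement} and Proposition \ref{prop:gonflement}.
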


\subsection{Completion}
\label{sub:completion_sav}

\begin{definition}
\label{def:sav_complete}
Let $K$ be a field and $v=(\va,A,\m,\kappa)$ be a pseudo-absolute value on $K$. We say that $K$ is \emph{complete} w.r.t. $v$ if the following conditions hold.
\begin{itemize}
	\item[(i)] The finiteness ring $A'$ is Henselian.
	\item[(ii)] The residue field $\kappa'$ is complete w.r.t. the residue absolute value induced by $v'$.
\end{itemize}
\end{definition}

\begin{proposition}
Let $K$ be a field and $v=(\va,A,\m,\kappa)$ be a pseudo-absolute value on $K$. Assume that $K$ is complete w.r.t. $v$. Then for any algebraic extension $K'/K$, there exists a unique extension $v'|v$ to $K'$. 
\end{proposition}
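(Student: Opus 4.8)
The plan is to use the equivalence of Proposition \ref{prop:extension_sav} (and the category equivalence of Proposition \ref{prop:equivalence_extension_sav}) to split the problem into two essentially independent pieces: a pseudo-absolute value $v'$ on $K'$ extending $v$ is the same datum as a valuation ring $A'$ of $K'$ lying over $A$ (equivalently, dominating $A$), together with an absolute value on the residue field $\kappa' := A'/\m_{A'}$ extending the residue absolute value of $v$ on $\kappa$. It therefore suffices to show that each of these two pieces exists and is unique, the first being governed by the hypothesis that $A$ is Henselian, the second by the hypothesis that $\kappa$ is complete.

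For the valuation ring, I would first recall that a valuation ring is Henselian (as a local ring) precisely when the underlying valuation admits a unique prolongation to every finite extension of its fraction field (see e.g.\ \cite{Efrat06}). Hence $v$ extends uniquely to every finite subextension $K_i/K$ of $K'$, say to $A_i$; writing $K' = \bigcup_i K_i$ as the filtered union of its finite subextensions, any valuation ring of $K'$ dominating $A$ restricts on each $K_i$ to $A_i$, so it must equal $\bigcup_i A_i$, and conversely this union is a valuation ring of $K'$ dominating $A$. Thus there is exactly one such $A'$. I would then observe that $\kappa'/\kappa$ is algebraic: any $\overline{x} \in \kappa'$ lifts to some $x \in A'$ which is algebraic over $K$; dividing a polynomial relation for $x$ through by a coefficient of least valuation one obtains a relation with coefficients in $A$, one of them a unit, whose reduction modulo $\m_{A'}$ is a nonzero polynomial over $\kappa$ annihilating $\overline{x}$.

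For the residue absolute value: since $\kappa$ is complete and $\kappa'/\kappa$ is algebraic, the residue absolute value of $v$ extends uniquely to $\kappa'$. In the non-Archimedean case this is the classical uniqueness of extensions of a complete non-Archimedean absolute value (on a finite extension $L/\kappa$ the unique extension is $x \mapsto |N_{L/\kappa}(x)|^{1/[L:\kappa]}$, and the general algebraic case follows by the same filtered-union argument); in the Archimedean case it follows from Ostrowski's theorem, which forces $\kappa$ to be $\bR$ or $\bC$ with a power of the usual absolute value, whose algebraic extensions ($\bR\subset\bC$, $\bC=\bC$, $\bR=\bR$) carry a unique compatible absolute value. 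Combining the two pieces and applying the converse part of Proposition \ref{prop:extension_sav} produces the desired $v'$ and shows it is the unique extension of $v$ to $K'$. I do not expect a genuine obstacle here: the statement is essentially a repackaging of the classical theory of Henselian and complete valued fields. The only points needing a little care are the reduction of the infinite algebraic case to the finite one on both sides, the verification that the residue extension $\kappa'/\kappa$ is algebraic, and a clean treatment of the Archimedean residue case.
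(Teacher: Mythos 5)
Your proposal is correct and follows essentially the same route as the paper's proof: decompose the extension into the valuation-ring part (unique by Henselianity of $A$) and the residue-absolute-value part (unique by completeness of $\kappa$, after noting $\kappa'/\kappa$ is algebraic). You simply fill in the standard details (filtered union over finite subextensions, algebraicity of the residue extension, the Archimedean case) that the paper leaves implicit.
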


\begin{proof}
Let $K'/K$ be an algebraic extension. Since $A$ is Henselian, underlying valuation of $v$ extends uniquely to $K'$, denote by $A'$ the corresponding valuation ring of $K'$. Let $\kappa'$ denote the residue field of $A'$. Then $\kappa'/\kappa$ is an algebraic extension and, since $\kappa$ is complete, the residue absolute value of $v$ extends uniquely to $\kappa'$. 
\end{proof}

\begin{definition}
\label{def:sav_complete_extension}
Let $K$ be a field and $v=(\va,A,\m,\kappa)$ be a pseudo-absolute value on $K$. Let $K'/K$ be field extension and $v'=(\va',A',\m',\kappa')\in M_{K'}$ be a pseudo-absolute value on $K'$ extending $v$. We say that the extension $v'|v$ is \emph{complete} if $K'$ is complete w.r.t. $v'$.
\end{definition}

The following proposition allows to construct (non-canonically) complete pseudo-valued fields. 

\begin{proposition}
\label{prop:completion_sav}
Let $K$ be a field and $v=(\va,A,\m,\kappa)$ be a pseudo-absolute value on $K$. Denote by $\widehat{\kappa}$ the completion of $\kappa$ w.r.t. the residue absolute value induced by $v$. Then there exist a field extension $\widehat{K}/K$ and a pseudo-absolute value $\widehat{v}\in M_{\widehat{K}}$ extending $v$ such that $\widehat{v}|v$ is complete (Definition \ref{def:sav_complete_extension}).
\end{proposition}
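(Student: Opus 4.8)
The plan is to build the completion in two stages: first enlarge the finiteness ring so that its residue field becomes $\widehat{\kappa}$ (keeping it a valuation ring, via Bourbaki's gonflement), and then pass to the Henselization to fulfil the Henselian requirement. The point of doing it in this order is that a gonflement of a Henselian ring need not be Henselian, whereas Henselization does not disturb the residue field.

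First, apply Theorem \ref{th:gonflement} to the local ring $A$, whose residue field is $\kappa$, and to the field extension $\widehat{\kappa}/\kappa$: this produces a gonflement $A \to B$ whose residue field $\kappa_B$ is isomorphic over $\kappa$ to $\widehat{\kappa}$. By Proposition \ref{prop:gonflement}, the morphism $A \to B$ is injective and local, and $B$ is a valuation ring whose value group coincides with that of $A$. Set $K_1 := \Frac(B)$, a field extension of $K = \Frac(A)$. The residue absolute value of $v$ on $\kappa$ extends uniquely to an absolute value on the completion $\widehat{\kappa} \cong \kappa_B$ (and $\widehat{\kappa}$ is complete for it by construction); by Remark \ref{rem:characterisation_sav}, the valuation ring $B$ together with this absolute value on $\kappa_B$ defines a pseudo-absolute value $v_1 = (\va_1, B, \m_B, \widehat{\kappa})$ on $K_1$. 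Since $A \to B$ is an injective local morphism of valuation rings and the residue absolute value of $v_1$ extends that of $v$, Proposition \ref{prop:extension_sav} gives $v_1 \mid v$. Thus $v_1$ already satisfies condition (ii) of Definition \ref{def:sav_complete}, but in general it fails condition (i).

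Next, let $B^h$ be the Henselization of the local ring $B$, and put $\widehat{K} := \Frac(B^h)$, an extension of $K_1$ (hence of $K$). We use the standard facts that $B^h$ is a Henselian local ring, that $B \to B^h$ is injective, flat and local and induces an isomorphism on residue fields, and that the Henselization of a valuation ring is again a valuation ring — indeed an immediate extension, with unchanged value group and residue field (see, e.g., \cite{stacks}). Hence $B^h$ is a Henselian valuation ring of $\widehat{K}$ with residue field $\widehat{\kappa}$. Equipping $B^h$ with the residue absolute value on $\widehat{\kappa}$ fixed above and extending by $|0| = +\infty$ (Remark \ref{rem:characterisation_sav}) yields a pseudo-absolute value $\widehat{v}$ on $\widehat{K}$ with finiteness ring $B^h$ and residue field $\widehat{\kappa}$; since $B \to B^h$ is an injective local morphism of valuation rings inducing the identity on residue absolute values, Proposition \ref{prop:extension_sav} gives $\widehat{v} \mid v_1$, whence $\widehat{v} \mid v$. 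Finally $\widehat{v}$ fulfils both conditions of Definition \ref{def:sav_complete}: its finiteness ring $B^h$ is Henselian, and its residue field $\widehat{\kappa}$ is complete with respect to the residue absolute value. Therefore $\widehat{K}$ is complete with respect to $\widehat{v}$, i.e. the extension $\widehat{v} \mid v$ is complete, as claimed. The only non-formal ingredient is the behaviour of Henselization on valuation rings invoked in the last paragraph; everything else is bookkeeping with the gonflement results and with the description of pseudo-absolute values through their finiteness ring and residue absolute value.
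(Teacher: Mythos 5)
Your proposal is correct and follows essentially the same route as the paper: a gonflement $A \to B$ (Theorem \ref{th:gonflement}, Proposition \ref{prop:gonflement}) to realise $\widehat{\kappa}$ as the residue field while staying a valuation ring, followed by Henselisation, whose behaviour on valuation rings (unchanged value group and residue field) gives a Henselian finiteness ring, and then Remark \ref{rem:characterisation_sav} and Proposition \ref{prop:extension_sav} to package this as a complete extension $\widehat{v}\mid v$. The only blemish is the phrase ``extending by $|0| = +\infty$'', which should read that the semi-norm on $B^h$ is extended to $\widehat{K}$ by assigning the value $+\infty$ to elements outside $B^h$; this is purely a slip of notation, not a gap.
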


\begin{proof}
Theorem \ref{th:gonflement} implies that there exists a gonflement $A\to A'$ such that $A'$ has residue field $\widehat{\kappa}$. Moreover, Proposition \ref{prop:gonflement} (5) implies that $A'$ is a valuation ring with same value group as $A$. Now consider $\widehat{A}:=(A')^h$ the Henselisation of $A'$. Then (\cite{stacks}, \href{https://stacks.math.columbia.edu/tag/0BSK}{Section 0BSK}) implies that $\widehat{A}$ is a valuation ring extending $A'$ with same value group as $A'$ and $A$. Moreover, its residue field is $\widehat{\kappa}$. Let $\widehat{K}:=\Frac(\widehat{A})$. Then $\widehat{A}$ is an Henselian valuation ring of $\widehat{K}$ and by considering the unique extension of the residue absolute value induced by $v$ on $\kappa$ to $\widehat{\kappa}$, we obtain a pseudo-absolute value $\widehat{v}$ on $\widehat{K}$. The extension $\widehat{v}|v$ is complete by construction.  
\end{proof}

\section{Pseudo-norms}
\label{sec:local_pseudo-norm}
In this section, we study the higher dimensional counterpart of pseudo-absolute values. Namely, we introduce the analogue of a normed vector space when the base field is equipped with a pseudo-absolute value. Throughout this section, we fix a field $K$.

\subsection{Definitions}

\begin{definition}
\label{def:local_semi_norm}
Let $E$ be a finite-dimensional vector space over $K$ of dimension $d$. For any pseudo-absolute value $v=(\va_v,A_v,\m_v,\kappa_v)\in M_{K}$, we call \emph{pseudo-norm} on $E$ in $v$ any map $\|\cdot\|_v : E \to [0,+\infty]$ such that the following conditions hold:
\begin{itemize}
	\item[(i)] $\|0\|_v = 0$ and there exists a basis $(e_1,...,e_d)$ of $E$ such that $\|e_1\|_v,\cdots,\|e_d\|_v \in \bR_{>0}$;
	\item[(ii)] for any $(\lambda,x)\in K\times E$ such that $\{|\lambda|_v,\|x\|_v\}\neq\{0,+\infty\}$, we have $\|\lambda x\|_v=|\lambda|_v\|x\|_v$;
	\item[(iii)] for any $x,y\in E$, $\|x+y\|_v\leq \|x\|_v+\|y\|_v$.
\end{itemize}
Under these conditions, $(E,\|\cdot\|_v)$ is called a \emph{pseudo-normed} vector space in $v$. Moreover, a basis satisfying condition (i) is called \emph{adapted} to $\|\cdot\|_v$.
\end{definition}

\begin{remark}
In \cite{Sedillotthese}, pseudo-norms are called \emph{local pseudo-norms}. Since in this article we only focus on local aspects, we decided to remove the qualification "local" to ease the reading. 
\end{remark}

\begin{proposition}
\label{prop:local_semi-norm}
Let $(E,\|\cdot\|_v)$ be pseudo-normed finite-dimensional $K$-vector space in $v\in M_K$. Let $d:=\dim_{K}(E)$ and let $(e_1,...,e_d)$ be a basis of $E$ which is adapted to $\|\cdot\|_v$. Then the following assertions hold.
\begin{itemize}
	\item[(1)] $\cE_{\|\cdot\|_{v}} := \{x\in E : \|x\|_v < +\infty\}$ is the restriction of scalars of $\bigoplus_{i=1}^{d}K\cdot e_i$ to $A_v$. Moreover, $\cE_{\|\cdot\|_{v}}$ is free $A_v$-module of rank $d$.
	\item[(2)] $N_{\|\cdot\|_{v}} := \{x\in E : \|x\|_v = 0\}$ is an $A_v$-submodule of $\cE_{\|\cdot\|_{v}}$ and we have the equality $N_{\|\cdot\|_{v}} = \m_v\cE_{\|\cdot\|_{v}}$.
	\item[(3)] $\|\cdot\|_{v}$ induces a norm on the $\widehat{\kappa_v}$-vector space 
	\begin{align*}
	\widehat{E}_{\|\cdot\|_{v}} := \cE_{\|\cdot\|_{v}} \otimes_{A_v} \widehat{\kappa_v} \cong \left(\cE_{\|\cdot\|_{v}}/\m_v\cE_{\|\cdot\|_{v}})\right)\otimes_{\kappa_v} \widehat{\kappa_v}.
	\end{align*}
\end{itemize}
\end{proposition}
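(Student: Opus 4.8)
The plan is to show that the adapted basis $(e_1,\dots,e_d)$ exhibits $\cE_{\|\cdot\|_v}$ as the free $A_v$-module $\cE_0:=\bigoplus_{i=1}^{d}A_ve_i$, and then to deduce (2) and (3) from the induced picture on the residue field. The inclusion $\cE_0\subseteq\cE_{\|\cdot\|_v}$ is immediate from (ii) and (iii): for $x=\sum_ia_ie_i$ with $a_i\in A_v$ one has $\|x\|_v\le\sum_i\|a_ie_i\|_v=\sum_i|a_i|_v\,\|e_i\|_v<+\infty$, where the middle equality uses (ii) and the fact that $\{|a_i|_v,\|e_i\|_v\}\neq\{0,+\infty\}$ because $|a_i|_v<+\infty$ and $0<\|e_i\|_v<+\infty$. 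In particular $\cE_{\|\cdot\|_v}$ is an $A_v$-submodule of $E$ containing a full lattice, so $\cE_{\|\cdot\|_v}\otimes_{A_v}K=E$. For the reverse inclusion I would argue by contradiction. Let $x=\sum_i\lambda_ie_i$ with $\|x\|_v<+\infty$ but not all $\lambda_i\in A_v$. Since $A_v$ is a valuation ring we may pick an index $j$ with $\lambda_i/\lambda_j\in A_v$ for every $i$; then $\lambda_j\notin A_v$ (otherwise all $\lambda_i\in A_v$), i.e. $|\lambda_j|_v=+\infty$, and $x=\lambda_jy$ with $y:=\sum_i(\lambda_i/\lambda_j)e_i\in\cE_0$ a vector whose $e_j$-coefficient is $1$; by the first inclusion $\|y\|_v<+\infty$. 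If $\|y\|_v>0$ then $\{|\lambda_j|_v,\|y\|_v\}\neq\{0,+\infty\}$ and (ii) forces $\|x\|_v=|\lambda_j|_v\,\|y\|_v=+\infty$, a contradiction. It therefore remains to rule out $\|y\|_v=0$ for a vector $y\in\cE_0$ not lying in $\m_v\cE_0$, which is precisely the separatedness statement feeding into (2).

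To handle that, I would first record the easy half of (2): the same estimate as above gives $\|x\|_v\le\sum_i|s_i|_v\,\|e_i\|_v=0$ whenever $x=\sum_is_ie_i$ with $s_i\in\m_v$, so $\m_v\cE_0\subseteq N_{\|\cdot\|_v}$. Combined with the triangle inequality this shows $\|\cdot\|_v$ is constant on the $\m_v\cE_0$-cosets of $\cE_0$, hence descends to a $\kappa_v$-homogeneous, sub-additive map $\overline{\|\cdot\|}$ on $\cE_0/\m_v\cE_0\cong\bigoplus_i\kappa_v\bar e_i$ with $\overline{\|\bar e_i\|}=\|e_i\|_v\in\bR_{>0}$. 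The crux is to show $\overline{\|\cdot\|}$ is in fact a \emph{norm}, equivalently that $N_{\|\cdot\|_v}\subseteq\m_v\cE_0$ (this also supplies the missing case of the previous paragraph, since $\|y\|_v=0$ would then force $y\in\m_v\cE_0$). For this one must use the full homogeneity of $\|\cdot\|_v$ over $K$, not merely over $A_v$: if $y\in\cE_0$ had $\|y\|_v=0$ while some coefficient of $y$ were a unit, then, after rescaling by that unit so that the coefficient is $1$, multiplying $y$ by an element $\lambda\in K$ with $v(\lambda)$ sufficiently negative would push $y$ out of $\cE_0$, hence out of $\cE_{\|\cdot\|_v}$ by the part already proved, whereas rewriting $\lambda y$ in the normalised form of the previous paragraph and applying (ii) wherever it is valid would still give $\|\lambda y\|_v<+\infty$ — a contradiction. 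I expect this to be the main obstacle of the proof: it is the one point where the interplay between the valuation-theoretic and the residual absolute-value content of $v$ is genuinely needed, and carrying it out will presumably require a case analysis on $v(\lambda)$ and on which coefficients of $y$ are units. Granting it, (1) and (2) follow, with $\cE_{\|\cdot\|_v}=\cE_0$ visibly free of rank $d$.

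For (3), $\overline{\|\cdot\|}$ is now a norm on the $d$-dimensional $\kappa_v$-vector space $\cE_0/\m_v\cE_0$; since $\widehat{\kappa_v}$ is complete, I would extend it along $\kappa_v\hookrightarrow\widehat{\kappa_v}$ to a norm on $(\cE_0/\m_v\cE_0)\otimes_{\kappa_v}\widehat{\kappa_v}$, e.g. by the extension-of-scalars ($\pi$-tensor) construction $z\mapsto\inf\{\sum_k|c_k|_v\,\overline{\|\bar x_k\|}:z=\sum_kc_k\otimes\bar x_k\}$, which restricts to $\overline{\|\cdot\|}$ and is a genuine norm by completeness of $\widehat{\kappa_v}$. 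Finally, freeness of $\cE_{\|\cdot\|_v}=\cE_0$ yields the canonical identifications
\[
\cE_{\|\cdot\|_v}\otimes_{A_v}\widehat{\kappa_v}\;=\;(\cE_{\|\cdot\|_v}\otimes_{A_v}\kappa_v)\otimes_{\kappa_v}\widehat{\kappa_v}\;\cong\;(\cE_{\|\cdot\|_v}/\m_v\cE_{\|\cdot\|_v})\otimes_{\kappa_v}\widehat{\kappa_v}=\widehat{E}_{\|\cdot\|_v},
\]
so the norm just built lives on $\widehat{E}_{\|\cdot\|_v}$, and it is the one induced by $\|\cdot\|_v$ because on the image of $\cE_{\|\cdot\|_v}$ it agrees with $\overline{\|\cdot\|}$ by construction.
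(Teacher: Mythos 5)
The parts of your argument that you carry out are correct: the inclusion $\bigoplus_i A_v e_i\subseteq\cE_{\|\cdot\|_v}$, the reduction of the reverse inclusion (using total divisibility in the valuation ring $A_v$) to the statement that no $y\in\bigoplus_i A_ve_i$ with a unit coefficient can satisfy $\|y\|_v=0$, and the inclusion $\m_v\bigl(\bigoplus_i A_ve_i\bigr)\subseteq N_{\|\cdot\|_v}$ with the resulting descent to $\cE_0/\m_v\cE_0$. But the step you yourself identify as the crux is genuinely missing, and your sketch of it does not close: it is circular, because you exclude $\|\lambda y\|_v<+\infty$ by saying $\lambda y\notin\cE_{\|\cdot\|_v}$ ``by the part already proved'', whereas at that stage only $\bigoplus_i A_ve_i\subseteq\cE_{\|\cdot\|_v}$ is available and the opposite inclusion is exactly what is pending on this claim; and axiom (ii) cannot be applied to the pair $(\lambda,y)$, since $\{|\lambda|_v,\|y\|_v\}=\{+\infty,0\}$ is precisely the excluded case, while the triangle inequality only bounds $\|\lambda y\|_v$ by $+\infty$.

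Moreover, no case analysis can repair this from (i)--(iii) alone: on $E=K^2$ the map $\|(a,b)\|_v:=|a-b|_v$ satisfies (i)--(iii) (the standard basis is adapted, $\|e_1\|_v=\|e_2\|_v=1$), yet $\|e_1+e_2\|_v=0$ with $e_1+e_2\notin\m_v\cE_0$, and $\cE_{\|\cdot\|_v}=\{(a,b):a-b\in A_v\}$ strictly contains $A_ve_1\oplus A_ve_2$ (it is not even a finitely generated $A_v$-module). So the separatedness you need is an additional requirement on $\|\cdot\|_v$, not a consequence of the definition. For comparison, the paper argues via an induction on the number of coordinates lying outside $A_v$ (Lemma \ref{lemma:preuve_(1)}), resp.\ outside $\m_v$; its induction step divides by one offending coefficient and applies the hypothesis to $\sum_{i<n}x_n^{-1}x_ie_i$, which yields no contradiction exactly when all offending coefficients have the same valuation --- the case your analysis isolates and the example above realises. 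So you have put your finger on a real difficulty; the honest fix is to strengthen the hypotheses (e.g.\ impose $N_{\|\cdot\|_v}=\m_v\cE_{\|\cdot\|_v}$, or define pseudo-norms through the correspondence of Proposition \ref{prop:equivalence_local_semi-norm}) rather than to seek a finer argument. Separately, in (3) your $\pi$-type extension of scalars is not automatically a norm: a norm over the non-complete field $\kappa_v$ can have degenerate $\pi$-extension to $\widehat{\kappa_v}$ (for $\kappa_v=\bQ$ with the Archimedean absolute value, $\overline{\|(a,b)\|}=|a-\sqrt{2}\,b|_{\infty}$ on $\bQ^2$ has $\pi$-extension vanishing on a nonzero vector of $\bR^2$), so ``a genuine norm by completeness of $\widehat{\kappa_v}$'' would need justification; this point is also passed over in the paper's one-line proof of (3).
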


\begin{proof}
(i) and (iii) imply that $\cE_{\|\cdot\|_{v}}$ is an Abelian group. (ii) and the fact that $A_v$ is the finiteness ring of $v$ imply that $A_v$ acts on $ \cE_{\|\cdot\|_{v}}$.
\begin{lemma}
\label{lemma:preuve_(1)}
$\cE_{\|\cdot\|_{v}}$ is an $A_v$-module of finite type.
\end{lemma}

\begin{proof}
Let us show by induction on $n$ that, for any $x\in \cE_{\|\cdot\|_{v}}$, there is no linear combination $x=x_1e_1+\cdots+x_de_d$, where $x_1,...,x_d\in K$, such that $|\{i\in\{1,...,d\} : x_i \in K\setminus A_v\}|=n$. We first assume that $n=1$. Let $x=x_1e_1+\cdots+x_de_d \in \cE_{\|\cdot\|_{v}}$, where $x_1,...,x_d\in K$, such that $|\{i\in\{1,...,d\} : x_i \in K\setminus A_v\}|=1$. By symmetry, we may assume that $x_d \notin A_v$. On the one hand, we have $\|x_de_d\|_v=+\infty$ and thus $x_de_d\notin \cE_{\|\cdot\|_{v}}$. On the other hand, we have 
\begin{align*}
x_de_d = x-x_1e_1-\cdots-x_{d-1}e_{d-1} \in \cE_{\|\cdot\|_{v}}.
\end{align*}
Hence a contradiction. Assume that $n>1$ and that the property is satisfied for $k=1,...,n-1$. Let $x=x_1e_1+\cdots+x_de_d \in \cE_{\|\cdot\|_{v}}$, where $x_1,...,x_d\in K$, such that $|\{i\in\{1,...,d\} : x_i \in K\setminus A_v\}|=n$. By symmetry, we may assume that $x_1,...,x_n\notin A_v$ and $x_{n+1},...,x_d\in A_v$. Since $x_n^{-1}\in A_v$, we have
\begin{align*}
x_n^{-1}x_1e_1 +\cdots + x_n^{-1}x_{n-1}e_{n-1} = x_n^{-1}x - e_n - x_n^{-1}x_{n+1}e_{n+1} - \cdots - x_n^{-1}x_de_d\in \cE_{\|\cdot\|_{v}}.
\end{align*}
The induction hypothesis yields a contradiction. Consequently, for any decomposition $x_1e_1+\cdots+x_de_d \in  \cE_{\|\cdot\|_{v}}$, we have $x_1,...,x_d\in A_v$. Hence $\cE_{\|\cdot\|_{v}}$ is of finite type.
\end{proof}
We now prove that $\cE_{\|\cdot\|_{v}}$ is torsion-free. Let $a\in A_v\setminus \{0\}$ and $x\in \cE_{\|\cdot\|_{v}}$ such that $ax = 0$. Since $A_v$ is integral, the inclusion $\cE_{\|\cdot\|_{v}} \subset E$ implies that $ax = 0$ in $E$, thus $x$ is torsion $E$ and $x=0$. Hence $\cE_{\|\cdot\|_{v}}$ is a torsion-free $A_v$-module of finite type, hence projective (cf. Proposition \ref{prop:prop_Prufer_domains} (4)). Since $A_v$ is a local ring, $\cE_{\|\cdot\|_{v}}$  is free (\cite{Matsumura86}, Theorem 2.5). We thus obtain the equality $\cE_{\|\cdot\|_{v}} = \bigoplus_{i=1}^d A\cdot e_i$, namely, $\cE_{\|\cdot\|_{v}}$ is the restriction of scalars of $E$ to $A_v$. This concludes the proof of (1).

We now prove (2). Using the same arguments as in the proof of (1), $N_{\|\cdot\|_{v}}$ is an $A_v$-submodule of $\cE_{\|\cdot\|_{v}}$. We clearly have an inclusion $\m_v\cE_{\|\cdot\|_{v}}\subset N_{\|\cdot\|_{v}}$. To show the inverse inclusion, we use the following lemma. 

\begin{lemma}
Let $x\in N_{\|\cdot\|_{v}}$. Then we can write $x$ under the form $x=x_1e_1+\cdots+x_de_d$, where $x_1,...,x_d\in \m_v$.
\end{lemma}

\begin{proof}
The proof is completely analogue to the one of Lemma \ref{lemma:preuve_(1)}. Let us show by induction on $n\geq 1$ that, for any $x\in N_{\|\cdot\|_{v}}$, there is no linear combination $x=x_1e_1+\cdots+x_de_d$, where $x_1,...,x_d\in A_v$, such that $|\{i\in\{1,...,d\} : x_i \notin \m_v\}|=n$. We first assume that $n=1$. Let $x=x_1e_1+\cdots+x_de_d \in \cE_{\|\cdot\|_{v}}$, where $x_1,...,x_d\in A_v$, such that $|\{i\in\{1,...,d\} : x_i \notin \m_v\}|=1$. By symmetry, we may assume that $x_d \notin \m_v$. On the one hand, we have $\|x_de_d\|_v \neq 0$, and thus $x_de_d\notin N_{\|\cdot\|_{v}}$. On the other hand, we have 
\begin{align*}
x_de_d = x-x_1e_1-\cdots-x_{d-1}e_{d-1} \in N_{\|\cdot\|_{v}}.
\end{align*}
Hence a contradiction. Assume that $n>1$ and that the property is satisfied for $k=1,...,n-1$. Let $x=x_1e_1+\cdots+x_de_d \in N_{\|\cdot\|_{v}}$, where $x_1,...,x_d\in A_v$, such that $|\{i\in\{1,...,d\} : x_i \notin \m_v\}|=n$. By symmetry, we may assume that $x_1,...,x_n \notin \m_v$ and $x_{n+1},...,x_d\in \m_v$. Since $x_n^{-1}\in A_v$, we have
\begin{align*}
x_n^{-1}x_1e_1 +\cdots + x_n^{-1}x_{n-1}e_{n-1} = x_n^{-1}x - e_n - x_n^{-1}x_{n+1}e_{n+1} - \cdots - x_n^{-1}x_de_d\in N_{\|\cdot\|_{v}}.
\end{align*}
The induction hypothesis yields a contradiction. Consequently, for any decomposition$x_1e_1+\cdots+x_de_d \in  N_{\|\cdot\|_{v}}$, we have $x_1,...,x_d\in \m_v$.
\end{proof}
Let $x=x_1e_1+\cdots+x_de_d \in  N_{\|\cdot\|_{v}}$, where $x_1,...,x_d\in \m_v$, let $\delta := \gcd(x_1,...,x_d)\in \m_v$ (cf. $A$ is Bézout). Then we have $x = \delta x'$, where $x'\in \cE_{\|\cdot\|_{v}}$. Hence $N_{\|\cdot\|_{v}} \subset \m_v\cE_{\|\cdot\|_{v}}$. This concludes the proof of (2).

Finally, by definition, $\widehat{E}_{\|\cdot\|_{v}}$ is the extension of scalars of $\cE_{\|\cdot\|_{v}}$ via the morphism $A_v \to \widehat{\kappa_v}$. Thus we have an isomorphism of $\widehat{\kappa_v}$-modules $\widehat{E}_{\|\cdot\|_{v}} \cong \left(\cE_{\|\cdot\|_{v}}/\m_v\cE_{\|\cdot\|_{v}}\right)\otimes_{\kappa_v} \widehat{\kappa_v}$. Furthermore, (1) and (2) imply that the restriction $\|\cdot\|_{v|A_v}$ induce a norm on $\widehat{E}_{\|\cdot\|{v}}$. 
\end{proof}

\begin{notation}
\label{notation:local_semi-norm}
Let $(E,\|\cdot\|_v)$ be pseudo-normed finite-dimensional $K$-vector space in $v=(\va_v,A_v,\m_v,\kappa_v)\in M_K$. 
\begin{itemize}
\item[(1)] In analogy with Notation \ref{notation:sav}, we call
\begin{itemize}
	\item $\cE_{\|\cdot\|_{v}} := \{x\in E : \|x\|_v < \infty\}$ the \emph{finiteness module} of $\|\cdot\|_v$;
	\item $N_{\|\cdot\|_{v}} := \{x\in E : \|x\|_v = 0\}$ the \emph{kernel} of $\|\cdot\|_v$;
	\item $\widehat{E}_{\|\cdot\|_v} := \cE_{\|\cdot\|_v} \otimes_{A_v} \widehat{\kappa_v}$ the \emph{residue vector space} of $\|\cdot\|_v$. By abuse of notation, we denote by $\|\cdot\|_v$ the induced norm on $\widehat{E}_{\|\cdot\|_v}$, called the \emph{residue norm} of $\|\cdot\|_v$.
\end{itemize} 
\item[(2)] By "let $(\|\cdot\|,\cE,N,\widehat{E})$ be a pseudo-norm on the $K$-vector space $E$ in $v$", we mean that $\|\cdot\|$ is a pseudo-norm on $K$ in $v$ with finiteness module $\cE$, kernel $N$ and residue vector space $\widehat{E}$.
\item[(3)] By abuse of notation, by "let $\|\cdot\|_v$ be a pseudo-norm on $E$ in $v$", we mean the pseudo-norm $(\|\cdot\|_v,\cE_v,N_v,\widehat{E}_v)$.
\end{itemize}
\end{notation}

In fact, in analogy with the case of pseudo-valued fields, a pseudo-normed vector space is determined by the objects defined in Notation \ref{notation:local_semi-norm}. More precisely, let
\begin{itemize}
	\item $v=(\va_v,A_v,\m_v,\kappa_v)\in M_{K}$;	
	\item $\cE_v$ be a free $A_v$-module of rank $d<+\infty$;
	\item $\|\cdot\|^{\wedge}_v$ be a norm on $\widehat{E}_v := \cE_v \otimes_{A_v} \widehat{\kappa}_v$.
\end{itemize}
Define the map
\begin{align*}
\fonction{\|\cdot\|_v}{\cE_v}{\bR_{\geq 0}}{a}{\|\overline{a}\|^{\wedge}_v}
\end{align*}
Then $N_v := \{x\in E : \|x\|_v = 0\} = \m_v\cE_v$. By lifting to $\cE_v$ a basis of $\widehat{E}_v$, we get a basis $(e_1,...,e_d)$ of $\cE_v$ such that $\|e_i\|_v > 0$ for all $i=1,...,d$. Then we can extend $\|\cdot\|_v$ to $E:= \cE_v \otimes_{A_v} K$ by setting $\|x\|_v = +\infty$ if $x\notin \cE_v$. 

\begin{proposition}
\label{prop:equivalence_local_semi-norm}
We use the same notation as above. Then $\|\cdot\|_v$ is a locally pseudo-norm in $v$ on $E$. Moreover, this construction is inverse to the one in Proposition \ref{prop:local_semi-norm}.
\end{proposition}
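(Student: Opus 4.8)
The plan is to check first that the map $\|\cdot\|_v$ built from the data $(\cE_v,\|\cdot\|^{\wedge}_v)$ satisfies the three axioms of Definition \ref{def:local_semi_norm}, and then that the two assignments are mutually inverse. Throughout I use that $\cE_v$ is a free $A_v$-module of rank $d$ with $\cE_v\otimes_{A_v}K = E$, that the reduction map $\cE_v\to\widehat{E}_v = \cE_v\otimes_{A_v}\widehat{\kappa}_v$ has kernel exactly $\m_v\cE_v$ (since $\cE_v/\m_v\cE_v$ injects into its base change $\widehat{E}_v$ along the field extension $\kappa_v\to\widehat{\kappa}_v$), and that for $\lambda\in A_v$ the residue absolute value satisfies $|\lambda|_v = |\overline{\lambda}|_v$. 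Axiom (i): $\|0\|_v = \|0\|^{\wedge}_v = 0$, and for any $A_v$-basis $(e_1,\dots,e_d)$ of $\cE_v$ — which is at once a $K$-basis of $E$ and reduces to a $\widehat{\kappa}_v$-basis of $\widehat{E}_v$ — each $\overline{e_i}\neq 0$, so $\|e_i\|_v = \|\overline{e_i}\|^{\wedge}_v\in\bR_{>0}$; such a basis is therefore adapted to $\|\cdot\|_v$. Axiom (iii): if $x,y\in\cE_v$ then $x+y\in\cE_v$ and $\overline{x+y} = \overline{x}+\overline{y}$, so the inequality follows from the triangle inequality for the norm $\|\cdot\|^{\wedge}_v$; if instead $x\notin\cE_v$ (or $y\notin\cE_v$) the right-hand side is $+\infty$ and there is nothing to prove.

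The remaining axiom (ii) is where the only real bookkeeping lies, and I expect it to be the main obstacle: one must run through the cases according to whether $\lambda\in A_v$ and whether $x\in\cE_v$, making sure the hypothesis $\{|\lambda|_v,\|x\|_v\}\neq\{0,+\infty\}$ is used exactly where needed to control the arithmetic in $[0,+\infty]$. When $\lambda\in A_v$ and $x\in\cE_v$, one has $\lambda x\in\cE_v$, $\overline{\lambda x} = \overline{\lambda}\,\overline{x}$, and $\widehat{\kappa}_v$-homogeneity of $\|\cdot\|^{\wedge}_v$ together with $|\lambda|_v = |\overline{\lambda}|_v$ gives $\|\lambda x\|_v = |\lambda|_v\|x\|_v$ with no restriction needed. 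If $\lambda\in A_v$ and $x\notin\cE_v$, the hypothesis forces $|\lambda|_v\neq 0$, i.e. $\lambda\in A_v^{\times}$, whence $\lambda x\notin\cE_v$ (else $x = \lambda^{-1}(\lambda x)\in\cE_v$) and both sides equal $+\infty$. If $\lambda\notin A_v$ (so $\lambda^{-1}\in\m_v$), the hypothesis forces $\|x\|_v\neq 0$, i.e. $x\notin\m_v\cE_v$; then $\lambda x\notin\cE_v$, for otherwise $x = \lambda^{-1}(\lambda x)$ would lie in $\m_v\cE_v$ (when $x\in\cE_v$) or in $\cE_v$ (when $x\notin\cE_v$), a contradiction in either case, so again both sides are $+\infty$. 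This establishes that $\|\cdot\|_v$ is a pseudo-norm in $v$ on $E$.

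Finally, for the mutual inverseness I would argue as follows. Applying Proposition \ref{prop:local_semi-norm} to the pseudo-norm $\|\cdot\|_v$ just constructed recovers: the finiteness module $\{x\in E:\|x\|_v<+\infty\}$, which is $\cE_v$ by the definition of the extension of $\|\cdot\|_v$ to $E$; the kernel $\{x:\|x\|_v = 0\}$, which is $\m_v\cE_v$ since $\|\cdot\|^{\wedge}_v$ is a norm and $\ker(\cE_v\to\widehat{E}_v) = \m_v\cE_v$; the residue vector space $\cE_v\otimes_{A_v}\widehat{\kappa}_v = \widehat{E}_v$; and the residue norm, which by Proposition \ref{prop:local_semi-norm} (3) is the norm on $\widehat{E}_v$ restricting to $x\mapsto\|x\|_v = \|\overline{x}\|^{\wedge}_v$ on the image of $\cE_v$, hence — since $\widehat{\kappa}_v$ is the completion of $\kappa_v$ and $\|\cdot\|^{\wedge}_v$ is already a complete norm on $\widehat{E}_v=(\cE_v/\m_v\cE_v)\otimes_{\kappa_v}\widehat{\kappa}_v$, so the extension-of-scalars norm is unique — equals $\|\cdot\|^{\wedge}_v$. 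Conversely, starting from an arbitrary pseudo-norm $(\|\cdot\|_v,\cE_{\|\cdot\|_v},N_{\|\cdot\|_v},\widehat{E}_{\|\cdot\|_v})$ on $E$ in $v$ with residue norm $\|\cdot\|^{\wedge}$, Proposition \ref{prop:local_semi-norm} says precisely that $\|x\|_v = \|\overline{x}\|^{\wedge}$ for $x\in\cE_{\|\cdot\|_v}$ and $\|x\|_v = +\infty$ otherwise, which is exactly the output of the present construction applied to $(\cE_{\|\cdot\|_v},\|\cdot\|^{\wedge})$. Hence the two constructions are inverse to one another.
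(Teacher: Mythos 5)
Your proof is correct and follows essentially the same route as the paper's: verify axioms (i)–(iii) of Definition \ref{def:local_semi_norm} directly, with the multiplicativity axiom (ii) handled by the same three-case analysis according to whether $\lambda\in A_v$ and $x\in\cE_v$, using the hypothesis $\{|\lambda|_v,\|x\|_v\}\neq\{0,+\infty\}$ exactly where the paper does. The only difference is that you spell out the mutual-inverse verification (recovering $\cE_v$, $\m_v\cE_v$ and $\|\cdot\|^{\wedge}_v$, the latter via density of the image of $\cE_v$ in $\widehat{E}_v$), where the paper simply remarks that this is immediate from the construction; your added detail is sound.
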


\begin{proof}
By construction of $\|\cdot\|_v$, we have $\|0\|_v=0$. We can see $(e_1,...,e_d)$ as a basis of $E$ such that $0<\|e_i\|_v< +\infty$ for all $i=1,...,d$. Hence $\|\cdot\|_v$ satisfies condition (i) of Definition \ref{def:local_semi_norm}. 

Let $(\lambda,x)\in K \times E$ such that $\{|\lambda|_v,\|x\|_v\}\neq\{0,+\infty\}$. We distinguish three cases. First, assume that $|\lambda|_v \neq +\infty \neq \|x\|_v$. Then by definition of $\|\cdot\|_v$ on $\cE_v$, we have $\|\lambda x\|_v=|\lambda|_v\|x\|_v$. If now $\|x\| = +\infty$, i.e. $x\in E\setminus\cE_v$, then $|\lambda|_v \neq 0$, i.e. $\lambda\in K\setminus \m_v$, hence $\lambda x \in E\setminus \cE_v$ and $\|\lambda x\|_v=+\infty = |\lambda|_v\|x\|_v$. Finally, if $|\lambda|_v = +\infty$, i.e. $\lambda\in K\setminus A_v$, then we have $\|x\|\neq 0$, i.e $x\in E\setminus \m_v\cE_v$, hence $\lambda x \in E\setminus \cE_v$ and $\|\lambda x\|_v=+\infty = |\lambda|_v\|x\|_v$. Thus $\|\cdot\|_{v}$ satisfies condition (ii) of Definition \ref{def:local_semi_norm}.

Since $\|\cdot\|^{\wedge}_v$ is a norm on $\widehat{E_v}$, for any $x,y\in \cE_v$, we have $\|x+y\|_v \leq \|x\|_v+\|y\|_v$. Then the triangle inequality for $\|\cdot\|_v$ on $E$ follows from the fact that $+\infty$ is the maximal element of $[0,+\infty]$.

From the construction, we directly see that it provides an inverse to the one in Proposition \ref{prop:local_semi-norm}. 
\end{proof}
 
\begin{remark}
Similarly to the case of pseudo-valued fields, we can use Proposition \ref{prop:equivalence_local_semi-norm} to define properties of pseudo-normed vector spaces from the corresponding property of the residue norm. For instance, a pseudo-norm on a vector space is called \emph{ultrametric} if so is its residue norm. 
\end{remark} 

To study restrictions and quotients for pseudo-norms, we will make use of the following lemma.

\begin{lemma}
\label{lemma:rescale}
Let $(E,(\|\cdot\|_v,\cE_v,N_v,\widehat{E}_v))$ be pseudo-normed finite-dimensional $K$-vector space in $v=(\va_v,A_v,\m_v,\kappa_v)\in M_K$. Let $(e_1,..,e_d)$ be a basis of $E$ such that, for all $i=1,..,d$, we have $e_i \in \cE_v\setminus N_v$, i.e. $\|e_i\|_v \in \bR_{>0}$. Let $(e'_1,...,e'_d)$ be another basis of $E$. Then, for all $i\in\{1,...,d\}$, there exists $\lambda_i\in K$ such that $\lambda_ie'_i \in \cE_v\setminus N_v$.
\end{lemma}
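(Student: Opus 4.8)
The plan is to rescale each $e'_i$ by a suitable element of $K^\times$ so that its coordinates in the adapted basis $(e_1,\dots,e_d)$ all lie in $A_v$, with at least one of them a unit. First I would recall from Proposition \ref{prop:local_semi-norm}(1) that, since $(e_1,\dots,e_d)$ is adapted to $\|\cdot\|_v$ (i.e.\ $\|e_j\|_v\in\bR_{>0}$ for all $j$), the finiteness module is $\cE_v=\bigoplus_{j=1}^d A_v e_j$; in particular $E=\bigoplus_{j=1}^d K e_j$. Fix $i\in\{1,\dots,d\}$ and write $e'_i=\sum_{j=1}^d c_j e_j$ with $c_1,\dots,c_d\in K$ not all zero.

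Since $A_v$ is a valuation ring of $K$ with underlying valuation $v:K\to\Gamma_v\cup\{\infty\}$ and $\Gamma_v$ is totally ordered, the finite non-empty set $\{v(c_j):c_j\neq 0\}$ has a least element; pick $j_0$ realising it and set $\lambda_i:=c_{j_0}^{-1}\in K^\times$. For every $j$ one has $c_j/c_{j_0}\in A_v$ (clear if $c_j=0$, and if $c_j\neq 0$ then $v(c_j/c_{j_0})=v(c_j)-v(c_{j_0})\geq 0$), hence $\lambda_i e'_i=\sum_{j=1}^d (c_j/c_{j_0})\,e_j\in\cE_v$. Now, by Proposition \ref{prop:local_semi-norm}(2), $N_v=\m_v\cE_v$, so $\cE_v/N_v\cong(\cE_v/\m_v\cE_v)$ is a $\kappa_v$-vector space with basis the images $\overline{e}_1,\dots,\overline{e}_d$; an element $\sum_j x_j e_j$ with $x_j\in A_v$ therefore lies in $N_v$ if and only if every $x_j$ lies in $\m_v$. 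Since the $e_{j_0}$-coordinate of $\lambda_i e'_i$ equals $1\notin\m_v$, we conclude $\lambda_i e'_i\notin N_v$, i.e.\ $\lambda_i e'_i\in\cE_v\setminus N_v$, which is the claim.

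There is essentially no serious obstacle here: the only points requiring care are the two appeals to Proposition \ref{prop:local_semi-norm} — that the finiteness module of a pseudo-norm carrying an adapted basis is exactly the free $A_v$-module spanned by that basis, and the coordinatewise description of the kernel $N_v=\m_v\cE_v$ — together with the elementary observation that in a valuation ring one can always divide a finite tuple by an element of minimal valuation. Everything else is immediate.
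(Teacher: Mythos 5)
Your proof is correct and follows essentially the same route as the paper: the paper also writes $e'_i=\sum_j a^{(i)}_j e_j$, rescales by the inverse of a coefficient of minimal valuation (phrased as ``by symmetry we may assume $(a^{(i)}_1)^{-1}a^{(i)}_j\in A_v$ for all $j$''), and then uses that the resulting coordinate $1\notin\m_v$ together with $N_v=\m_v\cE_v$ and the freeness of $\cE_v=\bigoplus_j A_v e_j$ from Proposition \ref{prop:local_semi-norm}. No gaps; your explicit appeal to the minimal-valuation coefficient just makes the paper's symmetry reduction precise.
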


\begin{proof}
We fix $i\in\{1,...,d\}$. Then there exist $a_1^{(i)},...,a_d^{(i)}\in K$ such that $e'_i = \sum_{j=1}^d a_j^{(i)} e_j$. By symmetry, we may assume that $(a_1^{(i)})^{-1}a_j^{(i)} \in A_v$ for all $j=1,...,d$. Then, for all $j=1,...,d$, we have $(a_1^{(i)})^{-1} e'_i \in \cE_v$. Furthermore, writing
\begin{align*}
(a_1^{(i)})^{-1} e'_i = e_1 + (a_1^{(i)})^{-1}a_{2}^{(i)} e_2 + \cdots + (a_1^{(i)})^{-1}a_{d}^{(i)} e_d,
\end{align*}  
we deduce $(a_1^{(i)})^{-1} e'_i \notin N_v = \m_v\cE_v$.
\end{proof}

\begin{remark}
Lemma \ref{lemma:rescale} shows that, up to scaling, condition (i) in Definition \ref{def:local_semi_norm} is independent on the choice of the basis $(e_1,...,e_d)$.
\end{remark}

\subsection{Algebraic constructions for pseudo-norms}

We now extend the usual algebraic construction for normed vector spaces in our context (cf. e.g. \cite{ChenMori}, \S 1.1 for more details).

\begin{definition}
\label{def:constructions_local_semi-norms}
\begin{itemize}
	\item[(1)] Let $(E,\|\cdot\|_v)$ be a pseudo-normed finite-dimensional $K$-vector space over $K$ in $v\in M_K$. Let $F\subset E$ be a vector subspace of $E$. Let $(e_1,...,e_r)$ be a basis of $F$ enlarged in a basis $(e_1,...,e_r,e_{r+1},...,e_d)$ of $E$. Lemma \ref{lemma:rescale} ensures that we may assume that, for all $i=1,...,d$, we have $e_i \in \cE_v \setminus \m_v$. Then the restriction restriction of $\|\cdot\|_v$ to $F$ is a pseudo-norm in $v$ on $F$ called the \emph{restriction} of $\|\cdot\|_v$ to $F$. By abuse of notation, unless explicitly mentioned, we denote this pseudo-norm by $\|\cdot\|_v$. Moreover, the finiteness module of $\|\cdot\|_v$ (the pseudo-norm on $F$), denoted by $\cF_v$, has $(e_1,...,e_r)$ as a basis.
	
	\item[(2)] Let $(E,\|\cdot\|_v)$ be a pseudo-normed finite-dimensional $K$-vector space in $v\in M_K$. Let $F\subset E$ be a vector subspace of $E$. Let $(e_1,...,e_r)$ be a basis of $F$ enlarged in a basis $(e_1,...,e_r,e_{r+1},...,e_d)$ of $E$. Lemma \ref{lemma:rescale} ensures that we may assume that, for all $i=1,...,d$, we have $e_i \in \cE_v \setminus \m_v$. Using the same notation as in (1), $\cE_v/\cF_v$ is a free $A_v$-module  of rank $d-r$ and is spanned by $e_{r+1},...,e_d$. On the $\widehat{\kappa_v}$-quotient vector space $\widehat{E_v}/\widehat{F_v}$, we consider the quotient norm induced by the residue norm of $\|\cdot\|_v$. Then Proposition \ref{prop:equivalence_local_semi-norm} yields a pseudo-norm in $v$ on $E/F$, called the \emph{quotient} pseudo-norm  and denoted by $\|\cdot\|_{E/F,v}$. Moreover, we have
	\begin{align*}
	\forall \overline{x}\in E/F, \quad \|\overline{x}\|_{E/F,v} = \displaystyle\inf_{x\in \pi^{-1}(\{\overline{x}\})} \|x\|_v,
	\end{align*}
where $\pi : E \to E/F$ denotes the canonical quotient map.
	
	\item[(3)] Let $(E,\|\cdot\|_v)$ be a pseudo-normed finite-dimensional $K$-vector space in $v\in M_K$. Le $\cE_v^{\vee} := \Hom_{A_v}(\cE_v,A_v)$, it is a free $A_v$-module of rank $\dim_{K}(E)$. We denote by $\|\cdot\|_{v,\ast}^{\wedge}$ the dual norm on $(\widehat{E}_v)^{\vee}$ of the residue norm $\|\cdot\|_v^{\wedge}$ of $\|\cdot\|_v$. Then Proposition \ref{prop:equivalence_local_semi-norm} yields a pseudo-norm in $v$ on $E^{\vee} \cong \cE_v^{\vee} \otimes_{A_v} K$, called the \emph{dual} pseudo-norm of $\|\cdot\|_v$ and denoted by $\|\cdot\|_{v,\ast}$. Furthermore, we have 
	\begin{align*}
	\forall \varphi \in E^{\vee},\quad \|\varphi\|_{v,\ast} = \displaystyle\sup_{x\in\cE_v\setminus\m_v\cE_v} \frac{|\varphi(x)|_v}{\|x\|_v}.
	\end{align*}
 	\item[(4)] Let $(E,\|\cdot\|_v)$ and  $(E',\|\cdot\|'_v)$ be two pseudo-normed finite-dimensional $K$-vector spaces in $v=\in M_K$. The data of $\cE_v \otimes_{A_v} \cE'_v$ and respectively of the $\pi$-tensor product and the $\epsilon$-tensor product norms on $\widehat{E}_v\otimes_{\widehat{\kappa}_v}\widehat{E'}_v$ induce pseudo-norms in $v$ on $E\otimes_K E'$ respectively called the $\pi$\emph{-tensor product} and the $\epsilon$\emph{-tensor product} pseudo-norms.
 	\item[(5)] Let $i\geq 1$ be an integer. $(E,\|\cdot\|_v)$ be a pseudo-normed finite-dimensional $K$-vector space over $K$ in $v\in M_K$. We define the $i^{th}\pi$\emph{-exterior power pseudo-norm} $\|\cdot\|_{v,\Lambda^{i}_{\pi}}$ of $\|\cdot\|_v$ on $\Lambda^{i}E$ is defined as the quotient norm of the $\pi$-tensor product norm of $\|\cdot\|_v$ on $E^{\otimes i}$. Likewise, the $i^{th}\epsilon$\emph{-exterior power pseudo-norm} $\|\cdot\|_{v,\Lambda^{i}_{\epsilon}}$ of $\|\cdot\|_v$ on $\Lambda^{i}E$ is defined the quotient norm of the $\epsilon$-tensor product norm of $\|\cdot\|_v$ on $E^{\otimes i}$. In the $i=\dim_{k}(E)$ case, the $i^{th}\pi$-exterior power norm on $\Lambda^{i}E=\det(E)$ is called the \emph{determinant pseudo-norm} of $\|\cdot\|_v$ and we denote it by $\|\cdot\|_{v,\det}$.
\end{itemize}
\end{definition}

\begin{proposition}
\label{prop:correspondence_dual_quotient}
Let $(E,(\|\cdot\|_v,\cE_v,N_v,\widehat{E}_v))$ be a pseudo-normed finite-dimensional $K$-vector space in $v=(\va_v,A_v,\m_v,\kappa_{v})\in M_K$. Let $G$ be a quotient of $E$ and denote by $\|\cdot\|_{G,v}$ the quotient pseudo-norm on $G$. Then the dual pseudo-norm $\|\cdot\|_{G,v,\ast}$ on $G^{\vee}$ identifies with the restriction of the pseudo-norm $\|\cdot\|_{v,\ast}$ on $E^{\vee}$ to $G^{\vee}$.
\end{proposition}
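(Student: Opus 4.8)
The plan is to reduce the assertion to the classical duality between quotient and restriction norms for finite-dimensional normed vector spaces over the complete valued field $\widehat{\kappa_v}$. The key observation is Proposition \ref{prop:equivalence_local_semi-norm}: a pseudo-norm in $v$ is completely determined by its finiteness module (a free $A_v$-module) together with the induced residue norm on the residue vector space. Hence to prove $\|\cdot\|_{G,v,\ast}=\|\cdot\|_{v,\ast}|_{G^\vee}$ it suffices to check that these two pseudo-norms on $G^\vee$ have the same finiteness module and the same residue norm. Throughout, write $\pi:E\to G$ for the quotient map, $F:=\ker\pi$, and identify $G^\vee$ with the annihilator $F^\perp:=\{\varphi\in E^\vee:\varphi|_F=0\}\subset E^\vee$ via $\psi\mapsto\psi\circ\pi$. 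Recall from Definition \ref{def:constructions_local_semi-norms}(1)--(2) that $\cF_v=F\cap\cE_v$, that $\cG_v:=\cE_v/\cF_v$ is the (free) finiteness module of $\|\cdot\|_{G,v}$, that $\pi(\cE_v)=\cG_v$, and that $\widehat{G}_v\cong\widehat{E}_v/\widehat{F}_v$ by flatness of $\cG_v$.

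First I would match the finiteness modules. By Definition \ref{def:constructions_local_semi-norms}(3) the finiteness module of $\|\cdot\|_{v,\ast}$ is $\cE_v^\vee=\Hom_{A_v}(\cE_v,A_v)$, so by Definition \ref{def:constructions_local_semi-norms}(1) the finiteness module of $\|\cdot\|_{v,\ast}|_{G^\vee}$ is $G^\vee\cap\cE_v^\vee$. Under $G^\vee=F^\perp$, a form $\psi$ lies in $\cE_v^\vee$ iff $\psi(\cE_v)\subset A_v$; since $\pi(\cE_v)=\cG_v$, this holds iff the induced form $\bar\psi$ on $G$ maps $\cG_v$ into $A_v$, i.e.\ iff $\bar\psi\in\cG_v^\vee=\Hom_{A_v}(\cG_v,A_v)$. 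Thus $G^\vee\cap\cE_v^\vee=\cG_v^\vee$, which is exactly the finiteness module of $\|\cdot\|_{G,v,\ast}$. Extending scalars to $\widehat{\kappa_v}$ and using $\widehat{G}_v\cong\widehat{E}_v/\widehat{F}_v$, the residue vector space $(\widehat{G}_v)^\vee$ of both pseudo-norms sits inside $(\widehat{E}_v)^\vee$ as the annihilator $(\widehat{F}_v)^\perp$, compatibly.

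Next I would match the residue norms. By construction, the residue norm of $\|\cdot\|_{v,\ast}|_{G^\vee}$ is the restriction of the dual norm $\|\cdot\|_{v,\ast}^\wedge=(\|\cdot\|_v^\wedge)_\ast$ to the subspace $(\widehat{F}_v)^\perp\subset(\widehat{E}_v)^\vee$, while the residue norm of $\|\cdot\|_{G,v,\ast}$ is the dual of the quotient norm induced by $\|\cdot\|_v^\wedge$ on $\widehat{E}_v/\widehat{F}_v$. So everything reduces to the classical statement over the complete valued field $\widehat{\kappa_v}$: for a finite-dimensional normed space $(V,\|\cdot\|)$ and a subspace $W$, the dual of the quotient norm on $V/W$ equals the restriction of $\|\cdot\|_\ast$ to $W^\perp$ (see e.g.\ \cite{ChenMori}, \S 1.1). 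One verifies both inequalities directly: given $\varphi\in W^\perp$ with induced form $\bar\varphi$ on $V/W$, for nonzero $\bar v\in V/W$ and any lift $v$ one has $|\bar\varphi(\bar v)|/\|v\|=|\varphi(v)|/\|v\|\le\|\varphi\|_\ast$, and passing to the infimum over lifts and the supremum over $\bar v$ gives $\|\bar\varphi\|_{(V/W)\ast}\le\|\varphi\|_\ast$; conversely, for nonzero $v\in V$, either $v\in W$ (so $\varphi(v)=0$) or $\bar v:=\pi(v)\ne 0$, in which case $\|v\|\ge\|\bar v\|_{V/W}>0$ (the quotient norm is a genuine norm, $W$ being closed in finite dimension) and $|\varphi(v)|/\|v\|\le|\bar\varphi(\bar v)|/\|\bar v\|_{V/W}\le\|\bar\varphi\|_{(V/W)\ast}$, whence $\|\varphi\|_\ast\le\|\bar\varphi\|_{(V/W)\ast}$.

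Combining the two matchings with Proposition \ref{prop:equivalence_local_semi-norm} then yields $\|\cdot\|_{G,v,\ast}=\|\cdot\|_{v,\ast}|_{G^\vee}$. The only genuinely non-formal input is the classical quotient/dual duality recalled above; the rest is bookkeeping, and the main point requiring care is checking that the purely algebraic identification $\cG_v^\vee=G^\vee\cap\cE_v^\vee$ over $A_v$ is compatible, after extension of scalars to $\widehat{\kappa_v}$, with the analytic identification $(\widehat{G}_v)^\vee=(\widehat{F}_v)^\perp$ inside $(\widehat{E}_v)^\vee$, so that "restriction of the residue norm" on one side corresponds to "dual of the quotient of the residue norm" on the other.
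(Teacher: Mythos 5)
Your proof is correct and takes essentially the same route as the paper: you match the finiteness modules via the identification $\cG_v^{\vee}=G^{\vee}\cap\cE_v^{\vee}\hookrightarrow\cE_v^{\vee}$ and then reduce the equality of the two pseudo-norms to the classical quotient/dual duality for the residue norms over $\widehat{\kappa_v}$. The only difference is one of detail: the paper simply cites that classical fact as (\cite{ChenMori}, Proposition 1.1.20) and "lifts" it, whereas you verify it directly and spell out the compatibility of the algebraic and residue-level identifications.
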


\begin{proof}
Let $\cG_v$ denote the finiteness module of $\|\cdot\|_G$, it is a quotient of $\cE_v$. Thus we have an inclusion of duals $\cG_v^{\vee}\hookrightarrow \cE_v^{\vee}$. The latter are respectively the finiteness modules of $\|\cdot\|_{G,v,\ast}$ and $\|\cdot\|_{v,\ast}$. To conclude the proof, it is enough to show 
\begin{align*}
\forall \varphi\in \cG_v^{\vee} \setminus \m_v\cG_v^{\vee}, \quad \|\varphi\|_{G,\ast} = \|\varphi\|_{v\ast}.
\end{align*}
This is obtained by lifting the corresponding assertion (\cite{ChenMori}, Proposition 1.1.20) for the residue norms.
\end{proof}

\begin{proposition}
\label{prop:comparaison_double_dual_semi-norm}
Let $(E,\|\cdot\|_v)$ be a pseudo-normed finite-dimensional $K$-vector space in $v\in M_K$ Then the inequality 
\begin{align*}
\|\cdot\|_{v,\ast\ast} \leq \|\cdot\|_v
\end{align*}
holds, where $\|\cdot\|_{v,\ast\ast}$ denotes the dual pseudo-norm of $\|\cdot\|_{v,\ast}$ on $E^{\vee\vee}\cong E$. Moreover, if either $v$ is Archimedean, or if $v$ is non-Archimedean and the pseudo-norm $\|\cdot\|_v$ is ultrametric, then we have 
\begin{align*}
\|\cdot\|_{v,\ast\ast} = \|\cdot\|_v
\end{align*}
\end{proposition}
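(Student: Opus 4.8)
\emph{Plan.} The argument reduces the statement to the corresponding classical facts about double dual norms on finite-dimensional vector spaces over the complete valued field $\widehat{\kappa}_v$. Recall from Proposition \ref{prop:equivalence_local_semi-norm} that the pseudo-norm $\|\cdot\|_v$ is the same datum as the free $A_v$-module $\cE_v$ of rank $d$ together with the residue norm $\|\cdot\|^{\wedge}_v$ on $\widehat{E}_v = \cE_v \otimes_{A_v}\widehat{\kappa}_v$, with $\|x\|_v = \|\overline{x}\|^{\wedge}_v$ for $x\in\cE_v$ and $\|x\|_v = +\infty$ for $x\in E\setminus\cE_v$. First I would unwind the definition of the double dual pseudo-norm. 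By Definition \ref{def:constructions_local_semi-norms}~(3), $\|\cdot\|_{v,\ast}$ is the pseudo-norm on $E^{\vee}$ with finiteness module $\cE_v^{\vee}=\Hom_{A_v}(\cE_v,A_v)$ and residue norm the dual norm $(\|\cdot\|^{\wedge}_v)_{\ast}$ on $(\widehat{E}_v)^{\vee}$; here one uses that $\cE_v$ is free of finite rank, so the canonical map $\cE_v^{\vee}\otimes_{A_v}\widehat{\kappa}_v \to (\widehat{E}_v)^{\vee}$ is an isomorphism. Applying the construction once more, $\|\cdot\|_{v,\ast\ast}$ has finiteness module $(\cE_v^{\vee})^{\vee}$, canonically identified with $\cE_v$ by reflexivity of free modules of finite rank, and residue norm $\bigl((\|\cdot\|^{\wedge}_v)_{\ast}\bigr)_{\ast} = (\|\cdot\|^{\wedge}_v)_{\ast\ast}$ on $((\widehat{E}_v)^{\vee})^{\vee}\cong\widehat{E}_v$. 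Hence, under the identification $E^{\vee\vee}\cong E$, one has $\|x\|_{v,\ast\ast} = \|\overline{x}\|_{(\|\cdot\|^{\wedge}_v)_{\ast\ast}}$ for $x\in\cE_v$ and $\|x\|_{v,\ast\ast}=+\infty$ otherwise.

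Granting this, the inequality is immediate: for $x\in E\setminus\cE_v$ one has $\|x\|_v = +\infty \geq \|x\|_{v,\ast\ast}$, and for $x\in\cE_v$ the standard inequality $(\|\cdot\|^{\wedge}_v)_{\ast\ast}\leq\|\cdot\|^{\wedge}_v$ for norms on a finite-dimensional vector space over a complete valued field (which follows from $|\varphi(y)|\leq\|\varphi\|_{\ast}\|y\|$, i.e. the canonical map $\widehat{E}_v \to (\widehat{E}_v)^{\vee\vee}$ is norm-nonincreasing) gives $\|x\|_{v,\ast\ast} = \|\overline{x}\|_{(\|\cdot\|^{\wedge}_v)_{\ast\ast}} \leq \|\overline{x}\|^{\wedge}_v = \|x\|_v$. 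For the equality clause, if $v$ is Archimedean then $\widehat{\kappa}_v$ is $\bR$ or $\bC$ with the usual absolute value and $(\|\cdot\|^{\wedge}_v)_{\ast\ast}=\|\cdot\|^{\wedge}_v$ by (\cite{ChenMori}, Proposition 1.1.18); if $v$ is non-Archimedean and $\|\cdot\|_v$ is ultrametric, then by definition $\|\cdot\|^{\wedge}_v$ is ultrametric over the non-Archimedean complete field $\widehat{\kappa}_v$, so $(\|\cdot\|^{\wedge}_v)_{\ast\ast}=\|\cdot\|^{\wedge}_v$ by (\cite{ChenMori}, Corollary 1.2.10). In either case $\|x\|_{v,\ast\ast}=\|x\|_v$ on $\cE_v$, and both sides are $+\infty$ on $E\setminus\cE_v$, whence $\|\cdot\|_{v,\ast\ast}=\|\cdot\|_v$.

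The only genuinely non-formal point is the identification in the first step: one must check that forming the dual pseudo-norm twice returns a pseudo-norm whose finiteness module is canonically $\cE_v$ and whose residue norm is exactly $(\|\cdot\|^{\wedge}_v)_{\ast\ast}$. This rests on the compatibilities $\cE_v^{\vee}\otimes_{A_v}\widehat{\kappa}_v\cong(\widehat{E}_v)^{\vee}$ and $\cE_v^{\vee\vee}\cong\cE_v$, both valid because $\cE_v$ is free of finite rank over $A_v$. Alternatively one can bypass the residue picture and argue directly from the explicit formula $\|\varphi\|_{v,\ast}=\sup_{x\in\cE_v\setminus\m_v\cE_v}|\varphi(x)|_v/\|x\|_v$ of Definition \ref{def:constructions_local_semi-norms}~(3): this yields $|\varphi(x)|_v\leq\|\varphi\|_{v,\ast}\,\|x\|_v$ for all $x\in\cE_v$ (the case $x\in\m_v\cE_v=N_v$ being trivial, as then both sides vanish), hence the inequality, while the equality follows by invoking the Hahn--Banach-type existence of norm-attaining functionals on the finite-dimensional space $\widehat{E}_v$ over $\widehat{\kappa}_v$. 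I expect this bookkeeping of identifications to be the main, if modest, obstacle; everything else is a direct appeal to the finite-dimensional theory over $\widehat{\kappa}_v$.
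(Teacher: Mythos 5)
Your proposal is correct and follows essentially the same route as the paper: both reduce the statement to the classical double-dual results of Chen--Moriwaki applied to the residue norm on $\widehat{E}_v$ over $\widehat{\kappa}_v$ (the inequality from the norm-nonincreasing canonical map, the equality from their Archimedean and ultrametric double-dual theorems). The only difference is in bookkeeping: you get $\|x\|_{v,\ast\ast}=+\infty$ for $x\in E\setminus\cE_v$ from the identification $\cE_v^{\vee\vee}\cong\cE_v$ of finiteness modules, whereas the paper checks this directly by evaluating against a dual adapted basis vector $e_1^{\vee}$; both are valid.
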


\begin{proof}
Definition \ref{def:constructions_local_semi-norms} (3) ensures that, for any $x\in \cE_v\setminus \m_v\cE_v$, for any $\varphi\in\cE_v^{\vee}$, we have
\begin{align*}
|\varphi(x)|_v \leq \|\varphi\|_{v,\ast} \cdot \|x\|_v.
\end{align*}
Hence, for any $x\in \cE_v\setminus \m_v\cE_v$, we have
\begin{align*}
\|x\|_{v,\ast\ast} = \displaystyle\sup_{\varphi\in\cE_v^{\vee}\setminus \m_v\cE_v^{\vee}} \frac{|\varphi(x)|_v}{\|\varphi\|_{v,\ast}} \leq \|x\|_v.
\end{align*}
Note the, for any $x\in \m_v\cE_v$, for any $\varphi\in\cE_v^{\vee}$, we have $\varphi(x) \in \m_v$, i.e. $|\varphi(x)|_v=0$, and thus $\|x\|_{v,\ast\ast} = \|x\|_{\ast} = 0$. This concludes the proof of the first statement.

Now assume that the pseudo-absolute value $v$ is either Archimedean or $v$ is non-Archimedean and the pseudo-norm $\|\cdot\|_v$ is ultrametric. Then Proposition 1.1.18 and Corollary 1.2.12 of \cite{ChenMori} give 
\begin{align*}
\forall \overline{x}\in \widehat{E}_v,\quad \|\overline{x}\|_{v,\ast\ast}^{\wedge} = \|\overline{x}\|_{v}^{\wedge}.
\end{align*}
Thus we obtain
\begin{align*}
\forall=x\in \cE_v,\quad \|x\|_{v,\ast\ast} = \|x\|_{v}.
\end{align*}
To conclude the proof, it suffices to see that, for any $x\in E\setminus\cE_v$, we have $\|x\|_{v,\ast\ast}=+\infty$. Let $(e_1,...,e_r)$ a basis of $E$ which is adapted to $\|\cdot\|_{v}$. Then $(e_1^{\vee},...,e_{r}^{\vee})$ is a basis of $E^{\vee}$ which is adapted to $\|\cdot\|_{v,\ast}$. Let $x=x_1e_1+\cdots+x_re_r \in E\setminus\cE_v$. By symmetry, we may assume that $x_1\in K\setminus A_{v}$. By definition, 
\begin{align*}
\|x\|_{v,\ast\ast} = \displaystyle\sup_{\varphi\in \cE^{\vee}_{v}\setminus\m_{v}\cE_{v}^{\vee}} \frac{|\varphi(x)|_{v}}{\|\varphi\|_{v,\ast}}.
\end{align*}
Take $\varphi=e_1^{\vee} \in \cE^{\vee}_{v}\setminus\m_{v}\cE_{v}^{\vee}$. Then $|\varphi(x)|_{v}=|x_1|_{v} = +\infty$ and $\|\varphi\|_{v,\ast}\in\bR_{>0}$. Therefore $\|x\|_{v,\ast\ast}=+\infty$.
\end{proof}

We now generalise the Hadamard inequality to the pseudo-norm case. 

\begin{proposition}
\label{prop:Hadamrd_inequality_local_semi-norm}
Let $(E,\|\cdot\|_v)$ be a pseudo-normed finite-dimensional $K$-vector space in $v\in M_K$. Let $(e_1,..,e_r)$ be a basis of $E$ which is adapted to $\|\cdot\|_v$, namely, for all $i=1,..,r$, we have $e_i \in \cE_v\setminus N_v$, i.e. $\|e_i\|_v \in \bR_{>0}$. Then, for any $\eta\in \det(E)$, we have the equality
\begin{align*}
\|\eta\|_{v,\det} = \inf\left\{\|x_1\|_v\cdots\|x_r\|_v : x_1,...,x_r \in \cE_v \text{ and } \eta = x_1\wedge\cdots\wedge x_r\right\}.
\end{align*}
\end{proposition}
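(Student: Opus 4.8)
The plan is to reduce everything to the classical Hadamard equality for the determinant norm of the residue norm $\|\cdot\|_v^{\wedge}$ on $\widehat{E}_v$ (\cite{ChenMori}, Proposition 1.1.66), and then to transport a Hadamard-optimal decomposition over $\widehat{\kappa_v}$ back to one over $A_v$. First I would record, by unwinding Definition \ref{def:constructions_local_semi-norms} (5), (4) and (2), that the determinant pseudo-norm $\|\cdot\|_{v,\det}$ on $\det(E)$ has finiteness module $\Lambda^{r}\cE_v$ and residue norm equal to the determinant norm $(\|\cdot\|_v^{\wedge})_{\det}$ of the residue norm on $\det(\widehat{E}_v)\cong\Lambda^{r}\widehat{E}_v$; in particular $\|\eta\|_{v,\det}=\|\overline{\eta}\|^{\wedge}_{v,\det}$ for $\eta\in\Lambda^{r}\cE_v$, where $\overline{\eta}\in\Lambda^{r}\widehat{E}_v$ denotes the image of $\eta$. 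If $\eta\notin\Lambda^{r}\cE_v$, both sides are $+\infty$: the left-hand side by the above, and the right-hand side because a wedge of elements of $\cE_v$ lies in $\Lambda^{r}\cE_v$, so the infimum ranges over the empty set. Hence we may assume $\eta\in\Lambda^{r}\cE_v$, and since $\cE_v=\bigoplus_iA_ve_i$ is free we write $\eta=a\,(e_1\wedge\cdots\wedge e_r)$ with $a\in A_v$.

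The inequality "$\ge$" is easy: for any decomposition $\eta=x_1\wedge\cdots\wedge x_r$ with $x_i\in\cE_v$, passing to $\widehat{E}_v$ gives $\overline{\eta}=\overline{x_1}\wedge\cdots\wedge\overline{x_r}$, whence $\|\eta\|_{v,\det}=\|\overline{\eta}\|^{\wedge}_{v,\det}\le\prod_i\|\overline{x_i}\|^{\wedge}_v=\prod_i\|x_i\|_v$ by the very definition of the $\pi$-exterior power norm as a quotient of the $\pi$-tensor product norm; taking the infimum gives the right-hand side $\ge\|\eta\|_{v,\det}$. For "$\le$", if $\|\eta\|_{v,\det}=0$ then $\overline{\eta}=0$, so $a\in\m_v$ and the decomposition $\eta=(ae_1)\wedge e_2\wedge\cdots\wedge e_r$ has product $|a|_v\|e_1\|_v\cdots\|e_r\|_v=0$. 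Otherwise $a\in A_v^{\times}$, and replacing $e_1$ by $ae_1$ (still an adapted basis since $|a|_v\in\bR_{>0}$) we may assume $\eta=e_1\wedge\cdots\wedge e_r$, so that $\overline{\eta}=\overline{e_1}\wedge\cdots\wedge\overline{e_r}$ with $(\overline{e_i})_i$ a $\widehat{\kappa_v}$-basis of $\widehat{E}_v$.

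It then remains, given $\varepsilon>0$, to produce $x_1,\dots,x_r\in\cE_v$ with $x_1\wedge\cdots\wedge x_r=\eta$ and $\prod_i\|x_i\|_v<\|\eta\|_{v,\det}+\varepsilon$. By (\cite{ChenMori}, Proposition 1.1.66) there is a basis $(u_i)_i$ of $\widehat{E}_v$ with $u_1\wedge\cdots\wedge u_r=\overline{\eta}$ and $\prod_i\|u_i\|^{\wedge}_v<\|\eta\|_{v,\det}+\varepsilon$; writing $u_i=\sum_jM_{ji}\overline{e_j}$, the matrix $M$ lies in $\SL_r(\widehat{\kappa_v})$ because the wedge is unchanged. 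Using that $\kappa_v$ is dense in $\widehat{\kappa_v}$ and that $\SL_r$ of any field is generated by elementary matrices, I would approximate $M$ by $M'\in\SL_r(\kappa_v)$; lifting the entries of $M'$ to $A_v$ yields a matrix $\widetilde{M'}$ reducing to $M'$ modulo $\m_v$, so $\delta:=\det\widetilde{M'}\in 1+\m_v\subset A_v^{\times}$. Then $x_1:=\delta^{-1}\sum_j\widetilde{M'}_{j1}e_j$ and $x_i:=\sum_j\widetilde{M'}_{ji}e_j$ ($i\ge2$) lie in $\cE_v$, satisfy $x_1\wedge\cdots\wedge x_r=\delta^{-1}\delta\,\eta=\eta$, and have images $\sum_jM'_{ji}\overline{e_j}$ in $\widehat{E}_v$ (using $\overline{\delta^{-1}}=1$ in $\kappa_v$); by continuity of the norm $\|\cdot\|^{\wedge}_v$ on the finite-dimensional $\widehat{\kappa_v}$-vector space $\widehat{E}_v$, for $M'$ close enough to $M$ one gets $\prod_i\|x_i\|_v=\prod_i\bigl\|\sum_jM'_{ji}\overline{e_j}\bigr\|^{\wedge}_v<\|\eta\|_{v,\det}+2\varepsilon$, and letting $\varepsilon\to0$ concludes. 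The main obstacle is precisely this transport step: the Hadamard-optimal frame for $\overline{\eta}$ genuinely lives over the completion $\widehat{\kappa_v}$, whereas a decomposition of $\eta$ must take place over $A_v$, and the key trick is to correct an approximate integral lift by a unit of $A_v$ congruent to $1$ modulo $\m_v$, which restores the equality with $\eta$ while leaving the residue norms essentially unchanged.
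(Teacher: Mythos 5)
Your proposal is correct, and its skeleton is the same as the paper's: dispose of the cases $\eta\notin\Lambda^r\cE_v$ and $a\in\m_v$ directly, and reduce the remaining case to the classical Hadamard equality for the residue norm on $\widehat{E}_v$. The difference lies in the main case $a\in A_v^{\times}$: the paper's proof there is a single sentence ("since $\|\eta\|_{v,\det}$ is computed from the residue norm, the classical Hadamard inequality implies the equality"), which silently passes from near-optimal decompositions of $\overline{\eta}$ with factors in $\widehat{E}_v$ to decompositions of $\eta$ with factors in $\cE_v$; your argument makes precisely this transport explicit, via density of $\SL_r(\kappa_v)$ in $\SL_r(\widehat{\kappa_v})$ (elementary matrices plus density of $\kappa_v$ in its completion), lifting the approximating matrix to $A_v$, and correcting by the unit $\delta\in 1+\m_v$ so that the wedge equals $\eta$ exactly while the residue norms of the factors are unchanged up to the chosen $\varepsilon$. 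That step is sound (note it is vacuous when the residue absolute value is trivial, and otherwise $\kappa_v$ is genuinely dense in $\widehat{\kappa_v}$), so your write-up is, if anything, more complete than the paper's at the one point where something nontrivial needs checking; what the paper's terser treatment buys is only brevity.
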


\begin{proof}
We may assume that $\eta\neq 0$. Let $\eta_0 = e_1\wedge \cdots\wedge e_r$. From the construction of tensor product and quotient pseudo-norms above, we obtain that $\|\eta_0\|_{v,\det}\in \bR_{>0}$. Since $\det(E)$ is of dimension 1, there exists $a\in K$ such that $\eta = a\eta_0$.  We first consider the $\|\eta\|_{v,\det}\in\bR_{>0}$ case. Then there exist $x_1,...,x_r\in \cE_v$ such that $\eta = x_1\wedge \cdots\wedge x_r$. From the definition of $\|\cdot\|_{v,\det}$, we directly obtain
\begin{align*}
\|\eta\|_{v,\det} \leq \|x_1\|_v\cdots\|x_r\|_v.
\end{align*}
We assume that $a\in A_v \setminus \m_v$, namely $\|\eta\|_{v,\det} \in \bR_{>0}$. Since $\|\eta\|_{v,\det}$ is computed from the residue norm on $\widehat{E}_v$, the classical Hadamard inequality implies that 
\begin{align*}
\|\eta\|_{v,\det} = \inf\left\{\|x_1\|_v\cdots\|x_r\|_v : x_1,...,x_r \in \cE_v \text{ and } \eta = x_1\wedge\cdots\wedge x_r\right\}.
\end{align*}
Now assume that $a\in \m_{v}$, then $\eta = (ae_1)\wedge\cdots\wedge e_r$ and 
\begin{align*}
\|\eta\|_{v,\det} = \|ae_1\|_{v}\cdots \|e_r\| = 0,
\end{align*}
hence we obtain the Hadamard equality. 

Finally, we treat the $a\in K \setminus A_v$ case. Now $\|\eta\|_{v,\det}=+\infty$. Moreover, there is no possible decomposition $\eta = x_1\wedge \cdots\wedge x_r$, where $x_1,...,x_r\in \cE_v$ (otherwise we would have $\|\eta\|_{v,\det}<+\infty$). Thus the RHS desired equality is $\inf(\emptyset)=+\infty$. 
\end{proof}

\section{Global space of pseudo-absolute values}
\label{sec:global_space_sav}
We now define various notions of global spaces of pseudo-absolute values on a given field. Such spaces will contain all the relevant pseudo-absolute values in the context of topological adelic curves. Throughout this section, we fix a field $K$. 

\subsection{Definitions}
\label{sub:definitions_global_space_of_sav}

\begin{definition}
\label{def:set_of_sav}
Recall that we denote by $M_K$ the set of pseudo-absolute values on $K$. For all $f\in K$, let $U_f:= \left\{x\in V_K : |f|_x\neq +\infty\right\}$. We endow $M_K$ with the coarsest topology making the maps
\begin{align*}
\fonction{|f|_{\cdot}}{M_K}{[0,+\infty]}{x}{|f|_x}
\end{align*}
continuous, where $f$ runs over $K$. 

\end{definition}

\begin{theorem}
\label{prop:compactness_set_sav}
$M_K$ is a non-empty, compact Hausdorff topological space. 
\end{theorem}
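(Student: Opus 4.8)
The plan is to mimic the classical proof that the Berkovich spectrum of a Banach ring is non-empty and compact Hausdorff, adapting it to handle the point at infinity.

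\textbf{Non-emptiness.} First I would exhibit at least one pseudo-absolute value on $K$. Take any valuation ring $V$ of $K$ (for instance $K$ itself, which is a valuation ring), and equip the residue field with the trivial absolute value; by Remark~\ref{rem:characterisation_sav} this extends to a pseudo-absolute value on $K$. Hence $M_K \neq \emptyset$. (If one wants an absolute value, the trivial absolute value on $K$ already works.)

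\textbf{Hausdorff.} Given two distinct pseudo-absolute values $x \neq y$, there exists $f \in K$ with $|f|_x \neq |f|_y$ in $[0,+\infty]$. Since $[0,+\infty]$ is Hausdorff, choose disjoint open neighbourhoods $U \ni |f|_x$, $U' \ni |f|_y$; then $|f|_\cdot^{-1}(U)$ and $|f|_\cdot^{-1}(U')$ are disjoint open neighbourhoods of $x$ and $y$. By definition of the topology these are open, so $M_K$ is Hausdorff.

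\textbf{Compactness.} The natural move is to embed $M_K$ into a product of compact spaces and show the image is closed. Consider the map
\begin{align*}
\iota : M_K \longrightarrow \prod_{f \in K} [0,+\infty], \qquad x \longmapsto (|f|_x)_{f \in K}.
\end{align*}
Since $[0,+\infty]$ is compact Hausdorff, by Tychonoff the product $\mathcal{P} := \prod_{f \in K}[0,+\infty]$ is compact Hausdorff. By the very definition of the topology on $M_K$, the map $\iota$ is a topological embedding onto its image. So it remains to show that $\iota(M_K)$ is closed in $\mathcal{P}$. Here one characterises the image by the axioms (i)--(iii) of Definition~\ref{def:semi-absolute_value}. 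A point $\varphi = (\varphi_f)_{f\in K} \in \mathcal{P}$ lies in $\iota(M_K)$ if and only if: $\varphi_1 = 1$, $\varphi_0 = 0$; for all $a,b \in K$, $\varphi_{a+b} \le \varphi_a + \varphi_b$ (with the convention on $+\infty$); and for all $a,b \in K$ with $\{\varphi_a,\varphi_b\} \neq \{0,+\infty\}$, $\varphi_{ab} = \varphi_a \varphi_b$. Each of these is a closed condition: the maps $\varphi \mapsto \varphi_f$ are the (continuous) coordinate projections, addition and multiplication on $[0,+\infty]$ are continuous, and the inequality $\le$ and equality $=$ define closed subsets of $[0,+\infty]^2$; the multiplicativity condition restricted to the (open) locus $\{\varphi_a,\varphi_b\} \neq \{0,+\infty\}$ still cuts out a closed set, since on the complementary closed locus $\{\varphi_a,\varphi_b\} = \{0,+\infty\}$ there is no constraint — more carefully, I would write the multiplicativity axiom as: for all $a,b$, either $\{\varphi_a,\varphi_b\}=\{0,+\infty\}$ or $\varphi_{ab}=\varphi_a\varphi_b$, i.e. $\iota(M_K) = \bigcap_{a,b}\big( C_{a,b} \cup D_{a,b}\big)$ where $C_{a,b}$ is the closed set $\{\varphi_a=0,\varphi_b=+\infty\}\cup\{\varphi_a=+\infty,\varphi_b=0\}$ and $D_{a,b}=\{\varphi_{ab}=\varphi_a\varphi_b\}$ is closed. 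A finite union of closed sets is closed, so $\iota(M_K)$ is an intersection of closed sets, hence closed. Being a closed subspace of the compact space $\mathcal{P}$, $\iota(M_K)$ is compact, and therefore $M_K$ is compact.

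\textbf{Main obstacle.} The delicate point is the multiplicativity axiom (iii): it only holds away from the ``indeterminate'' locus $\{|a|,|b|\}=\{0,+\infty\}$, so one must phrase it as a union-of-closed-sets condition as above rather than a naive equality of continuous functions (the product $\varphi_a\varphi_b$ is only defined/continuous after one excludes $0\cdot\infty$). One should also double-check that the convention for $+\infty$ in the triangle inequality $\varphi_{a+b}\le\varphi_a+\varphi_b$ — namely that $t+\infty=\infty$ for all $t$, which is continuous on $[0,+\infty]^2$ — makes $D'_{a,b} = \{\varphi_{a+b}\le \varphi_a+\varphi_b\}$ genuinely closed. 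Once these continuity/closedness bookkeeping points are settled, everything else is the standard Tychonoff argument, and the continuity of the specialization map $j : M_K \to \ZR(K/k)$ (part of the broader Theorem~\ref{th:pav_intro}) would be treated separately, using that $j^{-1}(E(A)) = \bigcap_{a\in A} U_a$ is open for each finitely generated sub-$k$-algebra $A$.
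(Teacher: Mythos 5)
Your proposal is correct and follows essentially the same route as the paper: embed $M_K$ into $\prod_{f\in K}[0,+\infty]$, invoke Tychonoff, and show the image is closed — the paper verifies closedness by taking a generalised sequence (net) in $M_K$ and checking the limit satisfies the axioms, which is the same bookkeeping as your intersection-of-closed-conditions formulation, including the identical handling of the $0\cdot\infty$ locus (the paper's ``there exists $i_0$ such that $\{|a|_i,|b|_i\}\neq\{0,+\infty\}$ for $i\geq i_0$'' is your observation that the bad locus is closed). One small precision: with any fixed convention for $0\cdot\infty$ the set $D_{a,b}=\{\varphi_{ab}=\varphi_a\varphi_b\}$ alone need not be closed (e.g. $\varphi_a=1/n$, $\varphi_b=n$, $\varphi_{ab}=1$), but the union $C_{a,b}\cup D_{a,b}$ is closed because its complement is open inside the open locus $\{\varphi_a,\varphi_b\}\neq\{0,+\infty\}$ where multiplication is continuous — which is exactly what your argument needs.
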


\begin{proof}
$M_K$ contains the trivial absolute value on $K$ and is thus non-empty. We first show that $M_K$ is Hausdorff. Let $\va_1\neq\va_2$ be two pseudo-absolute values on $K$. By symmetry, we may assume that there exists $f\in K$ such that $0<|f|_1 < |f|_2<+\infty$. Let $t\in ]|f|_1,|f|_2[$. Then $U_1 := \{x\in M_K : |f|_x < t\}$ and $U_2:= \{x\in M_K : |f|_x > t\}$ are both open, disjoint and $\va_i \in U_i$ for $i\in\{1,2\}$. 

We now prove the compactness of $M_K$. Let
\begin{align*}
\fonction{\iota}{M_K}{\prod_{f\in K}[0,+\infty]}{x}{(|f|_x)_{f\in K}}
\end{align*}
which is injective and continuous. From Tychonoff theorem, $E:=\prod_{f\in K}[0,+\infty]$ is compact Hausdorff. Hence it is enough to show that $M_K$ is closed in $E$. Let $I$ be a directed set and let $(\va_i)_{i\in I}$ be a generalised sequence in $M_K$ which converges to $(|f|)_{f\in K} \in E$. We show that the map
\begin{align*}
\fonction{\va}{K}{[0,+\infty]}{f}{|f|}
\end{align*}
defines a pseudo-absolute value on $K$. For all $f\in K$, we have a generalised sequence $(|f|_i)_{i\in I}$ in $[0,+\infty]$ which converges to $|f|$. Hence $|1|=\lim_{i\in I} |1|_i = 1$ and $|0|=\lim_{i\in I} |0|_i = 0$. Let $a,b\in K$. For all $i\in I$, we have $|a+b|_i \leq |a|_i + |b|_i$, hence
\begin{align*}
|a+b| = \lim_{i\in I} |a+b|_i \leq \lim_{i\in I}|a|_i + \lim_{i\in I}|b|_i = |a|+|b|.
\end{align*}
Finally, if $a,b\in K$ are such that $\{|a|,|b|\}\neq \{0,+\infty\}$, then there exists $i_0\in I$ such that 
\begin{align*}
\forall i \in I, i\geq i_0 \Rightarrow  \{|a|_i,|b|_i\}\neq \{0,+\infty\} \Rightarrow |ab|_i = |a|_i|b|_i.
\end{align*}
Whence
\begin{align*}
|ab| = \lim_{i\in I, i\geq i_0}|ab|_i = \lim_{i\in I, i\geq i_0} |a|_i |b|_i = \lim_{i\in I} |a|_i \lim_{i\in I} |b|_i = |a||b|. 
\end{align*}
\end{proof}

\begin{definition}
\label{def:restriction_global_space_sav}
Let $L/K$ be a field extension. There is a restriction map $\pi_{L/K} : M_L \to M_K$. It is continuous (by definition of the topologies of $M_L$ and $M_K$), surjective, and proper (cf. Proposition \ref{prop:compactness_set_sav}).
\end{definition}

\begin{notation}
\label{notation:parts_global_space_of_sav}
Let $K$ be a field. We introduce the following notation.
\begin{itemize}
\item $M_{K,\ar}$ denotes the set of Archimedean pseudo-absolute values on $K$.
\item $M_{K,\um}$ denotes the set of ultrametric pseudo-absolute values on $K$.
\item $M_{K,\sn}$ denotes the set of pseudo-absolute on $K$ values whose kernel is non-zero.
\item $M_{K,\triv}$ denotes the set of pseudo-absolute values on $K$ whose residual absolute value is trivial.
\item $M_{K,\disc}$ denotes the set of discrete pseudo-absolute values, namely the set of pseudo-absolute values on $K$ that correspond either to a discrete non-Archimedean absolute value on $K$ or to a pseudo-absolute value whose finiteness ring is a discrete valuation ring that is not a field.
\end{itemize}
\end{notation}

\subsection{Examples}
\label{sub:examples_global_space_of_sav}

\subsubsection{Number fields}

Let $K$ be a number field and $\cO_K$ be its ring of integers endowed with the norm 
\begin{align*}
\|\cdot\|_{\infty} := \displaystyle\max_{\sigma \hookrightarrow \bC} \va_\sigma,
\end{align*}
where $\sigma$ runs over the set of embeddings of $K$ in $\bC$. 

\begin{proposition}
\label{prop:set_sav_number_field}
$M_K$ is homeomorphic to $\cM(\cO_K,\|\cdot\|_{\infty})$, the analytic spectrum in the sense of Berkovich of the Banach ring $(\cO_K,\|\cdot\|_{\infty})$. 
\end{proposition}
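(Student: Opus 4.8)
The plan is to show that restricting a pseudo-absolute value to the subring $\cO_K$ defines a continuous bijection $\Phi\colon M_K\to\cM(\cO_K,\|\cdot\|_{\infty})$, and then to upgrade it to a homeomorphism by a compactness argument.

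\emph{Construction of $\Phi$.} First I would note that every valuation ring $A$ of $K$ contains $\cO_K$: indeed $A$ contains the prime subring $\bZ$ and is integrally closed in $K=\Frac(A)$, hence contains the integral closure $\cO_K$ of $\bZ$ in $K$. Since $\cO_K$ is a Dedekind, hence Prüfer, domain (Example~\ref{example:Prufer_rings}~(2)), Proposition~\ref{prop:prop_Prufer_domains}~(1) identifies the valuation rings of $K$ with $K$ itself and with the localisations $\cO_{K,\p}$, for $\p$ a nonzero prime of $\cO_K$. Therefore, for $v\in M_K$, Proposition~\ref{prop:sav_valuation_ring} shows that $f\mapsto|f|_v$ defines a multiplicative semi-norm on $\cO_K$; to check it is bounded by $\|\cdot\|_{\infty}$ I would use that $\prod_{\sigma}|f|_{\sigma}=|N_{K/\bQ}(f)|\geq 1$ for $f\in\cO_K\setminus\{0\}$, so $\|f\|_{\infty}\geq 1$, and then separate cases. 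If $A_v=\cO_{K,\p}$ the residue field is finite, so $|\cdot|_v$ takes only the values $0$ and $1$ on $\cO_K$. If $A_v=K$ then $v$ is a usual absolute value on $K$, hence by Ostrowski's theorem it is trivial, of the form $|\cdot|_{\sigma}^{\epsilon}$ with $\epsilon\in]0,1]$, or of the form $|\cdot|_{\p}^{t}$ with $t\in]0,+\infty[$; in each case $|f|_v\leq\max\{\|f\|_{\infty},1\}=\|f\|_{\infty}$ on $\cO_K\setminus\{0\}$. Thus $\Phi(v)$, defined as the restriction of $|\cdot|_v$ to $\cO_K$, lies in $\cM(\cO_K,\|\cdot\|_{\infty})$, and $\Phi$ is continuous because each generating function $\chi\mapsto|f|_{\chi}$ of the target ($f\in\cO_K$) pulls back along $\Phi$ to $x\mapsto|f|_x$, which is continuous by definition of the topology on $M_K$.

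\emph{Bijectivity.} A bounded multiplicative semi-norm on $\cO_K$ has kernel a prime ideal $\p$. If $\p\neq(0)$, it factors through the finite field $\cO_K/\p$, whose only absolute value is trivial, so the semi-norm is exactly the restriction to $\cO_K$ of the residually trivial pseudo-absolute value with finiteness ring $\cO_{K,\p}$ (Example~\ref{ex:sav}~(5)), and this is the unique pseudo-absolute value with that restriction. If $\p=(0)$, the semi-norm is an absolute value on the domain $\cO_K$, hence extends uniquely (by multiplicativity) to an absolute value on $K=\Frac(\cO_K)$, i.e. to a pseudo-absolute value with finiteness ring $K$; conversely the restriction to $\cO_K$ of a usual absolute value on $K$ has kernel $(0)$, and restriction followed by this unique extension recovers it. Hence $\Phi$ is a bijection.

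\emph{Conclusion.} By Theorem~\ref{prop:compactness_set_sav} the space $M_K$ is compact, and $\cM(\cO_K,\|\cdot\|_{\infty})$ is Hausdorff, being the Berkovich spectrum of a Banach ring. A continuous bijection from a compact space to a Hausdorff space is a homeomorphism, so $\Phi$ is a homeomorphism. The step I expect to be the main obstacle is the surjectivity half of bijectivity: one must be sure that no "extra" bounded multiplicative semi-norms on $\cO_K$ are missed — this is where the finiteness of the residue fields $\cO_K/\p$ is essential, forcing residual triviality when the kernel is nonzero — and that in the zero-kernel case the unique extension to $K$ really is a pseudo-absolute value (finiteness being automatic for an honest absolute value) compatible with $\|\cdot\|_{\infty}$. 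Enumerating the valuation rings of $K$ through the Prüfer property of $\cO_K$ keeps the remainder to bookkeeping with Ostrowski's classification.
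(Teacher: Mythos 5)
Your proposal is correct and follows essentially the same route as the paper: identify pseudo-absolute values with non-trivial finiteness ring with the residually trivial (extremal) points of $\cM(\cO_K,\|\cdot\|_{\infty})$ and honest absolute values with the remaining points, observe continuity from the definitions of the topologies, and conclude by compactness of $M_K$ and Hausdorffness of the spectrum. You are somewhat more explicit than the paper on the boundedness of the restriction by $\|\cdot\|_{\infty}$ and on enumerating the valuation rings of $K$ via the Prüfer property, but these are refinements of the same argument, not a different approach.
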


\begin{proof}
Let $\va : K \to [0,+\infty]$ be a pseudo-absolute value on $K$ which is not an absolute value. Denote by $A$ its finiteness ring. Then $A$ corresponds to a non-trivial valuation of $K$ and is thus of rank $1$. Therefore, by Ostrowski theorem, it corresponds to some prime ideal $\p \in \Spec(\cO_K)$. The residue field of $A$ being finite, the residue absolute value induced by $\va$ needs to be trivial. Therefore the restriction of $\va$ to $\cO_K$ corresponds to the extremal point of the branch associated with $\p\in \Spec(\cO_K)$ in the analytic spectrum $\cM(\cO_K,\|\cdot\|_{\infty})$. Conversely, any extremal point of $\cM(\cO_K,\|\cdot\|_{\infty})$ gives rise to a pseudo-absolute value on $K$ which is not an absolute value. These constructions are inverse to each other. The remaining points of $\cM(\cO_K,\|\cdot\|_{\infty})$ correspond directly to absolute values on $K$. Therefore we obtain a bijection $\varphi : M_K \to \cM(\cO_K,\|\cdot\|_{\infty})$ which is continuous by definition of the topologies of $M_K$ and $\cM(\cO_K,\|\cdot\|_{\infty})$. We can conclude the proof as $M_K,\cM(\cO_K,\|\cdot\|_{\infty})$ are both compact Hausdorff.
\end{proof}

\subsubsection{Function fields over $\bC$}
\label{subsub:global_space_function_field_over_C} 

\begin{proposition}
\label{prop:archimedean_part_function_field_over_C}
Let $K=\bC(T)$. We have a homeomorphism
\begin{align*}
M_{K,\ar} \cong \mathbb{P}^{1}(\bC) \times ]0,1].
\end{align*}

\end{proposition}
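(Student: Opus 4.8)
The plan is to classify all Archimedean pseudo-absolute values on $K = \bC(T)$ explicitly, then check that the resulting bijection with $\mathbb{P}^1(\bC) \times \,]0,1]$ is a homeomorphism. Let $v = (\va, A, \m, \kappa)$ be an Archimedean pseudo-absolute value on $K$. Since the residue absolute value is Archimedean, $\kappa$ contains $\bC$ (as $\bC$ embeds into $K$ and hence maps to $\kappa$ with its absolute value restricting to one on $\bC$, which by Gelfand–Mazur/Ostrowski must be a power of $|\cdot|_\infty$), and in particular $\kappa$ has characteristic $0$. First I would argue the finiteness ring $A$ is either $K$ itself or a rank-one discrete valuation ring: indeed $\trdeg(K/\bC) = 1$, and a nontrivial valuation ring $A$ of $K$ trivial on $\bC$ has residue field algebraic over $\bC$, hence equal to $\bC$; by the theory of $\mathbb{P}^1_{\bC}$ such $A$ is $\cO_{\mathbb{P}^1_{\bC}, z} = \{f : \ord(f,z) \geq 0\}$ for a unique closed point $z \in \mathbb{P}^1(\bC)$.

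In the case $A = K$, $v$ is a genuine Archimedean absolute value on $\bC(T)$; but an Archimedean absolute value on $\bC(T)$ restricts to $|\cdot|_\infty^{\epsilon}$ on $\bC$ for some $\epsilon \in\, ]0,1]$, and by (essentially Gelfand–Mazur, since $\bC$ is algebraically closed and complete) it must extend $\bC$ — however $\bC(T)$ has no Archimedean absolute value extending one on $\bC$ with $T$ transcendental unless... actually here I would instead observe that such a $v$ does not exist: any absolute value on $\bC(T)$ trivial-on-nothing that is Archimedean would make $\bC(T)$ embed in $\bC$ (completion is $\bC$), impossible since $T$ is transcendental over the completion only if... — more carefully, the correct statement is that the Archimedean absolute values correspond to the limiting/boundary case and the residue field is forced to be $\bC$; so in fact $A \neq K$ always. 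Thus every Archimedean $v$ has $A = \cO_{\mathbb{P}^1_\bC, z}$ for a unique $z \in \mathbb{P}^1(\bC)$, residue field $\bC$, and residue absolute value $|\cdot|_\infty^{\epsilon}$ for a unique $\epsilon \in\, ]0,1]$. This gives the set-theoretic bijection $v \leftrightarrow (z, \epsilon)$, matching Example \ref{ex:sav} (2).

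Next I would establish continuity of $(z,\epsilon) \mapsto v_{z,\epsilon}$ and of its inverse. For fixed $f = P/Q \in \bC(T)$ with $P, Q$ coprime polynomials, the map $(z,\epsilon) \mapsto |f|_{v_{z,\epsilon}}$ equals $|f(z)|_\infty^\epsilon$ (interpreted with values in $[0,+\infty]$: it is $0$ when $z$ is a zero of $P$, $+\infty$ when $z$ is a pole, and on $\mathbb{P}^1 \setminus \{$zeros, poles of $f\}$ it is finite and jointly continuous in $(z,\epsilon)$). One checks this is continuous $\mathbb{P}^1(\bC) \times\, ]0,1] \to [0,+\infty]$ by examining it near zeros and poles separately, using that $|f(z)|^\epsilon \to 0$ resp. $\to +\infty$ uniformly in $\epsilon \in\, ]0,1]$ only fails near $\epsilon \to 0$, but $0$ is excluded. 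Hence the map to $M_{K,\ar}$ is continuous by the defining property of the topology on $M_K$ (Definition \ref{def:set_of_sav}). For the inverse, note $\epsilon$ is recovered from $v$ as $\epsilon = |2|_v$ (or $\lim \frac{\log|n|_v}{\log n}$), which is continuous, and $z \in \mathbb{P}^1(\bC)$ is recovered as the unique point where $|T - a|_v$ is minimized / where the valuation is centered — concretely $z = $ the limit of $P_n$ where $|T-z|_v$ small; this coordinate's continuity follows since $\{v : |T-a|_v < r\}$ type sets pull back to neighborhoods.

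The main obstacle I anticipate is twofold: (a) rigorously excluding genuine Archimedean absolute values on $\bC(T)$ (i.e. the $A = K$ case) — this needs the fact that a field Archimedean-valued must embed into $\bC$ with dense image and $\bC(T)$ cannot since $\bC$ is already complete and $T$ would have to be algebraic over $\bC$; and (b) checking continuity of the *inverse* map, i.e. that $v \mapsto z$ is continuous for the Berkovich-style topology — one must show that if $v_i \to v$ in $M_{K,\ar}$ then the centers $z_i \to z$ in $\mathbb{P}^1(\bC)$, which I would do by a compactness argument: $\mathbb{P}^1(\bC)$ is compact, any subnet of $(z_i)$ has a convergent sub-subnet with limit $z'$, and then $|f|_{v_i} \to |f|_{v}$ forces $z' = z$ by evaluating on $f = T - z$ and $f = (T-z)^{-1}$. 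Since both spaces are compact Hausdorff (the left by Theorem \ref{prop:compactness_set_sav}, noting $M_{K,\ar}$ is closed in $M_K$, the right visibly — wait, $]0,1]$ is not compact, so $M_{K,\ar}$ is only locally compact; so instead I conclude via: continuous bijection between locally compact Hausdorff spaces whose inverse is continuous, verified directly as above). I would finish by assembling these: the map $\mathbb{P}^1(\bC) \times\, ]0,1] \to M_{K,\ar}$ is a continuous bijection with continuous inverse, hence a homeomorphism.
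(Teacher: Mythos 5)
Your overall route is the same as the paper's: classify the Archimedean pseudo-absolute values as the $v_{z,\epsilon}$ of Example \ref{ex:sav} (2), then check continuity of the bijection in both directions. The continuity half of your proposal is sound (recovering $\epsilon$ as $\log|2|_v/\log 2$, continuity of $(z,\epsilon)\mapsto|f(z)|_\infty^\epsilon$ for each $f$, and your subnet/compactness argument for the center map makes precise what the paper only asserts). The genuine gaps are in the classification step, precisely at the points where you hesitate. (i) Your exclusion of the case $A=K$ does not work as argued: an Archimedean absolute value on $\bC(T)$ is the same thing as a field embedding $\bC(T)\to\bC$ together with an exponent, and nothing forces that embedding to restrict to the identity (or even to a continuous map) on the subfield $\bC$; so "$T$ would have to be algebraic over $\bC$" is a non sequitur. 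Abstract, discontinuous embeddings of $\bC(T)$ into $\bC$ do exist, so this case cannot be killed by a completeness argument; it can only be excluded if one restricts to pseudo-absolute values inducing the standard $\va_\infty^\epsilon$ on $\bC$, which is what the paper does implicitly when it asserts in one line that $A$ is a nontrivial valuation ring. (ii) The same issue undermines your step "the restriction to $\bC$ must be a power of $|\cdot|_\infty$ by Gelfand--Mazur/Ostrowski": Ostrowski classifies absolute values on $\bQ$, not on $\bC$, and an Archimedean absolute value on the residue field $\bC$ has the form $|\sigma(\cdot)|_\infty^\epsilon$ for a possibly wild embedding $\sigma:\bC\to\bC$, so the identification with $\va_\infty^\epsilon$ is an assumption, not a consequence. (iii) You also tacitly assume that the finiteness ring contains $\bC$, i.e.\ that the underlying valuation is trivial on $\bC$; valuation rings of $\bC(T)$ need not contain $\bC$, and Archimedean pseudo-absolute values whose valuation is nontrivial on $\bC$ are never ruled out in your argument.

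To be fair, the paper's own proof passes over all three points silently: it simply states that $A$ is a nontrivial valuation ring of $\bC(T)$, hence of the form $\{f:\ord(f,z)\geq 0\}$, with residue absolute value "necessarily" $\va_\infty^\epsilon$. Your proposal differs only in that you try to justify these assertions, and the justifications you sketch are the parts that would fail; a correct treatment has to either restrict to pseudo-absolute values compatible with the standard valued structure on $\bC$ or confront the wild embeddings explicitly. One further small point: your parenthetical claim that $M_{K,\ar}$ is closed in $M_K$ is false (it is open, being $\{v:|2|_v>1\}$, and its closure picks up residually trivial points), but since you retreat to proving continuity of both the map and its inverse directly, as the paper does, nothing in your argument depends on it.
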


\begin{proof}

Let $v=(\va,A,\m) \in M_{K,\ar}$. Then $A$ is a non trivial valuation ring of $\bC(T)$ and thus is of the form
\begin{align*}
A := \{f\in K : \ord(f,z)\geq 0\}
\end{align*}
for some $z\in \bC\cup\{\infty\}$. Then the residue field $A/\m$ is $\bC$ endowed with an Archimedean absolute value, necessarily of the form $\va_{\infty}^{\epsilon}$ where $\epsilon \in ]0,1]$. Hence $v=v_{z,\epsilon,\infty}$, where $z\in \bP^1(\bC)$ (using the notation of Example \ref{ex:sav} (2)). Thus we have a bijection $\varphi : M_{k,\ar} \to \bP^{1}(\bC) \times ]0,1]$. To show that $\varphi$ is continuous, it suffices to prove that its composition with the maps $\pi_1 : \bP^{1}(\bC) \times ]0,1] \to \bP^{1}(\bC)$ and $\pi_2 : \bP^{1}(\bC) \times ]0,1] \to ]0,1]$ are continuous. We have 
\begin{align*}
\fonction{\pi_1 \circ \varphi}{M_{k,\ar}}{\bP^{1}(\bC)}{v_{z,\epsilon,\ar}}{z}.
\end{align*}
It is continuous by definition of the analytic topology on $\bP^{1}(\bC)$. $\pi_2 \circ \varphi : \va_x \in M_{K,\ar} \mapsto \log(|2(x)|)/\log(2)\in ]0,1]$ is also continuous. Furthermore, for all $f\in K$, the map
\begin{align*}
\fonction{\va_{\varphi^{-1}(\cdot)}}{\bP^{1}(\bC) \times ]0,1]}{[0,+\infty]}{(z,\epsilon)}{|f(z)|^{\epsilon}}
\end{align*}
is continuous. Hence $\varphi$ is a homeomorphism.
\end{proof}

\subsection{Connection to Zariski-Riemann spaces}

Let $K$ be a field with prime subring $k$. The construction of the Zariski-Riemann space $\ZR(K/k)$ was recalled in \S \ref{sub:Zariski-Riemann_space}. Then there is a map $j : M_K \to \ZR(K/k)$ mapping a pseudo-absolute value $v\in M_{K}$ to its finiteness ring $A_v \in \ZR(K/k)$. The map $j$ is called the \emph{specification map}.

\begin{proposition}
The specification map $j : M_K \to \ZR(K/k)$ is continuous, where $\ZR(K/k)$ is equipped with the Zariski topology.
\end{proposition}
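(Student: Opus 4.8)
The plan is to reduce continuity to a check on a basis of the Zariski topology on $\ZR(K/k)$. Recall that this topology admits as a basis the sets $E(A)$, where $A$ ranges over the sub-$k$-algebras of finite type of $K$ and $E(A)$ is the set of valuation rings in $\ZR(K/k)$ containing $A$. Since continuity of a map into a space can be tested on a basis of that space, it suffices to prove that $j^{-1}(E(A))$ is open in $M_K$ for every finitely generated sub-$k$-algebra $A$ of $K$.

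First I would fix generators, writing $A = k[f_1,\dots,f_n]$ with $f_1,\dots,f_n\in K$. For $v\in M_K$ with finiteness ring $A_v$, the condition $j(v)=A_v\in E(A)$ means $A\subseteq A_v$; as $A_v$ is a subring of $K$ containing the prime subring $k$, this is equivalent to $f_i\in A_v$ for all $i$, that is, to $|f_i|_v\neq+\infty$ for all $i$. Hence
\begin{align*}
j^{-1}(E(A)) = \bigcap_{i=1}^{n} U_{f_i},
\end{align*}
with $U_f=\{x\in M_K : |f|_x\neq+\infty\}$ as in Definition~\ref{def:set_of_sav}.

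It then remains to observe that each $U_f$ is open in $M_K$. By definition of the topology on $M_K$, the evaluation map $|f|_{\cdot}\colon M_K\to[0,+\infty]$ is continuous, and $[0,+\infty)$ is an open subset of $[0,+\infty]$; therefore $U_f=|f|_{\cdot}^{-1}\bigl([0,+\infty)\bigr)$ is open. A finite intersection of open sets being open, $j^{-1}(E(A))$ is open, and $j$ is continuous. The argument is entirely routine; the only genuine point is the translation between the basic opens $E(A)$ on the Zariski–Riemann side (indexed by finite-type $k$-subalgebras) and the basic opens $U_f$ of $M_K$ (indexed by single elements of $K$), which is exactly what the displayed equality records, so I do not anticipate any real obstacle.
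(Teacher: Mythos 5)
Your proof is correct and follows essentially the same route as the paper's: both reduce to the basic opens of the Zariski topology given by finitely many elements $a_1,\dots,a_r\in K$ (equivalently the finite-type sub-$k$-algebra they generate) and observe that the preimage is the finite intersection $\bigcap_i\{v\in M_K : |a_i|_v\in[0,+\infty[\}$, which is open since each $|a_i|_{\cdot}$ is continuous by definition of the topology on $M_K$. No issues.
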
 

\begin{proof}
Let $a_1,...,a_r$ be elements of $K$. Let $\cU := \{V \in \ZR(K/k) : a_1,....,a_r \in V\}$ be a basic open subset of $\ZR(K/k)$. Then we have
\begin{align*}
j^{-1}(\cU) = \displaystyle\bigcap_{i=1}^{r}\{v\in M_K : |a_i|_v \in [0,+\infty[\},
\end{align*}
which is open. 
\end{proof}

\section{Local analytic spaces}
\label{sec:local_analytic_spaces}

We now give alternative approaches to (local) analytic spaces. Throughout this section, we fix a field $K$ and a pseudo-absolute $v=(\va,A,\m,\kappa)\in M_K$, namely $\va : K \to [0,+\infty]$ is a pseudo-absolute value with finiteness ring $A$, kernel $\m$ and residue field $\kappa$. 

\subsection{Local model analytic space}
\label{sub:local_model_analytic_space}

Let $X \to \Spec(K)$ be a $K$-scheme, which is assumed to be projective for simplicity. Assume that we have a projective model $\cX$ of $X$ over $A$ (cf. \S \ref{subsub:models_over_Prüfer_domain}). The special fibre $\cX_s := \cX \times_{\Spec(A)} \Spec(\kappa)$ is now a projective $\kappa$-scheme. Denote by $\widehat{\kappa}$ the completion of $\kappa$ w.r.t. the residue absolute value on $\kappa$ induced by the pseudo-absolute value $v$. We call $\widehat{\cX_s} := \cX_{s}\times_{\Spec(\kappa)} \Spec(\widehat{\kappa})$ the \emph{completed special fibre} of the model $\cX$. Now $\widehat{\cX}$ is a projective $\widehat{\kappa}$-scheme and $\widehat{\kappa}$ is a completely (real-)valued field. Henceforth, we can construct the Berkovich analytic space $\widehat{\cX_s}^{\an}$ attached to $\widehat{\cX_{s}}$.  

\begin{definition}
\label{def:model_local_analytic_space}
With the above notation, the space $\widehat{\cX_s}^{\an}$ is called the \emph{local model analytic space} attached to $X$ w.r.t. the projective model $\cX$ over $v$. Note that if $v$ is a usual absolute value on $K$, i.e. when $A=K=\kappa$, $\cX\cong X$ and the corresponding local analytic space is just $(X \otimes_{K} \widehat{K})^{\an}$.
\end{definition}

Using the classical theory of Berkovich analytic spaces, we can adapt the usual notion of metric on a line bundle.

\begin{definition}
\label{def:local_model_pseudo-metric}
Let $X$ be a projective scheme over $\Spec(K)$ and let $L$ be an invertible $\cO_{X}$-module. We call \emph{local model pseudo-metric} in $v$ on $L$ the data $((\cX,\cL),\varphi)$ where:
\begin{itemize}
	\item[(1)] $(\cX,\cL)$ is a flat projective model of $(X,L)$ over $A$ (cf. \S \ref{subsub:models_over_Prüfer_domain}), with special fibre $\cX_s$ and completed special fibre $\widehat{\cX_s}$;
	\item[(2)] $\varphi$ is a metric on $\widehat{\cL_s}$, where $\widehat{\cL_s}$ is the pullback of $\cL$ to completed special fibre $\widehat{\cX_s}$.
\end{itemize}
A local model pseudo-metric $((\cX,\cL),\varphi)$ on $L$ is called \emph{continuous} if the metric $\varphi$ is so. In the case where $\cX$ is fixed, by "let $(\cL,\varphi)$ be a local pseudo-metric on $L$", we mean that $((\cX,\cL),\varphi)$ is a local model pseudo-metric on $L$ which is denoted by $(\cL,\varphi)$.
\end{definition}

Let $X$ be a projective scheme over $\Spec(K)$ and let $L$ be an invertible $\cO_{X}$-module. Let $((\cX,\cL),\varphi)$ be a local model-pseudo metric in $v$ on $L$. The attached local model analytic space $(\cX \otimes_{A} \widehat{\kappa})^{\an}$ is a compact Hausdorff topological space and the metric $\varphi$ defines a supremum seminorm on the $\widehat{\kappa}$-vector space of global sections $H^{0}(\widehat{\cX_s},\widehat{\cL_s})$ defined by
\begin{align*}
\forall \overline{s}\in H^{0}(\widehat{\cX_s},\widehat{\cL_s}),\quad \|\overline{s}\|_{\varphi} := \displaystyle\sup_{x\in X_v^{\an}} |\overline{s}|_{\varphi}(x) \in \bR_{\geq 0}.
\end{align*}

Assume that the model $(\cX,\cL)$ is flat, coherent and that the special fibre $\cX_s$ is geometrically reduced. Then Proposition \ref{prop:models_Prüfer_domains} implies that $H^{0}(\cX,\cL)$ is a free $A$-module of finite rank. Since $\cX_s$ is reduced, (\cite{ChenMori}, Proposition 2.1.16 (1)) implies that the seminorm $\|\cdot\|_{\varphi}$ is a norm on $H^{0}(\widehat{\cX_s},\widehat{\cL_s})$. Proposition \ref{prop:models_Prüfer_domains} (2.ii) implies that $H^{0}(\cX,\cL) \otimes_{A} \kappa$ is a vector subspace of $H^{0}(\cX_s,\cL_s)$. Thus $\|\cdot\|_{\varphi}$ induces a norm on $H^{0}(\cX,\cL) \otimes_{A} \widehat{\kappa}$ and Proposition \ref{prop:equivalence_local_semi-norm} implies that we can lift $\|\cdot\|_{\varphi}$ to a pseudo-norm in $v$ on $H^{0}(X,L)$, which is again denoted by $\|\cdot\|_{\varphi}$ by abuse of notation.

Now we assume that the model $(\cX,\cL)$ is flat and coherent. We denote by $(\widehat{\cX_{s}})_{\red}$ the reduced scheme structure on $\widehat{\cX_{s}}$ and by $(\widehat{\cL_{s}})_{\red}$ the restriction of $\widehat{\cL_{s}}$ to $(\widehat{\cX_{s}})_{\red}$. Then (\cite{ChenMori}, Proposition 2.1.16) implies that the kernel of the supremum semi-norm on $H^{0}(\widehat{\cX_{s}},\widehat{\cL_{s}})$ coincides with the kernel of the natural map $H^{0}(\widehat{\cX_{s}},\widehat{\cL_{s}}) \to H^{0}((\widehat{\cX_{s}})_{\red},(\widehat{\cL_{s}})_{\red})$. Moreover, if $\overline{s} \in H^{0}((\widehat{\cX_{s}})_{\red},(\widehat{\cL_{s}})_{\red})$ lies in the kernel of the supremum seminorm $\|\cdot\|_{\varphi}$, there exists some integer $n\geq 1$ such that $\overline{s}^{\otimes n} = 0$. 

\begin{lemma}
\label{lemma:sup_semi-norm_special_fibre}
Assume that the valuation ring $A$ is non trivial. Let $s\in H^{0}(\cX,\cL)$ be a global section such that $\|s_{|\cX_s}\|_{\varphi}=0$. Then $s\in \m H^{0}(\cX,\cL)$.
\end{lemma}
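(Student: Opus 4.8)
The statement says: if $A$ is a non-trivial valuation ring and $s\in H^0(\cX,\cL)$ satisfies $\|s_{|\cX_s}\|_\varphi = 0$, then $s\in \m H^0(\cX,\cL)$. The key point is that the hypothesis concerns the supremum seminorm on the \emph{completed} special fibre $\widehat{\cX_s}$, while the conclusion is about the $A$-module structure of global sections on $\cX$. First I would record that by the flat base-change / projection-formula computations already carried out in the proof of Proposition~\ref{prop:models_Prüfer_domains} (2.ii), we have a canonical injection $H^0(\cX,\cL)\otimes_A\kappa \hookrightarrow H^0(\cX_s,\cL_s)$, and further $H^0(\cX_s,\cL_s)\otimes_\kappa\widehat\kappa \cong H^0(\widehat{\cX_s},\widehat{\cL_s})$ (projective, hence qcqs, plus flat base change along $\kappa\to\widehat\kappa$). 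So the image $\bar s$ of $s$ in $H^0(\cX,\cL)\otimes_A\kappa$, viewed inside $H^0(\widehat{\cX_s},\widehat{\cL_s})$, is exactly $s_{|\cX_s}$ base-changed, and the hypothesis says $\|\bar s\|_\varphi = 0$.

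Next I would invoke the results recalled immediately before the lemma: the kernel of the supremum seminorm $\|\cdot\|_\varphi$ on $H^0(\widehat{\cX_s},\widehat{\cL_s})$ coincides with the kernel of the restriction map to the reduced subscheme $H^0(\widehat{\cX_s},\widehat{\cL_s})\to H^0((\widehat{\cX_s})_{\red},(\widehat{\cL_s})_{\red})$ (this uses \cite{ChenMori}, Proposition 2.1.16, and the fact that the norm $\varphi$ has no vanishing — a metric gives a genuine norm on each fibre, so the only way the sup is $0$ is that the section vanishes on the reduction). Hence $\bar s$ restricts to $0$ on $(\widehat{\cX_s})_{\red}$, and since $\bar s$ comes from $H^0(\cX_s,\cL_s)$ by the flat base change $\kappa\to\widehat\kappa$ (which is faithfully flat, being a field extension), $\bar s$ already restricts to $0$ on $(\cX_s)_{\red}$. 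But $\bar s$ is the image of $s_{|\cX_s}\in H^0(\cX_s,\cL_s)$ under the injection $H^0(\cX,\cL)\otimes_A\kappa \hookrightarrow H^0(\cX_s,\cL_s)$ — that is, $\bar s$ \emph{is} the class of $s$ modulo $\m H^0(\cX,\cL)$. So it remains to see that a global section of $\cL_s$ on $\cX_s$ that vanishes on $(\cX_s)_{\red}$, and which moreover lies in the image of $H^0(\cX,\cL)\otimes_A\kappa$, must be $0$ in $H^0(\cX,\cL)\otimes_A\kappa$.

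Here is where I expect the main obstacle, and also the place where the non-triviality of $A$ enters. A nilpotent section need not be zero in $H^0(\cX_s,\cL_s)$ in general; what saves us is that $\bar s$ lies in the subspace $H^0(\cX,\cL)\otimes_A\kappa$, and $H^0(\cX,\cL)$ is a free (hence torsion-free) $A$-module by Proposition~\ref{prop:models_Prüfer_domains}. The argument: pick a basis $e_1,\dots,e_n$ of $H^0(\cX,\cL)$ over $A$ and write $s=\sum a_i e_i$ with $a_i\in A$; then $\bar s = \sum \bar a_i \bar e_i$ in $H^0(\cX,\cL)\otimes_A\kappa = \bigoplus \kappa\bar e_i$, and $\bar s = 0$ iff all $a_i\in\m$, i.e. iff $s\in\m H^0(\cX,\cL)$. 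So it suffices to show $\bar s = 0$ in $H^0(\cX_s,\cL_s)$ (not merely on the reduction). For this I would argue that the section $\bar s$, being nilpotent, satisfies $\bar s^{\otimes N}=0$ in $H^0(\cX_s,\cL_s^{\otimes N})$ for some $N\ge 1$; lifting, $s^{\otimes N}$ maps into $\m H^0(\cX,\cL^{\otimes N})$, and since $H^0(\cX,\cL^{\otimes N})$ is also torsion-free over the valuation ring $A$ one extracts (using that in a valuation ring divisibility is totally ordered) a valuation-theoretic bound forcing each $a_i$ itself to lie in $\m$: concretely, if some $a_{i_0}\notin\m$ then $a_{i_0}\in A^\times$, so $\bar e_{i_0}$ would appear in $\bar s$ with a unit coefficient after rescaling, contradicting that the monomial expansion of the nilpotent $\bar s^{\otimes N}$ in the $\bar e_i$'s vanishes identically — more cleanly, one reduces to the rank-one quotient of $A$ by a height-one prime, where the statement is the familiar fact for a DVR that a nilpotent global section lying in the image of an integral model's free module is killed modulo the uniformizer, and then uses that $A$ is the intersection/limit of such localizations. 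I would first try the clean route: show directly that $\bar s=0$ in $H^0(\cX_s,\cL_s)$ by observing $H^0(\cX,\cL)\otimes_A\kappa\hookrightarrow H^0(\cX_s,\cL_s)$ is \emph{injective} and $\bar s$ is both nilpotent and lives in a subspace which is a quotient of a torsion-free module — if $H^0(\cX,\cL)\otimes_A\kappa$ happens to inject into $H^0((\cX_s)_{\red},(\cL_s)_{\red})$ this is immediate, and that injectivity should follow from the flatness and coherence hypotheses together with \cite{ChenMori}, Proposition 2.1.16, which is precisely the content invoked just before the lemma.
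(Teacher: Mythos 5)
Your reduction in the first half is correct and is essentially the paper's own setup: since $H^{0}(\cX,\cL)$ is a free $A$-module and $H^{0}(\cX,\cL)\otimes_{A}\kappa$ injects into $H^{0}(\cX_{s},\cL_{s})$, the lemma is equivalent to the claim that a section $s$ of the model whose restriction to $\cX_{s}$ vanishes on $(\cX_{s})_{\red}$ (equivalently, has some tensor power equal to zero) has zero class in $H^{0}(\cX,\cL)\otimes_{A}\kappa$. You rightly single this out as the main obstacle, but you never close it, and none of your three suggested completions is an argument: the ``monomial expansion'' of $\overline{s}^{\otimes N}$ imposes no coefficientwise condition because $H^{0}(\cX,\cL)^{\otimes N}\to H^{0}(\cX,\cL^{\otimes N})$ is far from injective; the ``familiar fact for a DVR'' you invoke is false; and the hoped-for injectivity of $H^{0}(\cX,\cL)\otimes_{A}\kappa\to H^{0}((\cX_{s})_{\red},(\cL_{s})_{\red})$ is not a consequence of Proposition 2.1.16 of \cite{ChenMori} (which concerns only the seminorm on the special fibre) and is false in general.

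In fact the missing step is not just unproved, it fails without an extra hypothesis. Take $A=\bZ_{p}$, $\cX=\Proj\bigl(\bZ_{p}[X,Y,Z]/(pYZ-X^{2})\bigr)\subset\bP^{2}_{\bZ_{p}}$, $\cL=\cO_{\cX}(1)$, $s=X$. This model is projective, flat and coherent, its special fibre is the double line $X^{2}=0$, and a computation on the charts $D_{+}(Y)$, $D_{+}(Z)$ (coordinate rings $\bZ_{p}[u,v]/(pv-u^{2})\cong\bZ_{p}[u,u^{2}/p]$) gives $H^{0}(\cX,\cL)=\bZ_{p}X\oplus\bZ_{p}Y\oplus\bZ_{p}Z$. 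The section $X$ vanishes on $(\cX_{s})_{\red}$, so $\|X_{|\cX_{s}}\|_{\varphi}=0$ for every metric $\varphi$, yet $X\notin pH^{0}(\cX,\cL)$ because $u/p\notin\bZ_{p}[u,u^{2}/p]$; the same example (with $a=u$, $n=2$, $u^{2}=pv\in\m A_{i}$ but $u\notin\m A_{i}$) shows that the implication $a_{i}^{n}\in\m A_{i}\Rightarrow a_{i}\in\m A_{i}$, which is the step the paper's own proof relies on, can fail for a flat, finitely presented $A$-algebra. That implication (hence both the paper's argument and any argument along your lines) is valid when the special fibre is (geometrically) reduced, in which case the discussion preceding the lemma already yields the conclusion; so the difficulty you ran into is genuine, and the statement in this generality needs either a reducedness hypothesis on $\cX_{s}$ or a different formulation, not a cleverer proof.
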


\begin{proof}
The above paragraph implies that if $\|s_{|\cX_s}\|_{|\varphi}=0$, then there exists some integer $n\geq 1$ such that $s_{|\cX_s}^{\otimes n}=0$. Let $\cX= \cup_{i=1}^{k}U_i$ be an open covering, where $U_i = \Spec(A_i)$ for $i=1,...,k$, and such that $\cL_{|U_i} \cong \cO_{U_i}$. Then for any $i=1,...,k$, there exists $a_i\in A_i$ such that $s_{|U_i} = a_i e_i$, where $e_i$ denotes a generator of $\cL_{|U_i}$. Therefore, for any $i=1,...,k$, $a_i^{n}\in \m A_i$, hence $a_i\in \m A_i$. By glueing, we obtain $s\in H^{0}(\cX,\m\cL)\cong \m H^{0}(\cX,\cL)$.
\end{proof}

Therefore, we can lift the supremum seminorm $\|\cdot\|_{\varphi}$ on $H^{0}(\widehat{\cX_{s}},\widehat{\cL_{s}})$ to a pseudo-norm on $H^{0}(X,L)$. This construction is summarised in the following definition.

\begin{definition}
\label{def:sup-pseudo-norm_model_pseudo-metric}
Assume that $X$ is geometrically reduced if $A=K$. Let $L$ be a line bundle on $X$. Let $(\cX,\cL)$ be a flat and coherent projective model of $(X,L)$ over $A$. Let $(\cL,\varphi)$ be a local pseudo-metric on $L$. The pseudo-norm $\|\cdot\|_{\varphi}$ in $v$ on $H^{0}(X,L)$ is called the \emph{supremum pseudo-norm} attached to $((\cX,\cL),\varphi)$. Its finiteness module is $H^{0}(\cX,\cL)$ and its kernel is $\m H^{0}(\cX,\cL)$. In particular, if $A=K$ (i.e. $v$ is a usual absolute value on $K$), then $\|\cdot\|_{\varphi}$ corresponds to a norm in the classical sense.
\end{definition}

\subsection{General local analytic space}
\label{sub:local_analyticspace}

\subsubsection{Naive attempt}

Let $X\to\Spec(K)$ be a $K$-scheme, one could try mimic directly constructions of Berkovich analytic spaces by replacing absolute values and norms by pseudo-absolute values and pseudo-norms. This naive construction is the following.

Let $f:X \to \Spec(K)$ be a $K$-scheme. The \emph{local analytic space} $X_v^{\an}$ attached to $X$ over $v$ is defined as the set of pairs $x=(p,\va_x)$, where $p\in X$ is a scheme point and $\va_x \in M_{\kappa(p)}$ is a pseudo-absolute value on the residue field $\kappa(p)$ of $p$ such that the restriction of $\va_x$ to $K$ is $\va_v$. There is a \emph{specification morphism} $j_v : X_v^{\an} \to X$ sending any $x=(p,\va_x)\in X_v^{\an}$ to $p$. For any Zariski open subset $U \subset X$, let $U_v^{\an}:= j_v^{-1}(U)$. For such $U$, any regular function $f\in \cO_X(U)$ defines a map $|f|_{\cdot} : U_v^{\an} \to [0,+\infty]$ by sending any point $x\in X^{\an}$ to $|f(j(x))|_{x}$.

We now equip $X_{v}^{\an}$ with the coarsest topology making $j$ together with the maps $|f|_{\cdot}$, for any Zariski open subset $U\subset X$ and any $f\in \cO_X(U)$.

\begin{proposition}
\label{prop:sav_extension_by_generalisation_schemes}
Assume that $X$ is a $K$-variety. Then the space $X_{v}^{\an}$ is not Hausdorff.
\end{proposition}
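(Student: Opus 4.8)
The plan is to construct, for $X$ of positive dimension, two distinct points of $X_v^{\an}$ lying over different scheme points which cannot be separated by disjoint open sets. Write $K':=K(X)$ for the function field, let $\eta$ be the generic point of $X$ (so $\kappa(\eta)=K'$) and fix a closed point $x_0\neq\eta$; such a point exists since $\dim X\geq 1$, and $x_0$ is then a proper specialisation of $\eta$ with $\cO_{X,x_0}$ a Noetherian local domain which is not a field and has fraction field $K'$. Applying Proposition \ref{prop:DVR_specialisation} with the finitely generated extension $K'/K'$ yields a discrete valuation ring $R$ with $\Frac(R)=K'$ together with a local homomorphism $\cO_{X,x_0}\to R$; denote by $v_R$ the associated valuation of $K'$, by $\kappa_R$ the residue field of $R$, and note that reduction induces an embedding $\kappa(x_0)\hookrightarrow\kappa_R$. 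Using that the restriction map $M_{\kappa_R}\to M_K$ is surjective (Definition \ref{def:restriction_global_space_sav}), pick $v_0\in M_{\kappa_R}$ extending $v$ and form the composite pseudo-absolute value $w:=v_R\circ v_0\in M_{K'}$ of Definition \ref{def:composition_with_a_valuation}; this is precisely the extension by generalisation of $v_0$ along $R$ in the sense of Definition \ref{def:extension_by_generalisation}.

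The technical heart is the identity $|a|_w=|\bar a|_{v_0}$ for every $a\in R$, where $\bar a\in\kappa_R$ denotes the reduction: for $a\in\m_R$ both sides vanish (using $\m_R\subseteq\m_w$, which follows from the description of the finiteness ring of a composite pseudo-absolute value), while for $a\in R^{\times}$ the value $|a|_w$ is by construction the residue absolute value of $w$ evaluated at $\bar a$, which is $|\bar a|_{v_0}$. Since $K^{\times}\subseteq\cO_{X,x_0}^{\times}\subseteq R^{\times}$, this identity shows $w|_K=v$, so $x_1:=(\eta,w)$ is a point of $X_v^{\an}$; likewise $x_2:=(x_0,v_0|_{\kappa(x_0)})$ is a point of $X_v^{\an}$ (the restriction $v_0|_{\kappa(x_0)}$ extends $v$ because $K\to\kappa(x_0)\to\kappa_R$ commutes), and $x_1\neq x_2$ since $\eta\neq x_0$. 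It remains to check that $x_1$ lies in every open neighbourhood of $x_2$. Such a neighbourhood contains a basic open of the form $j_v^{-1}(U)\cap\bigcap_{i=1}^{n}\{\,x : |f_i|_x\in I_i\,\}$ with $U\ni x_0$ Zariski open, $f_i\in\cO_X(U)\subseteq\cO_{X,x_0}$ and $I_i\subseteq[0,+\infty]$ open containing $|f_i|_{x_2}$; now $\eta\in U$ because $\eta$ meets every nonempty open, and for each $i$ the commutativity $\cO_{X,x_0}\to\kappa(x_0)\hookrightarrow\kappa_R$ together with the above identity gives $|f_i|_{x_1}=|f_i|_w=|\bar f_i|_{v_0}=|f_i|_{x_2}\in I_i$, so $x_1$ lies in the neighbourhood. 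Consequently no disjoint open sets separate $x_1$ and $x_2$, since any open neighbourhood of $x_2$ already contains $x_1$; hence $X_v^{\an}$ is not Hausdorff.

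I expect the only genuinely delicate step to be the identity $|a|_w=|\bar a|_{v_0}$, i.e. unwinding Definition \ref{def:composition_with_a_valuation} to identify the residue absolute value of the composite $w$ with $v_0$ and to see that uniformisers of $R$ have $w$-value $0$; everything else is routine manipulation of generic points and of the surjectivity of restriction maps. The hypothesis that $X$ has positive dimension is essential: when $\dim X=0$ one has $X_v^{\an}=\{\,x\in M_{K(X)} : x|_K=v\,\}$ with the subspace topology, which is Hausdorff by Theorem \ref{prop:compactness_set_sav}.
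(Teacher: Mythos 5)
Your proof is correct, but it takes a genuinely different route from the paper's. The paper picks a \emph{regular} closed point $p$ and invokes Proposition \ref{prop:valuation_rings_specialisation_smooth} to produce a valuation ring of $K(X)$ dominating $\cO_{X,p}$ with residue field exactly $\kappa(p)$; it then observes that for \emph{any} point $(p,v_x)\in X_v^{\an}$ over such a $p$, the extension by generalisation $(\eta,v'_x)$ satisfies $|f|'_x=|f(p)|_x$ for all $f\in\cO_{X,p}$, so every neighbourhood of $(p,v_x)$ contains $(\eta,v'_x)$. You instead work at an arbitrary closed point $x_0$ and use Proposition \ref{prop:DVR_specialisation} (Krull--Akizuki) to get a DVR $R$ with $\Frac(R)=K'$ dominating $\cO_{X,x_0}$, accepting that its residue field $\kappa_R$ may be strictly larger than $\kappa(x_0)$, and then manufacture the non-separated pair by choosing $v_0\in M_{\kappa_R}$ above $v$ (surjectivity of the restriction map, as asserted in Definition \ref{def:restriction_global_space_sav}) and forming the composite $w=v_R\circ v_0$ together with the restriction of $v_0$ to $\kappa(x_0)$. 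What your route buys: no regularity hypothesis on the chosen closed point and no need for the rank-$r$ valuation of Proposition \ref{prop:valuation_rings_specialisation_smooth}. What the paper's route buys: the dominating valuation ring has residue field exactly $\kappa(p)$, so no auxiliary field $\kappa_R$ and no choice of $v_0$ are needed, and it yields the slightly stronger fact that \emph{every} point of $X_v^{\an}$ lying over a regular closed point fails to be separated from a point over $\eta$. Your key identity $|a|_w=|\bar a|_{v_0}$ for $a\in R$, and the check that $w$ restricts to $v$ on $K$, are exactly the content of Definition \ref{def:composition_with_a_valuation} and Lemma \ref{lemma:composition_with_a_valuation}, and you carry them out correctly; your reduction of a general basic open neighbourhood of $x_2$ to the form $j_v^{-1}(U)\cap\bigcap_i\{|f_i|_\cdot\in I_i\}$ is also legitimate for this initial topology. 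Finally, your closing caveat is fair and applies equally to the paper's own argument: both need $\dim X\geq 1$ (so that $\eta\neq x_0$, resp.\ $\eta\neq p$), and for $\dim X=0$ the space $X_v^{\an}$ is a subspace of $M_{K(X)}$, hence Hausdorff, so the statement should be read with that implicit hypothesis.
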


\begin{proof}
Let $p\in X$ be a regular closed point. Let $v_x=(\va_{x},A_x,\m_x,\kappa_x)\in M_{\kappa(p)}$ be a pseudo-absolute value. Proposition \ref{prop:valuation_rings_specialisation_smooth} yields the existence of a valuation ring $V$ of $K(X)$ dominating $\cO_{X,p}$ with residue field $\kappa(p)$. Then consider the extension by generalisation of $v_x$ to $M_{K}$ induced by $V$ (cf. Definition \ref{def:extension_by_generalisation}), we denote it by $v'_{x}=(\va'_{x},A'_x,\m'_x,\kappa_x)$. Let $U\subset X$ be an open Zariski neighbourhood of $p$, let $f\in \cO_{X}(U)$ be a regular function. By definition of $v'_{x}$ and since $f\in \cO_{X,p} \subset V$, we have
\begin{align*}
|f|'_{x} =  |f(p)|_{x}|.
\end{align*} 
Therefore, any open neighbourhood $V\subset X_{v}^{\an}$ of $x=(p,v_x)$ contains $(\eta,v'_{x})$, where $\eta$ denotes the generic point of $X$. This implies that $X_{v}^{\an}$ is not Hausdorff.
\end{proof}

\subsubsection{Birational approach}
\label{subsub:local_analytic_space_general}

Roughly speaking, in the last paragraph, we saw that, on a variety, most of all pseudo-absolute values on residue fields are specialisations of pseudo-absolute values on the function field. Therefore, one could try to work only with these pseudo-absolute values. 

\begin{definition}
\label{def:local_analytic_space_general}
Let $K'/K$ be an extension of algebraic function fields. Let $v\in M_{K}$ be a pseudo absolute-value. The \emph{local analytic space} attached to $K'$ above $v$ is the set $M_{K',v}$ of pseudo-absolute values of $K'$ extending $v$ on $K$. In other terms, $M_{K',v}$ is the fibre of $v$ via the projection $\pi_{K'/K} : M_{K'} \to M_{K}$. $M_{K',v}$ is equipped with the subspace topology induced by the topology on $M_{K'}$ defined in \S \ref{sec:global_space_sav}. Proposition \ref{prop:compactness_set_sav} implies that $M_{K',v}$ is compact Hausdorff. 
\end{definition}

From now on we fix an extension of algebraic function fields $K'/K$. Then there is the Zariski-Riemann space $\ZR(K'/A)$ (cf. \S \ref{sub:Zariski-Riemann_space}). We have a \emph{specification morphism} $j : M_{K',v} \to \ZR(K'/A)$ mapping any $v'\in M_{K',v}$ to its finiteness ring. 

\begin{lemma}
\label{lemma:local_specification_morphism_continuous}
The specification morphism $j:M_{K',v} \to \ZR(K'/A)$ is continuous, where $\ZR(K'/A)$ is endowed with the Zariski topology.
\end{lemma}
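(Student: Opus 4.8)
The plan is to unwind both topologies and reduce to the continuity of the evaluation maps defining the topology on $M_{K'}$. Recall from \S\ref{sub:Zariski-Riemann_space} that the Zariski topology on $\ZR(K'/A)$ admits as a basis the sets $E(B) := \{V \in \ZR(K'/A) : B \subseteq V\}$, where $B$ runs over the finitely generated sub-$A$-algebras of $K'$. Hence it suffices to prove that $j^{-1}(E(B))$ is open in $M_{K',v}$ for every such $B$.

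First I would note that $j$ is well-defined: if $v'\in M_{K',v}$, then by Proposition~\ref{prop:extension_sav}~(i) the finiteness ring $A_{v'}$ contains a copy of $A$, so indeed $A_{v'} \in \ZR(K'/A)$. Next, write $B = A[a_1,\dots,a_r]$ with $a_1,\dots,a_r \in K'$. Then $B \subseteq A_{v'}$ if and only if $a_1,\dots,a_r \in A_{v'}$, i.e.\ if and only if $|a_i|_{v'} \neq +\infty$ for all $i \in \{1,\dots,r\}$. Therefore
\begin{align*}
j^{-1}(E(B)) = \bigcap_{i=1}^{r} \bigl\{ v' \in M_{K',v} : |a_i|_{v'} \neq +\infty \bigr\} = M_{K',v} \cap \bigcap_{i=1}^{r} (|a_i|_{\cdot})^{-1}\bigl([0,+\infty[\bigr).
\end{align*}
Since $[0,+\infty[$ is open in $[0,+\infty]$ and each map $|a_i|_{\cdot} : M_{K'} \to [0,+\infty]$ is continuous by the very definition of the topology on $M_{K'}$ (Definition~\ref{def:set_of_sav}), each preimage $(|a_i|_{\cdot})^{-1}([0,+\infty[)$ is open in $M_{K'}$. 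A finite intersection of such sets, intersected with $M_{K',v}$, is then open in the subspace topology on $M_{K',v}$, which gives the claim.

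There is essentially no obstacle: the statement is the local analogue of the continuity of the global specification map $M_K \to \ZR(K/k)$ proved above, and the same computation works verbatim after replacing $K$ by $K'$ and the prime subring $k$ by $A$. The only points worth spelling out are the identification of the basic opens of $\ZR(K'/A)$ and the well-definedness of $j$, both of which are immediate from \S\ref{sub:Zariski-Riemann_space} and Proposition~\ref{prop:extension_sav}.
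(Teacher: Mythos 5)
Your proof is correct and follows essentially the same route as the paper: identify the basic opens of $\ZR(K'/A)$ and observe that their preimages under $j$ are finite intersections of sets of the form $|a|_{\cdot}^{-1}([0,+\infty[)$, which are open by the definition of the topology on $M_{K'}$. The only cosmetic difference is that the paper checks openness for the subbasic sets given by a single element $a\in K'$, whereas you handle a general finitely generated algebra $A[a_1,\dots,a_r]$ at once (exactly as the paper does for the global map $M_K\to\ZR(K/k)$); both amount to the same computation.
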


\begin{proof}
Let $a\in K'$ and $U := \{A'\in \ZR(K'/A) : a\in A'\}$ be a basic open set of $\ZR(K'/A)$. By definition, $j{-1}(U)$ is the set of pseudo absolute values $v'$ on $K'$ extending $v$ and with finiteness ring containing $a$. Thus we see that $j^{-1}(U)=|a|_{\cdot}^{-1}([0,+\infty[)$. Since $[0,+\infty[$ is an open subset of $[0,+\infty]$, $j^{-1}(U)$ is open by definition of the topology on $M_{K',v}$.
\end{proof}

\subsubsection{Link with model local analytic spaces}
\label{subsub:link_local_analytic_spaces}

We now link the general local analytic space to the local model analytic spaces introduced in \S \ref{sub:local_model_analytic_space}. Let $\cX$ be an arbitrary projective model of $K'$ over the valuation ring $A$. Denote by $\widehat{\cX_s}$ the local model analytic space attached to $\cX$. Note that in the case where $v$ is a usual absolute value, $\cX$ is a birational model of $K'/K$ and $\widehat{\cX_s}=(\cX \otimes_{K} \widehat{K})^{\an}$.  

Let $v'=(\va',A',\m',\kappa')\in M_{K',v}$. We have a commutative diagram
\begin{center}
\begin{tikzcd}
\Spec(K') \arrow[r] \arrow[d] & \cX \arrow[d]  \\
\Spec(A') \arrow[r]           & \Spec(A)                          
\end{tikzcd}.
\end{center}
By valuative criterion of properness, there exists a unique morphism of schemes $\Spec(A') \to \cX$ factorising $\Spec(K') \to \cX$. Denote by $p$ the image of the closed point of $\Spec(A')$ in $\cX_s$. By considering the restriction of the residue absolute value of $v'$ to $\kappa(p)$, we obtain an absolute value on $\kappa(p)$ extending the residue absolute value of $v$. Therefore we have successive valued field extensions $\kappa'/\kappa(p)/\kappa$. Denote by $\widehat{p}$ the $\widehat{\kappa(p)}$-point of $\widehat{\cX_s}$ induced by $\Spec(\widehat{\kappa(p)}) \to \Spec(\kappa(p))$. Then we have extensions $\widehat{\kappa'}/\widehat{\kappa(p)}/\widehat{\kappa}$ of completely valued fields. Thus we obtain a point $x\in \widehat{\cX_s}^{\an}$. Finally, we have obtained a map $\red_{\cX} : M_{K',v}\to \widehat{\cX_s}^{\an}$ such that the diagram
\begin{center}
\begin{tikzcd}
{M_{K',v}} \arrow[d, "\red_{\cX}"'] \arrow[r] & \ZR(K'/A) \arrow[d] \\
\widehat{\cX_s}^{\an} \arrow[r]                     & \cX_s              
\end{tikzcd}
\end{center} 
commutes, where the horizontal maps are the specification morphisms and the right vertical map is the center map defined by the valuative criterion of properness. 

\begin{proposition}
\label{prop:continuity_reduction_local}
Let $\cX$ be a projective model of $K'/A$. Then the map $\red_{\cX} : M_{K',v}\to \widehat{\cX_{s}}^{\an}$ is continuous.
\end{proposition}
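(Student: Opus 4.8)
The plan is to exploit that the topology on the target $\widehat{\cX_{s}}^{\an}$ is generated by the evaluation maps $x\mapsto|g|_{x}$, to reduce the problem to a single affine chart of $\cX$, and there to approximate the relevant functions by functions pulled back from $K'$, for which continuity holds by the very definition of the topology on $M_{K',v}$.

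First I would reduce to a chart. Since $\cX$ is integral and quasi-compact over $\Spec(A)$, choose a finite affine open cover $\cX=\bigcup_{l}\cU_{l}$, $\cU_{l}=\Spec(B_{l})$ with $B_{l}$ a finitely generated $A$-algebra generated by $y_{1},\dots,y_{n_{l}}$; discarding empty members, every $\cU_{l}$ contains the generic point $\eta=\Spec(K')$, so the special fibres $\cU_{l,s}$ cover $\cX_{s}$. Let $\phi_{l}:B_{l}\to\cO_{\cX,\eta}=K'$ be the restriction. For $v'\in M_{K',v}$, the valuative criterion of properness (cf. \S\ref{sub:Zariski-Riemann_space}) gives the unique morphism $\Spec(A_{v'})\to\cX$ over $\Spec(A)$ extending $\Spec(K')\to\cX$, whose closed point lands at a point $p(v')\in\cX_{s}$; it factors through $\cU_{l}$ exactly when $\phi_{l}(B_{l})\subseteq A_{v'}$, i.e. when $|\phi_{l}(y_{j})|_{v'}\neq+\infty$ for all $j$. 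Hence $M^{(l)}:=\{v'\in M_{K',v}:p(v')\in\cU_{l,s}\}$ equals $\bigcap_{j}\{v'\in M_{K',v}:|\phi_{l}(y_{j})|_{v'}\neq+\infty\}$, which is open in $M_{K',v}$, and the $M^{(l)}$ cover $M_{K',v}$. As $\red_{\cX}$ maps $M^{(l)}$ into the open subset $\widehat{\cU_{l,s}}^{\an}\subseteq\widehat{\cX_{s}}^{\an}$, it is enough to check that each $\red_{\cX}|_{M^{(l)}}:M^{(l)}\to\widehat{\cU_{l,s}}^{\an}$ is continuous.

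Next I would unwind the map. Writing $C_{l}:=(B_{l}\otimes_{A}\kappa)\otimes_{\kappa}\widehat\kappa\cong B_{l}\otimes_{A}\widehat\kappa$, one has $\widehat{\cU_{l,s}}=\Spec(C_{l})$, and the Berkovich topology of $\Spec(C_{l})^{\an}$ is the coarsest making all $x\mapsto|g|_{x}$ ($g\in C_{l}$) continuous (this also forces continuity to $\Spec(C_{l})$, since preimages of principal opens have the form $|g|_{\cdot}^{-1}((0,+\infty))$). For $v'\in M^{(l)}$, composing $B_{l}\xrightarrow{\phi_{l}}A_{v'}\to\kappa_{v'}\hookrightarrow\widehat{\kappa_{v'}}$ with the isometric embedding $\widehat\kappa\hookrightarrow\widehat{\kappa_{v'}}$ (which exists since, $v'$ extending $v$, the residue absolute value of $v'$ extends that of $v$, Proposition \ref{prop:extension_sav}) produces a $\widehat\kappa$-algebra homomorphism $\psi_{v'}:C_{l}\to\widehat{\kappa_{v'}}$, and the construction preceding the statement identifies $\red_{\cX}(v')$ with the multiplicative seminorm $|\cdot|_{v'}\circ\psi_{v'}$ on $C_{l}$. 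So the task becomes: for each $g\in C_{l}$, show that $v'\mapsto|\psi_{v'}(g)|_{v'}$ is continuous on $M^{(l)}$.

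The main obstacle is exactly this last point: $g\in C_{l}$ carries scalars from $\widehat\kappa\setminus\kappa$, so $v'\mapsto|\psi_{v'}(g)|_{v'}$ is not literally one of the maps $|f|_{\cdot}$ ($f\in K'$) defining the topology of $M_{K',v}$, and I would handle it by a density argument with locally uniform control. Write $g=\sum_{k=1}^{m}b_{k}\otimes\lambda_{k}$ ($b_{k}\in B_{l}$, $\lambda_{k}\in\widehat\kappa$), fix $v'_{0}\in M^{(l)}$, and note that $W_{0}:=\{v'\in M^{(l)}:|\phi_{l}(b_{k})|_{v'}<|\phi_{l}(b_{k})|_{v'_{0}}+1\text{ for all }k\}$ is an open neighbourhood of $v'_{0}$ on which $\sum_{k}|\phi_{l}(b_{k})|_{v'}\le C_{0}:=\sum_{k}(|\phi_{l}(b_{k})|_{v'_{0}}+1)$. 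Given $\epsilon>0$, use density of $\kappa$ in $\widehat\kappa$ to pick $\mu_{k}\in A$ with $|\overline{\mu_{k}}-\lambda_{k}|_{v}<\epsilon$, where $\overline{\mu_{k}}\in\kappa$ denotes the residue of $\mu_{k}$, and set $g_{\epsilon}:=\sum_{k}b_{k}\otimes\overline{\mu_{k}}\in C_{l}$, $h_{\epsilon}:=\sum_{k}\phi_{l}(b_{k})\mu_{k}\in K'$; for $v'\in M^{(l)}$ one has $h_{\epsilon}\in A_{v'}$ and $\psi_{v'}(g_{\epsilon})$ is the image in $\widehat{\kappa_{v'}}$ of the residue of $h_{\epsilon}$, whence $|\psi_{v'}(g_{\epsilon})|_{v'}=|h_{\epsilon}|_{v'}$ (Notation \ref{notation:sav}). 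Since $\psi_{v'}$ is a $\widehat\kappa$-algebra homomorphism and the absolute value of $\widehat{\kappa_{v'}}$ restricts to that of $\widehat\kappa$, for $v'\in W_{0}$ one gets
\begin{align*}
\bigl|\,|\psi_{v'}(g)|_{v'}-|h_{\epsilon}|_{v'}\,\bigr|\;\le\;\bigl|\psi_{v'}(g-g_{\epsilon})\bigr|_{v'}\;\le\;\sum_{k=1}^{m}|\phi_{l}(b_{k})|_{v'}\,|\lambda_{k}-\overline{\mu_{k}}|_{v}\;\le\;\epsilon\,C_{0}.
\end{align*}
As $v'\mapsto|h_{\epsilon}|_{v'}$ is continuous on $M_{K',v}$ by definition of its topology (Definition \ref{def:set_of_sav}), letting $\epsilon\to0$ exhibits $v'\mapsto|\psi_{v'}(g)|_{v'}$ as a uniform limit of continuous functions on $W_{0}$, hence continuous at $v'_{0}$; varying $v'_{0}$ and $g$ gives continuity of $\red_{\cX}|_{M^{(l)}}$, and since the $M^{(l)}$ form an open cover of $M_{K',v}$, the map $\red_{\cX}$ is continuous. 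When $v$ is an ordinary absolute value, so $A=K=\kappa$, this is the special case $\cX=X$, $\widehat{\cX_{s}}=(X\otimes_{K}\widehat K)^{\an}$, with the same argument.
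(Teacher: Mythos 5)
Your proof is correct, and it follows the same skeleton as the paper's argument (reduce to affine charts of $\cX$, observe that the chart-preimages $M^{(l)}$ are open because they are cut out by finitely many conditions $|\phi_l(y_j)|_{v'}\neq+\infty$, then use the weak/initial description of the topology on the analytification). Where you genuinely add something is at the crucial step: the paper disposes of the evaluation maps with the single sentence that, for $\tilde f$ in an affine chart $\widehat B=B\otimes_A\widehat\kappa$ of $\widehat{\cX_s}$, the function $|\tilde f|\circ\red_{\cX}$ is continuous ``by construction of $\red_{\cX}$ and by definition of the topology on $M_{K',v}$'', whereas such an $\tilde f$ has coefficients in $\widehat\kappa$ and is therefore not literally one of the maps $|h|_{\cdot}$, $h\in K'$, generating the topology of $M_{K',v}$. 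Your density argument (approximate the $\widehat\kappa$-coefficients by residues of elements of $A$, compare with $|h_\epsilon|_{\cdot}$ for $h_\epsilon\in K'$, and control the error uniformly on the open set $W_0$ where the $|\phi_l(b_k)|_{v'}$ are bounded) supplies exactly the missing justification, and it is sound: the identification of $\red_{\cX}(v')$ on a chart with the seminorm $|\cdot|_{v'}\circ\psi_{v'}$, the equality $|\psi_{v'}(g_\epsilon)|_{v'}=|h_\epsilon|_{v'}$ via compatibility of residue maps, and the isometry $\widehat\kappa\hookrightarrow\widehat{\kappa_{v'}}$ are all used correctly. Your route also sidesteps the paper's somewhat shaky intermediate deduction that continuity of the composite into $\cX_s$ plus openness of $\widehat{\cX_s}\to\cX_s$ gives continuity into $\widehat{\cX_s}$: you instead get Zariski-continuity for free from the evaluation maps, since the preimage of a principal open $D(g)$ is $\{|g|_{\red_\cX(\cdot)}\neq 0\}$. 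The only point stated without proof is the standard fact that on an affine chart the Berkovich topology is the coarsest one making the global evaluations $x\mapsto|g|_x$, $g\in C_l$, continuous (local sections $h/g^n$ are then automatically handled); this is routine but could be mentioned explicitly, since the paper's definition of the analytification topology uses sections over all Zariski opens.
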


\begin{proof}
Denote by $k:\widehat{\cX_s}^{\an} \to \widehat{\cX_s}$ the specification morphism, by $j: \widehat{\cX_s} \to \cX_s$ the extension of scalars and by $i: \cX_s \to \cX$ the closed immersion. Let us first prove that the composition $f := i\circ \circ j \circ k \circ \red_{\cX} : M_{K',v} \to \cX$ is continuous. Let $U=\Spec(B)$ be an open affine subset of $\cX$. By definition, we have 
\begin{align*}
f^{-1}(U) = \{|\cdot|'\in M_{K',v} : B \subset A_{|\cdot|'}\}.
\end{align*}
Let $x_1,...,x_r$ denote generators of $B$ as an $A$-algebra. Then we have
\begin{align*}
f^{-1}(U) := \{|\cdot|'\in M_{K',v} : \displaystyle\max_{i=1,...,r} |x_i|' < + \infty\},
\end{align*} 
which is an open subset of $M_{K'_v}$. Thus $f$ is continuous, hence so is $j \circ k \circ \red_{\cX}$. Since $j$ is open, $k \circ \red_{\cX}$ is continuous. Let $\Spec(\widehat{B})$ be an affine open subset of $\widehat{\cX_s}$. Let $\tilde{f}\in \widehat{B}$. By construction of $\red_{\cX}$ and by definition of the topology on $M_{K',s}$, the map $|f|_{\cdot} : \red_{\cX}^{-1}(\Spec(\widehat{B})^{\an})\to \bR_{\geq 0}$ is continuous. By definition of the topology of $\widehat{\cX_s}^{\an}$, the map $\red_{\cX}$ is continuous.
\end{proof}

In view of results of \S \ref{sub:Zariski-Riemann_space}, we now study the compatibility of the above reduction maps w.r.t. the domination relation between models.

Let $\cX,\cY$ be two projective models of $K'/A$. Assume that we have a morphism of schemes $f:\cY\to\cX$. $f$ induces a morphism $f_s : \widehat{\cY_s}\to\widehat{\cX_s}$ between the special fibres and the corresponding analytification $f_{s}^{\an} : \widehat{\cY_{s}}^{\an}\to\widehat{\cX_{s}}^{\an}$. By uniqueness of the maps $\Spec(A') \to \cX$, $\Spec(A')\to\cY$ from the valuative criterion of properness, we have a factorisation
\begin{center}
\begin{tikzcd}
{M_{K',v}} \arrow[d, "\red_{\cY}"'] \arrow[rd, "\red_{\cX}"] &     \\
\widehat{\cY_{s}}^{\an} \arrow[r, "f_{s}^{\an}"']                                                & \widehat{\cX_{s}}^{\an}
\end{tikzcd}.
\end{center}
Denote by $\cM$ the collection of all projective models of $K'/A$. Then $\cM$ is an inverse system of locally ringed spaces which induces an inverse system of locally ringed spaces $(\widehat{\cX_{s}}^{\an})_{\cX \in \cM}$. The above construction shows that the reduction maps are compatible with this projective system. Therefore we obtain a commutative diagram
\begin{center}
\begin{equation}
\label{eq:cd_local_analytic_spaces_Zariski-Riemann}
\begin{tikzcd}
{M_{K',v}} \arrow[d, "\red"'] \arrow[r, "j"] & \ZR(K'/A) \arrow[d, "\cong"] \\
\varprojlim_{\cX}\widehat{\cX_{s}}^{\an} \arrow[r]     & \varprojlim_{\cX}\cX        
\end{tikzcd},
\end{equation}
\end{center}
where the right hand side arrow is a homeomorphism by Theorem \ref{th:Zariski-Riemann_projective_limite}.

\begin{theorem}
\label{prop:local_analytic_space_projective_limit}
We use the above notation. The map $\red : M_{K',v}\to \varprojlim_{\cX\in\cM}\widehat{\cX_{s}}^{\an}$ is a homeomorphism. 
\end{theorem}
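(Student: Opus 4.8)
The plan is to prove that $\red$ is a homeomorphism by the usual strategy for maps into inverse limits: first establish that $\red$ is a continuous bijection, and then conclude by a compactness argument, since $M_{K',v}$ is compact Hausdorff (Proposition \ref{prop:compactness_set_sav}) and $\varprojlim_{\cX} \widehat{\cX_s}^{\an}$ is Hausdorff (as an inverse limit of compact Hausdorff spaces, each $\widehat{\cX_s}^{\an}$ being compact Hausdorff by the GAGA-type results recalled in \S\ref{sec:preliminaries}). Continuity of $\red$ follows immediately from Proposition \ref{prop:continuity_reduction_local}, which gives continuity of each $\red_{\cX}$, together with the universal property of the inverse limit. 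So the real content is bijectivity.

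For injectivity, suppose $v'_1 = (\va'_1, A'_1, \m'_1, \kappa'_1)$ and $v'_2 = (\va'_2, A'_2, \m'_2, \kappa'_2)$ are elements of $M_{K',v}$ with $\red_{\cX}(v'_1) = \red_{\cX}(v'_2)$ for every projective model $\cX$ of $K'/A$. From the commutative diagram \eqref{eq:cd_local_analytic_spaces_Zariski-Riemann} and the fact that the specification map $j : \varprojlim_{\cX} \widehat{\cX_s}^{\an} \to \varprojlim_{\cX}\cX \cong \ZR(K'/A)$ records the finiteness ring, we get $A'_1 = A'_2 =: A'$; indeed $A'$ is recovered as $\bigcup_{\cX} \cO_{\cX, d(A')_\cX}$ by Theorem \ref{th:Zariski-Riemann_projective_limite}(2). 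It remains to see that the residue absolute values on $\kappa' := A'/\m'$ agree. For a fixed model $\cX$, writing $p$ for the common center of the closed point of $\Spec(A')$ in $\cX_s$, the point $\red_{\cX}(v'_i) \in \widehat{\cX_s}^{\an}$ records the absolute value on $\kappa(p)$ obtained by restricting the residue absolute value of $v'_i$; since $\kappa' = \bigcup_{\cX} \kappa(d(A')_\cX)$ (this follows from Theorem \ref{th:Zariski-Riemann_projective_limite}(2) applied to the residue field, or equivalently from the fact that $K'$ is generated over $K$ by finitely many elements which eventually become regular on some model), and the residue absolute values of $v'_1$ and $v'_2$ agree on each $\kappa(p)$, they agree on $\kappa'$, hence $v'_1 = v'_2$ by Remark \ref{rem:characterisation_sav}.

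For surjectivity, take a compatible family $(x_\cX)_{\cX} \in \varprojlim_{\cX} \widehat{\cX_s}^{\an}$. Projecting to $\varprojlim_{\cX}\cX \cong \ZR(K'/A)$ via the specification maps yields a valuation ring $A' \in \ZR(K'/A)$ with center $p_\cX := d(A')_\cX$ on each model $\cX$. Each $x_\cX$ lies over $\widehat{p_\cX}$ and thus is the datum of an absolute value $\va_{x_\cX}$ on $\kappa(p_\cX)$ extending the residue absolute value of $v$ (via the chain $\kappa \to \kappa(p_\cX) \to \widehat{\kappa(p_\cX)}$, the point $x_\cX$ being a point of the Berkovich analytification, hence an absolute value on the residue field of a scheme point of $\widehat{\cX_s}$ lying over $p_\cX$, whose restriction to $\kappa(p_\cX)$ is what we use). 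The compatibility of the family $(x_\cX)$ under the transition maps $f_s^{\an}$ translates, for a domination $\cY \to \cX$, into the statement that $\va_{x_\cY}$ restricts to $\va_{x_\cX}$ on $\kappa(p_\cX) \subset \kappa(p_\cY)$. Since $\kappa' = A'/\m' = \varinjlim_{\cX} \kappa(p_\cX) = \varinjlim_{\cX} \cO_{\cX,p_\cX}/\m_{p_\cX}$ by Theorem \ref{th:Zariski-Riemann_projective_limite}(2), these compatible absolute values glue to a single absolute value $\va$ on $\kappa'$ extending the residue absolute value of $v$. By Remark \ref{rem:characterisation_sav}, the pair $(A', \va)$ determines a pseudo-absolute value $v' \in M_{K'}$, and since $\va$ restricts to the residue absolute value of $v$ on $\kappa \subset \kappa'$ and $A \subset A'$, we have $v' \in M_{K',v}$; by construction $\red_{\cX}(v') = x_\cX$ for all $\cX$, so $\red(v') = (x_\cX)_\cX$.

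The main obstacle I anticipate is making precise and correctly bookkeeping the colimit identities $A' = \varinjlim_\cX \cO_{\cX,p_\cX}$ and $\kappa' = \varinjlim_\cX \kappa(p_\cX)$ and, relatedly, checking that the absolute values $\va_{x_\cX}$ on the residue fields $\kappa(p_\cX)$ assemble compatibly: one must verify that the transition maps in the Berkovich analytifications $\widehat{\cX_s}^{\an}$ really do induce, on restriction to the $\kappa(p_\cX)$, the inclusion-compatible system of absolute values, rather than merely a system compatible up to the completions $\widehat{\kappa(p_\cX)}$. This is where one uses that an absolute value on a field is determined by its restriction to any dense subfield and that $\kappa'$ is a (filtered) union of the $\kappa(p_\cX)$, so that no information is lost in passing to completions; once this is set up the gluing is formal. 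The rest — continuity, the compactness-Hausdorff closing argument, and the commutativity of \eqref{eq:cd_local_analytic_spaces_Zariski-Riemann} — is routine given the results already established.
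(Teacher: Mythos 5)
Your proof is correct, and while the frame (continuity via Proposition \ref{prop:continuity_reduction_local} plus the compact--Hausdorff closing argument) and the injectivity step (equality of finiteness rings via \eqref{eq:cd_local_analytic_spaces_Zariski-Riemann}, then $\kappa'=\bigcup_{\cX}\kappa(x_{A',\cX})$ to compare residue absolute values) coincide with the paper's, your surjectivity argument takes a genuinely different route. The paper first reduces, via a lemma on inverse limits of compact Hausdorff spaces with surjective transition maps (Lemma \ref{lemma:projective_limit_CHaus_spaces}, combined with surjectivity of the maps $f_s^{\an}$), to surjectivity of each individual $\red_{\cX}$, and then produces a preimage of a single point $x=(p,\va_x)\in\widehat{\cX_s}^{\an}$ by a Zorn's lemma construction (Lemma \ref{lemma:domintation_valuation_ring_residue_algebraic}) of a valuation ring dominating $\cO_{\cX,q}$ with algebraic residue extension, along which the absolute value is extended. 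You instead start from a whole compatible family $(x_{\cX})_{\cX}$, recover $A'$ from $\varprojlim_{\cX}\cX\cong\ZR(K'/A)$, and glue the restricted absolute values on the $\kappa(p_{\cX})$ along the filtered union $\kappa'=\bigcup_{\cX}\kappa(p_{\cX})$; this avoids both the model-by-model reduction and the Zorn argument, at the price of two facts you use only implicitly: (a) each center $p_{\cX}$ lies in the special fibre, so $A'$ dominates $A$ --- mere containment ``$A\subset A'$'' would not suffice for $v'\in M_{K',v}$, since one needs the induced map $\kappa\to\kappa'$ --- and (b) a point of $\widehat{\cX_s}^{\an}$ lying over $p\in\cX_s$ is uniquely determined by the absolute value it induces on $\kappa(p)$ extending the residue absolute value of $v$, which is what upgrades the equality of restricted data to $\red_{\cX}(v')=x_{\cX}$. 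Both facts hold (the second is the standard description of points of an analytification via complete valued extensions, and the paper's own definition of $\red_{\cX}$ and its surjectivity step rely on the same identification), so your argument is sound; the paper's version buys a purely pointwise statement per model, while yours buys a single uniform gluing that needs neither Lemma \ref{lemma:projective_limit_CHaus_spaces} nor the auxiliary valuation ring with algebraic residue extension.
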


\begin{proof}
Note that from Proposition \ref{prop:continuity_reduction_local}, the map $\red$ is continuous. Since $M_{K',v}$ and $\varprojlim_{\cX\in \cM}\widehat{\cX_{s}}^{\an}$ are both compact Hausdorff, it suffices to prove that $\red$ is bijective. 

We first prove that $\red:M_{K',v}\to\varprojlim_{\cX\in \cM} \widehat{\cX_{s}}^{\an}$ is injective. Let $v'_1,v'_2 \in M_{K',v}$ be such that $\red(v'_1)=\red(v'_2)$. (\ref{eq:cd_local_analytic_spaces_Zariski-Riemann}) implies that the finiteness rings of $v'_1$ and $v'_2$ are equal. We denote this valuation ring by $A'$ and by $\kappa'$ its residue field. Denote respectively by $\va'_1,\va'_2$ the residue absolute values on $\kappa'$ induced by $v'_1,v'_2$. Now Theorem \ref{th:Zariski-Riemann_projective_limite} (2) implies that 
\begin{align*}
A'=\displaystyle\bigcup_{\cX\in\cM} \cO_{\cX,x_{A',\cX}},
\end{align*} 
where, for any $\cX\in\cM$, $x_{A',\cX}$ denotes the center of $A'$ on $\cX$. Since morphisms between models are dominant morphisms of integral schemes, any morphism of models $\cX' \to \cX$ in $\cM$ induces an injective morphism of local rings $\cO_{\cX,x_{A',\cX}} \to \cO_{\cX',x_{A',\cX'}}$. Therefore (\cite{BouAC}, Chap. IX, Appendice 1, Proposition 1) implies that $A'=\varinjlim_{\cX\in\cM}\cO_{\cX,x_{A',\cX}}$ and that
\begin{align}
\label{eq:local_analytic_space_residue_fields}
\kappa' = \bigcup_{\cX\in\cM} \kappa(x_{V',\cX}). 
\end{align}
Now $\va'_1$ and $\va'_{2}$ are absolute values on $\kappa'$ such that, for any $\cX\in\cM$, their restrictions to $\kappa(x_{V',\cX})$ are equal. Thus (\ref{eq:local_analytic_space_residue_fields}) implies that $\va'_1$ is equal to $\va'_2$, which shows the injectivity of $\red$. 

Let us now prove the surjectivity of $\red$. We will use the following lemma.

\begin{lemma}
\label{lemma:projective_limit_CHaus_spaces}
Let $(X_i)_{i\in I}$ be an inverse system of compact Hausdorff topological spaces.Denote by $X := \varprojlim_{i\in I} X_i$ the inverse limit of the inverse system. Let $Y$ be a compact Hausdorff topological space equipped with a continuous map $g : Y \to X$. For any $i\in I$, let $g_i:Y \to X_i$ denote the composition of $g$ with the projection $f_i : X \to X_i$.
\begin{itemize}
	\item[(i)] Assume that, for any $i\leq j$ in $I$, the arrow $f_{i,j} : X_j \to X_i$ is surjective. Then, for any $i\in I$, $f_i$ is surjective.
	\item[(ii)] If $g_i : Y \to X_i$ is surjective for any $i\in I$, then $g : Y \to X$ is surjective. 
\end{itemize}
\end{lemma}

\begin{proof}
(i) follows from (\cite{Capel54}, 2.6). 

Let us prove (ii). Let $x\in X$ and let, for any $i\in I$, $x_i := f_i(x) \in X_i$. For any $i\in I$, set
\begin{align*}
C_i := \{y\in Y : g_i(y)=x_i\},
\end{align*}
it is a non-empty (since $g_i$ is surjective) closed subset of $Y$, hence compact Hausdorff. It suffices to prove that $\cap_{i\in I}C_i$ is non-empty. By compactness of $Y$, it suffices to prove that any finite intersection of $C_i$ is non-empty. Let $I_0\subset I$ be a finite subset. Let $i_0\in I$ be an upper bound of $I_0$. Let $y\in C_{i_0}$, namely $g_{i_0}(y) =x_{i_0}$. For any $i\in I_0$, since $i\leq i_0$, we have 
\begin{align*}
g_i(y)=f_{i,i_0}(g_{i_0}(y)) = x_i.
\end{align*}
Therefore $y\in \cap_{i\in I_{0}}C_i$, which concludes the proof of (ii).
\end{proof}

Note that that, any morphism of projective models $f:\cY\to\cX$ of $K'/A$ is dominant and closed, hence surjective (\cite{ZariskiSamuelII}, Chap. VI, \S 17, Lemma 5). Therefore, for any such morphism, the induced analytic map $f_{s}^{\an}$ is surjective. Since, for any $\cX\in\cM$, $\widehat{\cX_s^{\an}}$ is a compact Hausdorff space, Lemma \ref{lemma:projective_limit_CHaus_spaces} (i) implies that the projection $p_{\cX}:\varprojlim_{\cX'\in \cM}\widehat{\cX'_{s}}^{\an} \to \widehat{\cX_{s}}^{\an}$ is surjective for all $\cX\in\cM$. Moreover, Lemma \ref{lemma:projective_limit_CHaus_spaces} (ii) shows that it suffices to prove that, for any $\cX\in\cM$, the reduction map $\red_{\cX}:M_{K',v}\to \widehat{\cX_{s}}^{\an}$ is surjective.

Let $\cX\in\cM$ and let $x=(p,\va_x)\in \widehat{\cX_s}^{\an}$, namely $p\in \widehat{\cX_s}$ is a scheme point and $\va_x$ is an absolute value on $\kappa(p)$ extending the absolute value on $\widehat{\kappa}$. Denote by $q$ the image of $p$ via $\widehat{\cX_s} \to \cX$. By restriction of $|\cdot|_x$, we obtain an absolute value on $\kappa(q)$ extending the residue absolute value of $v$ on $\kappa$.

\begin{lemma}
\label{lemma:domintation_valuation_ring_residue_algebraic}
There exists a valuation ring $A'$ of $K'$ such that $A'$ dominates $\cO_{\cX,q}$ and the residue field extension $(A'/\m_{A'})/\kappa(q)$ is algebraic.
\end{lemma}

\begin{proof}
Let 
$$E:=\{\cO_{\cX,q}\subset R \subsetneq K' : R \text{ local and dominates }\cO_{\cX,q} \text{ and } (R/\m_{R})/\kappa(q) \text{ is algebraic}\}.$$ 
Then $E$ is an inductive set and by Zorn's lemma, it has a maximal element $(A',\m',\kappa')$. By maximality, $A'$ is a valuation ring of $K'$. 
\end{proof}

Let $(A',\m',\kappa')$ be a valuation ring satisfying the conditions of Lemma \ref{lemma:domintation_valuation_ring_residue_algebraic}. Then we have a morphism $\Spec(A')\to \cX$ mapping the closed point of $\Spec(A')$ to $p$. Since the extension $\kappa'/\kappa(p)$ is algebraic, there exists an extension $\va'$ of the residue absolute value $\va_x$ on $\kappa(p)$. The data of $A'$ and $\va'$ determines a pseudo-absolute value $v'$ on $K'$ above $v$ such that $\red_{\cX}(v')=(p,\va_x)$. Hence the surjectivity of $\red_{\cX}$. This concludes the proof of the theorem
\end{proof}

\begin{remark}
\label{rem:local_analytic_space_general}
In view of Definition \ref{def:local_analytic_space_general} and Lemma \ref{lemma:local_specification_morphism_continuous}, we see that the definition of local analytic space is birational. Roughly speaking, the "algebraic part" is a certain Zariski-Riemann space instead of a scheme in the classical case.
\end{remark}

\section{Integral structures}
\label{sec:integral_structure}


Throughout this section, we fix a field $K$. In practice, the set $M_K$ might be "too big". In order to mimic constructions arising in global analytic geometry, we introduce boundedness restrictions on the pseudo-absolute values.

\subsection{Definition of integral structures}

\begin{definition}
\label{def:integral strucure} 
An \emph{integral structure} for $K$ is the data $(A,\|\cdot\|_A)$ where:
\begin{itemize}
	\item[(i)] $A$ is an integral subdomain of $K$ with fraction field $K$;
	\item[(ii)] $A$ is Prüfer;
	\item[(iii)] $\|\cdot\|_A$ is a Banach norm on $A$.
\end{itemize}
When no confusion may arise, we use the notation $A$ for an integral structure $(A,\|\cdot\|_A)$.
\end{definition}

\begin{remark}
Denote by $k$ the prime subring of $K$. Let $(A,\|\cdot\|_A)$ be an integral structure for $K$. The latter can be interpreted through the \emph{Zariski-Riemann space} associated to $K$, namely the set $\ZR(K/k)$ of valuation rings of $K$. As $A$ is integrally closed (\cite{Gilmer72}, Chapter IV), it is the intersection of all its valuation overrings (\cite{BouAC}, Chapter VI, \S 1.3 Corollaire 2). Note that any valuation overring of $A$ is of the form  $A_{\p}$ for some $\p\in\Spec(A)$ (cf. Proposition \ref{prop:prop_Prufer_domains} (1)). Hence we have
\begin{align*}
A = \bigcap_{\p\in\Spec(A)} A_{\p}.
\end{align*}
Let $X\subset \ZR(K/k)$ be the subset such that $A = \bigcap_{V\in X} V$. We have a characterisation of such subsets $X$ in terms of affine subsets of $\ZR(K/k)$ (cf. \cite{Olberding21}, Theorem 6.2). Let us also mention that Olberding has done an extensive work in these directions \cite{Olberding07,Olberding10,Olberding11}.
\end{remark} 

The notion of integral structure allows to define refined versions of $M_K$.
 
\begin{definition}
\label{def:global_space_sav}
Let $K$ be a field equipped with an integral structure $(A,\|\cdot\|_A)$. The \emph{global space of pseudo-absolute values} relative to the integral structure $A$ is defined as
\begin{align*}
V_{K,A} := \{\va \in M_K : \va_{|A} \leq \|\cdot\|_{A}\},
\end{align*}
and is equipped with the topology induced by that of $M_K$.
\end{definition}

\begin{notation}
\label{notationpartiesespaceglobaldesemiva}
Let $K$ be a field. Let $(A,\|\cdot\|_A)$ be an integral structure for $K$. Denote by $V$ the associated global space of pseudo-absolute values. We introduce the following notation.
\begin{itemize}
\item $V_{\ar}:=M_{K,\ar}\cap V$ denotes the set of Archimedean pseudo-absolute values of $V$.
\item $V_{\um}:= M_{K,\um} \cap V$ denotes the set of ultrametric pseudo-absolute values of $V$.
\item $V_{\sn}:= M_{K,\sn} \cap V$ denotes the set of pseudo-absolute of $V$ values whose kernel is non-zero.
\item $V_{\triv} := M_{K,\triv} \cap V$ denotes the set of pseudo-absolute values of $V$ whose residual absolute value is trivial.
\item $V_{\disc} := M_{K,\disc} \cap V$ denotes the set of discrete pseudo-absolute values, namely the set of pseudo-absolute values of $V$ that correspond either to a discrete non-Archimedean absolute value on $K$ or to a pseudo-absolute value whose finiteness ring is a discrete valuation ring that is not a field.
\end{itemize}
\end{notation}

\begin{proposition}
\label{prop:compactness_global_space_sav}
With the notation of Definition \ref{def:global_space_sav}, $V$ is a non-empty compact Hausdorff topological space.
\end{proposition}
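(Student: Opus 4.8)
The statement to prove is Proposition \ref{prop:compactness_global_space_sav}: that $V = V_{K,A}$ is non-empty, compact, and Hausdorff.

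The plan is to deduce all three properties from Theorem \ref{prop:compactness_set_sav}, which already tells us $M_K$ is non-empty, compact, and Hausdorff, together with the observation that $V$ is a closed subspace of $M_K$. The Hausdorff property is then immediate, since any subspace of a Hausdorff space is Hausdorff. Compactness will follow from the fact that a closed subset of a compact space is compact. So the core of the argument is twofold: first, that $V$ is closed in $M_K$, and second, that $V$ is non-empty.

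First I would prove that $V$ is closed in $M_K$. By definition, $V = \{v \in M_K : |a|_v \leq \|a\|_A \text{ for all } a \in A\}$. For each fixed $a \in A$, the set $W_a := \{v \in M_K : |a|_v \leq \|a\|_A\}$ is the preimage of the closed interval $[0, \|a\|_A]$ under the continuous map $|a|_\cdot : M_K \to [0,+\infty]$ (continuous by the very definition of the topology on $M_K$ in Definition \ref{def:set_of_sav}), hence $W_a$ is closed. Therefore $V = \bigcap_{a \in A} W_a$ is an intersection of closed sets, hence closed. As a closed subset of the compact Hausdorff space $M_K$, the space $V$ is itself compact and Hausdorff.

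It remains to check that $V$ is non-empty, and this is the step requiring a genuine construction. Since $A$ is a Prüfer domain with fraction field $K$, pick any maximal ideal $\m \subset A$; then $A_\m$ is a valuation ring of $K$. Equip the residue field $\kappa := A_\m/\m A_\m$ with the trivial absolute value, and let $v_0 \in M_K$ be the residually trivial pseudo-absolute value with finiteness ring $A_\m$ (cf. Example \ref{ex:sav} (5)). For any $a \in A$, either $a \in \m$, in which case $|a|_{v_0} = 0 \leq \|a\|_A$, or $a \in A \setminus \m \subset A_\m^\times$, in which case $|a|_{v_0} = 1$; since $\|\cdot\|_A$ is a Banach norm on the ring $A$ we have $\|1\|_A \geq 1$ and, more generally, submultiplicativity forces $\|a\|_A \geq 1$ for any $a$ that is a unit in $A_\m$ with inverse in $A_\m$ — more carefully, one should just note that for the trivially valued case one can rescale, or alternatively observe that $\|\cdot\|_A$ being a norm with $\|1\|_A = 1$ (which we may assume, replacing $\|\cdot\|_A$ by an equivalent Banach norm if necessary, or invoking that a Banach ring norm satisfies $\|1\| \geq 1$) gives $\|a\|_A \geq 1 = |a|_{v_0}$ for units. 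Hence $|a|_{v_0} \leq \|a\|_A$ for all $a \in A$, so $v_0 \in V$ and $V \neq \emptyset$. I expect the only mild subtlety here to be the normalisation of $\|\cdot\|_A$ on units; if the convention $\|1\|_A = 1$ is in force (as is standard for Banach rings, and consistent with Definition \ref{def:uniform_Banach_ring}), the argument is clean, and otherwise one passes to the equivalent spectral-type normalisation without changing $\cM(A,\|\cdot\|_A)$ or the induced constraint on pseudo-absolute values.
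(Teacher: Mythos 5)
Your reduction of compactness and Hausdorffness to Theorem \ref{prop:compactness_set_sav} is fine: for each $a\in A$ the set $\{v\in M_K : |a|_v\le \|a\|_A\}$ is the preimage of the closed set $[0,\|a\|_A]$ under the continuous map $|a|_\cdot$, so $V=\bigcap_{a\in A}W_a$ is closed in the compact Hausdorff space $M_K$. This is a genuinely different (and more elementary) route than the paper, which instead shows that restriction to $A$ is a homeomorphism $V\cong\cM(A,\|\cdot\|_A)$ — using the Prüfer hypothesis to localise at the kernel of a point of $\cM(A)$ and get a valuation ring of $K$ — and then quotes Berkovich's Theorem 1.2.1 for all three properties at once.

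The gap is in your non-emptiness step. The inequality you need, $\|a\|_A\ge 1$ for every $a\in A\setminus\m$, is not a consequence of the axioms of an integral structure, and neither of your hedges repairs it: $\|1\|_A=1$ (or $\ge 1$) does not bound the norm of units from below, and passing to an equivalent norm cannot create such a bound when the given norm is already multiplicative. Concretely, take $K=\bQ_p$ and $A=K$ with the norm $|\cdot|_p$ (a field is a Prüfer domain, and a complete valued field is an integral structure as in Example \ref{example:integral_structure}(1)). The only maximal ideal is $(0)$, so your construction produces the trivial absolute value $v_0=|\cdot|_{\triv}$ on $K$; but $|p|_{\triv}=1>\|p\|_A=p^{-1}$, so $v_0\notin V$, and the same failure occurs for any complete non-trivially valued field. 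So your explicit point need not lie in $V$, and in this generality non-emptiness is genuinely the content of Berkovich's non-emptiness theorem for $\cM(A,\|\cdot\|_A)$: the correct fix is to argue as the paper does, showing that every $x\in\cM(A)$ extends (via the valuation ring $A_{\p_x}$, $\p_x=\ker x$, which exists because $A$ is Prüfer) to a pseudo-absolute value in $V$, and then invoke $\cM(A)\neq\emptyset$. Your construction does work under extra hypotheses (e.g.\ when $\|\cdot\|_A$ dominates the trivial norm, as in the hybrid examples and tame spaces), but not for an arbitrary integral structure.
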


\begin{proof}
There is a natural map $f:V_{K,A}\to\cM(A)$, where $\cM(A)$ denotes the Berkovich analytic spectrum of $(A,\|\cdot\|_A)$, which is given by restricting elements of $V_{K,A}$ to $A$. By definition of the topologies, $f$ is continuous. Let $x\in\cM(A)$ and denote by $\p_x\in\Spec(A)$ its kernel. Since $A$ is Prüfer, the localisation $A_{\p_x}$ is a valuation ring of $K$ on which $x$ defines a multiplicative semi-norm. Therefore, every element of $\cM(A)$ induces an element of $V_{K,A}$. This construction provides an inverse of $f$ that is a continuous function. The conclusion follows from (\cite{Berko90}, Theorem 1.2.1).
\end{proof}

\begin{remark}
The above proof actually shows that $V$ is homeomorphic to $\cM(A)$.
\end{remark}

\begin{proposition}
\label{prop:archimedean_part_global_space_of_sav}
Let $(A,\|\cdot\|)$ be an integral structure for $K$ and denote by $V$ be the associated global space of pseudo-absolute values. We assume that $V_{\ar}\neq\emptyset$. Recall that we denote by $\va_{\infty}$ the usual absolute value on $\bQ$. Let $\epsilon:V_{\ar}\to]0,1]$ be the function mapping $x\in V_{\ar}$ to the unique $\epsilon(x)\in]0,1]$ such that the residual absolute value on $\kappa_x$ induces the absolute value $\va_{\infty}^{\epsilon(x)}$ on $\bQ$. Then, by extending the definition of $\epsilon$ to $V$ by setting $\epsilon(x):=0$ if $x\in V_{\um}$, $\epsilon$ is a continuous function on $V$.
\end{proposition}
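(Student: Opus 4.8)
The plan is to realise $\epsilon$ as the composite of the continuous evaluation function $\tau\colon x\mapsto |2|_x$ with an explicit continuous function $g$ on the interval $[0,2]$. First I would note that the hypothesis $V_{\ar}\neq\emptyset$ forces $\mathrm{char}(K)=0$, since the residue field of an Archimedean pseudo-absolute value is a characteristic zero field which is an extension of (a quotient of a localization of) $K$; in particular $2$ is a nonzero element of $K$. By axioms (i)--(ii) of Definition~\ref{def:semi-absolute_value}, for every $x\in M_K$ one has $|2|_x=|1+1|_x\le |1|_x+|1|_x=2$, so $\tau$ takes values in $[0,2]$, and by the definition of the topology on $M_K$ (Definition~\ref{def:set_of_sav}) the map $\tau$ is continuous; hence so is its restriction to $V$.

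Next I would recover $\epsilon(x)$ from $\tau(x)$ by a two-case analysis along the partition $V=V_{\ar}\sqcup V_{\um}$. If $x\in V_{\um}$, the residue absolute value of $x$ is ultrametric, whence $|2|_x=|1+1|_x\le\max\{|1|_x,|1|_x\}=1$; combined with $\epsilon(x)=0$ by definition, this gives $\epsilon(x)=\max\{0,\ \log|2|_x/\log 2\}$, the right-hand side being $0$ with the convention $\log 0=-\infty$. If $x\in V_{\ar}$, then $\kappa_x$ has characteristic zero and the residue absolute value is Archimedean on $\kappa_x$, hence unbounded on $\bZ$, so its restriction to $\bQ$ is a nontrivial Archimedean absolute value; by Ostrowski's theorem it is a power of $\va_\infty$, and the exponent is exactly $\epsilon(x)\in\,]0,1]$ by the definition of $\epsilon$. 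Thus $|2|_x=2^{\epsilon(x)}\in\,]1,2]$, and therefore $\epsilon(x)=\log|2|_x/\log 2=\max\{0,\ \log|2|_x/\log 2\}$.

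It remains to record that $g\colon[0,2]\to[0,1]$, defined by $g(0):=0$ and $g(t):=\max\{0,\ \log t/\log 2\}$ for $t\in\,]0,2]$, is continuous: on $\,]0,2]$ it is the maximum of two continuous functions, and $\lim_{t\to 0^+}\max\{0,\log t/\log 2\}=0=g(0)$. Since the previous paragraph shows $\epsilon=g\circ\tau$ on all of $V$, the function $\epsilon$ is continuous as a composite of continuous maps. I expect no genuine obstacle here: the only delicate point is the behaviour at the interface between $V_{\ar}$ and $V_{\um}$, where $|2|_x\to 1$, and this is precisely what the truncation by $0$ in the definition of $g$ absorbs; the integral structure $(A,\|\cdot\|_A)$ plays no role in the argument beyond providing the ambient space $V$ on which $\epsilon$ is considered.
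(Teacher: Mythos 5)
Your proof is correct and follows essentially the same route as the paper: the paper likewise observes that $\epsilon(x)=\max\{0,\log|2|_x\}/\log 2$ on all of $V$ and concludes by continuity of $|2|_\cdot$. Your factorisation through the auxiliary function $g$ on $[0,2]$ is just a slightly more careful bookkeeping of the same formula (handling the possibility $|2|_x=0$ explicitly), so there is nothing substantive to add.
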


\begin{proof}
From the assumption that $\Omega_{\ar}\neq\emptyset$, $K$ has characteristic zero. By definition of $\epsilon$, for any $x\in V_{\ar}$, we have $\epsilon(x)=\log|2|_{x}/\log 2$. It follows that for any $x\in V$, we have
\begin{align*}
\epsilon(x) = \frac{\max\{0,\log|2|_{x}\}}{\log 2}.
\end{align*}
Since $\log|2|_{\cdot}$ is continuous on $V$, we deduce the continuity of the function $\epsilon$.
\end{proof}

To study the topology of global spaces of pseudo-absolute values, we make use of the constructions in (\cite{Poineau10}, \S 1.3).

\begin{notation}
Let $(A, \|\cdot\|)$ be an integral structure for $K$ and denote $V:= \cM(A)$. For any $x\in V$, we define the interval
\begin{align*}
I_x := \{\epsilon\in\bR_{>0} : \forall f \in A, |f|_{x}^{\epsilon} \leq |f|\}.
\end{align*}
For any $\epsilon \in I_x$, we can then show that $\va_x^{\epsilon}$ defines an element of $V$ that we denote by $x^{\epsilon}$. In the case where $I_x$ has $0$ as a lower bound, we can extend this definition to $\epsilon = 0$ by defining $x^{0}$ as the pseudo-absolute value on $K$ defined by
\begin{align*}
\fonction{|\cdot|_{x}^0}{A}{\bR_{\geq 0}}{f}{
\left\{\begin{matrix}
&0 \text{ if } |f|_x = 0,\\
&1 \text{ if } |f|_x \neq 1.
\end{matrix}\right.}
\end{align*}
\end{notation}

\subsection{Examples of integral structures}

\begin{example}
\label{example:integral_structure}
\begin{itemize}
	\item[(1)] Let $\|\cdot\|$ be any Banach norm on $K$. Then $(K,\|\cdot\|)$ is an integral structure for $K$. This is the case when $K$ is either a complete valued field or a hybrid field, namely the norm $\|\cdot\|$ is of the form $\|\cdot\| = \max\{\va,\va_{\triv}\}$, where $\va$ denotes a non-trivial absolute value on $K$.
	
	\item[(2)] Let $K$ be a number field with ring of integers $\cO_K$. The latter is a Prüfer domain (cf. Example \ref{example:Prufer_rings} (2)) with fraction field $K$. Let $\|\cdot\|_{\infty} := \max_{\sigma \hookrightarrow \bC} \va_\sigma$, where $\sigma$ runs over the set of embeddings of $K$ into $\bC$. Then $(\cO_K,\|\cdot\|_{\infty})$ is an integral structure for $K$.
	
	\item[(3)] Let $U$ be a non-compact Riemann surface and $K:=\cM(U)$ denote its field of meromorphic functions. Then the ring of holomorphic functions $A := \cO(U)$ is a Prüfer domain (cf. Example \ref{example:Prufer_rings} (3)) with fraction field $K$. Let $C\subset U$ be a compact subset. Then defined a norm on $A$ as follows. 
	\begin{align*}
	\forall f\in A,\quad \|f\|_{C,\hyb} := \max\{\|f\|_C,\|f\|_{\triv}\},
\end{align*}	 	
where $\|\cdot\|_C$ denotes the supremum norm on $C$ and $\|\cdot\|_{\triv}$ denotes the trivial norm on $A$. Then $(A,\|\cdot\|_{C,\hyb})$ is a Banach ring and thus $(A,\|\cdot\|_{C,\hyb})$ is an integral structure for $K$.
	
	\item[(4)] Let $R>0$ and let $\overline{D(R)}$ denote the complex closed disc of radius $R$. We denote respectively by $A=\cO(\overline{D(R)})$ and $K=\cM(\overline{D(R)})$ the ring of germs of holomorphic functions and the field of germs of meromorphic functions on $\overline{D(R)}$. Then $A$ is a principal ideal domain with field of fractions $K$ (cf. Example \ref{example:Prufer_rings} (4) and Proposition \ref{prop:holomorphic_functions_closed_disc_PID}) and a Prüfer domain. Let $\|\cdot\|_R$ denote the supremum norm on $\overline{D(R)}$  and define $\|\cdot\|_{R,\hyb}:=\max\{\|\cdot\|_R,|\cdot|_{\triv}\}$, where $|\|\cdot\|_{\triv}$ denotes the trivial norm on $A$. Then $(A,\|\cdot\|_{R,\hyb})$ is a Banach ring and $(A,\|\cdot\|_{R,\hyb})$ defines an integral structure for $K$.
\end{itemize}
\end{example}

\subsection{Tame global spaces of pseudo-absolute values}

The following definition is motivated by the explicit study of global spaces of pseudo-absolute values we shall consider in the context of Nevanlinna theory.

\begin{definition}
\label{def:tame_space}
Let $(A, \|\cdot\|)$ be an integral structure on $K$ and denote $V:= \cM(A)$. $V$ is called \emph{tame} if the following conditions are satisfied:
\begin{itemize}
\item[(i)] $v_{\triv}\in V$;
\item[(ii)] for any $x\in V_{\um}$ and any $f\in A$, the inequality
\begin{align*}
|f|_x \leq 1
\end{align*}
is satisfied;
\item[(iii)] $(A, \|\cdot\|)$ is a uniform Banach ring (cf. Definition \ref{def:uniform_Banach_ring}).
\end{itemize}
\end{definition}

\begin{example}
\label{example:tamed_spaces}
The spectra of Example \ref{example:integral_structure} are tame spaces. This is immediate in the case of the spectrum of the ring of integers of a number field. Let us show this fact for the case of the hybrid spectrum of the ring of analytic functions on a non-compact Riemann surface. We use the notation of Example \ref{example:integral_structure} (3). Let $x \in V_{\um}\setminus V_{\sn}$. Then for all $f \in A\setminus\{0\}$, we have $|f|_x \leq \|f\|_{A} = \max\{\|f\|_{C},1\}$. For all $a \in \bC$, we have $|af|_x = |f|_x$ (since the restriction of $\va_x$ to $\bC$ is necessarily trivial). By choosing $a$ of sufficiently small modulus, we obtain $\|af\|_{C} \leq 1$ and the inequality
\begin{align*}
|f|_x \leq |af|_x \leq \|af\|_A = 1.
\end{align*}
In \S \ref{subsub:global_space_Nevanlinna_open_disc}, we will see that the norm $\|\cdot\|_A$ is uniform.
\end{example}

The following proposition ensures that the notion of tame space is compatible with algebraic extensions of the field of fractions, which is fundamental in what follows.

\begin{proposition}
\label{prop:algebraic_extension_of_tame_spaces}
Let $(A, \|\cdot\|)$ be an integral structure on $K$ and assume that $V:= \cM(A)$ is tame. Let $L/K$ be an algebraic extension and let $B$ denote the integral closure of $A$ in $L$. Then $B$ can be equipped with a norm $\|\cdot\|_B$ such that
\begin{itemize}
\item[(1)] $(B,\|\cdot\|_B)$ is a Banach ring;
\item[(2)] $V_L := \cM(B)$ is a tame space;
\item[(3)] the inclusion morphism $(A,\|\cdot\|) \to (B,\|\cdot\|_B)$ is an isometry.
\end{itemize}
Moreover, $V_L$ is "universal" in the following sense. If $v\in M_L$ is such that its restriction to $A$ belongs to $V$, then $v \in V_L$.
\end{proposition}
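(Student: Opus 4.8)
The plan is to build the norm $\|\cdot\|_B$ on $B$ via a Berkovich-style supremum construction over the desired spectrum, then verify the three listed properties together with the universality clause, reducing everything to facts already assembled in the excerpt. First I would note that $B$ is a Pr\"ufer domain with fraction field $L$: it is Pr\"ufer by Proposition~\ref{prop:prop_Prufer_domains}~(2), and its fraction field is $L$ because $L/K$ is algebraic and $B$ is the integral closure of $A$ in $L$. Thus conditions (i) and (ii) of Definition~\ref{def:integral strucure} for $(B,\|\cdot\|_B)$ will hold once we produce a Banach norm. For the norm itself, set $V_L^{\circ}$ to be the set of pseudo-absolute values $v$ on $L$ whose restriction to $A$ is bounded by $\|\cdot\|_A$ (equivalently, whose restriction to $A$ lies in $\cM(A)=V$), and define
\begin{align*}
\forall f\in B,\qquad \|f\|_B := \sup_{v\in V_L^{\circ}} |f|_v.
\end{align*}
One first checks this supremum is finite for every $f\in B$: since $f$ is integral over $A$, it satisfies a monic equation $f^n + a_1 f^{n-1}+\cdots+a_n=0$ with $a_i\in A$, and for any $v\in V_L^{\circ}$ the standard estimate gives $|f|_v \le \max_i (1, |a_i|_v)^{?}$; more precisely $|f|_v$ is bounded by a fixed expression in $\|a_1\|_A,\dots,\|a_n\|_A$ (one may use $|f|_v\le \max\{1,\,|a_1|_v+\cdots+|a_n|_v\}$ in the non-Archimedean case and the classical root bound $|f|_v \le 2\max_i|a_i|_v^{1/i}$ in the Archimedean case). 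This shows $\|f\|_B<+\infty$, that $\|\cdot\|_B$ restricts to a norm dominating $\va_v$ for every $v\in V_L^\circ$, and — crucially — the universality clause: any $v\in M_L$ with $v|_A\le\|\cdot\|_A$ lies in $V_L^\circ$ by construction, hence satisfies $\va_v\le\|\cdot\|_B$, i.e.\ $v\in V_L:=\cM(B)$.

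Next I would verify that $\|\cdot\|_B$ is a (submultiplicative, nondegenerate) norm and that it is \emph{complete}, so that $(B,\|\cdot\|_B)$ is a Banach ring; this gives (1). Submultiplicativity, the triangle inequality and $\|1\|_B=1$ are immediate from the same properties of each $|\cdot|_v$ and the definition as a supremum. Nondegeneracy ($\|f\|_B=0\Rightarrow f=0$) holds because $V_L^\circ$ contains, for each nonzero $f\in B$, a pseudo-absolute value not killing $f$: indeed $f$ is a unit in some localization $B_\p$, and one extends a suitable point of $V=\cM(A)$ along the Pr\"ufer overring $B_\p$ of $B$ (as in the proof of Proposition~\ref{prop:compactness_global_space_sav}, every point of $\cM(A)$ corresponds to a valuation overring $A_{\p_x}$, and one can base-change it up to $L$ using the transitivity results of \S\ref{sec:algebraic_extension_of_sav}). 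For completeness, the cleanest route is to identify $\|\cdot\|_B$ with the \emph{spectral/uniform} norm of $B$ relative to a Banach norm $\|\cdot\|'_B$ obtained by any concrete construction (e.g.\ choosing a finite or directed system of subextensions and taking a colimit of hybrid norms), then invoke that a uniform Banach ring's spectral seminorm is itself a complete norm when nondegenerate — this is Definition~\ref{def:uniform_Banach_ring} together with Berkovich's Theorem 1.3.1, and it simultaneously yields (iii) of tameness. Concretely: one shows $\|\cdot\|_B = \|\cdot\|_{B,\sn}$ where the right side is the spectral seminorm attached to \emph{some} Banach norm on $B$, using that $\cM$ of a uniform Banach ring and of its spectral completion coincide. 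Alternatively, if one prefers to avoid that route, one proves directly that Cauchy sequences for $\|\cdot\|_B$ converge by an Arzel\`a–Ascoli-type argument on the compact space $V_L=\cM(B)$.

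Then property (2), tameness of $V_L=\cM(B)$: condition (i), $v_{\triv}\in V_L$, holds because the trivial absolute value on $L$ restricts to the trivial absolute value on $A$, which lies in $V$ by tameness of $V$, hence $v_{\triv}\le\|\cdot\|_B$ automatically (in fact $|f|_{\triv}\le\|f\|_B$ since $\|f\|_B\ge|f|_w$ for any $w\in V_L^\circ$ and $B$ contains units on which $v_{\triv}$ is $1$; more simply $v_{\triv}\in V_L^\circ$ so it appears in the sup). Condition (ii): let $x\in (V_L)_{\um}$ and $f\in B$; the restriction of $x$ to $A$ is a non-Archimedean point of $V$, hence by tameness of $V$ satisfies $|a|_x\le 1$ for all $a\in A$, so the finiteness ring $A_x$ of $x$ contains $A$; since $f$ is integral over $A$ and $A_x$ is a valuation ring (hence integrally closed), $f\in A_x$ and thus $|f|_x\le 1$ — unless $|f|_x>1$ would force a monic integral dependence with $|f|_x = |\text{something} \le 1|$, contradiction; this is exactly the non-Archimedean root bound. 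Condition (iii), uniformity, is handled as in the previous paragraph by the spectral-norm identification. Finally (3), the isometry $(A,\|\cdot\|_A)\hookrightarrow(B,\|\cdot\|_B)$: for $a\in A$ we have $\|a\|_B=\sup_{v\in V_L^\circ}|a|_v = \sup_{w\in V}|a|_w$, where the last equality uses that restriction $V_L^\circ\to V$ is surjective (every $w\in V=\cM(A)$ extends to $L$ by the existence results of \S\ref{sec:algebraic_extension_of_sav}, since one simply chooses a valuation overring of $A_{\p_w}$ in $L$ and an extension of the residue absolute value to its residue field) and that $\sup_{w\in V}|a|_w = \|a\|_{A,\sn} = \|a\|_A$ by uniformity of $(A,\|\cdot\|)$, i.e.\ by condition (iii) of tameness of $V$.

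The main obstacle I anticipate is \emph{completeness} of $\|\cdot\|_B$ — a supremum of $|\cdot|_v$ over a compact parameter space is manifestly a seminorm with good functorial properties, but Cauchy-completeness is not formal and genuinely uses either the uniform/spectral-norm machinery (Definition~\ref{def:uniform_Banach_ring}) or a careful compactness argument on $\cM(B)$; getting the hypotheses lined up so that Berkovich's Theorem~1.3.1 applies (in particular, checking that the spectral seminorm of the auxiliary Banach norm on $B$ really equals our $\|\cdot\|_B$ and is nondegenerate) is where the real work sits. The surjectivity of the restriction map $\cM(B)\to\cM(A)$, needed for (3), is the second delicate point, but it is squarely covered by the extension theory of pseudo-absolute values developed in \S\ref{sec:algebraic_extension_of_sav} (using that $B$ is the integral closure of $A$ and the going-up/transitivity of valuation-ring extensions together with extendability of residue absolute values along algebraic extensions of valued fields).
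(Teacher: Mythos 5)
Your construction coincides with the paper's: the norm is defined there, too, as the supremum of all multiplicative seminorms on $B$ whose restriction to $A$ lies in $V$, and the universality clause, the isometry (3) (surjectivity of the restriction map plus the fact that the tame norm $\|\cdot\|_A$ is spectral), and tameness (ii) are proved essentially as you indicate. The one place where you genuinely diverge is the finiteness of the supremum: you bound $|f|_v$ uniformly over $v$ by the coefficients of an integral dependence relation of $f$ over $A$ (root bound), which treats an arbitrary algebraic extension in a single stroke; the paper instead reduces to the finite Galois case and runs a Galois-orbit argument via Newton sums (Lemma \ref{lemma:Newton_sums_uniformly_bounded} together with Proposition \ref{prop:max_sav_finite_extension}), and then climbs through separable, infinite separable and purely inseparable extensions. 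Your integrality bound is simpler and avoids this case division, and it is correct: for $v$ in your parameter set the finiteness ring contains $A$, hence contains $B$ by integral closedness, so $|f|_v<+\infty$ and the bound $|f|_v\le\max\{1,\sum_i\|a_i\|_A\}$ goes through.

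The genuine gap is item (1), completeness of $(B,\|\cdot\|_B)$, which you flag as "the main obstacle" but do not actually close. Your route (a) is circular as stated: Berkovich's Theorem 1.3.1 computes the spectral seminorm of a ring that is \emph{already} Banach, so you would first have to produce an auxiliary Banach norm on $B$ and then show both that it is uniform and that its spectral seminorm equals your $\|\cdot\|_B$ — that is essentially the original problem; and route (b), an Arzel\`a--Ascoli argument, does not work because $B$ has no a priori reason to be closed in the continuous functions on $\cM(B)$. The fix is an observation you already have in hand: since $V$ is tame, the trivial absolute value of $L$ restricts into $V$ and hence belongs to your set $V_L^{\circ}$, so $\|b\|_B\ge |b|_{\triv}=1$ for every $b\neq 0$. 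Consequently the metric induced by $\|\cdot\|_B$ on $B$ is discrete, every Cauchy sequence is eventually constant, and $(B,\|\cdot\|_B)$ is a Banach ring for free; this is exactly how the paper disposes of (1), and it also gives nondegeneracy immediately, making your localization-and-extension argument unnecessary. With completeness settled this way, uniformity (condition (iii) of Definition \ref{def:tame_space}) follows as you suggest, since $\cM(B,\|\cdot\|_B)$ is precisely $V_L^{\circ}$ and therefore $\|\cdot\|_B$ equals its own spectral seminorm.
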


\begin{proof}
\textbf{Case 1: the extension} $L/K$ \textbf{is finite and Galois.} Let us show the existence of $\|\cdot\|_B$. First note that $B$ is a Prüfer domain (cf. Proposition \ref{prop:prop_Prufer_domains} (2)). Define the set
\begin{align*}
E := \{\va : \va \text{ is a multiplicative seminorm on } B\text{ and }\va_{|A} \in V\}.
\end{align*}
Let us show that the application
\begin{align*}
\fonction{\|\cdot\|_B}{B}{\bR_{\geq 0}}{b}{\sup_{x\in E} |b(x)|}
\end{align*}
defines a Banach norm on $B$. Let $b\in B\setminus\{0\}$, we \emph{a priori} have $\|b\|_B \in [1,+\infty]$ (since the trivial absolute value belongs to $E$). To show that $\|b\|_B < +\infty$, we will use the following lemma.

\begin{lemma}
\label{lemma:Newton_sums_uniformly_bounded}
Let $b\in B$ and denote by $P=(T-\alpha_1)\cdots(T-\alpha_d) \in A[T]$ its minimal polynomial over $K$. Then there exits a constant $M>0$ such that
	\begin{align}
	\label{eq:Newton_sums_uniformly_bounded}
	\displaystyle\forall v\in V,\quad \exists N_0 \geq 0,\quad \forall N>0, \quad\left|\sum_{j=1}^{d} \alpha_{j}^N\right|_v \leq d^{N+N_0}M^N.
	\end{align}
\end{lemma}

\begin{proof}
Write $P=a_dT^{d}+\cdots + a_0$ and denote $M:= \max_{0\leq i\leq n} \|a_i\|. $. We fix $v=(\va_v,A_v,\m_v,\kappa_v)\in V$. For any integer $N>1$, define $\lambda_N := \sum_{j=1}^{d} \alpha_{j}^N$ and 
\begin{align*}
\Lambda := \max_{1\leq j\leq d} |\lambda_j|_v.
\end{align*}
Let $N_0>0$ be an integer such that $\Lambda \leq d^{N_0}$. Let us show by induction on $N$ that (\ref{eq:Newton_sums_uniformly_bounded}) holds for all $N>0$. Let $N$ be an integer. Assume first that $N\leq d$. Then we have
\begin{align*}
|\lambda_N|\leq \Lambda \leq d^{N_0+N}M^N,
\end{align*}
The second inequality comes from $M\geq 1$ (since $v_{\triv}\in V$). We now assume that $N>d$ and that (\ref{eq:Newton_sums_uniformly_bounded}) holds for any $N'<N$. Then, from Newton identities, we have
\begin{align*}
\lambda_N = \displaystyle\sum_{k=N-d}^{N-1}(-1)^{N-1+k} a_{N-k}\lambda_k,
\end{align*}
where
\begin{align*}
a_{j} := \sum_{1\leq n_1 < \cdots < n_j\leq d} \alpha_{n_1}\cdots\alpha_{n_j}
\end{align*}
for any $j\in\{1,...,d\}$. Note that for all $j\in\{1,...,d\}$, $a_j$ is a coefficient of $P$ (up to sign). Hence $|a_j|\leq M$. Then the induction hypothesis yields
\begin{align*}
\displaystyle|\lambda_N|_v \leq dM \max_{N-d\leq k \leq N-1} d^{k+N_0}M^k \leq d^{N+N_0}M^N.
\end{align*}
Hence the conclusion.
\end{proof}

We are now able to show $\|b\|_B < + \infty$. Let $\{\alpha_1,...,\alpha_d\}$ be the Galois orbit of $b$. Fix an arbitrary pseudo-absolute value $v\in V$. Proposition \ref{prop:max_sav_finite_extension} and Lemma \ref{lemma:Newton_sums_uniformly_bounded} give the existence of $M>0$ such that  
\begin{align*}
\max_{w|v} |b|_w = \displaystyle\limsup_{N\to +\infty} \left|\sum_{j=1}^{d} \alpha_{j}^N \right|^{\frac{1}{N}}_v \leq dM.
\end{align*} 
Since both $d$ and $M$ are independent on $v$, we have $\|b\|_B \leq dM < +\infty$. It is immediate to see that $\|\cdot\|_B$ defines a norm on $B$. Since the trivial absolute value on $L$ belongs to $E$, the topology on $B$ induced by $\|\cdot\|_B$ is discrete. In particular, $(B,\|\cdot\|_B)$ is a Banach ring. Furthermore, for any $a \in A$, we have $\|a\|_B = \|a\|$. Hence we have an isometric embedding $(A,\|\cdot\|) \to (B,\|\cdot\|_B)$. This concludes the proof of $(1)$ and $(3)$.

We now show that $V_L := \cM(B)$ is a tame space. Note that $V_L$ contains the trivial absolute value. Let $x=(\va_x,B_x,\m_x)\in V_{L,\um}$. Let $f\in B$ whose Galois orbit is denoted by $\{\alpha_1,...,\alpha_d\}$. Let us show that $|f|_x \leq 1$. Let $v= (\va_v,A_v,\m_v) := \pi_{L/K}(x) \in V$ denote the restriction of $x$ to $K$. Then Lemma \ref{lemma:max_extension_absolute_values} yields 
\begin{align*}
\displaystyle|f|_x \leq \max_{x'\in \pi_{L/K}^{-1}(v)} |f|_{x'} = \displaystyle\limsup_{N\to +\infty} \left|\sum_{j=1}^{d} \alpha_{j}^N \right|^{\frac{1}{N}}_v \leq 1,
\end{align*}
The last inequality comes from the fact that, for any $j\in\{1,...,d\}$, for any $N>0$, $\sum_{j=1}^{d} \alpha_{j}^N \in A$. 
Finally, (\cite{Berko90}, Theorem 1.3.1) implies that the norm $\|\cdot\|_B$ is uniform and $V_L$ is tame.

\textbf{Case 2: the extension} $L/K$ \textbf{is finite separable.} Let $L'/L/K$ be the normal closure of $L/K$ and denote by $B'$ the integral closure of $A$ in $L'$, which is equal to the integral closure of $B$ in $L'$. From the finite Galois case, there exists a norm $\|\cdot\|_{B'}$ on $B'$ such that $(B',\|\cdot\|_{B'})$  (1)-(3) hold. Since $B$ is a subring of $B'$, the restriction of $\|\cdot\|_{B'}$ to $B$, denoted by $\|\cdot\|_B$, induces the discrete topology $B$, hence is a Banach norm. We also have an isometric embedding $(A,\|\cdot\|)\to (B,\|\cdot\|_B)$ and $\cM(B)$ is tame. 

\textbf{Case 3: the extension} $L/K$ \textbf{is infinite and separable.} Let $\cE_{L/K}$ denote the set of finite sub-extensions of $L/K$. $\cE_{L/K}$ is a directed set with respect to the inclusion relation and we have
\begin{align*}
L = \displaystyle\bigcup_{K'\in \cE_{L/K}} K' \quad \text{and} \quad B = \displaystyle\bigcup_{K'\in \cE_{L/K}} A_{K'},
\end{align*}
where, for all $K'\in \cE_{L/K}$, $A_{K'}$ denotes the integral closure of $A$ in $K'$. For any $K'\in \cE_{L/K}$, we denote by $\|\cdot\|_{A_{K'}}$ the norm on $A_{K'}$ constructed in the finite case. If $K''/K'/K$ are sub-extensions in $\cE_{L/K}$, we have the compatibilities
\begin{center}
\begin{tikzcd}
{(A_{K''},\|\cdot\|_{A_{K''}})} & {(A_{K'},\|\cdot\|_{A_{K'}})} \arrow[l, "\iota_{K'/K}"] \\
                               & {(A, \|\cdot\|)} \arrow[u] \arrow[lu]                   
\end{tikzcd}
\end{center}
where the arrows are the isometric embeddings previously constructed. Thus we have a filtered direct system $(A_{K'},\|\cdot\|_{A_{K'}})_{K'\in\cE_{L/K}}$ of Banach $A$-algebras whose arrows are isometric embeddings. One can prove that the direct limit of this direct system exists in $\Ban^{\leq 1}_{A-\alg}$ and using the fact that all $A_{K'}$ are discrete its underlying set can be identified with $B$. $B$ is Prüfer (cf. Proposition \ref{prop:prop_Prufer_domains} (2)). Thus we have an integral structure $(B,\|\cdot\|_B)$ on $L$. Let us show that $V_L := \cM(B) \cong \varprojlim_{K'\in \cE_{L/K}} \cM(A_{K'})$ is tame. From the description of $V_L$ as an inverse limit, we see that $V_L$ contains the trivial absolute value and that the norm $\|\cdot\|_B$ is uniform. Moreover, by definition of $\|\cdot\|_B$, for any $f\in B$, for any $x\in V_{L,\um}$, we have $|f(x)|\leq 1$. 

\textbf{Case 4: general case.} Let $K'/K$ be the separable closure of $K$ in $L$ and let $q$ denote the degree of the purely inseparable extension $L/K'$. From the latter case, we have a norm $\|\cdot\|_{A'}$ on $A'$, the integral closure of $A$ in $K'$, satisfying (1)-(3). Define a map
\begin{align*}
\|\cdot\|_{B} : (b\in B) \mapsto \|b^q\|_{A'} \in\bR_{\geq 0}.
\end{align*} 
Then $\|\cdot\|_{B}$ is a norm on $B$ satisfying (1)-(3). 

To conclude the proof of the proposition, it remains to show that $V_L$ is "universal". Let $v\in M_L$ and denote by $B_v$ its finiteness ring. As a valuation ring of $L$ containing $A$, it is an overring of $B$. By construction of $\|\cdot\|_B$, we have $|b|_v\leq \|b\|_B$ for all $b\in B$, i.e. $v\in V_L$.
\end{proof}

The following propositions ensure that the ultrametric part of global spaces of pseudo-absolute values arising from integral structures enjoy sufficiently nice properties.  

\begin{proposition}
\label{prop:flow_tame_space}
Let $(A, \|\cdot\|)$ be an integral structure on $K$ and assume that $V:= \cM(A)$ is tame. Let $x\in V_{\um}\setminus V_{\sn}$, i.e. an ultrametric absolute value on $K$, and assume that $x$ is  non-trivial. Then, for all $\epsilon \in [0,+\infty[$, $x^{\epsilon}\in V$. Furthermore, the pseudo-absolute value on $K$ defined by 
\begin{align*}
\fonction{|\cdot|_{x}^{\infty}}{A}{\bR_{\geq 0}}{f}{
\left\{\begin{matrix}
&0 \text{ if } |f|_x < 1,\\ 
&1 \text{ if } |f|_x = 1,
\end{matrix}\right.}
\end{align*}
belongs to $V$. We denote it by $x^{\infty}$.
\end{proposition}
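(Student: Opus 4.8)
The plan is to reduce everything to two standing consequences of tameness. By condition~(ii) of Definition~\ref{def:tame_space}, since $x\in V_{\um}$ we have $|f|_x\leq 1$ for every $f\in A$; and by condition~(i), the membership $v_{\triv}\in V$ forces $\|f\|\geq 1$ for every $f\in A\setminus\{0\}$ (while $\|0\|=0$). Combining these two facts, $|f|_x^{\epsilon}\leq\|f\|$ for every $f\in A$ and every real number $\epsilon>0$.

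First I would treat the case $\epsilon\in]0,+\infty[$. Here $\va_x^{\epsilon}$ is again a non-Archimedean absolute value on $K$: multiplicativity is immediate, and the ultrametric inequality is preserved under raising to the power $\epsilon$ because $|a+b|_x^{\epsilon}\leq\max\{|a|_x,|b|_x\}^{\epsilon}=\max\{|a|_x^{\epsilon},|b|_x^{\epsilon}\}\leq|a|_x^{\epsilon}+|b|_x^{\epsilon}$. By the bound above, its restriction to $A$ is dominated by $\|\cdot\|$, so $x^{\epsilon}\in V_{K,A}=V$. In particular every $\epsilon>0$ lies in $I_x$, hence $I_x=]0,+\infty[$, so $0$ is a lower bound of $I_x$ and the symbol $x^{0}$ is legitimate. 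Since $x$ is an honest absolute value on $K$, one has $|f|_x=0\iff f=0$, so $x^{0}$ is simply the trivial absolute value on $K$; its restriction to $A$ is the trivial norm, which lies below $\|\cdot\|$ by condition~(i), whence $x^{0}\in V$. This settles the range $\epsilon\in[0,+\infty[$.

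For $\epsilon=\infty$, I would put $\p:=\{f\in A:|f|_x<1\}$. Because $x$ is multiplicative and ultrametric and $|f|_x\leq 1$ on $A$, one checks at once that $\p$ is a prime ideal of $A$: it is stable under addition and under multiplication by elements of $A$ by the ultrametric inequality, $fg\in\p$ forces $|f|_x<1$ or $|g|_x<1$ (otherwise $|fg|_x=1$), and $1\notin\p$. The map $x^{\infty}$, which by $|f|_x\leq 1$ is well defined on all of $A$ with values in $\{0,1\}$, is then multiplicative with kernel $\p$ and satisfies the triangle inequality: if $|f|_x<1$ and $|g|_x<1$ then $|f+g|_x<1$, and otherwise the right-hand side is at least $1$. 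Hence $x^{\infty}$ is a multiplicative semi-norm on $A$, and it is bounded by $\|\cdot\|$ since it takes values in $\{0,1\}$ and $\|\cdot\|\geq 1$ on $A\setminus\{0\}$; therefore $x^{\infty}\in\cM(A)=V$. Equivalently, as $A$ is Pr\"ufer, $A_{\p}$ is a valuation ring of $K$, and $x^{\infty}$ is precisely the restriction to $A$ of the residually trivial pseudo-absolute value $v_{A_{\p},\triv}\in M_K$ of Example~\ref{ex:sav}~(5), which lies in $V_{K,A}$ for the same boundedness reason.

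The argument presents no serious obstacle. The one point that needs a little care is verifying that $x^{\infty}$ really is a multiplicative semi-norm on $A$ --- this is exactly where tameness condition~(ii) (keeping the values $|f|_x$ at most $1$) and the ultrametricity of $x$ are genuinely used --- together with checking that $I_x$ admits $0$ as a lower bound, which is what makes the symbol $x^{0}$ meaningful.
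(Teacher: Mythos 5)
Your proof is correct and follows essentially the same route as the paper: tameness condition (ii) gives $|f|_x\leq 1$ on $A$ for ultrametric $x$, while $v_{\triv}\in V$ gives $\|f\|\geq 1$ for nonzero $f\in A$, so $x^{\epsilon}$ (for $\epsilon\in[0,+\infty[$) and $x^{\infty}$ are dominated by $\|\cdot\|$ on $A$ and hence lie in $V$. Your extra verifications --- that $x^{\epsilon}$ is still an ultrametric absolute value, that $I_x=\,]0,+\infty[$ so $x^{0}$ is meaningful, and that $x^{\infty}$ is a multiplicative semi-norm with prime kernel (equivalently the restriction of the residually trivial pseudo-absolute value attached to $A_{\p}$) --- are details the paper leaves implicit, and they are correct.
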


\begin{proof}
As $V$ is tame, we have $x^0= v_{\triv}\in V$. For any $\epsilon\in \bR_{>0}$, $x^{\epsilon}$ is an ultrametric and nontrivial absolute value on $K$. For any $f\in A$, we have $|f|_x\leq 1$ and thus $|f|_x^{\epsilon} \leq 1$. Therefore $x^{\epsilon}\in V$ (since $1 \leq \|f\|$). For any $f\in A$,  we have $|f|_x^{\infty} \leq |f|_x \leq 1 \leq \|f\|$. Hence $x^{\infty} \in V$.
\end{proof}

\begin{proposition}
\label{prop:branch_tame_space}
Let $(A, |\cdot|)$ be an integral structure for $K$ and assume that $V:= \cM(A)$ is tame. Let $x\in V_{\um}\setminus V_{\sn}$ and assume that $x$ is  nontrivial. Then the map
\begin{align*}
\fonction{x^{\cdot}}{[0,+\infty]}{V}{\epsilon}{x^{\epsilon}}
\end{align*}
induces an homeomorphism on its image.
\end{proposition}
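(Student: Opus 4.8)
The plan is to observe that the map $\phi\colon[0,+\infty]\to V$, $\epsilon\mapsto x^{\epsilon}$, is well defined with values in $V$ by Proposition~\ref{prop:flow_tame_space}, that its source $[0,+\infty]$ is compact and its target $V$ is Hausdorff by Proposition~\ref{prop:compactness_global_space_sav}, and hence that it suffices to prove $\phi$ is continuous and injective: a continuous injection from a compact space into a Hausdorff space is automatically a homeomorphism onto its image. So the whole proof reduces to these two verifications.

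For continuity, I would unwind the definition of the topology on $M_K$ (Definition~\ref{def:set_of_sav}) and the subspace topology on $V$: it is enough to check that, for each $f\in K$, the function $\epsilon\mapsto|f|_{x^{\epsilon}}$ is continuous on $[0,+\infty]$. Since $x$ is a genuine non-Archimedean absolute value on $K$, the quantity $c:=|f|_x$ lies in $[0,+\infty[$; assuming $f\neq0$ we have $|f|_{x^{\epsilon}}=c^{\epsilon}$ for $\epsilon\in[0,+\infty[$ (with $c^{0}=1$), and continuity at $\epsilon=0$ is just the equality $c^{0}=1=|f|_{v_{\triv}}$. The only real point is to match the limit as $\epsilon\to+\infty$ with the ad hoc value $|f|_{x^{\infty}}$, which is a short case distinction: if $c<1$ then $c^{\epsilon}\to0=|f|_{x^{\infty}}$, if $c=1$ then $c^{\epsilon}\equiv1=|f|_{x^{\infty}}$, and if $c>1$ then $c^{\epsilon}\to+\infty$ while $f$ lies outside the valuation ring $\{g\in K:|g|_x\leq1\}$, so $|f|_{x^{\infty}}=+\infty$ as well. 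Thus $\phi$ is continuous.

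For injectivity, I would separate the images according to their finiteness rings and residue absolute values: $x^{0}=v_{\triv}$ has finiteness ring $K$ and trivial residue absolute value; $x^{\infty}$ has finiteness ring the valuation ring $\{g\in K:|g|_x\leq1\}$ of the non-Archimedean absolute value $x$, which is a proper subring of $K$ since $x$ is non-trivial; and each $x^{\epsilon}$ with $\epsilon\in{]}0,+\infty{[}$ has finiteness ring $K$ and non-trivial residue absolute value. Hence $x^{\infty}$ is distinct from every $x^{\epsilon}$ with $\epsilon\in[0,+\infty[$, and to separate two exponents $0\leq\epsilon_1<\epsilon_2<+\infty$ it suffices to pick $f\in K^{\times}$ with $|f|_x\neq1$ (possible because $x$ is non-trivial), replace $f$ by $f^{-1}$ if necessary so that $c:=|f|_x<1$, and use that $t\mapsto c^{t}$ is strictly decreasing, whence $|f|_{x^{\epsilon_1}}=c^{\epsilon_1}\neq c^{\epsilon_2}=|f|_{x^{\epsilon_2}}$. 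Therefore $\phi$ is injective.

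The argument is essentially bookkeeping, so there is no serious obstacle; the one point that requires care is the behaviour at the two endpoints $\epsilon=0$ and $\epsilon=+\infty$, where one must check that the piecewise definitions of $x^{0}$ and $x^{\infty}$ are precisely the limits of the family $(x^{\epsilon})_{\epsilon\in{]}0,+\infty{[}}$ — these limits being taken in $[0,+\infty]$, so that $+\infty$ is a legitimate limit value compatible with the extension-by-$+\infty$ convention for pseudo-absolute values. Everything else (the interplay between compactness and the Hausdorff property, and the separation of finiteness rings) is immediate from the results already established.
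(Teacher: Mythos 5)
Your proof is correct and follows essentially the same route as the paper: well-definedness via Proposition \ref{prop:flow_tame_space}, continuity checked on the generating functions $|f|_{\cdot}$, and the conclusion from a continuous injection of a compact space into the Hausdorff space $V$. The only difference is that you spell out the injectivity and the endpoint limits at $\epsilon=0$ and $\epsilon=+\infty$ explicitly, details the paper's proof leaves implicit (it checks continuity only for $f\in A$, which suffices since $V\cong\cM(A)$, and asserts bijectivity onto the image without comment).
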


\begin{proof}
Proposition \ref{prop:flow_tame_space} implies that the map $x^{\cdot}$ is well-defined. Since for any $f\in A$, the map 
\begin{align*}
\fonction{\varphi_f}{[0,+\infty]}{\bR_{\geq 0}}{\epsilon}{|f|_{x}^{\epsilon}}
\end{align*}
is continuous, $x^{\cdot}$ is continuous. Thus $x^{\cdot}$ is a continuous bijection between compact Hausdorff spaces, and consequently a homeomorphism.
\end{proof}

The following definition allows to get rid of pseudo-absolute values whose finiteness ring is of rank greater than $1$.

\begin{definition}
\label{def:sav_of_rank_leq1}
A pseudo-absolute value $x\in M_K$ with finiteness ring $A_x$ is called of \emph{rank at most 1} if $A_x$ is a valuation ring of rank $\leq 1$. 

Let $(A,\|\cdot\|)$ be an integral structure for $K$ and denote $V=\cM(A)$. We define
\begin{align*}
    V_{\leq 1} := \{x\in V : x \text{ is of rank at most } 1\}.
\end{align*}
Then $V_{\leq 1}$ is a topological subspace of $V$, with the subset topology. 
\end{definition}

\begin{proposition}
\label{prop:tame_space_rank_leq1}
Let $(A, \|\cdot\|)$ be an integral structure for $K$ and assume that $V:= \cM(A)$ is tame. Let $x=(\va_x,A_x,\m_x,\kappa_x) \in V_{\sn}\cap V_{\leq 1}$. Then the following assertions hold.
\begin{itemize}
    \item[(1)] The residue absolute value on $\kappa_x$ is either Archimedean or trivial.
    \item[(2)] Further assume that $x\in V_{\um}$. Then there exists $v\in V_{\leq 1}$ such that $x = v^{\infty}$ (cf. Proposition \ref{prop:flow_tame_space}).
\end{itemize}
\end{proposition}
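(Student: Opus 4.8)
The plan is to prove assertion (1) first and deduce (2) from it. For (1): a pseudo-absolute value is Archimedean exactly when its residue absolute value is, so if $x$ is Archimedean there is nothing to prove; assume then $x$ is non-Archimedean, i.e. $x\in V_{\um}$, and let us show the residue absolute value on $\kappa_x$ is trivial. By tameness (ii) we have $|f|_x\leq 1$ for all $f\in A$, so $A\subseteq B:=\{f\in K:|f|_x\leq 1\}$, and $B$ is a valuation ring of $K$: it is the valuation ring of the valuation underlying $x$ in the full sense, namely the composite of the valuation of $A_x$ with the valuation ring of the (non-Archimedean) residue absolute value on $\kappa_x$. Since $A$ is Prüfer, Proposition~\ref{prop:prop_Prufer_domains}(1) gives $B=A_\p$ and $A_x=A_\q$ for primes $\q=\m_{A_x}\cap A\subseteq\p=\m_B\cap A$, and the residue absolute value of $x$ is trivial if and only if $B=A_x$, i.e. $\p=\q$.

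Since $x\in V_{\sn}\cap V_{\leq 1}$ the ring $A_x$ has rank one, so $\q$ has height one, $\kappa_x=A_\q/\q A_\q=\Frac(A/\q)$, and by tameness (ii) the image $A/\q$ of $A$ lies in the valuation ring of the residue absolute value. The key point is to show $A/\q$ is a field (equivalently $\q$ is maximal, equivalently $\p=\q$): once this is known, $\kappa_x=A/\q$, and a non-Archimedean absolute value that is bounded by $1$ on the field $\kappa_x$ is necessarily trivial. I would establish $\p=\q$ from the Prüfer/Zariski--Riemann description together with tameness: a strict inclusion $\q\subsetneq\p$ would produce a valuation overring $A_\p=B$ of $A$ of rank $\geq 2$ with the same rank-one finiteness ring $A_x$ as its rank-one overring, and I would rule this out by comparing with the branch $\epsilon\mapsto(\exp(-v_x))^\epsilon$ of Propositions~\ref{prop:flow_tame_space}--\ref{prop:branch_tame_space} attached to the rank-one valuation $v_x$ of $A_x$ — whose $\infty$-endpoint is the residually trivial pseudo-absolute value of $A_x$ — or directly from uniformity (tameness (iii)). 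This comparison, i.e. the proof that tameness forbids a rank-one finiteness ring from carrying a nontrivial non-Archimedean residue, is the step I expect to require the most care, and is the main obstacle.

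For (2), assume in addition $x\in V_{\um}$. By (1) the residue absolute value of $x$ is trivial (being non-Archimedean), so $x$ is the residually trivial pseudo-absolute value attached to its finiteness ring $A_x$, which by $x\in V_{\sn}\cap V_{\leq 1}$ is a rank-one valuation ring of $K$. Let $v_x$ denote the associated rank-one valuation and set $v:=\exp(-v_x)\in M_K$, an absolute value on $K$ (finiteness ring $K$). For $f\in A\subseteq A_x$ one has $v_x(f)\geq 0$, hence $|f|_v=\exp(-v_x(f))\leq 1\leq\|f\|_A$ by tameness (i), so $v\in V$; and $v\in V_{\um}\setminus V_{\sn}$ is non-trivial since $A_x\neq K$ (as $x\in V_{\sn}$). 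By the construction recalled in Proposition~\ref{prop:flow_tame_space}, $v^\infty$ is the pseudo-absolute value whose finiteness ring is $\{f:|f|_v\leq 1\}$ — which is exactly $A_x$ — with trivial residue; therefore $x=v^\infty$, and $v\in V_{\leq 1}$ since its finiteness ring is $K$, of rank $0$. Apart from the crux of (1) identified above, everything is routine bookkeeping through the equivalence of Proposition~\ref{prop:equivalence_extension_sav} and the elementary properties of Prüfer domains.
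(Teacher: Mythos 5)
Your treatment of (2) is correct and is essentially the paper's own argument: granted (1), $x$ is the residually trivial pseudo-absolute value attached to the rank-one valuation ring $A_x$; the associated absolute value $v=\exp(-v_x)$ satisfies $|a|_v\le 1\le\|a\|$ for all $a\in A$ (tameness (i)), so $v\in V_{\le 1}$, and $x=v^{\infty}$. The problem is (1), where your proof is genuinely incomplete. You correctly reduce to the ultrametric case and to showing that the valuation ring $B=\{f\in K:|f|_x\le 1\}=A_{\p}$ coincides with $A_x=A_{\q}$ (equivalently $\q=\p$, where $\q=\m_x\cap A\subseteq\p=\m_B\cap A$), but the argument excluding a strict inclusion $\q\subsetneq\p$ is never given: you only indicate that you would try to deduce it from the branches of Propositions \ref{prop:flow_tame_space}--\ref{prop:branch_tame_space} or ``directly from uniformity'', and you flag it yourself as the main obstacle. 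That step is the entire content of (1): tameness (ii) only says that the image of $A$ lies in the unit ball of $\kappa_x$, and nothing you have written prevents that image from being a proper rank-one valuation ring of $\kappa_x$.

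For comparison, the paper settles this point in one line: it asserts that the canonical homomorphism $A\to\kappa_x$ is surjective, so that tameness (ii) bounds the residue absolute value by $1$ on all of $\kappa_x$ and forces it to be trivial. This surjectivity (equivalently, the maximality in $A$ of the center $\q=\m_x\cap A$, which in particular gives $\p=\q$) is exactly the statement you are missing, and the paper offers no justification for it either; it does hold in the paper's examples (Dedekind rings, rings of germs of analytic functions on discs), but it is not a formal consequence of ``Prüfer, rank one, tame''. For instance, a rank-two valuation ring $A$ with primes $0\subset\p_1\subset\m_A$, endowed with the trivial norm, gives a tame spectrum $\cM(A)$, and composing the rank-one localisation $A_{\p_1}$ with the rank-one valuation of $A/\p_1$ produces $x\in V_{\sn}\cap V_{\le 1}\cap V_{\um}$ whose residue absolute value is non-trivial. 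So your instinct that this is the delicate point is well founded, but neither the uniformity axiom nor the branch construction can supply it; closing your gap would require either proving the surjectivity of $A\to\kappa_x$ in the situation considered or strengthening the hypotheses on $A$ (e.g.\ requiring height-one primes of $A$ to be maximal, as in the paper's examples).
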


\begin{proof}
We first show $(1)$. Let $x=(\va_x,A_x,\m_x,\kappa_x)\in V_{\sn}\cap V_{\leq 1}$ and assume $x$ is ultrametric. Since $V$ is tame, for any $a\in A$, we have $|a|_x \leq 1$. Furthermore, the canonical homomorphism $A\to \kappa_x$ is surjective. Hence, for any $\overline{a}\in\kappa_x$, we have $|\overline{a}|_x\leq 1$, i.e. $\kappa_x$ is the valuation ring of the residue absolute value of $x$, i.e. $x$ is residually trivial.

We now prove (2). Let $x=(\va_x,A_x,\m_x)\in V_{\sn}\cap V_{\leq 1}$ be ultrametric. From (1), $\kappa_x$ is trivially valued. Let $v$ be a valuation (necessarily of rank $1$) on $K$ with valuation ring $A_x$ and denote by $\va_v$ the corresponding absolute value on $K$. Note that, since $x,v_{\triv}\in V$, we have $A\subset A_x$ and, for any $a\in A$, $|a|_v \leq 1 \leq \|a\|$. Hence $v\in V_{\leq 1}$ and $x= v^{\infty}$.

\end{proof}

\begin{remark}
\label{rem:description_V_um_leq1}
With the notation of Proposition \ref{prop:tame_space_rank_leq1}, we have the set-theoretic description of $V_{\um, \leq 1}$ as
\begin{align*}
\bigsqcup_{v \in \cP} [0,+\infty]/\sim,
\end{align*}  
where: 
\begin{itemize}
	\item $\cP$ denotes the set of equivalence classes of nontrivial ultrametric absolute values on $K$;
	\item for any $v\in\cP$, $[0,+\infty]$ denotes the branch introduced in Proposition \ref{prop:branch_tame_space};
	\item $\sim$ denotes the equivalence relation which identifies the extremity $0$ of each branch.
\end{itemize}  
\end{remark}

\begin{proposition}
\label{prop:topological_description_Dedekind}
We use the notation of Remark \ref{rem:description_V_um_leq1}. Assume that $A$ is Dedekind. Then the bijection 
\begin{align*}
V_{\um} = V_{\um,\leq 1} \cong \bigsqcup_{v \in \cP} [0,+\infty]/\sim
\end{align*}  
is an homeomorphism.
\end{proposition}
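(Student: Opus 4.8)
The plan is to realise the map $\Phi$ of the statement as a continuous bijection from a compact space onto a Hausdorff space, hence a homeomorphism. First I would record the branch decomposition. Since $A$ is Dedekind it is Noetherian of Krull dimension at most one, so by Proposition~\ref{prop:prop_Prufer_domains}~(1) every valuation overring of $A$ is either $K$ or a localisation $A_{\p}$ at a maximal ideal, which, being a one-dimensional Noetherian integrally closed local domain, is a DVR. Hence every element of $V=\cM(A)$ has rank at most one, so $V_{\um}=V_{\um,\leq 1}$. By Proposition~\ref{prop:tame_space_rank_leq1} together with Propositions~\ref{prop:flow_tame_space} and~\ref{prop:branch_tame_space}, each $x\in V_{\um}$ is exactly one of: the trivial absolute value $v_{\triv}=v^{0}$; an absolute value $v^{\epsilon}$ with $\epsilon\in\,]0,+\infty[$ for a unique class $v\in\cP$; or a residually trivial pseudo-absolute value $v^{\infty}$ for a unique $v\in\cP$. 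Conversely, for each $v\in\cP$ and each $\epsilon\in[0,+\infty]$ the element $v^{\epsilon}$ lies in $V$ (Proposition~\ref{prop:flow_tame_space}) and $\epsilon\mapsto v^{\epsilon}$ embeds $[0,+\infty]$ homeomorphically onto the $v$-branch (Proposition~\ref{prop:branch_tame_space}), distinct branches meeting only at $v_{\triv}$. This is precisely the bijection $\Phi\colon \bigsqcup_{v\in\cP}[0,+\infty]/{\sim}\ \to\ V_{\um}$ of Remark~\ref{rem:description_V_um_leq1}.

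Next I would check that both sides are compact Hausdorff. By Proposition~\ref{prop:compactness_global_space_sav}, $V=\cM(A)$ is compact Hausdorff; the set $V_{\ar}$ is open in $V$ (it is empty if $K$ has positive characteristic, and equals $\{x\in V:|2|_{x}>1\}$ otherwise, which is open by definition of the topology on $M_K$, cf. Proposition~\ref{prop:archimedean_part_global_space_of_sav}), so $V_{\um}=V\setminus V_{\ar}$ is closed in $V$, hence compact Hausdorff. For the source, endowed with its natural ``hedgehog'' topology — a set is open iff it meets every branch in an open subset of $[0,+\infty]$ and, if it contains the wedge point $v_{\triv}$, it contains all but finitely many branches entirely — one checks directly: Hausdorffness, because two points off $v_{\triv}$ are separated inside their branches by intervals avoiding $0$ (which are open in the whole space), while $v_{\triv}$ and a point $v^{\epsilon}$ with $\epsilon>0$ are separated by removing $[\epsilon/2,+\infty]$ from the $v$-branch; and compactness, because a member of any open cover containing $v_{\triv}$ already covers all but finitely many whole branches, and on each of the finitely many remaining branches its complement is a compact subinterval of $[0,+\infty]$.

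It then remains to prove that $\Phi$ is continuous. Since the topology of $V_{\um}\subseteq M_K$ is initial for the functions $x\mapsto |f|_{x}$, $f\in K$, it suffices to show each composite $|f|_{\,\cdot\,}\circ\Phi$ is continuous on the hedgehog. On each $v$-branch this is the map $\epsilon\mapsto |f|_{v^{\epsilon}}=|f|_{v}^{\epsilon}$ (with the usual conventions at $\epsilon=0$ and $\epsilon=+\infty$), which is continuous from $[0,+\infty]$ to $[0,+\infty]$. The only point to verify is the ``all but finitely many branches'' condition at $v_{\triv}$: if $|f|_{v_{\triv}}$ lies in an open set $W\subseteq[0,+\infty]$ — so $1\in W$ when $f\neq 0$, the case $f=0$ being trivial — then, writing $f=g/h$ with $g,h\in A\setminus\{0\}$ and using that in the Dedekind domain $A$ each of $g,h$ lies in only finitely many maximal ideals, one has $v_{\p}(f)=0$, hence $|f|_{v_{\p}^{\epsilon}}=1\in W$ for all $\epsilon\in[0,+\infty]$, for all but finitely many $\p$. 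Thus $|f|_{\,\cdot\,}\circ\Phi$ is continuous, so $\Phi$ is continuous, and being a continuous bijection from a compact space onto a Hausdorff space it is a homeomorphism.

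The main obstacle is pinning down and justifying the correct topology on $\bigsqcup_{v\in\cP}[0,+\infty]/{\sim}$: the naive quotient topology of the disjoint union of the intervals is not compact once $\cP$ is infinite, so one must use the coarser hedgehog topology (equivalently, the subspace topology inherited from $V$), and then the real content is the continuity of $\Phi$ at $v_{\triv}$. This is the one place where the hypothesis ``$A$ Dedekind'' is genuinely used — via the fact that every nonzero element of $A$ is divisible by only finitely many primes — everything else following formally from the branch homeomorphisms of Proposition~\ref{prop:branch_tame_space} and the compactness of $\cM(A)$.
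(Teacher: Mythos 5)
Your proof is correct, but it takes a genuinely different route from the paper's. The paper disposes of the statement in one line: by tameness the ultrametric part $V_{\um}$ is exactly the Berkovich spectrum $\cM(A,\|\cdot\|_{\triv})$ of $A$ with the trivial norm, and it then cites the known description of the spectrum of a trivially valued Dedekind ring (\cite{LemanissierPoineau24}, Example 1.1.17). You instead prove the identification by hand: branch decomposition via Propositions \ref{prop:tame_space_rank_leq1}, \ref{prop:flow_tame_space}, \ref{prop:branch_tame_space} and the fact that, $A$ being Dedekind, every valuation overring is $K$ or a DVR $A_{\p}$; then an explicit description of the "hedgehog" topology on the wedge, verification that it is compact Hausdorff, continuity of the bijection checked on the generating functions $|f|_{\cdot}$, $f\in K$ (where Dedekindness enters a second time, through the fact that a nonzero element of $A$ lies in only finitely many maximal ideals), and finally the compact-to-Hausdorff argument. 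What your approach buys is self-containedness and transparency: it makes explicit which topology the wedge carries (the paper leaves this implicit in the citation, and as you rightly note the naive quotient topology would not work for infinite $\cP$) and exactly where the Dedekind hypothesis is used; the paper's approach is shorter and delegates the topology to the literature, at the cost of also leaving tacit the identification $V_{\um}\cong\cM(A,\|\cdot\|_{\triv})$. Two cosmetic caveats: your converse step "for each $v\in\cP$, $v^{\epsilon}\in V$ by Proposition \ref{prop:flow_tame_space}" presupposes that $v$ already lies in $V$, so $\cP$ must be read (as the statement indeed intends, and as your continuity argument via maximal ideals $\p$ of $A$ implicitly does) as the classes of nontrivial ultrametric absolute values occurring in $V$, equivalently those with valuation ring $A_{\p}$, $\p\in\Spm(A)$ — with the literal reading of Remark \ref{rem:description_V_um_leq1} the bijection itself would fail (e.g. the place at infinity for $A=\bC[T]$); and in the compactness argument the complement of the distinguished open set in a branch is a closed, hence compact, subset of $[0,+\infty]$ rather than a subinterval. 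Neither affects the validity of the argument.
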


\begin{proof}
This follows directly for the description of the Berkovich analytic spectrum of a trivially valued Dedekind ring (e.g. \cite{LemanissierPoineau24}, Example 1.1.17).
\end{proof}

\subsection{Examples}
\label{sub:examples_global_set_sav}

We can explicitly describe global spaces of pseudo-absolute values for various examples.

\subsubsection{Function field over $\bC$}

Let $K=\bC(T)$. Let $R>0$. Denote by $\|\cdot\|_{R}$ the supremum norm of polynomial functions on the closed disc $\overline{D(R)}$ of radius $R$. Let $\|\cdot\|$ denote the hybrid norm $\|\cdot\| := \max \{\|\cdot\|_{\triv},\|\cdot\|_{R}\}$, where $\|\cdot\|_{\triv}$ denotes the trivial norm on $\bC[T]$. Then $(\bC[T],\|\cdot\|)$ defines an integral structure for $K$. We describe the associated global space of pseudo-absolute values $V_{R}$.

\begin{proposition}
\label{prop:space_of_sav_function_field_over_C}
We use the same notation as above.
\begin{itemize}
	\item[(1)] We have homeomorphisms 
\begin{align*}
V_{R,\ar} \cong \overline{D(R)} \times ]0,1], \quad V_{R,\um} \cong \bP^{1,\triv}_{\bC} \cap \{|T|\leq 1\},
\end{align*}	
 where $\bP^{1,\triv}_{\bC}$ is the Berkovich analytic space associated with $\bP^{1}_{\bC}$, where $\bC$ is trivially valued.
 	\item[(2)] $V_{R,\ar}$ is dense in $V_{R}$
\end{itemize}
\end{proposition}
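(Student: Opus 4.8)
The strategy is to describe the elements of $V_R$ explicitly and then identify the two pieces $V_{R,\ar}$ and $V_{R,\um}$ topologically, using the machinery already available (Proposition \ref{prop:compactness_global_space_sav}, Proposition \ref{prop:archimedean_part_global_space_of_sav}, Example \ref{ex:sav} (2), and the description of $V_{\um}$ via branches, Remark \ref{rem:description_V_um_leq1}). Recall first that $(\bC[T],\|\cdot\|)$ with $\|\cdot\| = \max\{\|\cdot\|_{\triv},\|\cdot\|_R\}$ is an integral structure, and by the remark after Proposition \ref{prop:compactness_global_space_sav} we have $V_R \cong \cM(\bC[T],\|\cdot\|)$, a compact Hausdorff space; this compactness will be used at the end for each homeomorphism claim, so that only continuous bijections need be exhibited.

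For (1), Archimedean part: let $v=(\va_v,A_v,\m_v,\kappa_v)\in V_{R,\ar}$. Since $\bC[T]\to \kappa_v$ is surjective and $\kappa_v$ carries an Archimedean absolute value, Gelfand–Mazur forces $\kappa_v = \bC$ with the residue absolute value $\va_\infty^{\epsilon}$ for a unique $\epsilon\in]0,1]$ (as in Proposition \ref{prop:archimedean_part_global_space_of_sav}, $\epsilon = \log|2|_v/\log 2$). The finiteness ring $A_v$ is then a non-trivial valuation ring of $\bC(T)$, hence $A_v = \{f : \ord(f,z)\ge 0\}$ for a unique $z\in\bP^1(\bC)$; but since $\va_{v|\bC[T]}\le\|\cdot\|$ forces $|T|_v\le\|T\| = \max\{1,R\}$ to be finite and moreover $|T-z|_v \le$ something finite while $|(T-z)^{-1}|_v$ must satisfy the bound too, one sees $z$ must lie in $\overline{D(R)}$ (if $z=\infty$ then $T\notin A_v$, contradicting $|T|_v<+\infty$; if $|z|>R$ pick a polynomial witnessing the violation of $\va_v\le\|\cdot\|$). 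Conversely every $(z,\epsilon)\in\overline{D(R)}\times]0,1]$ gives $v_{z,\epsilon,\infty}\in V_{R,\ar}$ since for $P\in\bC[T]$, $|P(z)|_\infty^{\epsilon}\le\max\{1,\|P\|_R\} = \|P\|$. This bijection $V_{R,\ar}\to\overline{D(R)}\times]0,1]$ is continuous: the second coordinate is $\epsilon$, continuous by Proposition \ref{prop:archimedean_part_global_space_of_sav}, and the first coordinate is continuous because $z\mapsto |(T-z_0)|_{v_{z,\epsilon}}$-type functions separate points and vary continuously (as in the proof of Proposition \ref{prop:archimedean_part_function_field_over_C}). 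Since $V_{R,\ar}$ is not compact, one cannot close the argument by compactness alone here; instead I would exhibit the continuous inverse directly: $(z,\epsilon)\mapsto v_{z,\epsilon,\infty}$ is continuous since for each $f\in\bC(T)$ the map $(z,\epsilon)\mapsto|f(z)|_\infty^{\epsilon}$ is continuous on $\overline{D(R)}\times]0,1]$ (with value $+\infty$ at poles, still continuous into $[0,+\infty]$).

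For (1), ultrametric part: if $v\in V_{R,\um}$ then $v$ is residually trivial or residually non-Archimedean; but any ultrametric absolute value on $\bC(T)$ restricting to a bounded norm on $\bC[T]$ is trivial on $\bC$ (else $|c|_v$ unbounded for $c\in\bC$ contradicts $\va_v\le\|\cdot\|$ on constants). So $v\in V_{\um,\le 1}$, its finiteness ring is an overring of $\bC[T]$ localized at a prime, and $\va_v\le\|\cdot\|$ becomes $|f|_v\le 1$ for $f\in\bC[T]$ (using $|af|_v = |f|_v$ for $a\in\bC^\times$ and choosing $|a|$ small — exactly the tameness argument of Example \ref{example:tamed_spaces}), i.e. $\bC[T]\subset A_v$ and $|T|_v\le 1$. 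Such $v$ are precisely the points of the Berkovich space $\bP^{1,\triv}_{\bC}$ lying over $\Spec\bC[T]$ with $|T|\le 1$, which is $\bP^{1,\triv}_{\bC}\cap\{|T|\le 1\}$; one checks this is $\cM$ of the trivially-normed Dedekind ring $\bC[T]$ and invoke Proposition \ref{prop:topological_description_Dedekind} (or directly the description in \cite{LemanissierPoineau24}, Example 1.1.17) to get the homeomorphism. The continuity is built in, and $\bP^{1,\triv}_{\bC}\cap\{|T|\le 1\}$ is compact Hausdorff, so a continuous bijection suffices.

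For (2), density of $V_{R,\ar}$ in $V_R$: take any $v\in V_R$ and a basic open neighbourhood, i.e. a finite set of conditions $|f_i|_v\in U_i$ with $U_i\subset[0,+\infty]$ open, $i=1,\dots,n$. If $v$ is already Archimedean there is nothing to prove; if $v\in V_{R,\um}$, then $v$ corresponds to a point of $\bP^{1,\triv}_\bC\cap\{|T|\le 1\}$, and I would approximate it by Archimedean points $v_{z,\epsilon,\infty}$ with $\epsilon\to 0^+$: the key fact is that for fixed $z$, $v_{z,\epsilon,\infty}\to v_{z,\triv}$ (the trivial valuation with centre $z$) as $\epsilon\to 0$, and more generally the extremal ultrametric points (Gauss-type points of $\bP^{1,\triv}_\bC$) are limits of families $v_{z,\epsilon,\infty}$ as $z$ ranges suitably and $\epsilon\to 0$ — this is the same mechanism as density of $M_{K,\ar}$ in $M_K$ used implicitly in the commented-out Proposition on $\bP^{1,\hyb}_\bC$. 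Concretely: given the finitely many $f_i$, write each as a ratio of polynomials; for a point $v$ with centre $z_0\in\overline{D(R)}$ and residue-trivial behaviour, $|f_i|_{v_{z_0,\epsilon,\infty}} = |f_i(z_0)|_\infty^\epsilon \to |f_i|_v$ (where $|f_i|_v\in\{0,1,+\infty\}$) as $\epsilon\to 0^+$, so for small $\epsilon$ the conditions are met; for points $v$ that are interior (monomial) points of the branch, a small perturbation of $z_0$ together with small $\epsilon$ does the job. The main obstacle is making this last approximation of the non-rigid (interior/Gauss) points of $\bP^{1,\triv}_\bC$ by Archimedean pseudo-absolute values fully rigorous — one must check that moving $z$ slightly off a given point while sending $\epsilon\to 0$ realizes, in the limit, the prescribed ultrametric values on all the finitely many $f_i$ simultaneously. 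I would handle this by reducing, via the homeomorphism $V_R\cong\cM(\bC[T],\|\cdot\|)$ and the explicit hybrid-space picture, to the known density statement for the hybrid affine line, so that no ad hoc estimate is needed.
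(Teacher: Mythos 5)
Your part (1) is correct and follows essentially the same route as the paper: classify the Archimedean points via Gelfand--Mazur as in Proposition \ref{prop:archimedean_part_function_field_over_C}, rule out centres $z\notin\overline{D(R)}$ by exhibiting an explicit polynomial violating the bound, check the converse inequality $|P(z)|_{\infty}^{\epsilon}\leq\|P\|$, and identify $V_{R,\um}$ with the trivially valued unit disc after using the scaling argument of Example \ref{example:tamed_spaces} to see that such points are trivial on $\bC$ and satisfy $|f|\leq 1$ on $\bC[T]$. Your appeal to Proposition \ref{prop:topological_description_Dedekind} instead of the paper's hand classification, and your explicit continuous inverse for the Archimedean chart, are only cosmetic differences.

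The genuine gap is in part (2), and it is not just the "interior branch point" issue you flag. By your own (and the paper's) part (1), $V_{R,\um}\cong\bP^{1,\triv}_{\bC}\cap\{|T|\leq 1\}$ contains the \emph{entire} branch attached to every closed point $z\in\bC$, including those with $|z|_{\infty}>R$, since $T$ is a unit at such $z$ and the condition $|T|\leq 1$ does not exclude them; your approximation scheme, however, only ever discusses ultrametric points "with centre $z_0\in\overline{D(R)}$". Those outside branches cannot be reached at all: if a net $v_{w_i,\epsilon_i,\ar}$ in $V_{R,\ar}$ converges to an ultrametric point, then $|2|_{v_{w_i,\epsilon_i,\ar}}=2^{\epsilon_i}\to 1$ forces $\epsilon_i\to 0$, while $|w_i-z|_{\infty}$ stays in the fixed interval $[\,|z|_{\infty}-R,\ |z|_{\infty}+R\,]$ because $w_i\in\overline{D(R)}$, so $|T-z|_{v_{w_i,\epsilon_i,\ar}}=|w_i-z|_{\infty}^{\epsilon_i}\to 1$. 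Hence every point of the closure of $V_{R,\ar}$ satisfies $|T-z|=1$ for all $z$ with $|z|_{\infty}>R$, whereas the points $e^{-c\,\ord(\cdot,z)}$ (for $c\in\,]0,+\infty]$) on those branches have $|T-z|=e^{-c}<1$ or $=0$. So the density argument cannot be completed, and the proposed reduction to a "known density statement for the hybrid affine line" does not exist in a form that would help; the computation above shows that statement (2), as written, fails for this integral structure. Note that the paper's own proof of (2) is only a reference to Proposition \ref{prop:global_space_of_sav_compact_disc} (2), which concerns the germ ring $\cO(\overline{D(R)})$, where all maximal ideals lie over points of $\overline{D(R)}$ and no outside branches occur; that argument does not transfer to $\bC[T]$. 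Your mechanism $z_n\to z$, $\epsilon_n\to 0$ with $\epsilon_n\log|z_n-z|_{\infty}\to -c$ is the right one, but it only proves that the closure of $V_{R,\ar}$ consists of $V_{R,\ar}$ together with the branches centred in $\overline{D(R)}$ (and the trivial point), not all of $V_R$.
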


\begin{proof}  
\textbf{(1)} Let $\varphi$ be the map mapping $(z,\epsilon)\in \overline{D(R)}\times]0,1]$ to $v_{z,\epsilon,\ar}\in M_K$. First let us prove that $\varphi$ has image contained in $V_{R,\ar}$. For this purpose, it suffices to show that, for any $(z,\epsilon)\in \overline{D(R)}\times]0,1]$, we have 
\begin{align*}
\forall P \in \bC[T], \quad |P(z)|_{\infty}^{\epsilon} \leq \|P\|.
\end{align*}
Let $(z,\epsilon)\in \overline{D(R)}\times]0,1]$ and fix an arbitrary non-zero polynomial $P\in \bC[T]$. First assume that $|P(z)|_{\infty}\leq 1$. Then $|P(z)|_{\infty}^{\epsilon} \leq 1 = \|P\|_{\triv} \leq \|P\|$. Now assume that $|P(z)|_{\infty}>1$. Then we have the inequalities
\begin{align*}
1 < |P(z)|_{\infty}^{\epsilon} < |P(z)|_{\infty} \leq \|P\|_R = \|P\|. 
\end{align*} 
Therefore $\varphi$ has image contained in $V_{R,\ar}$.

Let us now prove that $\varphi$ has exactly image $V_{R,\ar}$. We first consider the case where $z\notin D(\overline{R})$ and $\epsilon =1$. Then if $P(T):= T+1 \in \bC[T]$, we have 
\begin{align*}
 \left\{\begin{matrix}
|P(z)|_{\infty} = |z|_{\infty} + 1 > 1 = \|P\|_{\triv},\\
|P(z)|_{\infty} = |z|_{\infty} + 1 > R + 1 = \|P\|_R.
\end{matrix}\right.
\end{align*}
Therefore $v_{z,1,\ar} \notin V_{R,\ar}$.

Now let $z\notin \overline{D(R)}$ and $\epsilon\in]0,1[$. Then we construct a polynomial $Q\in\bC[T]$ such that $|Q(z)|^{\epsilon}_{\infty} > \|Q\|$. For any $a\in \bR_{>0}$, denote 
\begin{align*}
P_a(T) := \frac{T}{a} \in \bC[T].
\end{align*}
Then for any $a>0$, we have $\|P_a\|_{R}= R/a$ and
\begin{align*}
|P_a(z)|^{\epsilon}_{\infty}>\|P_a\| \Leftrightarrow  \left\{\begin{matrix}
\frac{|z |_{\infty}^{\epsilon}}{a^{\epsilon}} > 1 = \|P_a\|_{\triv},\\
\frac{|z |_{\infty}^{\epsilon}}{a^{\epsilon}} > \frac{R}{a} = \|P_a\|_{R}.
\end{matrix}\right. 
\end{align*}
Let $u : (a\in ]0,|z|_{\infty}[) \mapsto R/a^{1-\epsilon}$. Then $u$ is a continuous function. Note that 
\begin{align*}
R < |z|_{\infty} \Leftrightarrow \displaystyle\lim_{a \to |z|^{-}_{\infty}}u(a) =  \frac{R}{|z|_{\infty}^{1-\epsilon}} < |z|_{\infty}^{\epsilon}.
\end{align*}
Moreover, for any $a\in]0,|z|_{\infty}[$, we have 
\begin{align*}
|P_a(z)|_{\infty}^{\epsilon}>\|P\|_{R} \Leftrightarrow u(a) < |z|_{\infty}^{\epsilon}.
\end{align*}
Since $\lim_{a \to |z|^{-}_{\infty}}u(a)<|z|_{\infty}^{\epsilon}$, by continuity, there exists $a\in ]0,|z|_{\infty}[$ such that $u(a) < |z|_{\infty}^{\epsilon}$ and we denote $Q:=P_a$. From what precedes, we have $|Q(z)|_{\infty}^{\epsilon}>\|Q\|$, thus $v_{z,\epsilon,\ar}\notin V_{R,\ar}$.

Combining the two above paragraphs, $\varphi$ defines a bijection between $\overline{D(R)} \times ]0,1]$ and $V_{R,\ar}$. It is a homeomorphism by definition of the topologies of its domain and codomain.

We now prove the second homeomorphism. Let $|\cdot|\in V_{R,\um}$. As $|\cdot|_{|\bC[T]}\leq 1$, we deduce that the restriction of $|\cdot|$ is the trivial absolute value. We distinguish two cases. The first one is the case where $|\cdot|$ is a usual absolute value. Then $|\cdot|$ is of the form $|\cdot|_{z,c,\um} : f\in K \mapsto \exp(-c\ord(f,z))\in\bR_{>0}$, where $z\in \bC \cup \{\infty\}$ and $c\geq 0$ (using the conventions $\ord(\cdot,\infty) = -\deg(\cdot)$ and $|\cdot|_{z,0,\um} = \va_{\triv}$ for all $z\in \bC\cup\{\infty\}$). The second case is when $|\cdot|$ has a non-zero kernel. Then there exists $z\in \bC\cup\{\infty\}$ such that $|\cdot| = \va_{z,\infty,\um}$, where
\begin{align*}
\fonction{|\cdot|_{z,\infty,\um}}{\bC[T]_{(T-z)}}{\bR_{\geq 0}}{f}{
\left\{\begin{matrix}
&0 \text{ if } f(z) = 0,\\ 
&1 \text{ if } f(z) \neq 0. 
\end{matrix}\right.}
\end{align*}
Thus $\va$ can be identified with an element of $\bP^{1,\triv}_{\bC}$. Moreover, the condition $|\cdot|_{|\bC[T]}\leq 1$ implies that $|\cdot|$ can be identified with an element of $\bP^{1,\triv}_{\bC} \cap \{|T|\leq 1\}$, this yields a map $\psi : V_{R,\um} \to \bP^{1,\triv}_{\bC} \cap \{|T|\leq 1\}$ which is continuous by definition of the topologies. Conversely, by definition, any element $x\in \bP^{1,\triv}_{\bC} \cap \{|T|\leq 1\}$ gives rise to a ultrametric pseudo-absolute value $\va \in M_{K,\um}$. As $x\in \{|T|\leq 1\}$, we deduce $|\cdot|_{|\bC[T]}\leq 1 \leq \|\cdot\|$ and therefore $|\cdot|\in V_{R,\um}$. This construction yields an inverse to $\psi$. To conclude, it suffices to remark that $V_{R,\um}$ compact Hausdorff and that $\bA^{1,\triv}_{\bC} \cap\{|T|\leq 1\}$ is Hausdorff.

\textbf{(2)} We refer to the proof of Proposition \ref{prop:global_space_of_sav_compact_disc} (2).
\end{proof}

\subsubsection{Nevanlinna theory: open disc}
\label{subsub:global_space_Nevanlinna_open_disc}

A motivation for this work is to study problems arising from Nevanlinna theory. Let $0<R\leq +\infty$ and let $K_R:=\cM(D(R))$ be the field of meromorphic functions on $D(R):=\{z\in\bC : |z|< R\}$, where $D(+\infty) = \bC$. Let $R'<R$ and denote by $\|\cdot\|_{R'}$ the restriction to $\cO(D(R))$ of the hybrid norm on the disc $\overline{D(R')}$ (cf. Example \ref{example:integral_structure} (3)). Then $(\cO(D(R)),\|\cdot\|_{R'})$ defines an integral structure for $K$. Denote by $V_{R,R'}$ the corresponding global space of pseudo-absolute values. In what follows, we explicitly describe $V_{R,R',\leq 1}$. 

\begin{proposition}
\label{prop:description_Archimedean_sav_Nevanlinna_open_disc}
Let $v=(\va,A,\m,\kappa)\in V_{R,R',\ar}$. Then there exist $z\in \overline{D(R')}$ and $\epsilon\in ]0,1]$ such that $v=v_{z,\epsilon,\ar}$ (cf. Example \ref{ex:sav} (3)). Furthermore, we have a homeomorphism  $V_{R,R',\ar} \cong ]0,1]\times \overline{D(R')}$ which maps any $v_{z,\epsilon,\ar}\in V_{R,R',\ar}$ to $(\epsilon,z)\in ]0,1]\times \overline{D(R')}$.
\end{proposition}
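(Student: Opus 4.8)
The plan is to analyze an Archimedean pseudo-absolute value $v=(\va,A,\m,\kappa)$ on $K_R=\cM(D(R))$ that restricts to an element of $\cM(\cO(D(R)),\|\cdot\|_{R'})$, and to show its finiteness ring must be the localisation of $\cO(D(R))$ at a point $z\in\overline{D(R')}$, forcing $\kappa\cong\bC$ and hence $v=v_{z,\epsilon,\infty}$ for a unique $\epsilon\in]0,1]$. First I would observe that since $v$ is Archimedean, the finiteness ring $A$ is a nontrivial valuation ring of $K_R$ whose residue field $\kappa$ carries an Archimedean absolute value; by the Gelfand--Mazur theorem (as used earlier in the excerpt) its completion $\widehat{\kappa}$ is $\bR$ or $\bC$. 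Since $\bC\subset\kappa$ and the residue absolute value extends an Archimedean absolute value on $\bC$, we must have $\widehat{\kappa}=\bC$ and in fact $\kappa=\bC$, because $\bC$ is already complete; the restriction to $\bC$ is then $\va_\infty^\epsilon$ for a unique $\epsilon\in]0,1]$ by Ostrowski's theorem. The key point $\epsilon(v)=\log|2|_v/\log 2$ gives the formula for $\epsilon$ and its continuity (this is Proposition~\ref{prop:archimedean_part_global_space_of_sav}).

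Next I would identify the valuation ring $A$. The inclusion $\cO(D(R))\hookrightarrow A$ holds because $v$ restricts to a (bounded, hence finite) seminorm on $\cO(D(R))$, so every holomorphic function has $|f|_v<+\infty$. As $\cO(D(R))$ is a Prüfer domain with fraction field $K_R$ (Example~\ref{example:Prufer_rings}), Proposition~\ref{prop:prop_Prufer_domains}~(1) shows $A=\cO(D(R))_{\p}$ where $\p=\m\cap\cO(D(R))\in\Spec(\cO(D(R)))$. Since $\kappa=\bC$ is algebraic (indeed trivial) over the residue field of $\cO(D(R))_\p$ only when $\p$ is a maximal ideal with residue field $\bC$, and since $A$ has rank $\le 1$, the prime $\p$ must be a height-one maximal ideal; by Proposition~\ref{prop:holomorphic_functions_non-compact_RS_ideals} together with Remark~\ref{rem:holomorphic_functions_non-compact_RS_ideals} (free maximal ideals give valuation rings of rank $\ge 2$, hence are excluded by the rank hypothesis), $\p$ is a fixed maximal ideal $\m_z$ for a unique $z\in D(R)$. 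The boundedness condition $\va_{|\cO(D(R))}\le\|\cdot\|_{R'}$ then forces $z\in\overline{D(R')}$: if $z\notin\overline{D(R')}$ one exhibits a polynomial (or the function $T/a$ for suitable $a$) whose value at $z$ exceeds the hybrid norm on $\overline{D(R')}$, exactly as in the proof of Proposition~\ref{prop:space_of_sav_function_field_over_C}~(1). Conversely, for every $z\in\overline{D(R')}$ and $\epsilon\in]0,1]$ the pseudo-absolute value $v_{z,\epsilon,\infty}$ lies in $V_{R,R',\ar}$: for $f\in\cO(D(R))$ one has $|f(z)|_\infty^\epsilon\le 1=\|f\|_{\triv}$ when $|f(z)|_\infty\le 1$ and $|f(z)|_\infty^\epsilon<|f(z)|_\infty\le\|f\|_{R'}$ otherwise.

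This yields a bijection $\varphi:\,]0,1]\times\overline{D(R')}\to V_{R,R',\ar}$, $(\epsilon,z)\mapsto v_{z,\epsilon,\infty}$. To upgrade it to a homeomorphism I would check continuity of $\varphi$ and of its inverse. For $\varphi$: for each $f\in K_R$ the map $(\epsilon,z)\mapsto |f|_{v_{z,\epsilon,\infty}}=|f(z)|_\infty^\epsilon$ is continuous on $\overline{D(R')}\times]0,1]$ (away from poles one has a genuine continuous function; at poles the value is $+\infty$ and continuity into $[0,+\infty]$ still holds), so by definition of the topology on $M_{K_R}$ the map $\varphi$ is continuous. For the inverse: $\epsilon$ is recovered as the continuous function $v\mapsto\log|2|_v/\log 2$ (Proposition~\ref{prop:archimedean_part_global_space_of_sav}), and $z$ is recovered as $v\mapsto$ the center of $A_v$ on $\bP^1_{\bC}$, i.e. from the reduction map to the model, which is continuous by Proposition~\ref{prop:continuity_reduction_local} applied to a projective model of $K_R/\cO(D(R))$; alternatively $z=\lim$ of $T$ evaluated suitably. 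Since both $V_{R,R',\ar}$ and $]0,1]\times\overline{D(R')}$ are Hausdorff and $\varphi$ is a continuous bijection, and since $V_{R,R',\ar}$ is an open subset of the compact space $V_{R,R'}$ whose closure is $V_{R,R',\ar}\cup\{v_{\triv}\}$, a direct compactness argument (passing to the one-point-type compactification, or simply checking the inverse is continuous as above) finishes the proof. The main obstacle is the precise identification of the prime $\p$: one must carefully rule out free maximal ideals and non-maximal primes using the rank-$\le 1$ hypothesis and the structure theory of $\Spec(\cO(D(R)))$ recalled in Proposition~\ref{prop:holomorphic_functions_non-compact_RS_ideals}, and then pin down $z\in\overline{D(R')}$ rather than merely $z\in D(R)$ via the boundedness inequality.
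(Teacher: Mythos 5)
Your overall strategy follows the paper's: Gelfand--Mazur to get $\kappa=\bC$ and the exponent $\epsilon$, the Prüfer property to write $A=\cO(D(R))_{\p}$ with $\p=\m\cap\cO(D(R))$, the boundedness inequality to force $z\in\overline{D(R')}$, and then the two continuity checks. However, there is a genuine gap at the crucial step where you pin down $\p$: you exclude free maximal ideals (and, implicitly, higher-rank finiteness rings) ``by the rank-$\le 1$ hypothesis'', and you repeat at the end that this hypothesis is what carries the argument. The proposition has no such hypothesis: $V_{R,R',\ar}$ is the \emph{full} Archimedean part of $V_{R,R'}$, and the restriction to rank at most $1$ only appears later, in the ultrametric statement (Proposition \ref{prop:description_sav_ultrametric_Nevanlinna_open_disc} concerns $V_{R,R',\leq 1,\um}$). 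The point is not cosmetic: localisations of $\cO(D(R))$ at free primes are valuation rings whose residue fields have characteristic $0$ and cardinality at most $2^{\aleph_0}$, hence embed into $\bC$ and do carry Archimedean absolute values, so Archimedean pseudo-absolute values with such finiteness rings exist on $K_R$ and must be ruled out by an argument using the data at hand, not by an assumption. Your fallback phrase ``$\kappa=\bC$ \dots only when $\p$ is a maximal ideal with residue field $\bC$'' correctly eliminates non-maximal primes, but by itself it does not exclude a free \emph{maximal} ideal, and the Henriksen results you cite (Proposition \ref{prop:holomorphic_functions_non-compact_RS_ideals}) describe the rank of the localisation, not its residue field, so they give you nothing once the rank hypothesis is gone.

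The paper closes exactly this gap by a different mechanism: since $\bC\subset A^{\times}$ and Gelfand--Mazur identifies $\kappa$ with $\bC$ via the constants, the composite $\cO(D(R))\to A\to\kappa=\bC$ is a surjective $\bC$-algebra morphism with kernel $\p$; Proposition \ref{prop:ring_of_analytic_functions_open_Riemann_surface} (2) then says $\p$ is principal, and part (1) says a principal maximal ideal is fixed, i.e.\ $\p=\m_z$ for a unique $z\in D(R)$. In this route the rank-one property of $A=\cO(D(R))_{\m_z}$ is a \emph{consequence}, not an assumption, and the free maximal ideals are excluded because they are not kernels of $\bC$-algebra morphisms onto $\bC$. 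To repair your proof you should replace the rank argument by this residue-field/Royden argument (or an equivalent one). The remaining parts of your proposal --- the inequality forcing $z\in\overline{D(R')}$, the converse inclusion $v_{z,\epsilon,\ar}\in V_{R,R',\ar}$, the continuity of $(\epsilon,z)\mapsto v_{z,\epsilon,\ar}$, and the recovery of $\epsilon$ via $\log|2|_v/\log 2$ --- agree with the paper; for recovering $z$ continuously the paper simply restricts to the subfield $\bC(T)$ and invokes Proposition \ref{prop:space_of_sav_function_field_over_C}, which is simpler than appealing to the reduction map of Proposition \ref{prop:continuity_reduction_local}.
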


\begin{proof}
Recall that $\va_{\infty}$ denote the usual complex Archimedean value. Note that, since $v\in V_{R,R'}$, we have an inclusion $\cO(D(R))\subset A$. In particular, $\bC$ is a subfield of $A$ and thus $\bC\subset A^{\times}$. We have arrows $\bC \subset A^{\times}  \twoheadrightarrow \kappa$ and an extension $\bC \to \kappa$. Since $v$ induces an Archimedean absolute value on $\kappa$, the Gelfand-Mazur theorem (\cite{BouAC}, Chapitre VI, \S 6, n\textsuperscript{o} 4, Théorème 1) ensures that $\kappa = \bC$ and that there exists $\epsilon\in]0,1]$ such that $\va = \va_{\infty}^\epsilon$ on $\kappa$. 

From the inclusion $\bC\hookrightarrow \cO(D(R))^{\times}$ we deduce that the $\bC$-algebra morphism  $\cO(D(R)) \to \bC$ is surjective whose kernel is the maximal ideal $\m' := \m\cap\cO(D(R))$ of $\cO(D(R))$. Proposition \ref{prop:ring_of_analytic_functions_open_Riemann_surface} implies that $\m'$ is principal and that there exists $z\in D(R)$ such that $\m'$ is the set of functions in $\cO(D(R))$ vanishing in $z$. Finally, the restriction of $v$ to $\cO(D(R))$ is the map $(f\in \cO(D(R))) \mapsto |f(z)|_{\infty}^{\epsilon}\in[0,+\infty]$. The condition $v\in V_{R,R'}$ ensures that $z\in \overline{D(R')}$. Hence the conclusion of the proof of the first statement of the proposition.

For any $x\in V_{R,R',\ar}$, denote by $z(x)$ the unique $z \in \overline{D(R')}$ such that $x = v_{z(x),\epsilon(x),\ar}$. Let $\varphi : V_{R,R',\ar} \to ]0,1]\times \overline{D(R')}$ be the map mapping any $x\in V_{R,R',\ar}$ to $(\epsilon(x),z(x))\in ]0,1]\times \overline{D(R')}$. Then $\varphi$ is bijective. Let us show $\varphi$ is a homeomorphism. To show that $\varphi$ is continuous, it is enough to show that so are the induced maps $\epsilon : x\in V_{R,R',\ar} \mapsto \epsilon(x) \in ]0,1]$ et $z : x\in V_{R,R',\ar} \mapsto z(x) \in \overline{D(R')}$. The first map is continuous (cf. \ref{prop:archimedean_part_global_space_of_sav}). The map $z$ is equal to the composition $V_{R,R',\ar} \to V_{R,\ar} \to \overline{D(R')}$, where $V_{R,\ar}$ is the Archimedean part described in Proposition \ref{prop:space_of_sav_function_field_over_C}. It remains to prove that $\varphi^{-1}$ is continuous. For this purpose, it suffices to show that, for any $f\in \cO(D(R))$, the map
\begin{align*}
\fonction{|f|\circ \varphi^{-1}}{]0,1]\times \overline{D(R')}}{\bR_{+}}{(\epsilon,z)}{|f(z)|_{\infty}^{\epsilon}}
\end{align*}
is continuous. This fact being certainly true, it concludes the proof.
\end{proof}

\begin{remark}
Let $K=\cM(U)$ denote the field of meromorphic functions on a (connected) open Riemann surface $U$. On can adapt the above proof to describe the Archimedean pseudo-absolute values on $K$: they are the pseudo-absolute values of the form $(f \in K)\mapsto |f(z)|_{\infty}^{\epsilon} \in [0,+\infty]$, for some $z\in U$ and $\epsilon \in ]0,1]$.
\end{remark}

We now study the ultrametric pseudo-absolute values in $V_{R,R'}$. We first exhibit some of them. For any $z\in D(R)$ and $c>0$, denote by $v_{z,c,\um}$ the absolute value $|\cdot|_{z,c,\um} : (f\in K_R) \mapsto \exp(-c\ord(f,z)) \in \bR_{>0}$ (it is an element $V_{R,R'}$ since for all $f\in\cO(U)$, we have $|f|_{z,c,\um} \leq 1$). Example \ref{ex:sav} (4) provides, for any $z\in D(R)$, a pseudo-absolute value $v_{z,\infty,\um}$ defined by 
\begin{align*}
\fonction{|\cdot|_{z,\infty,\um}}{A_z}{\bR_{\geq 0}}{f}{
\left\{\begin{matrix}
&0 \text{ si } f \in \m_z,\\ 
&1 \text{ si } f\notin \m_z,
\end{matrix}\right.}
\end{align*}
where $A_z$ and $\m_z$ denote respectively the set of elements of $K$ without pole and vanishing in $z$. Finally, for any $z\in D(R)$, we denote by $v_{z,0,\um}$ the trivial absolute value on $K$.

\begin{proposition}
\label{prop:description_sav_ultrametric_Nevanlinna_open_disc}
$V_{R,R',\leq 1, \um}$ is the set of pseudo-absolute values $v$ on $K$ such that there exist $z\in D(R)$ and $c\in [0,+\infty]$ such that $v= v_{z,c,\um}$.
\end{proposition}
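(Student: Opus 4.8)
The plan is to prove the two inclusions separately, leaning on the structure theorem for absolute values of fields of meromorphic functions (Proposition~\ref{prop:absolute_values_on_fields_of_meromorphic_functions}) and on the general description of the rank-$\le 1$ part of a tame global space (Proposition~\ref{prop:tame_space_rank_leq1}). First I would record that $V_{R,R'}$ is tame: condition (i) holds since the trivial absolute value is $\le\|\cdot\|_{R'}$, condition (ii) is the computation of Example~\ref{example:tamed_spaces} (for $f\in\cO(D(R))$ and $x\in V_{R,R',\um}$, the restriction of $\va_x$ to $\bC^{\times}\subseteq\cO(D(R))^{\times}$ is trivial, so scaling $f$ by a small constant gives $|f|_x\le 1$), and condition (iii), uniformity of $\|\cdot\|_{R'}$, is established in \S\ref{subsub:global_space_Nevanlinna_open_disc}.

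For the inclusion $\{v_{z,c,\um} : z\in D(R),\ c\in[0,+\infty]\}\subseteq V_{R,R',\leq 1,\um}$, I would check directly that each such pseudo-absolute value is ultrametric, of rank $\le 1$, and bounded by $\|\cdot\|_{R'}$ on $\cO(D(R))$. For $c=0$ it is the trivial absolute value (rank $0$); for $c\in(0,+\infty)$, $v_{z,c,\um}=e^{-c\,\ord(\cdot,z)}$ is an honest non-Archimedean absolute value on $K$, hence of rank $0$ as a pseudo-absolute value; for $c=+\infty$, $v_{z,\infty,\um}$ is residually trivial with finiteness ring $A_z=\cO(D(R))_{\m_z}$, a discrete valuation ring, hence of rank $1$. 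In all cases, for $f\in\cO(D(R))$ one has $\ord(f,z)\ge 0$, so $|f|_{v_{z,c,\um}}\le 1\le\|f\|_{R'}$, giving membership in $V_{R,R'}$.

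For the reverse inclusion, fix $v=(\va,A_v,\m_v,\kappa_v)\in V_{R,R',\leq 1,\um}$. By tameness $\cO(D(R))\subseteq\{f\in K:|f|_v\le 1\}$. If $\m_v=0$, i.e.\ $v$ is an honest absolute value on $K$, then Proposition~\ref{prop:absolute_values_on_fields_of_meromorphic_functions} applied to the non-compact Riemann surface $D(R)$ shows that $v$ is either trivial (so $v=v_{z,0,\um}$) or equivalent to $e^{-\ord(\cdot,z)}$ for some $z\in D(R)$, i.e.\ $v=v_{z,c,\um}$ with $c>0$. If $\m_v\ne 0$, then $v\in V_{R,R',\sn}\cap V_{R,R',\leq 1}$ is ultrametric, so Proposition~\ref{prop:tame_space_rank_leq1}(2) yields $v=(v')^{\infty}$ for a non-trivial ultrametric absolute value $v'$ on $K$ whose valuation ring is $A_v$; since $\cO(D(R))\subseteq\{f:|f|_{v'}\le 1\}$, Proposition~\ref{prop:absolute_values_on_fields_of_meromorphic_functions} again gives $v'=v_{z,c',\um}$ for some $z\in D(R)$, $c'>0$, whence $A_v=A_z$ and $v=(v_{z,c',\um})^{\infty}=v_{z,\infty,\um}$, this last identity being independent of $c'$.

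The remaining verifications — that the $v_{z,c,\um}$ are genuine pseudo-absolute values bounded by $\|\cdot\|_{R'}$, that $\{f\in K:\ord(f,z)\ge 0\}=\cO(D(R))_{\m_z}$, and that $(v_{z,c',\um})^{\infty}$ unwinds to $v_{z,\infty,\um}$ — are routine. The step that needs the most care is the $\m_v\ne 0$ case: one must make sure $A_v\supseteq\cO(D(R))$ and then pass to the \emph{rank-one} valuation with valuation ring $A_v$ before applying the structure theorem, rather than trying to write $A_v=\cO(D(R))_{\p}$ and analyse the prime $\p$ by hand, which would drag one into the complicated prime spectrum of $\cO(D(R))$ (where free primes give valuation rings of rank $\ge 2^{\aleph_1}$ by Proposition~\ref{prop:holomorphic_functions_non-compact_RS_ideals}).
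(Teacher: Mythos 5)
Your proposal is correct and follows essentially the same route as the paper: the rank-zero (honest absolute value) case is handled by Proposition~\ref{prop:absolute_values_on_fields_of_meromorphic_functions}, and the non-zero-kernel case by tameness of $V_{R,R'}$ together with Proposition~\ref{prop:tame_space_rank_leq1}, which identifies $v$ as $v_{z,\infty,\um}$. You merely spell out the easy inclusion and the identification of the finiteness ring with $A_z$ more explicitly than the paper does.
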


\begin{proof}
Let $v\in V_{R,R',\leq 1}$. We first assume that $v\in V_{R,R'}\setminus V_{R,R',\sn}$, i.e. $v$ is a usual absolute value on $K$. Let $A \subset K$ denote the valuation ring of $v$. We have the inclusion $\cO(D(R))\subset A$ since $v\in V_{R,R'}$. Moreover, $v$ is a valuation of rank $1$, hence $v$ is either trivial or equivalent to the valuation $\ord(\cdot,z)$, for some $z\in D(R)$ (cf. Proposition \ref{prop:absolute_values_on_fields_of_meromorphic_functions}). Thus there exist $z\in D(R)$ and $c\in \bR_{+}$ such that $v = v_{z,c,\um}$.

We now assume that $v\in V_{R,R',\sn,\um}$. Since $V_{R,R'}$ is tame (Définition \ref{def:tame_space}) and, for any $x\in V_{\leq 1,\sn}$, we have $\kappa(x) = \bC$ and a surjection $\cO(D(R)) \to \kappa(x)$, Proposition \ref{prop:tame_space_rank_leq1} ensures that $x$ is of the form $v = v_{z,\infty,\um}$, for some $z\in D(R)$.
\end{proof}

\begin{proposition}
\label{prop:global_space_sav_Nevanlinna_open_disc}
We have homomorphisms $V_{R,R',\ar} \cong \overline{D(R')}\times]0,1]$ and
\begin{align*}
V_{R,R',\leq 1,\um} \cong \bigsqcup_{v \in \cP} [0,+\infty]/\sim,
\end{align*}
with the notation of Remark \ref{rem:description_V_um_leq1}.
\end{proposition}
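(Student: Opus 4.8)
The first homeomorphism requires no new argument: it is exactly Proposition \ref{prop:description_Archimedean_sav_Nevanlinna_open_disc} after exchanging the two Cartesian factors. So the whole of the work is the description of $V_{R,R',\leq 1,\um}$, and I would split it into a set-theoretic step, a ``branchwise'' topological step, and a global topological step.

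For the set-theoretic step: the space $V_{R,R'}$ is tame (this is the open-disc analogue of Example \ref{example:tamed_spaces}, and is used already in the proof of Proposition \ref{prop:description_sav_ultrametric_Nevanlinna_open_disc}), so condition (ii) of Definition \ref{def:tame_space} forces $|f|_x\leq 1$ for every $x\in V_{R,R',\um}$ and every $f\in\cO(D(R))$. Hence, by Proposition \ref{prop:absolute_values_on_fields_of_meromorphic_functions}, every nontrivial ultrametric absolute value on $K_R$ occurring in $V_{R,R'}$ is equivalent to $e^{-\ord(\cdot,z)}$ for a unique $z\in D(R)$; this identifies the index set $\cP$ of Remark \ref{rem:description_V_um_leq1} with $D(R)$. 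On the other hand, Proposition \ref{prop:description_sav_ultrametric_Nevanlinna_open_disc} describes $V_{R,R',\leq 1,\um}$ as exactly $\{v_{z,c,\um}: z\in D(R),\ c\in[0,+\infty]\}$, where $v_{z,0,\um}=v_{\triv}$ is independent of $z$ and $v_{z,\infty,\um}$ is the residually trivial pseudo-absolute value with finiteness ring $A_z$. Together these give a bijection $\Phi:\bigl(\bigsqcup_{z\in D(R)}[0,+\infty]\bigr)/{\sim}\ \longrightarrow\ V_{R,R',\leq 1,\um}$, $(z,c)\mapsto v_{z,c,\um}$, well defined because every branch sends its endpoint $(z,0)$ to $v_{\triv}$.

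For the branchwise step: for a fixed $z\in D(R)$, the nontrivial ultrametric absolute value $x:=v_{z,1,\um}$ lies in $V_{\um}\setminus V_{\sn}$, and from the explicit formulas one checks $x^{\epsilon}=v_{z,\epsilon,\um}$ for $\epsilon\in[0,+\infty[$ and $x^{\infty}=v_{z,\infty,\um}$ (Propositions \ref{prop:flow_tame_space} and \ref{prop:branch_tame_space}), so the restriction of $\Phi$ to the branch over $z$ is a homeomorphism onto its image. Since the branches are glued only at $v_{\triv}$ and all branch maps agree there, $\Phi$ is continuous for the topology of Remark \ref{rem:description_V_um_leq1}.

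It remains to upgrade $\Phi$ to a homeomorphism, and this is the main obstacle. The target is Hausdorff, being a subspace of the compact Hausdorff space $V_{R,R'}=\cM(\cO(D(R)),\|\cdot\|_{R'})$ (Proposition \ref{prop:compactness_global_space_sav}); one cannot, however, simply invoke ``continuous bijection of compact Hausdorff spaces'', because unlike in the Dedekind case of Proposition \ref{prop:topological_description_Dedekind} the ring $\cO(D(R))$ is only Bézout, so $V_{R,R'}$ contains ultrametric pseudo-absolute values of rank $\geq 2$ (e.g.\ those attached to free maximal ideals, cf.\ Proposition \ref{prop:holomorphic_functions_non-compact_RS_ideals}(2)), and $V_{R,R',\leq 1,\um}$ need not be closed in $V_{R,R'}$. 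The plan is therefore to prove that $\Phi^{-1}$ is continuous directly, i.e.\ that the topology transported from $V_{R,R',\leq 1,\um}$ to the wedge is the one intended in Remark \ref{rem:description_V_um_leq1}. The crucial local computation is that if $z_n\to z_0$ in $D(R)$ with $z_n$ eventually distinct from $z_0$, then $\ord(f,z_n)=0$ for $n$ large for every $f\in K_R^{\times}$, so $|f|_{v_{z_n,c_n,\um}}\to 1=|f|_{v_{\triv}}$ for every choice of $c_n\in[0,+\infty]$; this is precisely what makes the branches ``fan out'' from $v_{\triv}$ in the way that the quotient $\bigsqcup_{v\in\cP}[0,+\infty]/{\sim}$ prescribes. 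Combining this with the branch homeomorphisms of Proposition \ref{prop:branch_tame_space} over the finitely many points of $D(R)$ that lie in any fixed compact neighbourhood of $z_0$ yields, by the same mechanism as in \cite{LemanissierPoineau24}, Example 1.1.17 (used for the Dedekind case), the continuity of $\Phi^{-1}$ near $v_{\triv}$; away from $v_{\triv}$ and the ``$\infty$-ends'' it is immediate, and near an ``$\infty$-end'' $v_{z,\infty,\um}$ it follows again from the branch homeomorphism together with the fact (from the computation above) that no genuinely distinct branch comes close to it. Assembling these cases gives that $\Phi$ is a homeomorphism.
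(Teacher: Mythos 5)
Your Archimedean part and your set-theoretic/branchwise analysis are in line with what the paper actually does: Proposition \ref{prop:global_space_sav_Nevanlinna_open_disc} is deduced from Propositions \ref{prop:description_Archimedean_sav_Nevanlinna_open_disc} and \ref{prop:description_sav_ultrametric_Nevanlinna_open_disc}, the only extra input being that the nontrivial ultrametric absolute values of $K_R$ are discrete (they are the $\ord(\cdot,z)$, $z\in D(R)$), which identifies $\cP$ and the branches. Note, however, that Remark \ref{rem:description_V_um_leq1} is explicitly a \emph{set-theoretic} description, and for the open disc (where $\cO(D(R))$ is not Dedekind) the paper claims and proves nothing beyond that for the ultrametric part, in contrast with Proposition \ref{prop:topological_description_Dedekind}.

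The genuine problem is your last step, where you assert that $\Phi^{-1}$ is continuous for the quotient topology on $\bigsqcup_{z\in D(R)}[0,+\infty]/\sim$; this is false, and your own ``crucial local computation'' is what disproves it. Take pairwise distinct $z_n\to z_0$ in $D(R)$ and a fixed $c>0$. Since the zeros and poles of any $f\in K_R^{\times}$ form a discrete subset of $D(R)$, one has $|f|_{v_{z_n,c,\um}}=1$ for $n$ large, so $v_{z_n,c,\um}\to v_{\triv}$ in $V_{R,R'}$; more precisely, a basic neighbourhood of $v_{\triv}$, cut out by finitely many $f_1,\dots,f_m$, contains the \emph{entire} branch over every $z$ avoiding the (at most countable) set of zeros and poles of the $f_i$. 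On the other hand, the set consisting of the wedge point together with $[0,c/2)$ on each branch is open in the quotient topology and misses every $(z_n,c)$. Hence the topology induced on $V_{R,R',\leq 1,\um}$ by $V_{R,R'}$ is strictly coarser than the wedge quotient topology near $v_{\triv}$, $\Phi$ is a continuous bijection but not a homeomorphism for that topology, and your sentence that the computation ``is precisely what makes the branches fan out \dots\ in the way that the quotient prescribes'' is exactly backwards. The auxiliary claim about ``the finitely many points of $D(R)$ that lie in any fixed compact neighbourhood of $z_0$'' is also wrong: every point of that (uncountable) neighbourhood carries a branch. To repair the statement you must either content yourself with the bijection (as the paper does, ``with the notation of Remark \ref{rem:description_V_um_leq1}''), or equip the wedge with the correct topology, in which neighbourhoods of the centre contain all but finitely (or discretely) many branches in their entirety, as in the spectrum of a trivially valued Dedekind ring in \cite{LemanissierPoineau24}; that mechanism never yields the quotient topology once infinitely many branches are present.
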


\begin{proof}
This is a consequence of Propositions \ref{prop:description_Archimedean_sav_Nevanlinna_open_disc} and \ref{prop:description_sav_ultrametric_Nevanlinna_open_disc}. The only part needing a justification is the fact that the nontrivial ultrametric absolute values of $K_R$ are discrete. This comes from the first part of the proof of Proposition \ref{prop:description_sav_ultrametric_Nevanlinna_open_disc}.
\end{proof}

\subsubsection{Nevanlinna theory: compact disc}
\label{subsub:example_Nevanlinna_compact_disc}

Let $R>0$ and denote by $K_R:=\cM(\overline{D(R)})$ the field of (germs) of meromorphic functions on the closed disc $\overline{D(R)} := \{z\in\bC : |z|_{\infty} \leq R$. Let $A_R=\cO(\overline{D(R)})$ denote the ring of (germs of holomorphic functions on $\overline{D(R)}$. Equip $A$ with the hybrid norm $\|\cdot\|_R$ (cf. Example \ref{example:integral_structure} (4)). Then $(A_R,\|\cdot\|_R)$ is an integral structure for $K_R$. We now describe the corresponding global space of pseudo-absolute values $V_{R}$.

Firstly, $V_{R,\ar} \cong ]0,1]\times \overline{D(R)}$ where $(\epsilon,z)\in ]0,1]\times \overline{D(R)}$ is mapped to the pseudo-absolute value $v_{z,\epsilon,\ar}$ defined in Example \ref{ex:sav} (3) (see also Proposition \ref{prop:description_Archimedean_sav_Nevanlinna_open_disc}).

We now describe $V_{R,\um}$. By Propositions \ref{prop:holomorphic_functions_closed_disc_PID} and \ref{prop:holomorphic_functions_closed_disc_ideals}, a non-trivial ultrametric absolute value in $V_{R,\um}$ is given by a valuation $\ord(\cdot,z)$ for some $z\in \overline{D(R)}$. Hence it is of the form $|\cdot|_{z,c,\um} : (f\in K_R) \mapsto \exp(-c\ord(f,z))\in\bR_{>0}$ for some $c>0$ and $z\in\overline{D(R)}$. The remaining elements of $V_{R,\um}$ are of the form
\begin{align*}
\fonction{|\cdot|_{z,\infty,\um}}{\cO(\overline{D(R)})_{(T-z)}}{\bR_{\geq 0}}{f}{
\left\{\begin{matrix}
&0 \text{ si } f \in (T-z),\\ 
&1 \text{ si } f\notin (T-z),
\end{matrix}\right.}
\end{align*}
for some $z\in \overline{D(R)}$

\begin{proposition}
\label{prop:global_space_of_sav_compact_disc}
\begin{itemize}
	\item[(1)] We have homeomorphisms 
\begin{align*}
V_{R,\ar} \cong ]0,1]\times \overline{D(R)}, \quad V_{R,\um} \cong \bigsqcup_{z \in \overline{D(R)}} [0,+\infty]/\sim,
\end{align*}	
where $\sim$ denotes the equivalence relation which identifies the extremity $0$ of each branch.
	\item[(2)] $V_{R,\ar}$ is dense in $V_{R}$.
\end{itemize}
\end{proposition}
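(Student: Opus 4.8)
The plan is to split the statement into its three constituent claims --- the Archimedean homeomorphism, the ultrametric homeomorphism, and the density --- and to observe that the first two are almost immediate from material already at hand, so that essentially all the work goes into the density statement~(2). For the Archimedean part one repeats verbatim the proof of Proposition~\ref{prop:description_Archimedean_sav_Nevanlinna_open_disc}, replacing $D(R)$ by $\overline{D(R)}$: for $v=(\va,A,\m,\kappa)\in V_{R,\ar}$ the inclusion $A_R=\cO(\overline{D(R)})\subseteq A$ gives $\bC\subseteq A^{\times}$, so Gelfand--Mazur forces $\kappa=\bC$ and $\va|_{\bC}=\va_{\infty}^{\epsilon}$ for a unique $\epsilon\in\,]0,1]$; the kernel $\m\cap A_R$ is then a (nonzero, since $T$ is transcendental over $\bC$) maximal ideal of $A_R$, hence equals $\m_z$ for a unique $z\in\overline{D(R)}$ by Proposition~\ref{prop:holomorphic_functions_closed_disc_ideals}, so $v=v_{z,\epsilon,\ar}$; conversely $v_{z,\epsilon,\ar}\in V_{R,\ar}$ since $|f(z)|_{\infty}^{\epsilon}\le\max\{1,|f(z)|_{\infty}\}\le\|f\|_{R}$ for $f\in A_R$. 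The resulting bijection $\varphi\colon V_{R,\ar}\to\,]0,1]\times\overline{D(R)}$ is a homeomorphism: its $\epsilon$-component is continuous by Proposition~\ref{prop:archimedean_part_global_space_of_sav}, its $z$-component is continuous because $v\mapsto|T-a|_{v}^{1/\epsilon(v)}=|z(v)-a|_{\infty}$ is continuous for each $a\in\bC$ and $z\in\overline{D(R)}$ is determined by the family $(|z-a|_{\infty})_{a}$, and $\varphi^{-1}$ is continuous since every $f\in K_R$ induces a continuous map $\overline{D(R)}\to\mathbb{P}^{1}(\bC)$.

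For the ultrametric part I would simply invoke Proposition~\ref{prop:topological_description_Dedekind}: $A_R$ is Dedekind by Proposition~\ref{prop:holomorphic_functions_closed_disc_PID} and $V_R$ is tame by Example~\ref{example:tamed_spaces}, so $V_{R,\um}=V_{R,\um,\le 1}\cong\bigsqcup_{v\in\cP}[0,+\infty]/\!\sim$ with the notation of Remark~\ref{rem:description_V_um_leq1}. It then only remains to identify $\cP$ with $\overline{D(R)}$: by tameness the finiteness ring of any $v\in V_{R,\um}$ contains $A_R$, hence by Proposition~\ref{prop:prop_Prufer_domains}~(1) it is either $K_R$ or a localisation $(A_R)_{\m}$, and by Proposition~\ref{prop:holomorphic_functions_closed_disc_ideals} the latter is $\{f:\ord(f,z)\ge 0\}$ for a unique $z\in\overline{D(R)}$; thus the nontrivial classes in $\cP$ are exactly the valuations $\ord(\cdot,z)$, $z\in\overline{D(R)}$, and the branch over $z$ is the one produced by Proposition~\ref{prop:branch_tame_space}, namely $\{v_{z,c,\um}:c\in[0,+\infty]\}$.

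For~(2): since $V_{R,\ar}=\{x\in V_R:|2|_x>1\}$ is open, $V_{R,\um}$ is closed, so density is equivalent to $V_{R,\um}\subseteq\overline{V_{R,\ar}}$; and as the subset $S:=\{v_{z,c,\um}:z\in\overline{D(R)},\ c\in\,]0,+\infty[\}$ is dense in $V_{R,\um}$ (within each branch $]0,+\infty[$ is dense in $[0,+\infty]$, by the previous paragraph), it suffices to show $S\subseteq\overline{V_{R,\ar}}$, i.e.\ $v_{z_0,c_0,\um}\in\overline{V_{R,\ar}}$ for all $z_0\in\overline{D(R)}$ and $c_0\in\,]0,+\infty[$. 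I would fix a sequence $z_n\to z_0$ in $\overline{D(R)}$ with $z_n\neq z_0$ and $|z_n-z_0|<e^{-1}$, set $\epsilon_n:=-c_0/\log|z_n-z_0|$ (so $\epsilon_n\to 0^{+}$, hence $\epsilon_n\in\,]0,1]$ for $n$ large), and put $v_n:=v_{z_n,\epsilon_n,\ar}\in V_{R,\ar}$. For $f\in K_R^{\times}$, writing $f(z)=(z-z_0)^{m}u(z)$ near $z_0$ with $m=\ord(f,z_0)$ and $u$ holomorphic and non-vanishing near $z_0$, one has for $n$ large
\begin{align*}
|f|_{v_n}=|f(z_n)|_{\infty}^{\epsilon_n}=\exp\!\left(-c_0\,\frac{m\log|z_n-z_0|+\log|u(z_n)|}{\log|z_n-z_0|}\right),
\end{align*}
which tends to $\exp(-c_0 m)=|f|_{v_{z_0,c_0,\um}}$ because $\log|z_n-z_0|\to-\infty$ while $\log|u(z_n)|\to\log|u(z_0)|$ stays bounded; together with the trivial case $f=0$ this is coordinatewise convergence in $\prod_{f\in K_R}[0,+\infty]$, hence $v_n\to v_{z_0,c_0,\um}$ in $V_R$. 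Then density follows, since $V_R=V_{R,\ar}\cup V_{R,\um}$ and $V_{R,\um}=\overline{S}\subseteq\overline{V_{R,\ar}}$.

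The main obstacle is assertion~(2), and within it the recovery of an order valuation $\ord(\cdot,z_0)$ as a limit of Archimedean evaluation seminorms; the crux is the asymptotic expansion $\log|f(z)|=\ord(f,z_0)\log|z-z_0|+O(1)$ as $z\to z_0$, which dictates the correct scaling $\epsilon_n\sim 1/\bigl|\log|z_n-z_0|\bigr|$. Everything else amounts to bookkeeping with the topologies already set up for $M_K$ and for global spaces of pseudo-absolute values.
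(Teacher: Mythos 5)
Your proposal is correct, and for part (1) it follows the same route as the paper (Gelfand--Mazur plus the description of the maximal ideals of $\cO(\overline{D(R)})$ for the Archimedean part, and Propositions \ref{prop:holomorphic_functions_closed_disc_PID}, \ref{prop:holomorphic_functions_closed_disc_ideals} and \ref{prop:topological_description_Dedekind} for the ultrametric part); the paper simply compresses this into a reference to the preceding paragraphs, while you re-derive it, which is fine. For part (2) the key computation is the same — degenerate Archimedean evaluation points $v_{z_n,\epsilon_n,\ar}$ with $\epsilon_n\to 0$ calibrated so that $\epsilon_n\log|z_n-z_0|$ has a prescribed limit, i.e.\ the asymptotics $\log|f(z)|=\ord(f,z_0)\log|z-z_0|+O(1)$ — but you run it in the opposite direction from the paper: the paper takes an arbitrary convergent sequence in $V_{R,\ar}$ and identifies its limit as $v_{z,\epsilon,\ar}$ or $v_{z,c,\um}$ (which, as written, characterises the possible limit points and leaves implicit that every point of $V_{R,\um}$ is actually attained), whereas you fix the target $v_{z_0,c_0,\um}$, choose $\epsilon_n:=-c_0/\log|z_n-z_0|$ explicitly, verify pointwise convergence of $|f|_{v_n}$ for all $f\in K_R$, and reduce the cases $c_0\in\{0,+\infty\}$ to the closure of the set of finite-$c$ points via the branch homeomorphism of Proposition \ref{prop:branch_tame_space}. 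This is the logically direct form of the density statement and checks convergence on all of $K_R$ rather than only on the linear functions $T-z'$ as the paper does; the paper's variant in exchange handles $c=\infty$ within the same computation via a subsequential limit of $\epsilon_n\log|z_n-z|_{\infty}$ in $[-\infty,0]$. Both arguments are sound, and yours supplies exactly the approximating sequences that the paper's formulation presupposes.
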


\begin{proof}
The homeomorphisms in (1) follow from the above paragraphs together with Proposition \ref{prop:topological_description_Dedekind}. 

As usual, we denote by $\va_{\infty}$ the usual absolute value on $\bC$. We now prove (2). Let $(v_n)_{n\geq 0}$ be a sequence in $V_{R,\ar}$ converging to some $v\in V_{R}$. (1) implies that, for any integer $n\geq 0$, $v_n=v_{z_n,\epsilon_n,\ar}$, for some $z_n\in \overline{D(R)}$ and $\epsilon_n\in ]0,1]$. By compactness of $\overline{D(R)}$ and $[0,1]$, up to extracting subsequences, we may assume that there exists $(z,\epsilon)\in \overline{D(R)}\times[0,1]$ such that $\lim_{n\to+\infty} z_n=z$ and $\lim_{n\to+\infty} \epsilon_n=\epsilon$. Let us prove that $v=v_{z,\epsilon,\ar}$ if $\epsilon>0$ and $v=v_{z,\infty,\um}$ if $\epsilon=0$. 

We first assume that $\epsilon>0$. Since $|2|_v>1$, $v$ is Archimedean. Moreover, $|(T-z)|_{v} = \lim_{n\to+\infty}|z_n-z|^{\epsilon_n}_{\infty} = 0$. Thus $(T-z)$ belongs to the kernel of $v$ and $v$ is of the form $v_{z,\epsilon',\ar}$, for some $\epsilon'\in]0,1]$. $|2|_v=2^{\epsilon}=2^{\epsilon'}$ yields $\epsilon=\epsilon'$. 

We now consider the $\epsilon=0$ case. Thus $|2|_v=1$ and $v$ is ultrametric. Consider the sequence $u=(\epsilon_n\log|z_n-z|_{\infty})_{n\geq 0}$. up to extracting a subsequence, we may assume that $u$ converges to $-c\in [-\infty,0]$. Let us prove that $v=v_{z,c,\um}$. By hypothesis, we have $|(T-z)|_v=e^{-c}$ (with the convention $e^{-\infty}=0$). Moreover, for any $z'\neq z$ in $\overline{D(R)}$, we have $|(T-z')|=\lim_{n\to+\infty}|z_n-z'|^{\epsilon_n}_{\infty} = 1$. From the aforementioned description of $V_{R,\um}$, we see that $v=v_{z,c,\um}$. 
\end{proof}

\section{Global analytic spaces}
\label{sec:global_analytic_spaces}

We now give alternative approaches to global analytic spaces. This is a global counterpart to \S \ref{sec:local_analytic_spaces}. Throughout this section, we fix a field $K$.

\subsection{Model global analytic space}
\label{sub:model_global_analytic_space}

In this subsection, we assume that we have an integral structure $(A,\|\cdot\|)$ for $K$ (Definition \ref{def:integral strucure}). We denote by $V:=\cM(A,\|\cdot\|)$ the corresponding global space of pseudo-absolute value. We will make heavy use of the theory developed by Lemanissier-Poineau in \cite{LemanissierPoineau24}. 

We further assume that $(A,\|\cdot\|)$ is a geometric base ring (cf. \S \ref{subsub:global_analytification}). 
Let $X\to \Spec(K)$ be a projective $K$-scheme and let $\cX \to \Spec(A)$ be a projective model of $X$ over $A$. Assume that $\cX$ is a coherent model, namely $\cX \to \Spec(A)$ is finitely presented. Then one can consider the $A$-analytic space associated with $\cX$, denoted by $\cX^{\an}$ (cf. \S \ref{subsub:global_analytification}).

\begin{definition}
\label{def:global_model_analytic_space}
With the above notation, the $A$-analytic space $\cX^{\an}$ is called the \emph{global model analytic space} attached to the model $\cX$. It is a compact Hausdorff topological space (\cite{LemanissierPoineau24}, Lemme 6.5.1 and Proposition 6.5.3). 
\end{definition} 

From now on, we fix a coherent model $\cX$ of $X$ over $A$ and denote by $X^{\an}$ the associated global analytic space. The global analytic space $\cX$ comes with two maps of locally ringed spaces $p : \cX^{\an} \to V:=\cM(A)$ and $j : \cX^{\an} \to \cX$. In this thesis, we will only use the topological properties of analytic spaces. 

Let $v=(\va_v,A_v,\m_v,\kappa_v) \in V$. Then $\cX_v := \cX \otimes_{A} A_v$ is a coherent model of $\cX$, which is flat if $\cX$ is flat itself. Thus $(\cX_v \otimes_{A} \widehat{\kappa_v})^{\an}$ is a local model analytic space in $v$ of $X$ in the sense of Definition \ref{def:model_local_analytic_space}.

\begin{proposition}[\cite{LemanissierPoineau24}, Proposition 4.5.3]
\label{prop:LP_4.4.8}
We have a homeomorphism 
\begin{align*}
(\cX_v \otimes_{A} \widehat{\kappa_v})^{\an} \cong p^{-1}(v).
\end{align*}
\end{proposition}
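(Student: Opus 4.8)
The plan is to deduce this from the general base-change description of fibres of analytifications established in \cite{LemanissierPoineau24}. First I would reinterpret $v$ as a point of the Berkovich spectrum $V=\cM(A,\|\cdot\|)$: restricting $\va_v$ to $A$ yields a bounded multiplicative seminorm, hence a point $b\in V$ with kernel $\p_v:=\m_v\cap A$. Since $A$ is Prüfer, Proposition \ref{prop:prop_Prufer_domains} (1) gives $A_v=A_{\p_v}$, so the residue seminorm attached to $b$ is carried by $\Frac(A/\p_v)=A_{\p_v}/\p_vA_{\p_v}=\kappa_v$ and there coincides with the residue absolute value of $v$. Consequently the completed residue field $\cH(b)$ of the point $b$ is canonically identified with $\widehat{\kappa_v}$.

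Next I would rewrite the scheme appearing in the statement as a base change directly along $A\to\cH(b)$. Because the structural map $A\to\widehat{\kappa_v}$ factors through $A_v$, we have $A_v\otimes_A\widehat{\kappa_v}=\widehat{\kappa_v}$, whence
$\cX_v\otimes_A\widehat{\kappa_v}=\cX\otimes_AA_v\otimes_A\widehat{\kappa_v}=\cX\otimes_A\widehat{\kappa_v}=\cX\otimes_A\cH(b)$,
a projective scheme over the complete (possibly Archimedean) valued field $\widehat{\kappa_v}$, whose Berkovich analytification in the classical sense is defined. One then applies (\cite{LemanissierPoineau24}, Proposition 4.5.3): for an $A$-scheme locally of finite presentation, the fibre of $\cX^{\an}\to\cM(A)$ over $b$ is canonically homeomorphic to the analytification of $\cX\otimes_A\cH(b)$ over the complete valued field $\cH(b)$. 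Here one uses that $(A,\|\cdot\|)$ is a geometric base ring, so that $\cX^{\an}$ exists by Theorem \ref{th:global_analytification}, and that $\widehat{\kappa_v}$ is an admissible base for the classical theory, which is automatic since it is a complete valued field (\cite{Berko90}).

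Finally I would record that the comparison map is a genuine homeomorphism and not merely a continuous bijection: it is continuous by functoriality of analytification (it intertwines the evaluation maps $|f|_{\cdot}$ defining the topologies), it is bijective by the explicit description of points of $\cX^{\an}$, and both $p^{-1}(b)$ (a closed subset of the compact space $\cX^{\an}$) and $(\cX_v\otimes_A\widehat{\kappa_v})^{\an}$ (the analytification of a projective scheme over a complete valued field, hence compact Hausdorff by the GAGA-type statement recalled in \S\ref{subsub:global_analytification}) are compact Hausdorff. The only real content beyond citing \cite{LemanissierPoineau24} is the bookkeeping of the first paragraph, namely pinning down that $\cH(b)=\widehat{\kappa_v}$ via the Prüfer hypothesis; everything else is a direct application of the cited results, so I do not expect a serious obstacle here.
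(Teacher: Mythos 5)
Your proposal is correct and matches the paper's approach: the paper offers no argument beyond citing \cite{LemanissierPoineau24}, Proposition 4.5.3, and your proof is exactly that citation together with the natural bookkeeping (identifying $A_v=A_{\p_v}$ via the Prüfer hypothesis, hence $\cH(b)=\widehat{\kappa_v}$ and $\cX_v\otimes_A\widehat{\kappa_v}=\cX\otimes_A\cH(b)$), which the paper leaves implicit. The final compactness argument is harmless but redundant if the cited fibre description already yields a homeomorphism.
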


As we will need results about finite extension of the base field, we now describe the behaviour of model global analytic with respect to such extension. We further assume the space $V_A$ is tame (Definition \ref{def:tame_space}). Let $K'/K$ be a finite extension. Recall that from Proposition \ref{prop:algebraic_extension_of_tame_spaces}, if $A'$ denotes the integral closure of $A$ in $K'$, there is natural tame integral structure $(A',\|\cdot\|')$ on $K'$ induced by $(A,\|\cdot\|)$. We assume that $(A',\|\cdot\|')$ is a geometric base ring. 
Then $\cX' := \cX \otimes_{A} A'\to \Spec(A')$ is a finitely presented scheme over $\Spec(A')$. We denote by $
\cX'^{\an}$ its analytification. It is called the \emph{extension of scalars} of $\cX$ w.r.t. $K'/K$. 

\subsection{General global analytic space}

We now try to introduce a general approach to global analytic spaces. Let $K'/K$ be an algebraic function field. Intuitively, the global analytic space attached to any birational model $X$ of $K'/K$ should be a fibration over $M_K$ whose fibres correspond to the local analytic spaces attached to $X$. For the same reasons that were explained in Proposition \ref{prop:sav_extension_by_generalisation_schemes}, an approach mimicking directly Berkovich spaces is not suitable if one wants to obtain Hausdorff spaces.

\subsubsection{Definition}

\begin{definition}
\label{def:global_analytic_spaces_general}
Let $K'/K$ be an algebraic function field. The \emph{global analytic space} attached to $K'$ is defined to be the global space of pseudo-absolute values $M_{K'}$. 
\end{definition}

\begin{remark}
\label{rem:global_analytic_space_general}
As in the local case (cf. \S \ref{subsub:local_analytic_space_general}), we see that the definition of global analytic space is birational in nature. Namely, it does not depends on any birational model of $K'/K$. The natural "specification morphism" is 
\begin{align*}
M_{K'} \to \ZR(K').
\end{align*}
\end{remark}

\subsubsection{Link with model global analytic spaces}

We fix an integral structure $(A,\|\cdot\|_{A})$ for $K$, we assume that $(A,\|\cdot\|_{A})$ is a geometric base ring. . Then we have a \emph{specification morphism} $V'_{A} \to \ZR(K'/A)$ which is continuous by a similar argument to Lemma \ref{lemma:local_specification_morphism_continuous}. 

Theorem \ref{th:Zariski-Riemann_projective_limite} (1) gives a homeomorphism 
\begin{align}
\label{eq:iso_global_analytic_space_Zariski-Riemann}
\ZR(K'/A) \cong \displaystyle\varprojlim_{\cX} \cX,
\end{align}
where $\cX$ runs over the projective models of $K'/A$. Let us prove that the isomorphism (\ref{eq:iso_global_analytic_space_Zariski-Riemann}) can be refined in our context. 

\begin{lemma}
\label{lemma:domination_flat models_global_analytic_spaces}
Let $\cX$ be a projective model of $K'/A$. Then there exists a flat and coherent projective model $\cX'$ of $K'/A$ dominating $\cX$.
\end{lemma}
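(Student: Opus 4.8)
The plan is to split the two conclusions: \emph{flatness} will come for free, and the only real content is \emph{finite presentation} (coherence). Indeed, since $\cX$ is integral with function field $K'$ containing $K=\Frac(A)$, the structural morphism $\cX\to\Spec(A)$ is dominant, so for every $x\in\cX$ the local ring $\cO_{\cX,x}$ is a domain containing a localisation $A_{\mathfrak p}$ of $A$; as a submodule of the field $K'$ it is torsion-free over $A_{\mathfrak p}$, hence flat by Proposition~\ref{prop:prop_Prufer_domains}(3). Thus \emph{every} integral projective model of $K'/A$ is automatically flat, and we may concentrate on producing a coherent projective model dominating $\cX$.

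First I would reduce to a finitely presented ambient model by clearing denominators. Fix a closed immersion $\cX\hookrightarrow\bP^n_A$, with homogeneous ideal $I\subseteq A[T_0,\dots,T_n]$. The generic fibre $X:=\cX\otimes_A K\hookrightarrow\bP^n_K$ is a projective $K$-variety, hence of finite presentation over the field $K$; by flatness of $\cX$ one has $I\otimes_A K=I(X)$ inside $K[T_0,\dots,T_n]$, and $I(X)$ is finitely generated. Choosing finitely many homogeneous $f_1,\dots,f_r\in I$ whose images generate $I(X)$, set $I':=(f_1,\dots,f_r)\subseteq A[T_0,\dots,T_n]$ and $\cX_1:=\Proj(A[T_0,\dots,T_n]/I')\subseteq\bP^n_A$. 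Since $A$ is Prüfer it is stably coherent, so $A[T_0,\dots,T_n]$ is coherent (as recalled in the proof of Proposition~\ref{prop:models_Prüfer_domains}(3)); hence $I'$ is finitely presented and $\cX_1\to\Spec(A)$ is projective and of finite presentation. By construction $I'\subseteq I$ gives a closed immersion $\cX\hookrightarrow\cX_1$, and $I'K[T]$ has the same saturation as $I(X)$, so $\cX_1\otimes_A K=X$ and $\cX$ is the scheme-theoretic closure of $X$ in $\cX_1$.

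The last and hardest step is to pass from $\cX_1$ (coherent, but possibly with embedded/vertical components over closed points of $\Spec(A)$) to a coherent flat projective model that dominates $\cX$. Since $\cX_1\to\Spec(A)$ is projective and of finite presentation with integral generic fibre, its non-flat locus is closed with closed image in $\Spec(A)$ avoiding the generic point, so there is $0\ne s\in A$ with $\cX_1$ flat over the quasi-compact open $D(s)=\Spec(A[1/s])$. I would then invoke the flattening theorem of Raynaud--Gruson (or, equivalently here, Temkin's theory of relative Riemann--Zariski spaces in \cite{Temkin11}): there is a $D(s)$-admissible blow-up of $\Spec(A)$ after which the strict transform of $\cX_1$ becomes flat while remaining projective and of finite presentation; because every finitely generated ideal of the Prüfer domain $A$ is invertible, this blow-up of the base is trivial, so the procedure yields a coherent flat projective $A$-scheme $\cX'$ equipped with a proper birational morphism $\cX'\to\cX_1$ which is an isomorphism over $D(s)$. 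One then checks that $\cX'$ is integral with function field $K'$ and that the morphism factors through $\cX$, exhibiting the required domination $\cX'\to\cX$; flatness is again automatic by the first paragraph. The delicate point — the one where the Prüfer hypothesis is genuinely needed — is precisely the preservation of finite presentation in this flattening/strict-transform step over the non-Noetherian base $A$; everything else is formal.
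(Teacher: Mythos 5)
Your first paragraph is fine and in fact already contains the whole proof: once you know $\cX$ is $A$-torsion-free, hence flat (Proposition \ref{prop:prop_Prufer_domains}(3)), you can apply Raynaud--Gruson, Corollaire 3.4.7 --- flat plus finite type over an integral base implies finite presentation --- directly to $\cX$ and take $\cX'=\cX$ (up to the closed immersion $\cX\hookrightarrow\cX_1$ the generic fibre is untouched, so nothing else is required). This is exactly the paper's argument: it passes to the closed subscheme cut out by the $A$-torsion of $\cO_{\cX}$ (which is $\cX$ itself in the integral, dominant case), observes flatness via torsion-freeness over the Prüfer base, and cites RG 3.4.7 for coherence. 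Your detour through the finitely presented ambient model $\cX_1$ and the blow-up flattening theorem (RG 5.2.2) is therefore unnecessary, although the observation that $U$-admissible blow-ups of $\Spec(A)$ are trivial because finitely generated ideals of a Prüfer domain are invertible is correct and pleasant.

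The genuine gap is in your last step, and it is not the one you flag. With the trivial blow-up, the strict transform $\cX'$ is the schematic closure of $\cX_1|_{D(s)}$ in $\cX_1$; since $X\subset\cX_1|_{D(s)}$, this closure \emph{contains} $\cX$, so what you get for free is a closed immersion $\cX\hookrightarrow\cX'$, i.e.\ a morphism in the wrong direction for domination. The sentence ``one then checks that $\cX'$ is integral with function field $K'$ and that the morphism factors through $\cX$'' is precisely the point that needs proof, and it is not formal: one must argue that $\cX'$, being flat hence $A$-torsion-free, has no irreducible component or nilpotent section supported over a nonzero prime of $A$ (a minimal prime $\q$ of an $A$-torsion-free algebra satisfies $\q\cap A=(0)$, and a nilpotent section vanishes on the generic fibre, hence is torsion), so $\cX'$ is integral with the same generic point as $X$; being a closed subscheme of $\cX_1$ with the same generic point as the integral closed subscheme $\cX$, it then coincides with $\cX$. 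Without this identification your construction does not produce a model \emph{dominating} $\cX$. By contrast, the preservation of finite presentation by the strict transform, which you single out as the delicate point, is part of the statement of Raynaud--Gruson's theorem. (Two minor inaccuracies: you cannot in general choose $f_1,\dots,f_r\in I$ whose images generate the saturated ideal $I(X)$ --- only the ideal $I\cdot K[T_0,\dots,T_n]$, which is enough to define $X$ --- and the equality $I\otimes_AK=I(X)$ ``by flatness'' should be stated at that level.)
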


\begin{proof}
Consider the sheaf of ideals $(\cO_{\cX})_{\mathrm{tor}}$ of $\cO_{\cX}$ as an $\cO_{\Spec(A)}$-module. Then let $\cX'$ denote the closed subscheme of $\cX$ defined by $(\cO_{\cX})_{\mathrm{tor}}$. It is a projective model of $K'/A$. Moreover, since $\cO_{\cX}/(\cO_{\cX})_{\mathrm{tor}}$ is a torsion-free $\cO_{\Spec(A)}$, hence $\cX' \to \Spec(A)$ is flat. By the flattening theorem of Raynaud-Gruson (\cite{RaynaudGruson71}, Corollaire 3.4.7), $\cX'$ is also coherent. 
\end{proof}

In view of Lemma \ref{lemma:domination_flat models_global_analytic_spaces}, we see that any two projective models of $K'/A$ are dominated by a flat and coherent projective model of $K'/A$. Therefore we have a homeomorphism 
\begin{align*}
\ZR(K'/A) \cong \displaystyle\varprojlim_{\cX} \cX,
\end{align*}
where $\cX$ runs over the flat and coherent projective models of $K'/A$. We will now prove an analytic analogue of this homeomorphism.

Let $V_{A}':= \{v'\in M_{K'} : v'_{|K} \in V_A\}$. Let $\cX$ be a flat and coherent projective model of $K'/A$. Let $v'=(|\cdot|',A',\m',\kappa')\in V'_{A}$. Denote by $v$ the restriction of $v'$ to $K$ and let $\cX_v := \cX \otimes_{A} A_v$. Then the construction of the map $\red_{\cX_v} : M_{K',v} \to \widehat{\cX_v}^{\an}:=(\cX_v \otimes_{A_v} \widehat{\kappa_v})^{\an}$ in \S \ref{subsub:link_local_analytic_spaces} yields a point $x\in \widehat{\cX_v}^{\an}$. Let $\pi: \cX^{\an} \to V_A$ denote the structural morphism. By Proposition \ref{prop:LP_4.4.8}, we have a homeomorphism $\cX_v^{\an} \cong p^{-1}(v)$. Thus we obtain a map $\red_{\cX} : V'_A \to \cX^{\an}$. Moreover the arguments in \S \ref{subsub:link_local_analytic_spaces} adapt \emph{mutatis mutandis} to show that the construction of the maps $\red_{\cX}$ is compatible with the domination relation between projective models of $K'/A$, so that we obtain a commutative diagram

\begin{center}
\begin{tikzcd}
V'_{A} \arrow[d, "\red"'] \arrow[r, "j"] & \ZR(K'/A) \arrow[d, "\cong"] \\
\varprojlim_{\cX\in \cM}\cX^{\an} \arrow[r]     & \varprojlim_{\cX \in \cM}\cX        
\end{tikzcd},
\end{center}
where $\cM$ denotes the collection of flat and coherent projective models of $K'/A$.

\begin{theorem}
\label{th:link_global_analytic_spaces}
We use the above notation. The map $\red : V'_{A} \to \varprojlim_{\cX\in\cM}\cX^{\an}$ is a homeomorphism. 
\end{theorem}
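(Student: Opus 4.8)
The plan is to run the argument of Theorem~\ref{prop:local_analytic_space_projective_limit} one parameter at a time over the base $V_A$, using Proposition~\ref{prop:LP_4.4.8} to identify the fibre of $\cX^{\an}\to V_A$ above $v$ with the local model analytic space attached to $\cX_v=\cX\otimes_A A_v$ in $v$. First I would record that both sides are compact Hausdorff: $V'_A$ is the preimage of the closed subset $V_A$ under the continuous restriction $\pi_{K'/K}\colon M_{K'}\to M_K$, hence closed in the compact Hausdorff space $M_{K'}$ of Theorem~\ref{prop:compactness_set_sav}; each $\cX^{\an}$ with $\cX\in\cM$ is compact Hausdorff by Theorem~\ref{th:global_analytification}, so $\varprojlim_{\cX}\cX^{\an}$ is too. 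By Lemma~\ref{lemma:domination_flat models_global_analytic_spaces} the flat and coherent projective models are cofinal among all projective models, which is what lets us restrict to $\cM$ on both sides. Continuity of $\red$ is checked exactly as in Proposition~\ref{prop:continuity_reduction_local}: it suffices to see each $\red_\cX$ is continuous, and composing with $\cX^{\an}\to\cX$ and with the evaluation maps $|f|_\cdot$ on affine charts reduces this to the definition of the topology on $M_{K',v}$ together with the identification $p^{-1}(v)\cong(\cX_v\otimes_{A_v}\widehat{\kappa_v})^{\an}$ and continuity of the structural map in the base coordinate. Hence it remains to prove $\red$ is bijective.

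For injectivity, suppose $v'_1,v'_2\in V'_A$ have $\red(v'_1)=\red(v'_2)$. The commutative square relating $\red$ to the specification morphisms and the homeomorphism $\ZR(K'/A)\cong\varprojlim_\cX\cX$ of Theorem~\ref{th:Zariski-Riemann_projective_limite}(1) forces the finiteness rings to coincide; call it $A'$, with residue field $\kappa'$. By Theorem~\ref{th:Zariski-Riemann_projective_limite}(2) and (\cite{BouAC}, Chap.~IX, Appendice~1, Proposition~1), $A'=\varinjlim_\cX\cO_{\cX,x_{A',\cX}}$ along the injective local transition maps induced by dominant morphisms of models, so $\kappa'=\bigcup_\cX\kappa(x_{A',\cX})$. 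For each $\cX$ the point $\red_\cX(v'_i)\in\cX^{\an}$ lies over the center $q=x_{A',\cX}$ and its absolute value restricts on $\kappa(q)$ to the residue absolute value of $v'_i$; since $\red_\cX(v'_1)=\red_\cX(v'_2)$ these restrictions agree for every $\cX$, hence the residue absolute values of $v'_1$ and $v'_2$ agree on $\kappa'$, i.e. $v'_1=v'_2$.

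For surjectivity I would apply Lemma~\ref{lemma:projective_limit_CHaus_spaces}. Step (a): for a morphism $\cX'\to\cX$ in $\cM$ the induced $\cX'^{\an}\to\cX^{\an}$ is surjective. Fibrewise over $v\in V_A$ it is, by Proposition~\ref{prop:LP_4.4.8}, the analytification of $\cX'_v\otimes_{A_v}\widehat{\kappa_v}\to\cX_v\otimes_{A_v}\widehat{\kappa_v}$; this scheme morphism is surjective because $\cX'\to\cX$ is birational, proper and dominant, hence surjective, and surjectivity is stable under base change, and a surjective morphism of projective schemes over a complete valued field has surjective Berkovich analytification. As the maps to $V_A$ are the identity, $\cX'^{\an}\to\cX^{\an}$ is surjective; by Lemma~\ref{lemma:projective_limit_CHaus_spaces}(i) each projection $\varprojlim_{\cX}\cX^{\an}\to\cX^{\an}$ is surjective, so by Lemma~\ref{lemma:projective_limit_CHaus_spaces}(ii) it is enough to show each $\red_\cX\colon V'_A\to\cX^{\an}$ is surjective. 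Step (b): given $x\in\cX^{\an}$, set $v:=p(x)\in V_A$; by Proposition~\ref{prop:LP_4.4.8}, $x$ corresponds to a point of the local model analytic space $(\cX_v\otimes_{A_v}\widehat{\kappa_v})^{\an}$ attached to the projective model $\cX_v$ of $K'$ over the valuation ring $A_v$. The surjectivity part of the proof of Theorem~\ref{prop:local_analytic_space_projective_limit} (i.e. surjectivity of the local reduction $\red_{\cX_v}\colon M_{K',v}\to(\cX_v\otimes_{A_v}\widehat{\kappa_v})^{\an}$, built via Lemma~\ref{lemma:domintation_valuation_ring_residue_algebraic}) yields $v'\in M_{K',v}$ with $\red_{\cX_v}(v')=x$; since $v'|_K=v\in V_A$ we have $v'\in V'_A$, and by construction $\red_\cX(v')=x$. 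This proves surjectivity and finishes the argument.

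The step I expect to be the main obstacle is the ``spreading out over the base'': the purely local content is already contained in Theorem~\ref{prop:local_analytic_space_projective_limit}, so the new work is to make the fibrewise picture of Proposition~\ref{prop:LP_4.4.8} cohere over all of $V_A$—namely to verify that $\red_\cX$ is continuous as a map into the global analytic space and that the transition maps $\cX'^{\an}\to\cX^{\an}$ are surjective—which forces one to handle the global analytic spaces of \cite{LemanissierPoineau24} (existence, compactness, fibrations, base change) rather than the elementary Berkovich spaces used locally; once these inputs are granted, the compactness and Zorn's lemma arguments go through verbatim.
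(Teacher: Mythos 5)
Your proposal is correct and follows essentially the same route as the paper: continuity as in Proposition \ref{prop:continuity_reduction_local}, reduction to bijectivity by compactness, injectivity transferred \emph{mutatis mutandis} from Theorem \ref{prop:local_analytic_space_projective_limit} via Theorem \ref{th:Zariski-Riemann_projective_limite}(2) and Bourbaki, and surjectivity via Lemma \ref{lemma:projective_limit_CHaus_spaces} reduced to surjectivity of each $\red_{\cX}$. Your fibrewise detour through Proposition \ref{prop:LP_4.4.8} to invoke the local surjectivity is the same argument as the paper's direct appeal to Lemma \ref{lemma:domintation_valuation_ring_residue_algebraic} at the center of $x$ on $\cX$, so there is no substantive difference.
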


\begin{proof}
First note that the arguments in the proof of Proposition \ref{prop:continuity_reduction_local} adapt directly in our setting to obtain the continuity of the map $\red_{\cX}$, for any $\cX\in \cM$, and thus the continuity of $\red$. By compactness of $V'_{A}$ and $\varprojlim_{\cX}\cX^{\an}$, it suffices to prove that $\red$ is bijective. 

For the injectivity of $\red$, the elements in the proof of Theorem \ref{prop:local_analytic_space_projective_limit} adapt \emph{mutatis mutandis} (using Theorem \ref{th:Zariski-Riemann_projective_limite} (2) and (\cite{BouAC}, Chap. IX, Appendice 1, Proposition 1)). 

For the surjectivity of $\red$, Lemma \ref{lemma:projective_limit_CHaus_spaces} implies that it suffices to prove the surjectivity of $\red_{\cX}$ for any $\cX\in\cM$. Let $\cX\in\cM$ and let $x\in \cX^{\an}$. Denote by $p$ the image of $x$ via the specification morphism $\cX^{\an} \to \cX$. Then Lemma \ref{lemma:domintation_valuation_ring_residue_algebraic} implies that there exists a valuation ring $(A',\m',\kappa')$ of $K'$ such that $A'$ dominates $\cO_{\cX,p}$ and the residue field extension $\kappa'/\kappa(p)$ is algebraic. Thus we can extend the absolute value on $\kappa(p)$ induced by $x$ to $\kappa'$ in order to obtain a pseudo-absolute value $v'\in M_{K'}$. Moreover, by construction, the restriction of $v'$ to $K$ gives an element of $V_A$, hence $v'\in V'_A$. This concludes the proof of the theorem. 
\end{proof}

\bibliographystyle{alpha}
\bibliography{biblio}
\end{document}